\newtheorem{definition}{Definition}[section]
\newtheorem{lemma}[definition]{Lemma}
\newtheorem{proposition}[definition]{Proposition}
\newtheorem{theorem}[definition]{Theorem}
\newtheorem{remark}[definition]{Remark}
\DeclareMathOperator{\sign}{sgn}
\DeclareMathOperator{\real}{Re}
\DeclareMathOperator{\imag}{Im}
\DeclareMathOperator{\polylog}{Li}
\newcommand{\HT}{\mathbf{H}} % Hilbert transform
\newcommand{\Hop}[1]{\mathcal{H}^{#1}}
\newcommand{\HopGeneric}{\mathcal{H}}
\newcommand{\inter}[1]{\textbf{\textsl{#1}}}
\newcommand{\lo}[1]{\underline{#1}}
\newcommand{\hi}[1]{\overline{#1}}
\title{Highest Cusped Waves for the Fractional KdV Equations}
\author{Joel Dahne}
\begin{document}
\maketitle

\begin{abstract}
  In this paper we prove the existence of highest, cusped, traveling
  wave solutions for the fractional KdV equations
  \(f_t + f f_x = |D|^{\alpha} f_x\) for all \(\alpha \in (-1,0)\) and
  give their exact leading asymptotic behavior at zero. The proof
  combines careful asymptotic analysis and a computer-assisted
  approach.
\end{abstract}

\section{Introduction}
\label{sec:introduction}
This paper is concerned with the existence and regularity of highest,
cusped, periodic traveling-wave solutions to the fractional
Korteweg-de Vries (KdV) equations, in the periodic setting given by
\begin{equation}
  \label{eq:fkdv}
  f_{t} + f f_{x} = |D|^{\alpha}f_{x},\quad \text{ for } (x, t) \in \mathbb{T} \times \mathbb{R}.
\end{equation}
Here \(|D|^{\alpha}\) is the Fourier multiplier operator given by
\begin{equation*}
  \widehat{|D|^{\alpha}f}(\xi) = |\xi|^{\alpha}\widehat{f}(\xi)
\end{equation*}
where the parameter \(\alpha\) may in general take any real value. It
can serve as a model for investigating the balance between nonlinear
and dispersive effects~\cite{doi:10.1137:130912001}. For
\(\alpha = 2\) and \(\alpha = 1\) it reduces to the classical KdV and
Benjamin-Ono equations, for \(\alpha = -2\) one gets the reduced
Ostrovsky equation. For \(\alpha = -1\) it reduces to the
Burgers-Hilbert equation~\cite{hunter18:burger-hilbert}
\begin{equation}
  \label{eq:BH}
  f_{t} + f f_{x} = \HT[f],\quad \text{ for } (x, t) \in \mathbb{T} \times \mathbb{R}.
\end{equation}
Here \(\HT\) is the Hilbert transform which, for
\(f: \mathbb{T} \to \mathbb{R}\), is defined by
\begin{equation*}
  \HT[f](x) = \frac{1}{2\pi}p.v. \int_{-\pi}^{\pi}\cot\left(\frac{x - y}{2}\right)f(y)\ dy,\quad
  \widehat{\HT[f]}(k) = -i \sign(k)\widehat{f}(k).
\end{equation*}
The results in this paper, see Theorem~\ref{thm:main}, are for
\(\alpha \in (-1, 0)\), though the analysis of the Burgers-Hilbert
case, \(\alpha = -1\), is also important for the proof.

For \(\alpha \in (-1, 0)\) and small initial data in
\(H^{N}(\mathbb{R})\) with \(N \geq 3\) estimates for the life span
were proved by Ehrnström and Wang~\cite{ehrnstromwang19:enhanced}, see
also~\cite{hunterifrim12:enhanced} and~\cite{hunter2015long} for the
Burgers-Hilbert equation. Well-posedness in \(H^{s}(\mathbb{R})\) with
\(s > 3 / 2\) was established by Ria\~no~\cite{Riao2021}.

For \(\alpha \in (-1, 0)\) the equation exhibits finite time blow
\cite{Castro2010,Hur2012}. The characterization as wave breaking
(i.e.\ the functions stay bounded, but its gradient blows up) was
proved for \(\alpha \in (-1, -1 / 3)\) by Hur and Tao
\cite{Hur2014,Hur2017} and for \(\alpha \in (-1, 0)\) by Oh and
Pasqualotto \cite{oh21:_gradien_burger}. See \cite{Klein2015} for a
numerical study and \cite{chickering21:_asymp_burger} where they give
a precise characterization of a chock forming in finite time for
\(0 < \alpha < \frac{1}{3}\).

The study of traveling waves is an important topic in fluid dynamics,
see e.g.~\cite{Haziot2022} for a recent overview of traveling water
waves. The traveling wave assumption \(f(x, t) = \varphi(x - ct)\),
where \(c > 0\) denotes the wave speed, gives us
\begin{equation}
  \label{eq:fkdv-wave}
  -c \varphi' + \varphi\varphi' = |D|^{\alpha} \varphi'.
\end{equation}
For the fractional KdV equation there is a branch of even, zero-mean,
\(2\pi\)-periodic, smooth traveling wave solutions bifurcating from
constant solutions. For \(\alpha < -1\) and \(\alpha \in (-1, 0)\)
this branch has been studied and proved to end in a highest cusped
wave, for \(\alpha < -1\) by Bruell and Dhara~\cite{bruell18:_waves}
and for \(\alpha \in (-1, 0)\) by Hildrum and
Xue~\cite{Hildrum23PeriodicWaves}. For \(\alpha = -1\) the branch has
been studied by Castro, Córdoba and Zheng~\cite{castro2021stability}.

The notion of a highest traveling wave goes back to Stokes. For the
free boundary Euler equation Stokes argued that if there exists a
singular solution with a steady profile it must have an interior angle
of \(120^{\circ}\) at the crest~\cite{stokes2018mathematical}. This is
known as the Stokes conjecture and was proved in
1982~\cite{Amick1982}. For the Whitham equation~\cite{Whitham1967} the
existence of a highest cusped traveling wave was conjectured by
Whitham in~\cite{Whitham1999}. Its existence, together with its
\(C^{1 / 2}\) regularity, was proved by Ehrnström and
Wahlén~\cite{Ehrnstrm2019}. For the family of fractional KdV equation
and variants thereof there has recently been much progress related to
highest waves. Bruell and Dhara proved existence of highest traveling
waves which are Lipschitz at their cusp for
\(\alpha < -1\)~\cite{bruell18:_waves}. Hildrum and Xue proved
existence and the optimal \(-\alpha\)-Hölder regularity for a family
of equations including the fractional KdV equation with
\(\alpha \in (-1, 0)\)~\cite{Hildrum23PeriodicWaves}. Ørke proved
their existence for \(\alpha \in (-1, 0)\) for the inhomogeneous
fractional KdV equations as well as the fractional Degasperis-Procesi
equations, together with their optimal \(-\alpha\)-Hölder
regularity~\cite{orke22:_highes_kortew}.

The results in~\cite{Ehrnstrm2019, bruell18:_waves,
  orke22:_highes_kortew, Hildrum23PeriodicWaves} are all based on
global bifurcation arguments, bifurcating from the constant solution
and proving that the branch must end in a highest wave which is not
smooth at its crest. For the Burgers-Hilbert equation Dahne and
Gómez-Serrano proved the existence of a highest wave which at its cusp
behaves like \(|x|\log|x|\)~\cite{dahne2022burgershilbert}. The proof
uses a different method where the problem is first reduced to a fixed
point problem. The wave is therefore not directly tied to a branch of
solutions as in the other results. This method of rewriting the
problem into a fixed point problem was first used by Enciso,
Gómez-Serrano and Vergara for proving the convexity and the precise
asymptotic behavior of a highest wave solution to the Whitham
equation~\cite{enciso2018convexity}, answering a conjecture made by
Ehrnström and Wahlén~\cite{Ehrnstrm2019}. In this paper we use a
similar approach.

Similar to Hildrum and Xue we prove the existence of a highest wave
for the fractional KdV equation with \(\alpha \in (-1, 0)\). Our main
contribution is a more precise description of the asymptotic behavior
of the wave at the cusp. The description is in line with earlier
results for the Whitham and Burgers-Hilbert equation. Recently
Ehrnström, Mæhlen and Varholm have obtained similar results for the
asymptotic behavior of two families of equations including the uni-
and bidirectional Whitham equations using different types of
methods~\cite{Ehrnstrom2023precisecuspedbehaviour}.

We prove the following theorem:
\begin{theorem}
  \label{thm:main}
  There is a \(2\pi\)-periodic traveling wave \(\varphi\)
  of~\eqref{eq:fkdv-wave} for every \(\alpha \in (-1, 0)\), which
  behaves asymptotically at \(x = 0\) as
  \begin{equation*}
    \varphi(x) = c - \nu_{\alpha} |x|^{-\alpha} + \mathcal{O}(|x|^{p}).
  \end{equation*}
  for \(\nu_{\alpha} > 0\) as given in
  Lemma~\ref{lemma:asymptotic-coefficient} and some
  \(-\alpha < p \leq 1\) to be made explicit later on.
\end{theorem}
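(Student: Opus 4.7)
My plan is to follow the fixed-point strategy used in \cite{enciso2018convexity} for the Whitham equation and in \cite{dahne2022burgershilbert} for the Burgers--Hilbert case, adapted to the full range $\alpha \in (-1, 0)$. I would first integrate \eqref{eq:fkdv-wave} once, substitute $u = c - \varphi$, and impose the highest-wave normalization $u(0) = 0$, so that the equation becomes
\[
  |D|^{\alpha} u(0) - |D|^{\alpha} u = \tfrac{1}{2} u^{2}.
\]
Motivated by the target asymptotic in Theorem~\ref{thm:main}, I would construct an explicit approximate solution $u_{0}$ with $u_{0}(x) \sim \nu_{\alpha}|x|^{-\alpha}$ as $x \to 0$, where $\nu_{\alpha}$ is fixed by matching the leading singularities of $|D|^{\alpha} u_{0}$ and $u_{0}^{2}/2$ at the origin; this is exactly the content of Lemma~\ref{lemma:asymptotic-coefficient}. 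Subleading corrections can be added to $u_{0}$ so that the residual is of order $\mathcal{O}(|x|^{p})$ with $p$ as large as the target exponent allows.

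Decomposing $u = u_{0} + w$ then recasts the problem as a fixed-point equation $w = T(w)$, where $T$ inverts the linearization of the left-hand side at the trivial profile and absorbs both the residual $D(u_{0}) := |D|^{\alpha}u_{0}(0) - |D|^{\alpha} u_{0} - u_{0}^{2}/2$ and the perturbative terms $u_{0} w + w^{2}/2$. The natural function space is a weighted space in which the norm simultaneously enforces the prescribed decay $w = \mathcal{O}(|x|^{p})$ near the cusp and permits the rigorous evaluation of $|D|^{\alpha}$. A Banach contraction (or Schauder) argument on a small closed ball then produces $w$, and the expansion for $\varphi = c - u_{0} - w$ in Theorem~\ref{thm:main} is obtained by reading off this weighted bound.

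The principal obstacle is producing effective, rigorously verifiable bounds that hold uniformly for every $\alpha \in (-1, 0)$. Three quantities must be controlled: (i) the residual $D(u_{0})$, whose estimate demands a precise Mellin/Fourier expansion of $|D|^{\alpha}$ acting on functions with fractional singularities, with the periodic extension carefully incorporated; (ii) the operator norm of the inverse linearization on the weighted space, which depends delicately on $\alpha$ and degenerates as $\alpha \to -1^{+}$ and $\alpha \to 0^{-}$; and (iii) the nonlinear self-interaction $w \mapsto w^{2}/2$, which must be shown to be a strict contraction on the chosen ball. Because these constants depend on $\alpha$ in a way not accessible through closed-form manipulations, I expect them to be certified via interval arithmetic over a finite subcover of the parameter range, which is the computer-assisted component of the proof. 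The analysis of the Burgers--Hilbert equation in \cite{dahne2022burgershilbert} then provides the template needed to close the estimates in the delicate regime $\alpha \to -1^{+}$.
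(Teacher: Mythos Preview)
Your proposal is correct and follows essentially the same approach as the paper: the paper also integrates to $\frac{1}{2}u^2 = -\mathcal{H}^\alpha[u]$, writes $u = u_\alpha + w_\alpha v$ with an explicit weight $w_\alpha$ (equivalent to your weighted-space formulation for $w$), reduces to a Banach fixed-point problem for $v \in L^\infty(\mathbb{T})$ governed by exactly the three quantities you name (the defect $\delta_\alpha$, the operator norm $D_\alpha = \|T_\alpha\|$, and the nonlinear constant $n_\alpha$), and certifies the needed inequality $\delta_\alpha < (1-D_\alpha)^2/(4n_\alpha)$ via interval arithmetic on a subdivision of $(-1,0)$ into three regimes, with the Burgers--Hilbert analysis guiding the $\alpha \to -1^+$ endpoint.
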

\begin{remark}
  The remainder term \(\mathcal{O}(|x|^{p})\) in
  Theorem~\ref{thm:main} is in general not sharp. The estimate follows
  from the choice of our space.
\end{remark}
\begin{remark}
  Both our results and Hildrum and Xue's results assert the existence
  of a highest wave. While, a priori, they don't necessarily
  correspond to the same wave, we do believe this is the case.
\end{remark}

The ansatz \(\varphi(x) = c - u(x)\) allows us to rewrite
\eqref{eq:fkdv-wave} as an equation that does not explicitly depend on
the wave speed \(c\). Proving the existence of a solution \(u\) can be
rewritten as a fixed point problem by considering the ansatz
\begin{equation*}
  u(x) = u_{\alpha}(x) + w_{\alpha}(x)v(x)
\end{equation*}
where \(u_{\alpha}(x)\) is an explicit, carefully chosen, approximate
solution and \(w_{\alpha}(x)\) is an explicit weight factor. Proving
the existence of a fixed point can be reduced to checking an
inequality involving three constants, \(D_{\alpha}\),
\(\delta_{\alpha}\) and \(n_{\alpha}\), that only depend on the choice
of \(u_{\alpha}\) and \(w_{\alpha}\), see
Proposition~\ref{prop:contraction}. This inequality is checked by
bounding \(D_{\alpha}\), \(\delta_{\alpha}\) and \(n_{\alpha}\) using
a computer-assisted proof, see Section~\ref{sec:bounds-for-values}.

One of the key difficulties compared to the Whitham
equation~\cite{enciso2018convexity} and the Burgers-Hilbert
equation~\cite{dahne2022burgershilbert} is that we are treating a
family of equations, instead of one fixed equation. To handle the full
family we need to understand how the equation changes with respect to
\(\alpha\). In particular the endpoints of the interval, \(\alpha\)
near \(-1\) and \(0\), require adapting the method to those cases. To
handle this we split the interval \((-1, 0)\) into three parts,
\begin{equation*}
  (-1, 0) = (-1, -1 + \delta_{1}) \cup [-1 + \delta_{1}, -\delta_{2}] \cup (-\delta_{2}, 0)
          = I_{1} \cup I_{2} \cup I_{3},
\end{equation*}
and adapt the methods used for \(I_{1}\) and \(I_{3}\). For
\(\alpha \in I_{1}\) the main complication comes from that as
\(\alpha \to -1\) the coefficient \(\nu_{\alpha}\) in
Theorem~\ref{thm:main} tends to infinity. For \(\alpha \in I_{3}\) the
issue is that both sides of the inequality that needs to be verified
for the fixed point argument tend to zero as \(\alpha \to 0\) and to
assert that it holds arbitrarily close to \(\alpha = 0\) we need an
understanding of the rate at which the two sides tend to zero.

An important part of the work is the construction of the approximate
solution \(u_{\alpha}\). Due to the singularity at \(x = 0\) it is not
possible to use a trigonometric polynomial alone, it would converge
very slowly and have the wrong asymptotic behavior. Pure products of
powers and logarithms, \(|x|^{a}\log^{b} |x|\), have the issue that
they are not periodic and do not interact well with the operator
\(|D|^{\alpha}\). Instead we take inspiration from the construction
in~\cite{enciso2018convexity} and consider a combination of
trigonometric polynomials and Clausen functions of different orders,
defined as
\begin{equation*}
  C_{s}(x) = \sum_{n = 1}^{\infty}\frac{\cos(nx)}{n^{s}},\quad
  S_{s}(x) = \sum_{n = 1}^{\infty}\frac{\sin(nx)}{n^{s}},
\end{equation*}
for \(s > 1\) and by analytic continuation otherwise. We also make use
of their derivatives with respect to the order, for which we use the
notation
\begin{equation*}
  C_{s}^{(\beta)}(x) := \frac{d^{\beta}}{ds^{\beta}} C_{s}(x),\quad
  S_{s}^{(\beta)}(x) := \frac{d^{\beta}}{ds^{\beta}} S_{s}(x).
\end{equation*}
The Clausen functions are \(2\pi\)-periodic, non-analytic at \(x = 0\)
and behave well with respect to \(|D|^{\alpha}\). In particular
\(C_{s}(x) - C_{s}(0) \sim |x|^{s - 1}\), which corresponds to the
behavior we expect in Theorem~\ref{thm:main}. See
Section~\ref{sec:clausen-functions} and
Appendix~\ref{sec:computing-clausen} for more details about the
Clausen functions. The main idea for the construction is the same as
in~\cite{enciso2018convexity}, to choose the coefficients for the
Clausen functions according to the asymptotic expansion of the
solution at \(x = 0\). However, in our case we also have to handle the
limits \(\alpha \to -1\) and \(\alpha \to 0\), which requires a good
understanding of how the approximations depend on \(\alpha\). In
particular understanding the limit \(\alpha \to -1\) is important not
only for this work, but is also used to handle the Burgers-Hilbert
case in~\cite{dahne2022burgershilbert}.

An essential part of our work is the interplay between traditional
mathematical tools and rigorous computer calculations. Traditional
numerical methods typically only compute approximate results, to be
able to use the results in a proof we need the results to be
rigorously verified. The basis for rigorous calculations is interval
arithmetic, pioneered by Moore in the 1970s~\cite{Moore1979}. Due to
improvements in both computational power and great improvements in
software it is getting practical to use computer-assisted tools in
more and more complicated settings. The main idea with interval
arithmetic is to do arithmetic not directly on real numbers but on
intervals with computer representable endpoints. Given a function
\(f:\mathbb{R} \to \mathbb{R}\), an interval extension of \(f\) is an
extension to intervals satisfying that for an interval
\(\inter{x} = [\lo{x}, \hi{x}]\), \(f(\inter{x})\) is an interval
satisfying \(f(x) \in f(\inter{x})\) for all \(x \in \inter{x}\). In
particular this allows us to prove inequalities for the function
\(f\), for example the right endpoint of \(f(\inter{x})\) gives an
upper bound of \(f\) on the interval \(\inter{x}\). For an
introduction to interval arithmetic and rigorous numerics we refer the
reader to the books~\cite{Moore1979,Tucker2011} and to the
survey~\cite{GomezSerrano2019} for a specific treatment of
computer-assisted proofs in PDE. For computer-assisted proofs in fluid
mechanics in particular some recent results are:
\cite{vandenBerg2021spontaneousperiodicorbits,Arioli2021uniquenessbifurcationbranches}
for the Navier-Stokes equation, \cite{Figueras2017, Figueras2017II,
  Gameiro2017} for the Kuramoto-Shivasinsky equation,
\cite{Chen2022HouLuomodel} for the Hou-Luo model,
\cite{Buckmaster2022} for the compressible Euler and Navier-Stokes
equations and \cite{Chen2022blowup3deuler, Chen2023blowup3dEulerII}
for blowup of the 2D Boussinesq and 3D Euler equation.

For all the calculations in this paper we make use of the Arb
library~\cite{Johansson2017arb} for ball (intervals represented as a
midpoint and radius) arithmetic. It has good support for many of the
special functions we use~\cite{Johansson2014hurwitz,
  Johansson2014thesis, Johansson2019}, Taylor arithmetic (see
e.g.~\cite{Johansson2015reversion}) as well as rigorous
integration~\cite{Johansson2018numerical}.

The paper is organized as follows. In
Section~\ref{sec:reduct-fixed-point} we reduce the proof of
Theorem~\ref{thm:main} to a fixed point problem. In
Section~\ref{sec:clausen-functions} we give a brief overview of
properties of the Clausen functions that are relevant for the
construction of \(u_{\alpha}\), in Section~\ref{sec:construction} we
give the construction of \(u_{\alpha}\) and in
Section~\ref{sec:choice-weight} we discuss the choice of the weight
function \(w_{\alpha}\) used. Section~\ref{sec:proof-strategy} gives
the general strategy for bounding \(n_{\alpha}\), \(\delta_{\alpha}\)
and \(D_{\alpha}\), Section~\ref{sec:evaluation} and~\ref{sec:inv-u0}
discusses evaluation of \(u_{\alpha}\). Section~\ref{sec:evaluation-N}
is devoted to the approach for bounding \(n_{\alpha}\),
Section~\ref{sec:evaluation-F} to bounding \(\delta_{\alpha}\) and
Section~\ref{sec:analysis-T} to studying the linear operator that
appears in the construction of the fixed point problem and bounding
\(D_{\alpha}\). The computer-assisted proofs giving bounds for
\(n_{\alpha}\), \(\delta_{\alpha}\) and \(D_{\alpha}\) are given in
Section~\ref{sec:bounds-for-values}. Finally we give the proof of
Theorem~\ref{thm:main} in Section~\ref{sec:proof-main-theorem}.

Six appendices are given at the end of the paper.
Appendix~\ref{sec:removable-singularities} gives some technical
details for how to compute enclosures of functions around removable
singularities. Appendix~\ref{sec:taylor-models} gives a brief
introduction to Taylor models which are used in some parts of the
proof. Appendix~\ref{sec:computing-clausen} is concerned with
computing enclosures of the Clausen functions and
Appendix~\ref{sec:rigorous-integration} with the rigorous numerical
integration needed for bounding \(D_{0}\). Finally
Appendix~\ref{sec:evaluation-F-I-1-asymptotic},
\ref{sec:evaluation-T-I-1-asymptotic}
and~\ref{sec:evaluation-T-hybrid-asymptotic} contains some of the
details needed for \(\alpha\) close to \(-1\).

\section{Reduction to a fixed point problem}
\label{sec:reduct-fixed-point}
In this section we reduce the problem of proving
Theorem~\ref{thm:main} to proving the existence of a fixed point for a
certain operator.

From~\cite[Theorem 12]{Hildrum23PeriodicWaves} we have the following
characterization of even, nondecreasing solutions of \eqref{eq:fkdv}.
\begin{lemma}
  Let \(\varphi \in C^{1}\) be a nonconstant, even solution of
  \eqref{eq:fkdv} which is nondecreasing on \((-\pi, 0)\), then
  \begin{equation*}
    \varphi' > 0 \quad \text{ and }\quad \varphi < c
  \end{equation*}
  on \((-\pi, 0)\).
\end{lemma}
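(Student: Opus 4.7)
The plan is to reduce both claims to a nonlocal touching/unique-continuation argument using the explicit periodic kernel of \(|D|^{\alpha}\). Integrating the traveling-wave equation \eqref{eq:fkdv-wave} once in \(x\) gives
\[
\tfrac{1}{2}(\varphi - c)^{2} = |D|^{\alpha}\varphi + B
\]
for some real constant \(B\), while the unintegrated equation reads \((\varphi - c)\varphi' = |D|^{\alpha}\varphi'\). For \(\alpha \in (-1, 0)\) the operator \(|D|^{\alpha}\) acts on mean-zero functions as convolution with \(K_{\alpha}(x) = 2C_{-\alpha}(x)\), a periodization of a Riesz potential. The facts I shall lean on — that \(K_{\alpha}\) is even, integrable, and strictly monotone decreasing on \((0, \pi)\) — are developed for independent reasons in Section~\ref{sec:clausen-functions} and will substitute for the Hopf lemma.

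For the bound \(\varphi < c\): by evenness and monotonicity, the global maximum of \(\varphi\) is attained at \(x = 0\). If \(\varphi(x_{\ast}) \geq c\) at some \(x_{\ast} \in (-\pi, 0)\), then by monotonicity and evenness \(\varphi \geq c\) on the symmetric interval \([x_{\ast}, |x_{\ast}|]\); combined with \(\varphi(0)\) being the maximum, one reduces to the case that \(\varphi \equiv c\) on a symmetric interval \((-a, a)\) of positive length. On such a plateau the integrated equation forces \(|D|^{\alpha}\varphi \equiv -B\), i.e.\ \(\int K_{\alpha}(x - y)(\varphi(y) - c)\,dy\) is constant for \(x \in (-a, a)\). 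Differentiating in \(x\) and using that \(\varphi - c\) vanishes on \((-a, a)\) but not identically outside (by nonconstancy of \(\varphi\)), the strict monotonicity of \(K_{\alpha}\) on \((0, \pi)\) makes the derivative nonzero at some interior point, a contradiction.

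For the strict monotonicity \(\varphi' > 0\): set \(\psi = \varphi'\), an odd continuous function with \(\psi \geq 0\) on \((-\pi, 0)\). The unintegrated equation gives \(|D|^{\alpha}\psi = (\varphi - c)\psi\), and \(\varphi - c < 0\) on \((-\pi, 0)\) by the first part. At an interior zero \(x_{\ast} \in (-\pi, 0)\) of \(\psi\), the right-hand side vanishes, so
\[
\int_{-\pi}^{\pi} K_{\alpha}(x_{\ast} - y)\psi(y)\,dy = 0.
\]
Folding via \(\psi(-y) = -\psi(y)\) rewrites this as an integral over \((0, \pi)\) of \(\psi(y)\) against \(K_{\alpha}(x_{\ast} - y) - K_{\alpha}(x_{\ast} + y)\), a difference which has a fixed sign by the monotonicity of \(K_{\alpha}\); since \(\psi\) also has a fixed sign on \((0, \pi)\) the integrand is of one sign throughout and can vanish only if \(\psi \equiv 0\), contradicting nonconstancy of \(\varphi\).

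The main obstacle is the lack of a classical strong maximum principle for the integral operator \(|D|^{\alpha}\) with \(\alpha < 0\): both arguments above rely on a careful analysis of the Clausen-function convolution kernel \(K_{\alpha} = 2C_{-\alpha}\) on the circle — in particular its strict monotonicity on \((0, \pi)\) — rather than on any pointwise differential inequality. Once those kernel properties are in hand from Section~\ref{sec:clausen-functions}, the two touching arguments close routinely.
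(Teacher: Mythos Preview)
The paper does not prove this lemma at all; it simply quotes \cite[Theorem~12]{Hildrum23PeriodicWaves}. So you are supplying an argument where the paper has none, and your touching argument for \(\varphi'>0\) is essentially the standard nonlocal maximum-principle argument used in that literature: at an interior zero \(x_\ast\) of \(\psi=\varphi'\) the relation \(|D|^{\alpha}\psi(x_\ast)=(\varphi(x_\ast)-c)\psi(x_\ast)=0\) becomes, after folding by oddness, \(\int_{0}^{\pi}\bigl(K_{\alpha}(x_\ast-y)-K_{\alpha}(x_\ast+y)\bigr)\psi(y)\,dy=0\), and since both factors have a fixed sign this forces \(\psi\equiv 0\). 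That half is correct. Note, incidentally, that it does not actually use \(\varphi<c\): the right-hand side vanishes at \(x_\ast\) simply because \(\psi(x_\ast)=0\), so the sentence ``and \(\varphi-c<0\) on \((-\pi,0)\) by the first part'' is superfluous.

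The first half, however, has a genuine gap. From \(\varphi(x_\ast)\ge c\) together with monotonicity and evenness you correctly get \(\varphi\ge c\) on \([x_\ast,|x_\ast|]\), but the step ``combined with \(\varphi(0)\) being the maximum, one reduces to the case that \(\varphi\equiv c\) on a symmetric interval'' is unjustified: knowing that \(\varphi(0)\) is the global maximum gives no upper bound by \(c\), so nothing forces a plateau. You are implicitly assuming \(\varphi(0)\le c\), which is exactly (the endpoint case of) what is to be proved. The subsequent plateau contradiction is also only sketched. The cleanest repair is to reverse the order of the two claims: first run your self-contained touching argument to get \(\varphi'>0\) on \((-\pi,0)\), and then read off the sign of \(\varphi-c\) directly from the \emph{same} kernel identity
\[
(\varphi(x)-c)\,\varphi'(x)=|D|^{\alpha}\varphi'(x)=\frac{1}{\pi}\int_{0}^{\pi}\bigl(K_{\alpha}(x-y)-K_{\alpha}(x+y)\bigr)\varphi'(y)\,dy,
\]
where now both \(\varphi'\) and the kernel difference are strictly signed, so the right-hand side has a definite sign and division by \(\varphi'(x)>0\) finishes.
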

As a consequence, any continuous, nonconstant, even function which is
nondecreasing on \((-\pi, 0)\) that satisfy \eqref{eq:fkdv} almost
everywhere must satisfy \(\varphi \leq c\). The maximal possible
height is thus given by \(c\) and due to the function being even and
nondecreasing on \((-\pi, 0)\) the maximal height has to be attained
at \(x = 0\).

Now, the ansatz \(\varphi(x) = c - u(x)\) inserted in
\eqref{eq:fkdv-wave} gives an equation which does not explicitly
depend on the wave speed \(c\). Indeed, inserting this gives us
\begin{equation}
  \label{eq:main-derivative}
  uu' = -|D|^{\alpha}u'.
\end{equation}
Note that a solution of this equation gives a solution of
\eqref{eq:fkdv-wave} for any wave speed \(c\). This is to be expected
due to the Galilean change of variables
\begin{equation*}
  \varphi \mapsto \varphi + \gamma,\ c \mapsto c + \gamma
\end{equation*}
which leaves \eqref{eq:fkdv-wave} invariant. In particular, taking
\(c\) equal to the mean of \(u\) gives a zero mean solution. For a
highest wave we expect to have \(\varphi(0) = c\), giving us
\(u(0) = 0\). Integrating \eqref{eq:main-derivative} gives us
\begin{equation}
  \label{eq:main}
  \frac{1}{2}u^{2} = -\Hop{\alpha}[u].
\end{equation}
Here \(\HopGeneric\) is the operator
\begin{equation}
  \label{eq:H}
  \Hop{\alpha}[u](x) = |D|^{\alpha}u(x) - |D|^{\alpha}u(0).
\end{equation}
It is the integral of the right-hand side of
\eqref{eq:main-derivative} with the constant of integration taken such
that \(\HopGeneric^{\alpha}[f](0) = 0\), this ensures that any
solution of \eqref{eq:main} satisfies \(u(0) = 0\). Note that any
solution of \eqref{eq:main} is a solution of
\eqref{eq:main-derivative} and hence gives a solution to
\eqref{eq:fkdv-wave}.

To reduce the problem to a fixed point problem the idea is to write
\(u\) as one, explicit, approximate solution of Equation
\eqref{eq:main} and one unknown term. More precisely we make the
ansatz
\begin{equation}
  \label{eq:main-ansatz}
  u(x) = u_{\alpha}(x) + w_{\alpha}(x)v(x)
\end{equation}
where \(u_{\alpha}(x)\) is an explicit, carefully chosen, approximate
solution of Equation \eqref{eq:main}, see
Section~\ref{sec:construction}, and \(w_{\alpha}(x)\) is an explicit
weight, see Section~\ref{sec:choice-weight}, both of them depending on
\(\alpha\). By taking \(u_{\alpha}(x) \sim \nu_{\alpha}|x|^{-\alpha}\)
and \(w_{\alpha} = \mathcal{O}(|x|^{p})\), proving
Theorem~\ref{thm:main} reduces to proving existence of
\(v \in L^{\infty}(\mathbb{T})\) such that the given ansatz is a
solution of Equation \eqref{eq:main}.

Inserting the ansatz \eqref{eq:main-ansatz} into Equation
\eqref{eq:main} gives us
\begin{equation*}
  \frac{1}{2}(u_{\alpha} + w_{\alpha}v)^{2} = -\Hop{\alpha}[u_{\alpha} + w_{\alpha}v]
  \iff \frac{1}{2}u_{\alpha}^{2} + u_{\alpha}w_{\alpha}v + \frac{1}{2}w_{\alpha}^{2}v^{2} = -\Hop{\alpha}[u_{\alpha}] - \Hop{\alpha}[w_{\alpha}v]
\end{equation*}
By collecting all the linear terms in \(v\) we can write this as
\begin{equation*}
  u_{\alpha}w_{\alpha}v + \Hop{\alpha}[w_{\alpha}v] = -\Hop{\alpha}u_{\alpha} - \frac{1}{2}u_{\alpha}^{2} - \frac{1}{2}w_{\alpha}^{2}v^{2}
  \iff v + \frac{1}{w_{\alpha}u_{\alpha}}\Hop{\alpha}[v] = -\frac{1}{w_{\alpha}u_{\alpha}}\left(\Hop{\alpha}[u_{\alpha}] + u_{\alpha}^{2}\right) - \frac{w_{\alpha}}{2u_{\alpha}}v^{2}.
\end{equation*}
Now let \(T_{\alpha}\) denote the operator
\begin{equation}
  \label{eq:T_alpha}
  T_{\alpha}[v] = -\frac{1}{w_{\alpha}u_{\alpha}}\Hop{\alpha}[w_{\alpha}v].
\end{equation}
Denote the weighted defect of the approximate solution
\(u_{\alpha}(x)\) by
\begin{equation}
  \label{eq:F}
  F_{\alpha}(x) = \frac{1}{w_{\alpha}(x)u_{\alpha}(x)}\left(\Hop{\alpha}[u_{\alpha}](x) + \frac{1}{2}u_{\alpha}(x)^{2}\right),
\end{equation}
and let
\begin{equation}
  \label{eq:N}
  N_{\alpha}(x) = \frac{w_{\alpha}(x)}{2u_{\alpha}(x)}.
\end{equation}
Then we can write the above as
\begin{equation*}
  (I - T_{\alpha})v = -F_{\alpha} - N_{\alpha}v^{2}.
\end{equation*}
Assuming that \(I - T_{\alpha}\) is invertible we rewrite this as
\begin{equation}
  \label{eq:G}
  v = (I - T_{\alpha})^{-1}\left(-F_{\alpha} - N_{\alpha}v^{2}\right) =: G_{\alpha}[v].
\end{equation}
Hence proving the existence of \(v\) such that
\(u_{\alpha} + w_{\alpha}v\) is a solution to Equation \eqref{eq:main}
reduces to proving existence of a fixed point of the operator
\(G_{\alpha}\).

Next we reduce the problem of proving that \(G_{\alpha}\) has a fixed
point to checking an inequality for three numbers (depending on
\(\alpha\)) that depend only on the choice of \(u_{\alpha}\) and
\(w_{\alpha}\). We let \(\|T\|\) denote the
\(L^{\infty}(\mathbb{T}) \to L^{\infty}(\mathbb{T})\) norm of a linear
operator \(T\).
\begin{proposition}
  \label{prop:contraction}
  Let \(D_{\alpha} = \|T_{\alpha}\|\),
  \(\delta_{\alpha} = \|F_{\alpha}\|_{L^{\infty}(\mathbb{T})}\) and
  \(n_{\alpha} = \|N_{\alpha}\|_{L^{\infty}(\mathbb{T})}\). If
  \(D_{\alpha} < 1\) and they satisfy the inequality
  \begin{equation*}
    \delta_{\alpha} < \frac{(1 - D_{\alpha})^{2}}{4n_{\alpha}}
  \end{equation*}
  then for
  \begin{equation*}
    \epsilon_{\alpha} = \frac{1 - D_{\alpha} - \sqrt{(1 - D_{\alpha})^{2} - 4\delta_{\alpha}n_{\alpha}}}{2n_{\alpha}}
  \end{equation*}
  and
  \begin{equation*}
    X_{\epsilon} = \{v \in L^{\infty}(\mathbb{T}): v(x) = v(-x), \|v\|_{L^{\infty}(\mathbb{T})} \leq \epsilon\}
  \end{equation*}
  we have
  \begin{enumerate}
  \item \(G_{\alpha}(X_{\epsilon_{\alpha}}) \subseteq X_{\epsilon_{\alpha}}\);
  \item
    \(\|G_{\alpha}[v] - G_{\alpha}[w]\|_{L^{\infty}(\mathbb{T})} \leq
    k_{\alpha}\|v - w\|_{L^{\infty}(\mathbb{T})}\) with
    \(k_{\alpha} < 1\) for all \(v, w \in X_{\epsilon_{\alpha}}\).
  \end{enumerate}
\end{proposition}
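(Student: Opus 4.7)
The plan is a standard Banach-style argument: show $(I - T_\alpha)^{-1}$ exists as a bounded operator on $L^\infty(\mathbb{T})$, use it to estimate $\|G_\alpha[v]\|_\infty$ and $\|G_\alpha[v] - G_\alpha[w]\|_\infty$ in terms of $D_\alpha$, $\delta_\alpha$, $n_\alpha$, and then recognize $\epsilon_\alpha$ as the smaller root of the quadratic that makes the self-map condition tight.

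First I would establish that $(I - T_\alpha)$ is invertible on $L^\infty(\mathbb{T})$. Since $D_\alpha = \|T_\alpha\| < 1$, the Neumann series $\sum_{k \geq 0} T_\alpha^k$ converges in operator norm and gives the inverse, with the bound $\|(I - T_\alpha)^{-1}\| \leq (1 - D_\alpha)^{-1}$. I would also note that $T_\alpha$, $F_\alpha$, and $N_\alpha$ all respect the symmetry $x \mapsto -x$ (the operator $|D|^\alpha$ commutes with this reflection, and $u_\alpha$, $w_\alpha$ are chosen even), so $G_\alpha$ preserves the evenness subspace, ensuring the image actually lies in $X_\epsilon$ rather than merely in the $L^\infty$-ball.

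For part (1), take $v \in X_{\epsilon_\alpha}$. The definition of $G_\alpha$ and the norm bound on $(I - T_\alpha)^{-1}$ give
\begin{equation*}
  \|G_\alpha[v]\|_{L^\infty(\mathbb{T})}
  \leq \frac{1}{1 - D_\alpha}\bigl(\|F_\alpha\|_{L^\infty(\mathbb{T})} + \|N_\alpha\|_{L^\infty(\mathbb{T})}\|v\|_{L^\infty(\mathbb{T})}^2\bigr)
  \leq \frac{\delta_\alpha + n_\alpha \epsilon_\alpha^2}{1 - D_\alpha}.
\end{equation*}
The self-map condition $\|G_\alpha[v]\|_\infty \leq \epsilon_\alpha$ thus becomes $n_\alpha \epsilon_\alpha^2 - (1 - D_\alpha)\epsilon_\alpha + \delta_\alpha \leq 0$. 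The assumption $\delta_\alpha < (1 - D_\alpha)^2/(4n_\alpha)$ makes the discriminant of this quadratic strictly positive, and $\epsilon_\alpha$ is by construction the smaller root, so the inequality holds with equality. This shows $G_\alpha(X_{\epsilon_\alpha}) \subseteq X_{\epsilon_\alpha}$.

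For part (2), the identity $v^2 - w^2 = (v + w)(v - w)$ and linearity of $(I - T_\alpha)^{-1}$ give
\begin{equation*}
  G_\alpha[v] - G_\alpha[w] = -(I - T_\alpha)^{-1}\bigl(N_\alpha(v + w)(v - w)\bigr),
\end{equation*}
so for $v, w \in X_{\epsilon_\alpha}$ we get $\|G_\alpha[v] - G_\alpha[w]\|_\infty \leq k_\alpha \|v - w\|_\infty$ with $k_\alpha = 2n_\alpha \epsilon_\alpha/(1 - D_\alpha)$. Substituting the explicit $\epsilon_\alpha$ yields $k_\alpha = 1 - \sqrt{(1 - D_\alpha)^2 - 4\delta_\alpha n_\alpha}/(1 - D_\alpha)$, which is strictly less than $1$ precisely because the strict inequality on $\delta_\alpha$ makes the discriminant positive. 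There is no real obstacle here; the only thing that requires attention is confirming the symmetry is preserved by $G_\alpha$ and verifying that the strict inequality in the hypothesis translates into the strict inequality $k_\alpha < 1$ (rather than just $\leq 1$), which is essential for applying the contraction mapping principle later in Section~\ref{sec:proof-main-theorem}.
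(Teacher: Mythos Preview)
Your proposal is correct and follows essentially the same approach as the paper's proof: Neumann series bound for $(I-T_\alpha)^{-1}$, preservation of evenness, the self-map estimate $\|G_\alpha[v]\|_\infty \le (\delta_\alpha + n_\alpha\epsilon_\alpha^2)/(1-D_\alpha)$ with $\epsilon_\alpha$ chosen as a root of the associated quadratic, and the contraction constant $k_\alpha = 2n_\alpha\epsilon_\alpha/(1-D_\alpha)$. The paper verifies $k_\alpha < 1$ via $\epsilon_\alpha < (1-D_\alpha)/(2n_\alpha)$ rather than your explicit substitution, but this is the same observation.
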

\begin{proof}
  Using that \(N_{\alpha}\) and \(F_{\alpha}\) are even it can be checked that
  \begin{equation*}
    G_{\alpha}(X_{\epsilon_{\alpha}}) \subseteq (I - T)^{-1}X_{\delta_{\alpha} + n_{\alpha}\epsilon_{\alpha}^{2}}.
  \end{equation*}
  Since \(\|T_{\alpha}\| < 1\) the operator \(I - T_{\alpha}\) is
  invertible and an upper bound of the norm of the inverse is given by
  \(\frac{1}{1 - D_{\alpha}}\), moreover \(T_{\alpha}\) takes even
  functions to even functions and hence so will
  \((I - T_{\alpha})^{-1}\). This gives us
  \begin{equation*}
    G_{\alpha}(X_{\epsilon_{\alpha}}) \subseteq (I - T)^{-1}X_{\delta_{\alpha} + n_{\alpha}\epsilon_{\alpha}^{2}} \subseteq X_{\frac{\delta_{\alpha} + n_{\alpha}\epsilon_{\alpha}^{2}}{1 - D_{\alpha}}}.
  \end{equation*}
  The choice of \(\epsilon_{\alpha}\) then gives
  \begin{equation*}
    \frac{\delta_{\alpha} + n_{\alpha}\epsilon_{\alpha}^{2}}{1 - D_{\alpha}} = \epsilon_{\alpha}.
  \end{equation*}

  Next we have
  \(G_{\alpha}[v] - G_{\alpha}[w] = (I -
  T_{\alpha})^{-1}(-N_{\alpha}(v^{2} - w^{2}))\) and hence
  \begin{equation*}
    \|G_{\alpha}[v] - G_{\alpha}[w]\|_{L^{\infty}(\mathbb{T})}
    \leq \frac{n_{\alpha}}{1 - D_{\alpha}}\|v^{2} - w^{2}\|_{L^{\infty}(\mathbb{T})}
    \leq \frac{2n_{\alpha}\epsilon_{\alpha}}{1 - D_{\alpha}}\|v - w\|_{L^{\infty}(\mathbb{T})}.
  \end{equation*}
  Where
  \(k_{\alpha} = \frac{2n_{\alpha}\epsilon_{\alpha}}{1 - D_{\alpha}} <
  1\) since
  \(\epsilon_{\alpha} < \frac{1 - D_{\alpha}}{2n_{\alpha}}\).
\end{proof}

\section{Clausen functions}
\label{sec:clausen-functions}
We here give definitions and properties of the Clausen functions that
are used in the construction of \(u_{\alpha}\) and when bounding
\(n_{\alpha}\), \(\delta_{\alpha}\) and \(D_{\alpha}\). For more
details about the Clausen functions see
Appendix~\ref{sec:computing-clausen}.

For \(s > 1\) the Clausen functions can be defined through their
Fourier expansions as
\begin{align*}
  C_{s}(x) &= \sum_{n = 1}^{\infty}\frac{\cos(nx)}{n^{s}},\\
  S_{s}(x) &= \sum_{n = 1}^{\infty}\frac{\sin(nx)}{n^{s}}.
\end{align*}
For general \(s\) they are more conveniently defined by their relation
to the polylogarithm through
\begin{align*}
  C_{s}(x) = \frac{1}{2}\left(\polylog_{s}(e^{ix}) + \polylog_{s}(e^{-ix})\right) = \real\left(\polylog_{s}(e^{ix})\right),\\
  S_{s}(x) = \frac{1}{2}\left(\polylog_{s}(e^{ix}) - \polylog_{s}(e^{-ix})\right) = \imag\left(\polylog_{s}(e^{ix})\right).
\end{align*}
They behave nicely with respect to the operator \(|D|^{\alpha}\), for
which we have
\begin{equation*}
  |D|^{\alpha}C_{s} = -C_{s - \alpha}(x),\quad |D|^{\alpha}S_{s} = -S_{s - \alpha}(x).
\end{equation*}
In many cases we want to work with functions which are normalized to
be zero at \(x = 0\), for which we use the notation
\begin{equation*}
  \tilde{C}_{s}(x) = C_{s}(x) - C_{s}(0),\quad
  \tilde{C}_{s}^{(\beta)}(x) = C_{s}^{(\beta)}(x) - C_{s}^{(\beta)}(0).
\end{equation*}
Note that \(C_{s}(0)\) is in general only finite for \(s > 1\), in
which case we get directly from the Fourier expansion that
\(C_{s}(0) = \zeta(s)\) and
\(C_{s}^{(\beta)}(0) = \zeta^{(\beta)}(s)\). With this notation we get
for the operator \(\Hop{\alpha}\),
\begin{equation*}
  \Hop{\alpha}[\tilde{C}_{s}](x) = -\tilde{C}_{s - \alpha}(x),\quad \Hop{\alpha}[S_{s}](x) = -S_{s - \alpha}(x).
\end{equation*}

From~\cite{enciso2018convexity} we have the following expansion for
\(C_{s}(x)\) and \(S_{s}(x)\), valid for \(|x| < 2\pi\),
\begin{align*}
  C_{s}(x) &= \Gamma(1 - s)\sin\left(\frac{\pi}{2}s\right)|x|^{s - 1}
             + \sum_{m = 0}^{\infty} (-1)^{m}\zeta(s - 2m)\frac{x^{2m}}{(2m)!};\\
  S_{s}(x) &= \Gamma(1 - s)\cos\left(\frac{\pi}{2}s\right)\sign(x)|x|^{s - 1}
             + \sum_{m = 0}^{\infty} (-1)^{m}\zeta(s - 2m - 1)\frac{x^{2m + 1}}{(2m + 1)!}.
\end{align*}
For the functions \(C_{s}^{(\beta)}\) and \(S_{s}^{(\beta)}\) we will
mainly make use of \(C_{2}^{(1)}(x)\) and \(C_{3}^{(1)}(x)\), for
which we have the following expansions~\cite[Eq.~16]{Bailey2015},
valid for \(|x| < 2\pi\),
\begin{align*}
  C_{2}^{(1)}(x) =& \zeta^{(1)}(2) - \frac{\pi}{2}|x|\log|x| - (\gamma - 1)\frac{\pi}{2}|x|
                   + \sum_{m = 1}^{\infty}(-1)^{m}\zeta^{(1)}(2 - 2m)\frac{x^{2m}}{(2m)!};\\
  C_{3}^{(1)}(x) =& \zeta^{(1)}(3) - \frac{1}{4}x^{2}\log^2|x|
                    + \frac{3 - 2\gamma}{4}x^2\log|x|
                    - \frac{36\gamma - 12\gamma^2 - 24\gamma_{1} - 42 + \pi^{2}}{48}x^2\\
                 &+ \sum_{m = 2}^{\infty}(-1)^{m}\zeta^{(1)}(3 - 2m)\frac{x^{2m}}{(2m)!}.
\end{align*}
Where \(\gamma_{n}\) is the Stieltjes constant and
\(\gamma = \gamma_{0}\). Bounds for the tails are given in
Lemmas~\ref{lemma:clausen-tails}
and~\ref{lemma:clausen-derivative-tails}.

\section{Construction of \(u_{\alpha}\)}
\label{sec:construction}

In this section we give the construction of the approximate solution
\(u_{\alpha}\) for \(\alpha \in (-1, 0)\). As a first step we
determine the coefficient for the leading term in the asymptotic
expansion.

\begin{lemma}
  \label{lemma:asymptotic-coefficient}
  Let \(\alpha \in (-1, 0)\) and assume that \(u\) is a bounded, even
  solution of Equation \eqref{eq:main} with the asymptotic behavior
  \begin{equation*}
    u(x) = \nu_{a}|x|^{-\alpha} + o(|x|^{-\alpha})
  \end{equation*}
  close to zero, with \(\nu_{\alpha} \not= 0\). Then the coefficient
  is given by
  \begin{equation*}
    \nu_{\alpha} = \frac{
      2\Gamma(2\alpha)\cos\left(\pi\alpha\right)
    }{
      \Gamma(\alpha)\cos\left(\frac{\pi}{2}\alpha\right)
    }.
  \end{equation*}
\end{lemma}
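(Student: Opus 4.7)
The plan is to match the leading asymptotic behaviors of the two sides of Equation~\eqref{eq:main} as $x \to 0$. On the left hand side, squaring the assumed expansion gives
\[
  \tfrac{1}{2}u(x)^{2} = \tfrac{1}{2}\nu_{\alpha}^{2}|x|^{-2\alpha} + o(|x|^{-2\alpha}),
\]
since both the cross term $\nu_{\alpha}|x|^{-\alpha} \cdot o(|x|^{-\alpha})$ and the square of the remainder are $o(|x|^{-2\alpha})$. The real work is to match this with an expansion of $-\mathcal{H}^{\alpha}[u]$ at the origin.

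For that I would recognize the leading $|x|^{-\alpha}$ singularity of $u$ as coming from a Clausen function. The expansion recalled in Section~\ref{sec:clausen-functions} gives
\[
  \tilde{C}_{1-\alpha}(x) = \Gamma(\alpha)\cos\!\bigl(\tfrac{\pi}{2}\alpha\bigr)|x|^{-\alpha} + O(x^{2}),
\]
so writing
\[
  u(x) = \frac{\nu_{\alpha}}{\Gamma(\alpha)\cos(\pi\alpha/2)}\,\tilde{C}_{1-\alpha}(x) + \tilde{r}(x)
\]
leaves a remainder $\tilde{r}(x) = o(|x|^{-\alpha})$ near zero. Applying $\mathcal{H}^{\alpha}$, using the identity $\mathcal{H}^{\alpha}[\tilde{C}_{s}] = -\tilde{C}_{s-\alpha}$, and the analogous expansion
\[
  \tilde{C}_{1-2\alpha}(x) = \Gamma(2\alpha)\cos(\pi\alpha)|x|^{-2\alpha} + O(x^{2}),
\]
I obtain
\[
  -\mathcal{H}^{\alpha}[u](x) = \frac{\nu_{\alpha}\,\Gamma(2\alpha)\cos(\pi\alpha)}{\Gamma(\alpha)\cos(\pi\alpha/2)}|x|^{-2\alpha} + O(x^{2}) - \mathcal{H}^{\alpha}[\tilde{r}](x).
\]
Matching the $|x|^{-2\alpha}$ coefficients with the left hand side and cancelling one factor of $\nu_{\alpha} \neq 0$ yields precisely the announced formula.

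The main obstacle is justifying the claim $\mathcal{H}^{\alpha}[\tilde{r}](x) = o(|x|^{-2\alpha})$ as $x \to 0$. Heuristically this is exactly what one expects, because $\mathcal{H}^{\alpha}$ shifts the leading power of a Clausen singularity at the origin by $-\alpha$: applied to a remainder that already beats $|x|^{-\alpha}$, it should produce something that beats $|x|^{-2\alpha}$. To turn this into a rigorous statement I would either expand $\tilde{r}$ in Clausen functions of orders strictly larger than $1-\alpha$ (plus a smooth tail) and apply the operator termwise via $\mathcal{H}^{\alpha}[\tilde{C}_{s}] = -\tilde{C}_{s-\alpha}$, or alternatively work with the singular integral representation of $|D|^{\alpha}$ on $\mathbb{T}$ and split the kernel integral into a local piece near $x$, where the pointwise bound $\tilde{r}(y) = o(|y|^{-\alpha})$ is used, and a far piece controlled by the global $L^{\infty}$ bound on $\tilde{r}$.
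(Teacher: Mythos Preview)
Your proposal is correct and follows essentially the same route as the paper: square the left-hand side, rewrite the leading singularity of $u$ via $\tilde{C}_{1-\alpha}$, apply $\mathcal{H}^{\alpha}[\tilde{C}_{s}] = -\tilde{C}_{s-\alpha}$, and match the $|x|^{-2\alpha}$ coefficients. The paper likewise identifies the remainder estimate $\mathcal{H}^{\alpha}[o(|x|^{-\alpha})] = o(|x|^{-2\alpha})$ as the only nontrivial point and dispatches it by reference to~\cite[Lemma~4.1]{dahne2022burgershilbert}, which is the singular-integral splitting you sketch as your second alternative.
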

\begin{proof}
  We directly get
  \begin{equation*}
    \frac{1}{2}u(x)^{2} = \frac{\nu_{\alpha}^{2}}{2}|x|^{-2\alpha} + o(|x|^{-2\alpha}).
  \end{equation*}
  To get the asymptotic behavior of \(\Hop{\alpha}[u]\) hand side we
  go through the Clausen function \(\tilde{C}_{1 - \alpha}(x)\). Based
  on the asymptotic behavior of \(\tilde{C}_{1 - \alpha}(x)\) we can
  write \(u\) as
  \begin{equation*}
    u(x) = \frac{\nu_{\alpha}}{\Gamma(\alpha)\cos\left(\frac{\pi}{2}\alpha\right)}\tilde{C}_{1 - \alpha}(x) + o(|x|^{-\alpha}).
  \end{equation*}
  This gives us
  \begin{align*}
    \Hop{\alpha}[u](x)
    &= -\frac{\nu_{\alpha}}{\Gamma(\alpha)\cos\left(\frac{\pi}{2}\alpha\right)}\tilde{C}_{1 - 2\alpha}(x) + \Hop{\alpha}[o(|x|^{-\alpha})](x)\\
    &= -\nu_{\alpha}\frac{\Gamma(2\alpha)\cos\left(\pi\alpha\right)}{\Gamma(\alpha)\cos\left(\frac{\pi}{2}\alpha\right)}|x|^{-2\alpha} + o(|x|^{-2\alpha}) + \Hop{\alpha}[o(|x|^{-\alpha})](x).
  \end{align*}
  In a similar way as in~\cite[Lemma 4.1]{dahne2022burgershilbert} it
  can be shown that
  \(\Hop{\alpha}[o(|x|^{-\alpha})](x) = o(|x|^{-2\alpha})\), which
  implies the lemma.
\end{proof}

In addition to having the correct asymptotic behavior we want
\(u_{\alpha}\) to be a good approximate solution of \eqref{eq:main},
in the sense that we want the defect,
\begin{equation*}
  F_{\alpha}(x) = \frac{1}{w_{\alpha}(x)u_{\alpha}(x)}\left(\Hop{\alpha}[u_{\alpha}](x) + \frac{1}{2}u_{\alpha}(x)^{2}\right),
\end{equation*}
to be small for \(x \in [0, \pi]\). The hardest part is to make
\(F_{\alpha}(x)\) small locally near the singularity at \(x = 0\),
this is done by studying the asymptotic behavior of
\(\Hop{\alpha}[u_{\alpha}](x) + \frac{1}{2}u_{\alpha}(x)^{2}\). Once
the defect is sufficiently small near \(x = 0\) it can be made small
globally by adding a suitable trigonometric polynomial to
\(u_{\alpha}\).

The construction is similar to that in~\cite{enciso2018convexity}, but
requires more work to handle the limits \(\alpha \to -1\) and
\(\alpha \to 0\). We take \(u_{\alpha}\) to be a combination of three
parts:
\begin{enumerate}
\item The first term is \(a_{\alpha,0}\tilde{C}_{1 - \alpha}\), where
  the coefficient is chosen to give the right asymptotic behavior
  according to Lemma~\ref{lemma:asymptotic-coefficient}
\item The second part is chosen to make the defect small near
  \(x = 0\), similarly to in~\cite{enciso2018convexity}, it is given
  by a sum of Clausen functions.
\item The third part is chosen to make the defect small globally and
  is given by a trigonometric polynomial.
\end{enumerate}
More precisely the approximation is given by
\begin{equation}
  \label{eq:u0}
  u_{\alpha}(x) = \sum_{j = 0}^{N_{\alpha,0}} a_{\alpha,j}\tilde{C}_{1 - \alpha + jp_{\alpha}}(x)
  + \sum_{n = 1}^{N_{\alpha,1}} b_{\alpha,n}(\cos(nx) - 1),
\end{equation}
The values of \(a_{\alpha,j}\) for \(j \geq 1\) and \(p_{\alpha}\)
will be taken to make the defect small near \(x = 0\) and
\(b_{\alpha,n}\) is taken to make the defect small globally.

While the general form of the approximation is the same for all values
of \(\alpha\) the limits \(\alpha \to -1\) and \(\alpha \to 0\)
requires a slightly different approach. For that reason we split it
into three cases
\begin{enumerate}
\item \(\alpha \in (-1, -1 + \delta_{1}) = I_{1}\);
\item \(\alpha \in [-1 + \delta_{1}, -\delta_{2}] = I_{2}\);
\item \(\alpha \in (-\delta_{2}, 0) = I_{3}\),
\end{enumerate}
where the precise values of \(\delta_{1}\) and \(\delta_{2}\) will be
determined later. We start with the approximation for the interval
\(I_{2}\) which has the least amount of technical details. We then
discuss the alterations required to handle \(I_{1}\) and \(I_{3}\).

\subsection{Construction of \(u_{\alpha}\) for \(I_{2}\)}
\label{sec:construction-i_2}
The asymptotic behavior of the approximation \eqref{eq:u0} is
determined by \(a_{\alpha,0}\) and for it to agree with
Lemma~\ref{lemma:asymptotic-coefficient} we have to take
\begin{equation}
  \label{eq:a0}
  a_{\alpha,0} = \frac{2\Gamma(2\alpha)\cos\left(\pi\alpha\right)}{\Gamma(\alpha)^{2}\cos\left(\frac{\pi}{2}\alpha\right)^{2}}.
\end{equation}
The behavior of \(a_{\alpha,0}\) as a function in \(\alpha\) is shown
in Figure~\ref{fig:a0}.

To choose the other parameters we have to study the defect of the
approximation. The defect is given by
\(\Hop{\alpha}[u_{\alpha}] + \frac{1}{2}u_{\alpha}^{2}\), where we
have
\begin{equation}
  \label{eq:Hu0}
  \Hop{\alpha}[u_{\alpha}](x) = -\sum_{j = 0}^{N_{\alpha,0}}a_{\alpha,j}\tilde{C}_{1 - 2\alpha + jp_{\alpha}}(x) - \sum_{n = 1}^{N_{\alpha,1}}b_{\alpha,n}n^{\alpha}(\cos(nx) - 1).
\end{equation}

For the defect close to \(x = 0\) we want to study the asymptotic
expansion. We get the asymptotic behavior of \(u_{\alpha}\) and
\(\Hop{\alpha}[u_{\alpha}]\) in the following lemma, whose proof we
omit since it follows directly from the expansions of the Clausen and
cosine functions.
\begin{lemma}
  \label{lemma:u0-asymptotic}
  Let \(u_{\alpha}\) be as in \eqref{eq:u0}. Then the following
  asymptotic expansions hold near \(x = 0\):
  \begin{equation}
    \label{eq:u0-asymptotic}
    u_{\alpha}(x) = \sum_{j = 0}^{N_{\alpha,0}}a_{\alpha,j}^{0}|x|^{-\alpha + jp_{\alpha}}
    + \sum_{m = 1}^{\infty}\frac{(-1)^{m}}{(2m)!}\left(
      \sum_{j = 0}^{N_{\alpha,0}}a_{\alpha,j}\zeta(1 - \alpha + jp_{\alpha} - 2m)
      + \sum_{m = 1}^{N_{\alpha,1}}b_{\alpha,j}n^{2m}
      \right)x^{2m}
  \end{equation}
  \begin{equation}
    \label{eq:Hu0-asymptotic}
    \Hop{\alpha}[u_{\alpha}](x) = -\sum_{j = 0}^{N_{\alpha,0}}A_{\alpha,j}^{0}|x|^{-2\alpha + jp_{\alpha}}
    - \sum_{m = 1}^{\infty}\frac{(-1)^{m}}{(2m)!}\left(
      \sum_{j = 0}^{N_{\alpha,0}}a_{\alpha,j}\zeta(1 - 2\alpha + jp_{\alpha} - 2m)
      + \sum_{m = 1}^{N_{\alpha,1}}b_{\alpha,j}n^{2m + \alpha}
      \right)x^{2m}
  \end{equation}
  where
  \begin{align*}
    a_{\alpha,j}^{0} &= \Gamma(\alpha - jp_{\alpha})\cos\left(\frac{\pi}{2}(\alpha - jp_{\alpha})\right)a_{\alpha,j};\\
    A_{\alpha,j}^{0} &= \Gamma(2\alpha - jp_{\alpha})\cos\left(\frac{\pi}{2}(2\alpha - jp_{\alpha})\right)a_{\alpha,j}.
  \end{align*}
\end{lemma}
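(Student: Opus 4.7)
The plan is to substitute the known expansions of the Clausen functions from Section~\ref{sec:clausen-functions} into the definition~\eqref{eq:u0} of \(u_{\alpha}\), use the formula \(\Hop{\alpha}[\tilde{C}_{s}] = -\tilde{C}_{s-\alpha}\) together with~\eqref{eq:Hu0} to handle \(\Hop{\alpha}[u_{\alpha}]\) analogously, and then collect terms by powers of \(x\). The identity is asserted rather than proved in the paper because each step is routine once the expansion of \(\tilde{C}_{s}\) is in hand.

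First I would expand each Clausen contribution. Absorbing the \(m=0\) term in \(-C_{s}(0)=-\zeta(s)\) yields, for \(|x|<2\pi\),
\begin{equation*}
\tilde{C}_{s}(x) = \Gamma(1-s)\sin\!\bigl(\tfrac{\pi}{2}s\bigr)|x|^{s-1} + \sum_{m=1}^{\infty}(-1)^{m}\zeta(s-2m)\frac{x^{2m}}{(2m)!}.
\end{equation*}
Substituting \(s=1-\alpha+jp_{\alpha}\) and using the trigonometric identity \(\sin\!\bigl(\tfrac{\pi}{2}(1-\alpha+jp_{\alpha})\bigr)=\cos\!\bigl(\tfrac{\pi}{2}(\alpha-jp_{\alpha})\bigr)\), the leading term of \(a_{\alpha,j}\tilde{C}_{1-\alpha+jp_{\alpha}}(x)\) becomes \(a^{0}_{\alpha,j}|x|^{-\alpha+jp_{\alpha}}\) as defined in the lemma, with Taylor tail \(a_{\alpha,j}\sum_{m\ge 1}(-1)^{m}\zeta(1-\alpha+jp_{\alpha}-2m)x^{2m}/(2m)!\). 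Expanding the trigonometric piece by \(\cos(nx)-1=\sum_{m\ge 1}(-1)^{m}n^{2m}x^{2m}/(2m)!\) and collecting the contributions to each \(x^{2m}\) gives~\eqref{eq:u0-asymptotic}.

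For \(\Hop{\alpha}[u_{\alpha}]\) the same procedure applies with \(s\) shifted by \(-\alpha\): from~\eqref{eq:Hu0}, each term \(-a_{\alpha,j}\tilde{C}_{1-2\alpha+jp_{\alpha}}(x)\) contributes a leading \(-\Gamma(2\alpha-jp_{\alpha})\cos\!\bigl(\tfrac{\pi}{2}(2\alpha-jp_{\alpha})\bigr)a_{\alpha,j}|x|^{-2\alpha+jp_{\alpha}}=-A^{0}_{\alpha,j}|x|^{-2\alpha+jp_{\alpha}}\), together with a Taylor tail involving \(\zeta(1-2\alpha+jp_{\alpha}-2m)\). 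The trigonometric part \(-b_{\alpha,n}n^{\alpha}(\cos(nx)-1)\) in~\eqref{eq:Hu0} expands as before but carries the extra factor \(n^{\alpha}\), producing \(b_{\alpha,n}n^{2m+\alpha}\) inside the bracket of each \(x^{2m}\), which is exactly~\eqref{eq:Hu0-asymptotic}.

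There is no serious obstacle: both sums in~\eqref{eq:u0} are finite, so convergence on \(|x|<2\pi\) is inherited termwise from the Clausen expansion. The only non-mechanical step is the trigonometric rewrite \(\sin(\pi s/2)=\cos(\pi(1-s)/2)\), invoked once for the exponent \(1-\alpha+jp_{\alpha}\) and once for \(1-2\alpha+jp_{\alpha}\), so that the leading coefficients match the definitions of \(a^{0}_{\alpha,j}\) and \(A^{0}_{\alpha,j}\) exactly as stated; everything else is bookkeeping.
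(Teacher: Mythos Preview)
Your proposal is correct and follows exactly the approach the paper indicates: the paper omits the proof, stating only that it ``follows directly from the expansions of the Clausen and cosine functions,'' which is precisely the substitution-and-collect argument you carry out. The one nontrivial observation you make explicit---the identity \(\sin\!\bigl(\tfrac{\pi}{2}(1-\theta)\bigr)=\cos\!\bigl(\tfrac{\pi}{2}\theta\bigr)\) needed to match the leading coefficients to the stated \(a_{\alpha,j}^{0}\) and \(A_{\alpha,j}^{0}\)---is the only step that is not pure bookkeeping, and you handle it correctly.
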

From this we can compute the expansion of
\(\Hop{\alpha}[u_{\alpha}] + \frac{1}{2}u_{\alpha}^{2}\), given in the
following Lemma. Again we omit the proof which only involves standard
calculations.
\begin{lemma}
  \label{lemma:Du0-asymptotic}
  Let \(u_{\alpha}\) be as in \eqref{eq:u0}. Then we have the
  following asymptotic expansion near \(x = 0\)
  \begin{multline*}
    \Hop{\alpha}[u_{\alpha}] + \frac{1}{2}u_{\alpha}^{2} =
    \left(\frac{1}{2}(a_{\alpha,0}^{0})^{2} - A_{\alpha,0}^{0}\right)|x|^{-2\alpha}
    + \left(a_{\alpha,0}^{0}a_{\alpha,1}^{0} - A_{\alpha,1}^{0}\right)|x|^{-2\alpha + p_{\alpha}}\\
        +\sum_{k = 2}^{N_{\alpha,0}}\left(\frac{1}{4}((-1)^{k} + 1)(a_{\alpha,\lfloor\frac{k}{2}\rfloor}^{0})^{2}
      + \sum_{j = 0}^{\lfloor\frac{k - 1}{2}\rfloor}a_{\alpha,j}^{0}a_{\alpha,k - j}^{0} - A_{\alpha,k}^{0}\right)|x|^{-2\alpha + kp_{\alpha}}\\
    +\sum_{k = N_{\alpha,0} + 1}^{2N_{\alpha,0}}\left(\frac{1}{4}((-1)^{k} + 1)(a_{\alpha,\lfloor\frac{k}{2}\rfloor}^{0})^{2}
      + \sum_{j = 0}^{max(0, \lfloor\frac{k - 1}{2}\rfloor)}a_{\alpha,j}^{0}a_{\alpha,k - j}^{0}\right)|x|^{-2\alpha + kp_{\alpha}}\\
    + \left(\sum_{j = 0}^{N_{\alpha,0}}a_{\alpha,j}^{0}|x|^{-\alpha + jp_{\alpha}}\right)S_{1}
    + \frac{1}{2}S_{1}^{2}
    - S_{2}
  \end{multline*}
  where
  \begin{align*}
    S_{1} &= \sum_{m = 1}^{\infty}\frac{(-1)^{m}}{(2m)!}\left(
            \sum_{j = 0}^{N_{\alpha0}}a_{\alpha,j}\zeta(1 - \alpha + jp_{\alpha} - 2m)
            + \sum_{n = 1}^{N_{\alpha,1}}b_{\alpha,n}n^{2m}
            \right)x^{2m},\\
    S_{2} &= \sum_{m = 1}^{\infty}\frac{(-1)^{m}}{(2m)!}\left(
            \sum_{j = 0}^{N_{\alpha,0}}a_{\alpha,j}\zeta(1 - 2\alpha + jp_{\alpha} - 2m)
            + \sum_{n = 1}^{N_{\alpha,1}}b_{\alpha,n}n^{2m + \alpha}
            \right)x^{2m}.
  \end{align*}
\end{lemma}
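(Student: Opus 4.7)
The proof is a direct, though notationally heavy, computation relying on the asymptotic expansions established in Lemma~\ref{lemma:u0-asymptotic}. I would first introduce the shorthand
\begin{equation*}
  P_{\alpha}(x) := \sum_{j = 0}^{N_{\alpha,0}} a_{\alpha,j}^{0} |x|^{-\alpha + jp_{\alpha}},
\end{equation*}
so that \eqref{eq:u0-asymptotic} reads $u_{\alpha} = P_{\alpha} + S_{1}$ with $S_{1}$ as in the statement, and analogously \eqref{eq:Hu0-asymptotic} gives
\begin{equation*}
  \Hop{\alpha}[u_{\alpha}] = -\sum_{j = 0}^{N_{\alpha,0}} A_{\alpha,j}^{0} |x|^{-2\alpha + jp_{\alpha}} - S_{2}.
\end{equation*}
Substituting both decompositions into $\Hop{\alpha}[u_{\alpha}] + \tfrac{1}{2}u_{\alpha}^{2}$ yields
\begin{equation*}
  \tfrac{1}{2}P_{\alpha}^{2} \;-\; \sum_{j = 0}^{N_{\alpha,0}} A_{\alpha,j}^{0} |x|^{-2\alpha + jp_{\alpha}} \;+\; P_{\alpha}S_{1} + \tfrac{1}{2}S_{1}^{2} - S_{2},
\end{equation*}
and the last three summands already appear verbatim in the claimed expansion (with $P_{\alpha}$ displayed explicitly as its defining sum).

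The only remaining step is to unpack $\tfrac{1}{2}P_{\alpha}^{2}$ and merge it with the singular part of $\Hop{\alpha}[u_{\alpha}]$. Expanding the square gives
\begin{equation*}
  \tfrac{1}{2}P_{\alpha}^{2}(x) = \tfrac{1}{2}\sum_{j = 0}^{N_{\alpha,0}}\sum_{\ell = 0}^{N_{\alpha,0}} a_{\alpha,j}^{0} a_{\alpha,\ell}^{0} |x|^{-2\alpha + (j + \ell)p_{\alpha}},
\end{equation*}
and I would collect terms by total index $k := j + \ell$, which ranges over $\{0, 1, \ldots, 2N_{\alpha,0}\}$. Using the symmetry $(j,\ell) \leftrightarrow (\ell,j)$, the off-diagonal pairs double up while the diagonal term $j = \ell = k/2$ contributes only when $k$ is even; this produces exactly the coefficient
\begin{equation*}
  \tfrac{1}{4}\bigl((-1)^{k} + 1\bigr)(a_{\alpha,\lfloor k/2 \rfloor}^{0})^{2} \;+\; \sum_{j = 0}^{\lfloor (k - 1)/2 \rfloor} a_{\alpha,j}^{0} a_{\alpha,k - j}^{0}
\end{equation*}
in front of $|x|^{-2\alpha + kp_{\alpha}}$. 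For $0 \le k \le N_{\alpha,0}$ this combines with the matching $-A_{\alpha,k}^{0}$ coming from the singular part of $\Hop{\alpha}[u_{\alpha}]$, giving the first two rows of the statement (the cases $k = 0$ and $k = 1$ are displayed separately purely for readability, since for those values the off-diagonal sum is empty or consists of a single term). For $N_{\alpha,0} < k \le 2N_{\alpha,0}$ there is no matching $A_{\alpha,k}^{0}$ to subtract, which explains the appearance of the second range as a distinct sum.

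There is no conceptual obstacle here; the entire argument is bookkeeping once the expansions of Lemma~\ref{lemma:u0-asymptotic} are in hand. The only detail that requires care is the index range in the high-degree sum: for $N_{\alpha,0} < k \le 2N_{\alpha,0}$ one should really restrict $j$ to $\max(0, k - N_{\alpha,0}) \le j \le \lfloor (k-1)/2 \rfloor$ so that both $j$ and $k - j$ remain in $\{0, \ldots, N_{\alpha,0}\}$, which is the role played by the \(\max\) that appears in the stated formula.
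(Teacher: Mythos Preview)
Your proposal is correct and matches exactly what the paper intends: the paper explicitly omits the proof, stating only that it ``involves standard calculations'' from Lemma~\ref{lemma:u0-asymptotic}, and what you have written is precisely that calculation. Your observation about the index restriction $j \ge k - N_{\alpha,0}$ in the range $N_{\alpha,0} < k \le 2N_{\alpha,0}$ is the right mathematical content; the placement of the $\max$ in the stated formula appears to be a typographical slip rather than anything substantive.
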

Taking \(a_{\alpha,0}\) as in Equation \eqref{eq:a0} makes the leading
coefficient \(\frac{1}{2}(a_{\alpha,0}^{0})^{2} - A_{\alpha,0}^{0}\)
zero. If \(p_{\alpha}\) is not too large,
\(-\alpha + p_{\alpha} < 2\), the next leading term is
\begin{equation*}
  \left(a_{\alpha,0}^{0}a_{\alpha,1}^{0} - A_{\alpha,1}^{0}\right)|x|^{-2\alpha + p_{\alpha}}
\end{equation*}
where the coefficient is given by
\begin{equation*}
  a_{\alpha,0}^{0}a_{\alpha,1}^{0} - A_{\alpha,1}^{0} =
  \left(
    \Gamma(\alpha)\cos\left(\frac{\pi}{2}\alpha\right)
    \Gamma(\alpha - p_{\alpha})\cos\left(\frac{\pi}{2}(\alpha - p_{\alpha})\right)a_{\alpha,0}
    - \Gamma(2\alpha - p_{\alpha})\cos\left(\frac{\pi}{2}(2\alpha - p_{\alpha})\right)
    \right)a_{\alpha,1}.
\end{equation*}
and the only way for this to be zero is to either take \(a_{\alpha,1} = 0\)
or to choose \(p_{\alpha}\) such that
\begin{equation*}
  \Gamma(\alpha)\cos\left(\frac{\pi}{2}\alpha\right)
  \Gamma(\alpha - p_{\alpha})\cos\left(\frac{\pi}{2}(\alpha - p_{\alpha})\right)a_{\alpha,0}
  - \Gamma(2\alpha - p_{\alpha})\cos\left(\frac{\pi}{2}(2\alpha - p_{\alpha})\right) = 0.
\end{equation*}
Inserting the value of \(a_{\alpha,0}\) and rearranging the terms, this can
be rewritten as
\begin{equation}
  \label{eq:p-alpha}
  \frac{
    \Gamma(2\alpha - p_{\alpha})\cos\left(\frac{\pi}{2}(2\alpha - p_{\alpha})\right)
  }{
    \Gamma(\alpha - p_{\alpha})\cos\left(\frac{\pi}{2}(\alpha - p_{\alpha})\right)
  }
  = \frac{
    2\Gamma(2\alpha)\cos(\pi\alpha)
  }{
    \Gamma(\alpha)\cos\left(\frac{\pi}{2}\alpha\right)
  }.
\end{equation}
This tells us how to pick the parameter \(p_{\alpha}\), we take it to
be the smallest positive solution of \eqref{eq:p-alpha}. A plot of
\(p_{\alpha}\) as a function of \(\alpha\) is given in
Figure~\ref{fig:p-alpha}.

\begin{figure}
  \centering
  \begin{subfigure}[t]{0.45\textwidth}
    \includegraphics[width=\textwidth]{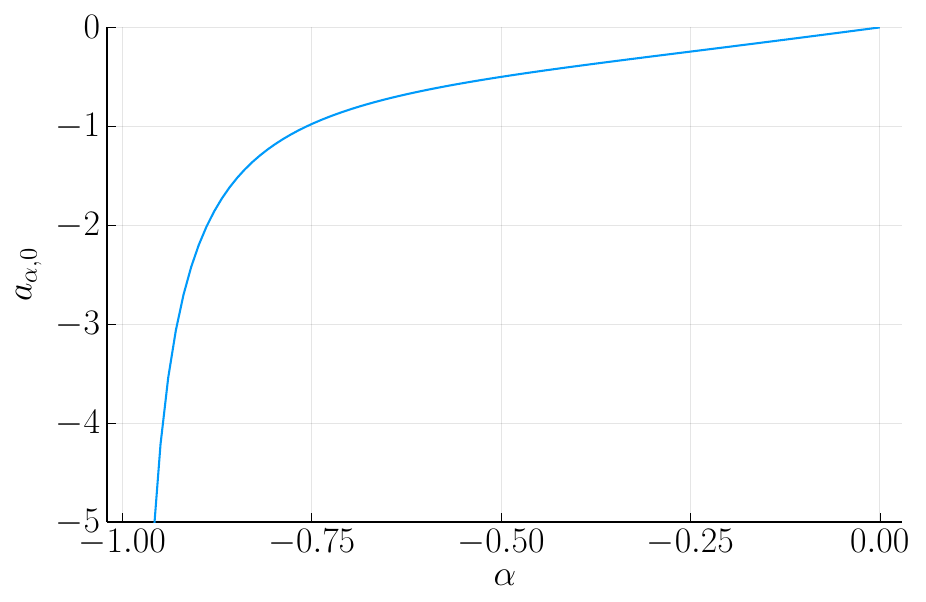}
    \caption{}
    \label{fig:a0}
  \end{subfigure}
  \begin{subfigure}[t]{0.45\textwidth}
    \includegraphics[width=\textwidth]{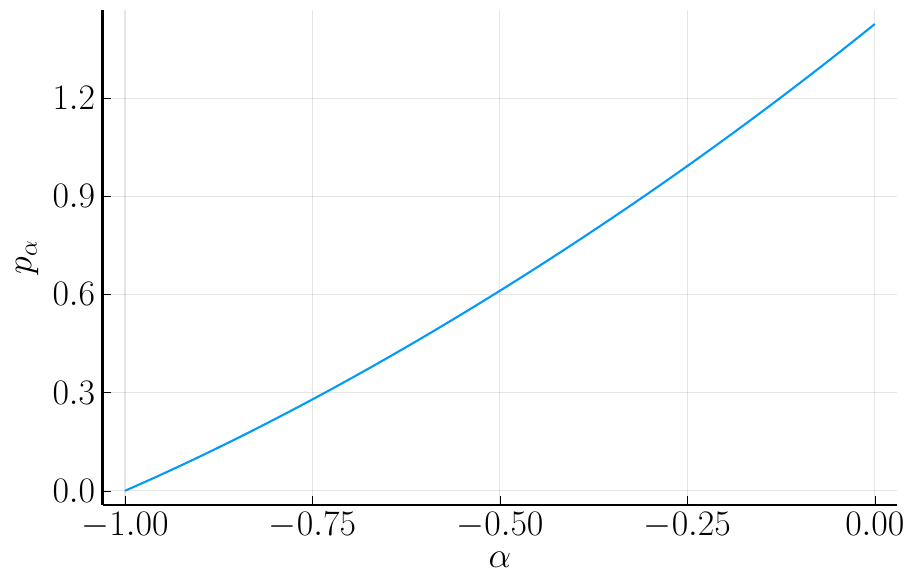}
    \caption{}
    \label{fig:p-alpha}
  \end{subfigure}
  \caption{Values of \(a_{\alpha,0}\), defined by Equation
    \eqref{eq:a0}, and \(p_{\alpha}\), defined by Equation
    \eqref{eq:p-alpha}, as functions of \(\alpha\).}
\end{figure}

The expansion in Lemma~\ref{lemma:Du0-asymptotic} also tells us how to
choose \(\{a_{\alpha,j}\}_{1 \leq j \leq N_{\alpha,0}}\): identify the
\(N_{\alpha,0}\) leading coefficients in the expansion, ignoring
\(|x|^{-2\alpha}\) and \(|x|^{-2\alpha + p_{\alpha}}\) whose
coefficients are zero from the choice of \(a_{\alpha,0}\) and
\(p_{\alpha}\), and take
\(\{a_{\alpha,j}\}_{1 \leq j \leq N_{\alpha,0}}\) to make them zero.
There are three potential issues with this approach for choosing
\(\{a_{\alpha,j}\}_{1 \leq j \leq N_{\alpha,0}}\)
\begin{enumerate}
\item The coefficients in the expansion might also depend on
  \(b_{\alpha,n}\) which we have yet to determine;
\item It is not clear if we indeed can pick \(a_{\alpha,j}\) to make
  these coefficients zero;
\item Even if we can pick \(a_{\alpha,j}\) to make the coefficients
  zero it is not clear how we would find these values.
\end{enumerate}
The first point is handled by simply ignoring the values of
\(b_{\alpha,n}\) at this stage, taking them as zero. This means that
some of the coefficients in the expansion might not be zero in the
end, once the values for \(b_{\alpha,n}\) are taken into account. This
however turns out to not be an issue, which is related to how to
handle the other two points. We summarize it in the following remark

\begin{remark}
  \label{remark:approximate-solution}
  Recall that \(u_{\alpha}\) is only an approximate solution to the
  equation. In the end the only important part is that the defect is
  sufficiently small. This means that we don't need to take exact
  values for \(p_{\alpha}\),
  \(\{a_{\alpha,j}\}_{1 \leq j \leq N_{\alpha,0}}\) or
  \(\{b_{\alpha,n}\}_{1 \leq n \leq N_{\alpha,1}}\). We are free to
  use any means, in particular non-rigorous numerical methods, for
  finding values which gives us a small defect. That the defect indeed
  is small is then later rigorously verified.

  Note that this does not apply to the parameter \(a_{\alpha,0}\). For
  the defect to be bounded near \(x = 0\) we need the leading
  coefficient in the expansion to be exactly zero, it is therefore not
  enough to use a numerical approximation of \(a_{\alpha,0}\).
\end{remark}

I light of this remark, points 2 and 3 above can be dealt with by
setting up the problem as a non-linear system of equations and finding
a solution using non-rigorous numerical methods. The numerical
procedure might or might not converge and the solution it gives might
or might not correspond to a true solution, but since we only need
approximate value this doesn't matter. That the solutions we get are
good enough will be verified when bounding the defect. Similarly, we
don't need to take \(p_{\alpha}\) to be an exact solution of
\eqref{eq:p-alpha}, a numerical approximation of the solution
suffices.

Once \(p_{\alpha}\) and
\(\{a_{\alpha,j}\}_{1 \leq j \leq N_{\alpha,0}}\) have been picked in
the above way, to make the asymptotic defect small, we wish to find
\(\{b_{\alpha,n}\}_{1 \leq n \leq N_{\alpha,1}}\) to make the defect
small globally. This is done by taking \(N_{\alpha,1}\) equally spaced
points \(\{x_{n}\}_{1 \leq n \leq N_{\alpha,1}}\) on the interval
\((0, \pi)\) and numerically solving the non-linear system
\begin{equation*}
  \Hop{\alpha}[u_{\alpha}](x_{n}) + \frac{1}{2}u_{\alpha}(x_{n})^{2} = 0 \quad \text{ for } 1 \leq n \leq N_{\alpha,1}.
\end{equation*}

So far we have not said anything about the values for \(N_{\alpha,0}\)
and \(N_{\alpha,1}\). These are tuning parameters, higher values will
typically lead to a smaller defect but also to a higher computational
cost. For \(N_{\alpha,0}\) this is only true up to some point where
the system for determining the coefficients starts to become
ill-conditioned. The values are chosen to make sure that the
inequality in Proposition~\ref{prop:contraction} holds and depends on
\(\alpha\). Some more details are given in
Section~\ref{sec:bounds-I-2}.

\subsection{Construction of \(u_{\alpha}\) for \(I_1\)}
\label{sec:construction-i_1}
The approximation in the previous section encounters several issues as
\(\alpha\) moves close to \(-1\). To begin with \(a_{\alpha,0}\)
diverges towards negative infinity, as can be seen in
Figure~\ref{fig:a0}. Furthermore \(p_{\alpha}\) tends towards zero, as
can be seen in Figure~\ref{fig:p-alpha}, which means that the
parameters for all the Clausen functions converge towards the same
value.

We can get an idea for how to handle these issues by numerically
studying the behavior of
\begin{equation}
  \label{eq:u0-i_1-clausen_sum}
  \sum_{j = 0}^{N_{\alpha,0}}a_{\alpha,j}\tilde{C}_{1 - \alpha + jp_{\alpha}}(x),
\end{equation}
as \(\alpha\) goes to \(-1\). The first important observation is that
as \(\alpha \to -1\) the parameter \(a_{\alpha,1}\) goes to infinity
in such a way that \(a_{\alpha,0} + a_{\alpha,1}\) remains bounded,
while \(\{a_{\alpha,j}\}_{2 \leq j \leq N_{\alpha,0}}\) remains
individually bounded. The second important observation is that as
\(\alpha \to -1\) the parameter \(p_{\alpha}\) behaves like
\(1 + \alpha + (1 + \alpha)^{2} / 2 + \mathcal{O}((1 + \alpha)^{3})\).
This hints at a solution to the problem that
\(a_{\alpha,0}\tilde{C}_{1 - \alpha}(x)\) doesn't converge: by taking
\(a_{\alpha,1} = -a_{\alpha,0}\) and
\(p_{\alpha} = 1 + \alpha + (1 + \alpha)^{2} / 2\) the first two terms
in the above sum become
\begin{equation*}
  a_{\alpha,0}(\tilde{C}_{1 - \alpha}(x) - \tilde{C}_{2 + (1 + \alpha)^{2} / 2}(x)).
\end{equation*}
For which we have the following lemma.
\begin{lemma}
  \label{lemma:u0-i_1-main-limit}
  The function
  \begin{equation*}
    a_{\alpha,0}(\tilde{C}_{1 - \alpha}(x) - \tilde{C}_{2 + (1 + \alpha)^{2} / 2}(x)),
  \end{equation*}
  with \(a_{\alpha,0}\) as in \eqref{eq:a0}, converges to
  \(\frac{2}{\pi^{2}}\tilde{C}_{2}^{(1)}(x)\) pointwise in \(x\) as
  \(\alpha \to -1\). Moreover, for
  \(\alpha \in \left(-1, -\frac{3}{4}\right)\) it satisfies the
  inequality
  \begin{equation*}
    a_{\alpha,0}(\tilde{C}_{1 - \alpha}(x) - \tilde{C}_{2 + (1 + \alpha)^{2} / 2}(x)) \geq \frac{2}{\pi^{2}}\tilde{C}_{2}^{(1)}(x).
  \end{equation*}
\end{lemma}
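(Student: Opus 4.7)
The plan is to introduce the small parameter $\epsilon = 1 + \alpha \in (0, 1/4)$, so that the two Clausen orders become $1 - \alpha = 2 - \epsilon$ and $2 + (1+\alpha)^2/2 = 2 + \epsilon^2/2$, both clustered around $s = 2$. I would then split the proof into the convergence statement and the inequality. The key computations are the Laurent expansion of $a_{\alpha,0}$ near $\alpha = -1$ and a termwise Fourier-series comparison that reduces the inequality to a scalar condition in $\epsilon$.

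For convergence, I would derive the Laurent expansion of $a_{\alpha,0}$ from the residue expansions $\Gamma(-2 + 2\epsilon) = \frac{1}{4\epsilon}(1 + O(\epsilon))$ and $\Gamma(-1 + \epsilon) = -\frac{1}{\epsilon}(1 + O(\epsilon))$, together with $\cos(\pi\alpha) = -\cos(\pi\epsilon)$ and $\cos(\pi\alpha/2) = \sin(\pi\epsilon/2)$; the definition of $a_{\alpha,0}$ then yields $a_{\alpha,0} = -\frac{2}{\pi^2 \epsilon}(1 + \epsilon + O(\epsilon^2))$. On the Clausen side, the fundamental theorem of calculus applied to $s \mapsto \tilde{C}_s(x)$ gives
\[
  \tilde{C}_{2 - \epsilon}(x) - \tilde{C}_{2 + \epsilon^2/2}(x) = -\int_{2 - \epsilon}^{2 + \epsilon^2/2} \tilde{C}_s^{(1)}(x)\, ds = -\left(\epsilon + \tfrac{\epsilon^2}{2}\right) \tilde{C}_2^{(1)}(x) + O_x(\epsilon^2),
\]
using continuity of $s \mapsto \tilde{C}_s^{(1)}(x)$ at $s = 2$ for fixed $x$. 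Multiplying the two expansions and letting $\epsilon \to 0^+$ yields the limit $\frac{2}{\pi^2}\tilde{C}_2^{(1)}(x)$.

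For the inequality I would expand both sides in Fourier series; the $n = 1$ term vanishes on each side, so
\[
  \tilde{C}_{2-\epsilon}(x) - \tilde{C}_{2+\epsilon^2/2}(x) = \sum_{n \geq 2} \frac{\cos(nx) - 1}{n^2}\left(n^\epsilon - n^{-\epsilon^2/2}\right), \quad \tilde{C}_2^{(1)}(x) = -\sum_{n \geq 2} \frac{(\cos(nx) - 1)\log n}{n^2}.
\]
Taking the difference of the two sides of the desired inequality gives
\[
  a_{\alpha,0}\left(\tilde{C}_{2-\epsilon} - \tilde{C}_{2+\epsilon^2/2}\right) - \frac{2}{\pi^2}\tilde{C}_2^{(1)} = \sum_{n \geq 2} \frac{\cos(nx) - 1}{n^2}\left[a_{\alpha,0}\left(n^\epsilon - n^{-\epsilon^2/2}\right) + \frac{2 \log n}{\pi^2}\right].
\]
Since $\cos(nx) - 1 \leq 0$, this sum is non-negative provided the bracketed factor is non-positive for every $n \geq 2$, reducing the claim to the termwise bound $(-a_{\alpha,0})(n^\epsilon - n^{-\epsilon^2/2}) \geq \frac{2 \log n}{\pi^2}$.

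To handle all $n \geq 2$ uniformly, set $y = \log n \geq \log 2$ and write $n^\epsilon - n^{-\epsilon^2/2} = y \int_{-\epsilon^2/2}^{\epsilon} e^{sy}\,ds$. Differentiating the ratio $(n^\epsilon - n^{-\epsilon^2/2})/y = \int_{-\epsilon^2/2}^\epsilon e^{sy}\,ds$ under the integral and bounding crudely gives
\[
  \partial_y \int_{-\epsilon^2/2}^{\epsilon} e^{sy}\, ds = \int_{-\epsilon^2/2}^{\epsilon} s e^{sy}\, ds \geq \frac{\epsilon^2}{2} - \frac{\epsilon^4}{8} > 0
\]
for $\epsilon \in (0, 1/4)$, so the ratio is increasing in $y \geq 0$ with limit $\epsilon(1 + \epsilon/2)$ at $y = 0$. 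The termwise bound is therefore implied by the scalar inequality $(-a_{\alpha,0})\,\epsilon(1 + \epsilon/2) \geq \frac{2}{\pi^2}$ on $\epsilon \in (0, 1/4)$. From the Laurent expansion, $(-a_{\alpha,0})\,\epsilon(1 + \epsilon/2) = \frac{2}{\pi^2}\bigl(1 + \tfrac{3\epsilon}{2} + O(\epsilon^2)\bigr)$, so the inequality is tight as $\epsilon \to 0^+$ and strict just above. The main obstacle is establishing it on the full range: near $\epsilon = 0$ one cannot apply interval arithmetic directly to $a_{\alpha,0}$ because of its pole at $\alpha = -1$, while away from $0$ the lowest-order Laurent term alone is too coarse. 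I would therefore handle a small right-neighborhood of $\epsilon = 0$ by a rigorous Taylor expansion with an explicit remainder bound of the map $\epsilon \mapsto (-a_{\alpha,0})\,\epsilon(1 + \epsilon/2)$, and verify the complementary compact subinterval using rigorous interval arithmetic via the Arb library.
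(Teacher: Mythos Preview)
Your proof is correct, and for the inequality part it takes a genuinely different route from the paper. For the convergence, both arguments are essentially the same (difference quotient converges to the $s$-derivative, times the finite limit of $-(1+\alpha)a_{\alpha,0}$). For the inequality, the paper instead writes the expression as the product
\[
  \bigl(-(1+\alpha)a_{\alpha,0}\bigr)\cdot\frac{\tilde{C}_{2+(1+\alpha)^2/2}(x)-\tilde{C}_{1-\alpha}(x)}{1+\alpha},
\]
and shows that each factor is positive and increasing in $\alpha$ on $(-1,-3/4)$; the inequality then follows because the product is bounded below by its limit at $\alpha=-1$. Positivity and monotonicity of the Clausen factor is obtained by Taylor-expanding $\tilde{C}_s$ at $s=2$ and observing a sign pattern in the resulting series in $h=1+\alpha$; positivity and monotonicity of $-(1+\alpha)a_{\alpha,0}$ is verified by a short computer-assisted check. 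Your Fourier-series reduction is more elementary on the Clausen side and lands on a \emph{single} scalar inequality $(-a_{\alpha,0})\,\epsilon(1+\epsilon/2)\ge 2/\pi^2$; note that this is in fact implied by the paper's check, since $-(1+\alpha)a_{\alpha,0}\ge 2/\pi^2$ and $1+\epsilon/2\ge 1$. The trade-off is that the paper obtains monotonicity in $\alpha$ as a byproduct (used later in the paper via $u_\alpha\ge u_{-1}$), whereas your argument yields only the inequality that the lemma actually states.
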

\begin{proof}
  We have
  \begin{equation*}
    a_{\alpha,0}(\tilde{C}_{1 - \alpha}(x) - \tilde{C}_{2 + (1 + \alpha)^{2} / 2}(x))
    = -(1 + \alpha)a_{\alpha,0} \cdot
    \frac{\tilde{C}_{2 + (1 + \alpha)^{2} / 2}(x) - \tilde{C}_{1 - \alpha}(x)}{1 + \alpha}.
  \end{equation*}
  Taking the limit as \(\alpha \to -1\) for the first factor we get
  \(\lim_{\alpha \to - 1} -(1 + \alpha)a_{\alpha,0} =
  \frac{2}{\pi^{2}}.\) The second factor converges to the derivative
  \(\tilde{C}_{1 - \alpha}^{(1)}(x)\). This gives us the pointwise
  convergence.

  To prove the inequality it is enough to prove that the left-hand
  side is increasing in \(\alpha\). For that it is enough to prove
  that both factors are positive and increasing in \(\alpha\).

  The proof that \(-(1 + \alpha)a_{\alpha,0}\) is positive and
  increasing in \(\alpha\) is computer-assisted. It is done by
  rigorously computing a lower bound of the value and the derivative
  for \(-1 < \alpha < -\frac{3}{4}\) and asserting that these lower
  bounds are positive.

  For the Clausen-factor we let \(h = 1 + \alpha\) and expand the
  Clausen functions at \(s = 2\), giving us
  \begin{align*}
    \tilde{C}_{1 - \alpha}(x) &= \sum_{n = 0}^{\infty}\frac{\tilde{C}_{2}^{(n)}(x)}{n!}(-h)^{n},\\
    \tilde{C}_{1 + (1 + \alpha)^{2} / 2}(x) &= \sum_{n = 0}^{\infty}\frac{\tilde{C}_{2}^{(n)}(x)}{n!}
                                              \left(\frac{h^{2}}{2}\right)^{n}.
  \end{align*}
  From which we get
  \begin{equation}
    \label{eq:clausen-diff-expansion}
    \frac{\tilde{C}_{2 + (1 + \alpha)^{2} / 2}(x) - \tilde{C}_{1 - \alpha}(x)}{1 + \alpha}
    = \sum_{n = 1}^{\infty}\frac{\tilde{C}_{2}^{(n)}(x)}{n!}h^{n - 1}
    \left(\frac{h^{n}}{2^{n}} + (-1)^{n - 1}\right).
  \end{equation}
  For \(-1 < \alpha < 0\) we have
  \(\left|\frac{h^{n}}{2^{n}}\right| < 1\) and the sign of the factor
  \(\frac{h}{2^{n}} + (-1)^{n - 1}\) is therefore the same as for
  \((-1)^{n - 1}\). Furthermore, we have
  \begin{equation*}
    \tilde{C}_{2}^{(n)}(x) = (-1)^{n}\sum_{k = 1}^{\infty}\frac{\cos(kx) - 1}{k^{2}}\log k
  \end{equation*}
  and since \(\cos(kx) - 1 \leq 0\) this has the same sign as
  \((-1)^{n - 1}\). It follows that all the terms in
  \eqref{eq:clausen-diff-expansion} are positive, hence the sum is
  positive and increasing in \(\alpha\) for \(-1 < \alpha < 0\).
\end{proof}

If we subtract
\(a_{\alpha,0}(\tilde{C}_{1 - \alpha}(x) - \tilde{C}_{2 + (1 +
  \alpha)^{2} / 2}(x))\) from \eqref{eq:u0-i_1-clausen_sum} the
remaining part of
\begin{equation*}
  (a_{\alpha,0} + a_{\alpha,1})\tilde{C}_{1 - \alpha + jp_{\alpha}}(x) + \sum_{j = 2}^{N_{\alpha,0}}a_{\alpha,j}\tilde{C}_{1 - \alpha + jp_{\alpha}}(x),
\end{equation*}
Numerically we can see that this converges to a bounded function as
\(\alpha\) goes towards \(-1\). Since we only need an approximation of
the limit we can simply fix some \(\alpha > -1\) and use the values
for this \(\alpha\).

More precisely we construct the approximation for \(I_{1}\) in the
following way. First fix \(\hat{\alpha}\) close to \(-1\) but such
that \(\alpha < \hat{\alpha}\) for all \(\alpha \in I_{1}\). Then
compute \(\{a_{\hat{\alpha},j}\}_{0 \leq j \leq N_{\hat{\alpha},1}}\)
and \(p_{\hat{\alpha}}\) using the approach in
Section~\ref{sec:construction-i_2}. We then take
\begin{multline}
  \label{eq:u0-I-1}
  u_{\alpha}(x) = a_{\alpha,0}(\tilde{C}_{1 - \alpha}(x) - \tilde{C}_{2 + (1 + \alpha)^{2} / 2}(x))
  + (a_{\hat{\alpha},0} + a_{\hat{\alpha},1})\tilde{C}_{1 - \hat{\alpha} + p_{\hat{\alpha}}}(x)\\
  + \sum_{j = 2}^{N_{\hat{\alpha},0}}a_{\hat{\alpha},j}\tilde{C}_{1 - \hat{\alpha} + jp_{\hat{\alpha}}}(x)
  + \sum_{n = 1}^{N_{-1,1}}b_{-1,n}(\cos(nx) - 1)
\end{multline}
The parameters \(\{b_{-1,n}\}_{1 \leq n \leq N_{-1,1}}\) are
determined by considering the defect at the limit \(\alpha \to -1\),
where we from Lemma~\ref{lemma:u0-i_1-main-limit} have
\begin{equation}
  \label{eq:u0-bh}
  u_{-1}(x) = \frac{2}{\pi^{2}}\tilde{C}_{2}^{(1)}(x)
  + (a_{\hat{\alpha},0} + a_{\hat{\alpha},1})\tilde{C}_{1 - \hat{\alpha} + p_{\hat{\alpha}}}(x)
  + \sum_{j = 2}^{N_{\hat{\alpha},0}}a_{\hat{\alpha},j}\tilde{C}_{1 - \hat{\alpha} + jp_{\hat{\alpha}}}(x)
  + \sum_{n = 1}^{N_{-1,1}}b_{-1,n}(\cos(nx) - 1).
\end{equation}
In the same way as in Section~\ref{sec:construction-i_2} we take
\(\{b_{-1,n}\}_{1 \leq n \leq N_{-1,1}}\) so that the defect is
minimized on \(N_{-1,1}\) equally spaced points on the interval
\((0, \pi)\). Note that by Lemma~\ref{lemma:u0-i_1-main-limit}
\(u_{-1}(x)\) gives a lower bound of \(u_{\alpha}(x)\).

With this approximation we get
\begin{multline}
  \label{eq:Hu0-I-1}
  \Hop{\alpha}[u_{\alpha}](x) = -a_{\alpha,0}(\tilde{C}_{1 - 2\alpha}(x) - \tilde{C}_{2 - \alpha + (1 + \alpha)^{2} / 2}(x))
  + (a_{\hat{\alpha},0} + a_{\hat{\alpha},1})\tilde{C}_{1 - \alpha - \hat{\alpha} + p_{\hat{\alpha}}}(x)\\
  - \sum_{j = 2}^{N_{\hat{\alpha},0}}a_{\hat{\alpha}, j}\tilde{C}_{1 - \alpha - \hat{\alpha} + jp_{\hat{\alpha}}}(x)
  - \sum_{n = 1}^{N_{-1,1}}b_{-1,n}n^{-\alpha}(\cos(nx) - 1).
\end{multline}
Note that since \(a_{\alpha,0} \to -\infty\) as \(\alpha \to -1\) we
cannot compute a finite enclosure of \(a_{\alpha,0}\) valid for an
interval of the form \((-1, -1 + \delta)\). When computing enclosures
we therefore have to treat
\begin{equation*}
  a_{\alpha,0}(\tilde{C}_{1 - \alpha}(x) - \tilde{C}_{2 + (1 + \alpha)^{2} / 2}(x))
  \quad\text{and}\quad
  a_{\alpha,0}(\tilde{C}_{1 - 2\alpha}(x) - \tilde{C}_{2 - \alpha + (1 + \alpha)^{2} / 2}(x))
\end{equation*}
as one part, and explicitly handle the removable singularity.

The asymptotic behavior are given in the following lemma, again we
omit the proof since it follows directly from the expansions of the
Clausen functions.
\begin{lemma}
  \label{lemma:u0-asymptotic-I-1}
  The approximation for \(\alpha \in I_{1}\), with \(u_{\alpha}\)
  given by \eqref{eq:u0-I-1} and \(\Hop{\alpha}[u_{\alpha}]\) given by
  \eqref{eq:Hu0-I-1} has the asymptotic expansions
  \begin{align*}
    u_{\alpha}(x) =& a_{\alpha,0}\left(
                     \Gamma(\alpha)\cos\left(\frac{\pi}{2}\alpha\right)
                     - \Gamma(-1 - (1 + \alpha)^{2} / 2)\cos\left(\frac{\pi}{2}(1 + (1 + \alpha)^{2} / 2)\right)|x|^{1 + \alpha + (1 + \alpha)^{2} / 2}
                     \right)|x|^{-\alpha}\\
                   &+ a_{\alpha,0}\sum_{m = 1}^{\infty}\frac{(-1)^{m}}{(2m)!}\left(
                     \zeta(1 - \alpha - 2m) - \zeta(2 + (1 + \alpha)^{2} / 2 - 2m)
                     \right)x^{2m}\\
                   &+ \sum_{j = 1}^{N_{\hat{\alpha},0}}\hat{a}_{\alpha,j}^{0}|x|^{-\hat{\alpha} + jp_{\hat{\alpha}}}
                     + \sum_{m = 1}^{\infty}\frac{(-1)^{m}}{(2m)!}\left(
                     \sum_{j = 1}^{N_{\hat{\alpha},0}}a_{\hat{\alpha},j}\zeta(1 - \hat{\alpha} + jp_{\hat{\alpha}} - 2m)
                     + \sum_{m = 1}^{N_{-1,1}}b_{\hat{\alpha},j}n^{2m}
                     \right)x^{2m}
  \end{align*}
  and
    \begin{align*}
      \Hop{\alpha}u_{\alpha}(x) =& -a_{\alpha,0}\left(
                                   \Gamma(2\alpha)\cos\left(\pi\alpha\right)
                                   - \Gamma(-1 + \alpha - (1 + \alpha)^{2} / 2)\cos\left(\frac{\pi}{2}(1 - \alpha + (1 + \alpha)^{2} / 2)\right)|x|^{1 + \alpha + (1 + \alpha)^{2} / 2}
                                   \right)|x|^{-2\alpha}\\
                   &- a_{\alpha,0}\sum_{m = 1}^{\infty}\frac{(-1)^{m}}{(2m)!}\left(
                     \zeta(1 - 2\alpha - 2m) - \zeta(2 - \alpha + (1 + \alpha)^{2} / 2 - 2m)
                     \right)x^{2m}\\
                   &- \sum_{j = 1}^{N_{\hat{\alpha},0}}\hat{A}_{\alpha,j}^{0}|x|^{-\alpha - \hat{\alpha} + jp_{\hat{\alpha}}}
                     - \sum_{m = 1}^{\infty}\frac{(-1)^{m}}{(2m)!}\left(
                     \sum_{j = 1}^{N_{\hat{\alpha},0}}a_{\hat{\alpha},j}\zeta(1 - \alpha - \hat{\alpha} + jp_{\hat{\alpha}} - 2m)
                     + \sum_{m = 1}^{N_{-1,1}}b_{\hat{\alpha},j}n^{2m + \alpha}
                     \right)x^{2m},
  \end{align*}
  where
  \begin{align*}
    \hat{a}_{\alpha,j}^{0} &= \Gamma(\hat{\alpha} - jp_{\hat{\alpha}})\cos\left(\frac{\pi}{2}(\hat{\alpha} - jp_{\hat{\alpha}})\right)a_{\hat{\alpha},j};\\
    \hat{A}_{j}^{0} &= \Gamma(\alpha + \hat{\alpha} - jp_{\hat{\alpha}})\cos\left(\frac{\pi}{2}(\alpha + \hat{\alpha} - jp_{\hat{\alpha}})\right)a_{\hat{\alpha},j}.
  \end{align*}
\end{lemma}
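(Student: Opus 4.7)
The plan is to prove the two expansions by term-by-term substitution of the Clausen expansion
\begin{equation*}
  \tilde{C}_s(x) = \Gamma(1-s)\sin\!\left(\tfrac{\pi}{2}s\right)|x|^{s-1} + \sum_{m=1}^{\infty}(-1)^m \zeta(s-2m)\frac{x^{2m}}{(2m)!},\qquad |x|<2\pi,
\end{equation*}
into every Clausen term appearing in \eqref{eq:u0-I-1} and \eqref{eq:Hu0-I-1}, rewriting each prefactor via the identity $\sin(\pi s/2) = \cos(\pi(1-s)/2)$ so it appears in the form used throughout the paper, and then collecting the singular powers on the one hand and the analytic tails on the other. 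For $\mathcal{H}^\alpha[u_\alpha]$ we additionally apply, before the substitution, the identities $\mathcal{H}^\alpha[\tilde{C}_s] = -\tilde{C}_{s-\alpha}$ and $\mathcal{H}^\alpha[\cos(n\cdot)-1] = -n^\alpha(\cos(n\cdot)-1)$ from Section~\ref{sec:clausen-functions}, which amount to shifting each Clausen order by $-\alpha$ and multiplying each cosine by $-n^\alpha$. The proof for $\mathcal{H}^\alpha[u_\alpha]$ is then parallel to the one for $u_\alpha$, with $\alpha$ replaced by $2\alpha$ in the first-line exponents and $\hat\alpha$ replaced by $\hat\alpha+\alpha$ in the middle-line exponents.

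The one place where care is needed is the leading paired contribution $a_{\alpha,0}\bigl(\tilde{C}_{1-\alpha}(x) - \tilde{C}_{2+(1+\alpha)^2/2}(x)\bigr)$. Expanding each Clausen separately produces two singular powers, $|x|^{-\alpha}$ and $|x|^{1+(1+\alpha)^2/2}$, and two analytic tails indexed by $\zeta(1-\alpha-2m)$ and $\zeta(2+(1+\alpha)^2/2-2m)$ respectively. I would then factor $|x|^{-\alpha}$ out of the two singular terms using $|x|^{1+(1+\alpha)^2/2} = |x|^{-\alpha}\,|x|^{1+\alpha+(1+\alpha)^2/2}$, which produces exactly the bracketed coefficient in the first displayed line of the lemma, and subtract the two $\zeta$-tails term-wise to recover the second displayed line. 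It is essential to keep this difference paired throughout the argument since $a_{\alpha,0}\to-\infty$ as $\alpha\to-1$ while the combined quantity stays bounded by Lemma~\ref{lemma:u0-i_1-main-limit}.

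For the remaining three groups of terms the computation is routine. For each $\tilde{C}_{1-\hat\alpha+jp_{\hat\alpha}}$ with $j\geq 1$, substituting the Clausen expansion with $s=1-\hat\alpha+jp_{\hat\alpha}$ (so $1-s=\hat\alpha-jp_{\hat\alpha}$) produces the singular term $\hat a^0_{\alpha,j}|x|^{-\hat\alpha+jp_{\hat\alpha}}$ with $\hat a^0_{\alpha,j}$ as defined in the statement, plus a power series whose $x^{2m}$ coefficient is $(-1)^m a_{\hat\alpha,j}\zeta(1-\hat\alpha+jp_{\hat\alpha}-2m)/(2m)!$. For the trigonometric polynomial the Taylor expansion $\cos(nx)-1 = \sum_{m\geq 1}(-1)^m n^{2m} x^{2m}/(2m)!$ directly supplies the $b_{-1,n}n^{2m}$ contribution to the analytic sum (and, after the $-n^\alpha$ multiplication, the $b_{-1,n}n^{2m+\alpha}$ contribution in the expansion of $\mathcal{H}^\alpha[u_\alpha]$). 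Collecting all analytic contributions into a single inner sum over $j$ and $n$ reproduces the third displayed line in each expansion.

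There is no real obstacle, only bookkeeping: the expansions are all classical and the identities used are the ones recalled in Section~\ref{sec:clausen-functions}. The most error-prone step is tracking prefactors through the trigonometric identity $\sin(\pi s/2) = \cos(\pi(1-s)/2)$ for the various values of $s$, which is why I would organize the proof around the four groups of terms listed above and verify the exponents match the statement before combining.
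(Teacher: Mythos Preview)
Your proposal is correct and matches the paper's approach: the paper omits the proof entirely, stating only that it follows directly from the expansions of the Clausen functions, which is precisely the term-by-term substitution you outline. Your careful treatment of the paired leading term $a_{\alpha,0}(\tilde{C}_{1-\alpha}-\tilde{C}_{2+(1+\alpha)^2/2})$ and the trigonometric rewriting is exactly the bookkeeping the paper leaves implicit.
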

There are a couple of things that make these expansions tricky to
handle. To begin with the terms
\(a_{\alpha,0}(\zeta(1 - \alpha - 2m) - \zeta(2 + (1 + \alpha)^{2} / 2
- 2m))\) in \(u_{\alpha}\) have removable singularities at
\(\alpha = -1\), this is handled using the approach described in
Appendix~\ref{sec:removable-singularities}. For
\(\Hop{\alpha}[u_{\alpha}]\) the exponent for the term
\begin{equation*}
  -a_{\alpha,0}\left(
    \Gamma(2\alpha)\cos\left(\pi\alpha\right)
    - \Gamma(-1 + \alpha - (1 + \alpha)^{2} / 2)\cos\left(\frac{\pi}{2}(1 - \alpha + (1 + \alpha)^{2} / 2)\right)|x|^{1 + \alpha + (1 + \alpha)^{2} / 2}
  \right)|x|^{-2\alpha}
\end{equation*}
approaches \(2\) and its interaction with the \(x^{2}\) term
\begin{equation*}
  -\frac{1}{2}a_{\alpha,0}\left(
    \zeta(1 - 2\alpha - 2) - \zeta(2 - \alpha + (1 + \alpha)^{2} / 2 - 2)
  \right)x^{2}
\end{equation*}
needs special care. A similar thing happens for
\(\hat{A}_{\alpha,j}^{0}|x|^{-\alpha - \hat{\alpha} +
  jp_{\hat{\alpha}}}\) and the corresponding \(x^{2}\) terms. How this
is handled is discussed in detail in
Section~\ref{sec:evaluation-F-I-1} and
Appendix~\ref{sec:evaluation-F-I-1-asymptotic}. Bounding the tails of
the sum
\begin{equation*}
  \sum_{m = 1}^{\infty}\frac{(-1)^{m}}{(2m)!}a_{\alpha,0}\left(
    \zeta(1 - \alpha - 2m) - \zeta(2 + (1 + \alpha)^{2} / 2 - 2m)
  \right)x^{2m}
\end{equation*}
and the corresponding one for \(\Hop{\alpha}[u_{\alpha}]\) requires a
bit extra work, this is done in Lemma~\ref{lemma:bound-tail-I-1}
and~\ref{lemma:clausen-derivative-tails-alt}.

For the precise values of \(\hat{\alpha}\), \(N_{\hat{\alpha},0}\) and
\(N_{-1,1}\) see Section~\ref{sec:bounds-I-1}.

\subsection{Construction of \(u_{\alpha}\) for \(I_3\)}
\label{sec:construction-i_3}
Using the approximation from Section~\ref{sec:construction-i_2} and
letting \(\alpha\) tend to \(0\) it turns out that we need fewer and
fewer terms to get a small defect. For \(\alpha\) close to \(0\)
(around \(-1 / 6\) or closer) it is enough to take
\(N_{\alpha,0} = 2\) and \(N_{\alpha,1} = 0\), giving us the
approximation
\begin{equation}
  \label{eq:u0-I-3}
  u_{\alpha}(x) = \sum_{j = 0}^{2}a_{\alpha,j}\tilde{C}_{1 - \alpha + jp_{\alpha}}(x)
\end{equation}
and
\begin{equation}
  \label{eq:Hu0-I-3}
  \Hop{\alpha}[u_{\alpha}](x) = -\sum_{j = 0}^{2}a_{\alpha,j}\tilde{C}_{1 - 2\alpha + jp_{\alpha}}(x).
\end{equation}
With this approximation the defect tends to zero as \(\alpha \to 0\),
which is good. However, in Section~\ref{sec:evaluation-T-I-3} we will
see that the value of \(D_{\alpha}\) in
Proposition~\ref{prop:contraction} tends to \(1\), meaning that
\((1 - D_{\alpha})^{2} / 4n_{\alpha}\) also tends to zero. This means
that we can never compute an enclosure for \(D_{\alpha}\) and
\(\delta_{\alpha}\) on any interval \((-\delta_{2}, 0)\) such that the
inequality in Proposition~\ref{prop:contraction} holds. Instead, we
compute expansions in \(\alpha\) at \(\alpha = 0\) of both
\(D_{\alpha}\) and \(\delta_{\alpha}\), using these expansions we can
then prove that the inequality holds on the interval \(I_{3}\).

In this case it is not enough to use numerical approximations of
\(a_{\alpha,1}\), \(a_{\alpha,2}\) and \(p_{\alpha}\), we need to take
into account their dependence on \(\alpha\). For \(p_{\alpha}\) we use
the defining equation \eqref{eq:p-alpha}. As \(\alpha \to 0\) we have
\(p_{\alpha} \to 1.426\dots\), this gives us that the four leading
terms (in \(x\)) in the expansion from
Lemma~\ref{lemma:Du0-asymptotic} are given by, ordered by their
exponents,
\begin{gather*}
  \left(\frac{1}{2}(a_{\alpha,0}^{0})^{2} - A_{\alpha,0}^{0}\right)|x|^{-2\alpha},\\
  \left(a_{\alpha,0}^{0}a_{\alpha,1}^{0} - A_{\alpha,1}^{0}\right)|x|^{-2\alpha + p_{\alpha}},\\
  \frac{1}{2}\left(\sum_{j = 0}^{2}a_{\alpha,j}\zeta(-1 - 2\alpha + jp_{\alpha})\right)x^{2},\\
  -\frac{1}{2}a_{\alpha,0}^{0}\left(\sum_{j = 0}^{2}a_{\alpha,j}\zeta(-1 - \alpha + jp_{\alpha})\right)x^{2 - \alpha}.
\end{gather*}
The choice of \(a_{\alpha,0}\) makes the first term zero and the
choice of \(p_{\alpha}\) makes the second term zero. We then want to
take \(a_{\alpha,1}\) and \(a_{\alpha,2}\) to make the third and
fourth term zero. Solving for \(a_{\alpha,1}\) and \(a_{\alpha,2}\) we
get the linear system
\begin{equation*}
  \begin{cases}
    a_{\alpha,1}\zeta(-1 - 2\alpha + p_{\alpha}) + a_{\alpha,2}\zeta(-1 - 2\alpha + 2p_{\alpha}) &= -a_{\alpha,0}\zeta(-1 - 2\alpha)\\
    a_{\alpha,1}\zeta(-1 - \alpha + p_{\alpha}) + a_{\alpha,2}\zeta(-1 - \alpha + 2p_{\alpha}) &= -a_{\alpha,0}\zeta(-1 - \alpha)
  \end{cases},
\end{equation*}
with the solution
\begin{equation}
  \label{eq:a1-a2-I-3}
  \begin{cases}
    a_{\alpha,1} &= a_{\alpha,0}\frac{
      \zeta(-1 - \alpha)\zeta(-1 - 2\alpha + 2p_{\alpha}) - \zeta(-1 - 2\alpha)\zeta(-1 - \alpha + 2p_{\alpha})
    }{
      \zeta(-1 - 2\alpha + p_{\alpha})\zeta(-1 - \alpha + 2p_{\alpha}) - \zeta(-1 - 2\alpha + 2p_{\alpha})\zeta(-1 - \alpha + p_{\alpha})
    }\\
    a_{\alpha,2} &= -a_{\alpha,0}\frac{
      \zeta(-1 - \alpha)\zeta(-1 - 2\alpha + p_{\alpha}) - \zeta(-1 - 2\alpha)\zeta(-1 - \alpha + p_{\alpha})
    }{
      \zeta(-1 - 2\alpha + p_{\alpha})\zeta(-1 - \alpha + 2p_{\alpha}) - \zeta(-1 - 2\alpha + 2p_{\alpha})\zeta(-1 - \alpha + p_{\alpha})
    }
  \end{cases}.
\end{equation}

The following lemma gives us information about the asymptotic
behavior of \(a_{\alpha,j}\).
\begin{lemma}
  \label{lemma:a-expansion-alpha-I-3}
  The expansion of \(a_{\alpha,0}\) at \(\alpha = 0\) is given by
  \begin{equation*}
    a_{\alpha,0} = \alpha + \mathcal{O}(\alpha^{3}).
  \end{equation*}
  For \(a_{\alpha,1}\) and \(a_{\alpha,2}\) it is given by
  \begin{equation*}
    a_{\alpha,j} = a_{\alpha,j,1}\alpha + \mathcal{O}(\alpha^{2})
  \end{equation*}
  with
  \begin{equation*}
    a_{\alpha,1,1} = \lim_{\alpha \to 0} \frac{
      \zeta(-1 - \alpha)\zeta(-1 - 2\alpha + 2p_{\alpha}) - \zeta(-1 - 2\alpha)\zeta(-1 - \alpha + 2p_{\alpha})
    }{
      \zeta(-1 - 2\alpha + p_{\alpha})\zeta(-1 - \alpha + 2p_{\alpha}) - \zeta(-1 - 2\alpha + 2p_{\alpha})\zeta(-1 - \alpha + p_{\alpha})
    }
  \end{equation*}
  and
  \begin{equation*}
    a_{\alpha,2,1} = \lim_{\alpha \to 0} \frac{
      \zeta(-1 - 2\alpha)\zeta(-1 - \alpha + p_{\alpha}) - \zeta(-1 - \alpha)\zeta(-1 - \alpha + 2p_{\alpha})
    }{
      \zeta(-1 - 2\alpha + p_{\alpha})\zeta(-1 - \alpha + 2p_{\alpha}) - \zeta(-1 - 2\alpha + 2p_{\alpha})\zeta(-1 - \alpha + p_{\alpha})
    }.
  \end{equation*}
\end{lemma}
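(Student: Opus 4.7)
The plan splits the lemma into two independent parts. The first, $a_{\alpha,0} = \alpha + \mathcal{O}(\alpha^{3})$, reduces to an elementary manipulation of Gamma and cosine identities. The second, $a_{\alpha,j} = a_{\alpha,j,1}\alpha + \mathcal{O}(\alpha^{2})$ for $j = 1, 2$, requires a more delicate L'H\^opital argument, since the explicit expressions in \eqref{eq:a1-a2-I-3} for $a_{\alpha,1}$ and $a_{\alpha,2}$ produce indeterminate $0/0$ ratios in the limit.

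For the first part, I would apply the Legendre duplication formula $\Gamma(2\alpha) = 2^{2\alpha - 1}\pi^{-1/2}\Gamma(\alpha)\Gamma(\alpha + 1/2)$ to cancel one factor of $\Gamma(\alpha)$ against those in the denominator of \eqref{eq:a0}, and then replace the remaining $\Gamma(\alpha)$ by $\Gamma(\alpha + 1)/\alpha$ to expose an explicit factor of $\alpha$. What remains is a product of the form $\alpha f(\alpha)$, where
$$f(\alpha) = \frac{2^{2\alpha}\Gamma(\alpha + 1/2)\cos(\pi\alpha)}{\sqrt{\pi}\,\Gamma(\alpha + 1)\cos^{2}(\pi\alpha/2)}$$
is real-analytic at $\alpha = 0$ with $f(0) = 1$. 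Computing the logarithmic derivative and evaluating at $\alpha = 0$ gives $(\log f)'(0) = 2\log 2 + \psi(1/2) - \psi(1) = 0$, using the classical values $\psi(1/2) = -\gamma - 2\log 2$ and $\psi(1) = -\gamma$. Thus $f'(0) = 0$, so $f(\alpha) = 1 + \mathcal{O}(\alpha^{2})$ and the first claim follows.

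For the second part I view each of \eqref{eq:a1-a2-I-3} as $a_{\alpha,j} = \pm a_{\alpha,0}\, R_{j}(\alpha)$ with $R_{j}$ a ratio of zeta products. At $\alpha = 0$ the two products in the numerator coincide, and likewise in the denominator, so both vanish. Differentiating in $\alpha$ and using the chain rule on $p_{\alpha}$, the key observation is that the dependence on $p_{\alpha}'(0)$ cancels between the two products, because the four zeta arguments pair up into the same two values at $\alpha = 0$. This yields
$$D'(0) = \zeta(-1 + p_{0})\zeta'(-1 + 2p_{0}) - \zeta(-1 + 2p_{0})\zeta'(-1 + p_{0}),$$
and analogous Wronskian-like expressions for the derivatives of the numerators. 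Combining L'H\^opital (so that $R_{j}(\alpha) = a_{\alpha,j,1} + \mathcal{O}(\alpha)$) with the first part gives $a_{\alpha,j} = (\alpha + \mathcal{O}(\alpha^{3}))(a_{\alpha,j,1} + \mathcal{O}(\alpha)) = a_{\alpha,j,1}\alpha + \mathcal{O}(\alpha^{2})$ as claimed.

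The main obstacle is verifying that $D'(0) \neq 0$, so that L'H\^opital actually produces a finite limit: this reduces to checking that the Wronskian of $\zeta$ at $-1 + p_{0}$ and $-1 + 2p_{0}$ is nonzero, which depends on the transcendental value $p_{0} \approx 1.426$ and has to be confirmed rigorously by enclosing $\zeta$ and $\zeta'$ at those arguments, a computer-assisted step that fits naturally into the rest of the paper. A secondary technical issue is that $p_{\alpha}$ itself is defined implicitly by \eqref{eq:p-alpha}, which degenerates to $1 = 1$ at $\alpha = 0$; one must therefore extract both the existence of $p_{0}$ and smoothness of $\alpha \mapsto p_{\alpha}$ from the next order of \eqref{eq:p-alpha}, though the precise value of $p_{\alpha}'(0)$ is unnecessary thanks to the cancellation noted above.
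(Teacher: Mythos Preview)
Your proposal is correct, and for the second part it is actually more careful than the paper's own proof, which simply says ``The expansions for \(a_{\alpha,1}\) and \(a_{\alpha,2}\) follow directly from Equation~\eqref{eq:a1-a2-I-3}.'' You rightly observe that the ratios \(R_{j}(\alpha)\) are of indeterminate \(0/0\) form at \(\alpha=0\), carry out the L'H\^opital differentiation, and note the pleasant cancellation of the \(p_{\alpha}'(0)\) terms; the paper leaves all of this implicit. Your identification of the residual obligation \(D'(0)\neq 0\) (and the regularity of \(p_{\alpha}\) near \(\alpha=0\)) is also well taken---in the paper these are absorbed into the computer-assisted framework of Appendix~\ref{sec:taylor-models-p-alpha}, where Taylor models of \(p_{\alpha}\) and the resulting ratios are computed rigorously, so the nonvanishing is verified numerically rather than argued symbolically.

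For the first part the two routes are genuinely different: the paper expands \(\Gamma(\alpha)=\alpha^{-1}+\gamma+\mathcal{O}(\alpha)\) and \(\Gamma(2\alpha)=(2\alpha)^{-1}+\gamma+\mathcal{O}(\alpha)\) directly in \eqref{eq:a0} and simplifies the resulting rational expression in \(\alpha^{-1}\), whereas you use the Legendre duplication formula to strip one \(\Gamma(\alpha)\) and then compute the logarithmic derivative of the remaining analytic factor. Your approach is slightly more structured (it isolates the factor \(\alpha\) cleanly and reduces the vanishing of the \(\alpha^{2}\) coefficient to the digamma identity \(\psi(1/2)-\psi(1)=-2\log 2\)), while the paper's is shorter and more direct. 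Both yield \(a_{\alpha,0}=\alpha+\mathcal{O}(\alpha^{3})\).
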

\begin{proof}
  The expansion for \(a_{\alpha,0}\) follows from
  Equation~\eqref{eq:a0} by using that
  \(\Gamma(\alpha) = \alpha^{-1} + \gamma + \mathcal{O}(\alpha)\),
  giving us
  \begin{equation*}
    a_{\alpha,0} = \frac{
      2\left((2\alpha)^{-1} + \gamma + \mathcal{O}(\alpha)\right)(1 + \mathcal{O}(\alpha^{2}))
    }{
      (\alpha^{-1} + \gamma + \mathcal{O}(\alpha))^{2}(1 + \mathcal{O}(\alpha^{2}))
    }
    = \frac{\alpha^{-1} + 2\gamma + \mathcal{O}(\alpha)}{\alpha^{-2} + 2\gamma\alpha^{-1} + \mathcal{O}(1)}
    = \alpha\frac{1 + 2\gamma\alpha + \mathcal{O}(\alpha^{2})}{1 + 2\gamma\alpha + \mathcal{O}(\alpha^{2})}
    = \alpha + \mathcal{O}(\alpha^{3}).
  \end{equation*}

  The expansions for \(a_{\alpha,1}\) and \(a_{\alpha,2}\) follows
  directly from Equation~\eqref{eq:a1-a2-I-3}.
\end{proof}
Using this we can get information about the asymptotic behavior of
\(u_{\alpha}\) and \(\Hop{\alpha}[u_{\alpha}]\).
\begin{lemma}
  \label{lemma:u0-expansion-alpha-I-3}
  For \(x \in (0, \pi]\) we have the following expansions at
  \(\alpha = 0\):
  \begin{align*}
    u_{\alpha}(x) &= 1 + b(x)\alpha + \mathcal{O}(\alpha^{2}),\\
    \Hop{\alpha}[u_{\alpha}](x) &= -\frac{1}{2} - b(x)\alpha + \mathcal{O}(\alpha^{2}),
  \end{align*}
  with
  \begin{equation*}
    b(x) = C_{1}(x) - \gamma + a_{\alpha,1,1}\tilde{C}_{1 + p_{0}}(x) + a_{\alpha,2,1}\tilde{C}_{1 + 2p_{0}}(x).
  \end{equation*}
\end{lemma}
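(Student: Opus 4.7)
The plan is to treat the \(j=0\) summand separately from the \(j=1,2\) summands, since only the former involves a Clausen function whose order crosses the critical value \(s=1\) as \(\alpha \to 0\). For \(j=1,2\) everything is analytic in \(\alpha\) at \(\alpha=0\) and the expansion follows from a direct product of Taylor series; the delicate part is the cancellation of two poles in the \(j=0\) term.

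First I would establish the Laurent expansion
\begin{equation*}
  \tilde{C}_{1-\alpha}(x) \;=\; \frac{1}{\alpha} + C_{1}(x) - \gamma + \mathcal{O}(\alpha)
\end{equation*}
for fixed \(x \in (0,\pi]\). To do this I would either use the representation \(\tilde{C}_{s}(x) = C_{s}(x) - \zeta(s)\) (valid by analytic continuation in \(s\) near \(s=1\)) together with the facts that \(C_{s}(x)\) is analytic in \(s\) at \(s=1\) with \(C_{1}(x) = -\log(2\sin(x/2))\) and \(\zeta(s) = (s-1)^{-1} + \gamma + \mathcal{O}(s-1)\), or equivalently expand via the formula \(C_{s}(x) = \Gamma(1-s)\sin(\pi s/2)|x|^{s-1} + \sum_{m\ge 0}(-1)^m \zeta(s-2m)x^{2m}/(2m)!\) and check that the poles of \(\Gamma(\alpha)\) and \(\zeta(1-\alpha)\) cancel in the difference \(C_{1-\alpha}(x) - \zeta(1-\alpha)\), leaving precisely the constant term \(C_{1}(x) - \gamma\). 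Combining this with \(a_{\alpha,0} = \alpha + \mathcal{O}(\alpha^{3})\) from Lemma~\ref{lemma:a-expansion-alpha-I-3} gives
\begin{equation*}
  a_{\alpha,0}\tilde{C}_{1-\alpha}(x) \;=\; 1 + \bigl(C_{1}(x) - \gamma\bigr)\alpha + \mathcal{O}(\alpha^{2}).
\end{equation*}

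For \(j=1,2\) the order \(1 - \alpha + jp_{\alpha}\) stays bounded away from \(1\) for \(\alpha\) in a neighbourhood of \(0\) (since \(p_{0} \approx 1.426\)), so the Clausen function \(\tilde{C}_{1-\alpha+jp_{\alpha}}(x)\) is smooth in \(\alpha\) and equals \(\tilde{C}_{1+jp_{0}}(x) + \mathcal{O}(\alpha)\). Multiplying by \(a_{\alpha,j} = a_{\alpha,j,1}\alpha + \mathcal{O}(\alpha^{2})\) and summing the three pieces gives
\begin{equation*}
  u_{\alpha}(x) \;=\; 1 + \bigl(C_{1}(x) - \gamma + a_{\alpha,1,1}\tilde{C}_{1+p_{0}}(x) + a_{\alpha,2,1}\tilde{C}_{1+2p_{0}}(x)\bigr)\alpha + \mathcal{O}(\alpha^{2}),
\end{equation*}
which is the claimed expansion with the stated \(b(x)\).

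The expansion of \(\mathcal{H}^{\alpha}[u_{\alpha}](x) = -\sum_{j=0}^{2} a_{\alpha,j}\tilde{C}_{1-2\alpha+jp_{\alpha}}(x)\) is obtained in exactly the same way, with the only change being in the \(j=0\) term, where the analogous Laurent expansion reads
\begin{equation*}
  \tilde{C}_{1-2\alpha}(x) \;=\; \frac{1}{2\alpha} + C_{1}(x) - \gamma + \mathcal{O}(\alpha),
\end{equation*}
so that \(a_{\alpha,0}\tilde{C}_{1-2\alpha}(x) = \tfrac{1}{2} + (C_{1}(x) - \gamma)\alpha + \mathcal{O}(\alpha^{2})\). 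The \(j=1,2\) contributions are identical to before, since \(1 - 2\alpha + jp_{\alpha} = 1 + jp_{0} + \mathcal{O}(\alpha)\) again. Collecting and negating the overall sign yields \(\mathcal{H}^{\alpha}[u_{\alpha}](x) = -\tfrac{1}{2} - b(x)\alpha + \mathcal{O}(\alpha^{2})\) with the same \(b(x)\).

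The only genuinely delicate step is the cancellation of the simple poles in \(\Gamma(\alpha)\cos(\pi\alpha/2)|x|^{-\alpha}\) and \(\zeta(1-\alpha)\) inside \(\tilde{C}_{1-\alpha}(x)\); everything else is a routine multiplication of convergent power series in \(\alpha\). A similar (slightly less singular) cancellation governs the \(j=0\) piece of \(\mathcal{H}^{\alpha}[u_{\alpha}]\).
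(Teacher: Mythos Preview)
Your proposal is correct and follows essentially the same approach as the paper: split \(\tilde{C}_{1-\alpha}(x)=C_{1-\alpha}(x)-\zeta(1-\alpha)\), use that \(C_{1-\alpha}(x)\) is regular at \(\alpha=0\) while \(\zeta(1-\alpha)=-\alpha^{-1}+\gamma+\mathcal{O}(\alpha)\), and multiply by \(a_{\alpha,0}=\alpha+\mathcal{O}(\alpha^{3})\); the \(j=1,2\) terms and the \(\Hop{\alpha}\) case are handled identically in both. The only minor difference is cosmetic---the paper multiplies \(a_{\alpha,0}\) into \(C_{1-\alpha}(x)\) and \(\zeta(1-\alpha)\) separately rather than first assembling the Laurent expansion of \(\tilde{C}_{1-\alpha}(x)\).
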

\begin{proof}
  For \(j \geq 1\) the Clausen functions are all finite at
  \(\alpha = 0\) and from Lemma~\ref{lemma:a-expansion-alpha-I-3} we
  therefore get
  \begin{align*}
    a_{\alpha,1}\tilde{C}_{1 - \alpha + p_{\alpha}}(x) &= a_{\alpha,1,1}\tilde{C}_{1 + p_{0}}(x)\alpha
                                                         + \mathcal{O}(\alpha^{2}),\\
    a_{\alpha,2}\tilde{C}_{1 - \alpha + 2p_{\alpha}}(x) &= a_{\alpha,2,1}\tilde{C}_{1 + 2p_{0}}(x)\alpha
                                                         + \mathcal{O}(\alpha^{2}),\\
    a_{\alpha,1}\tilde{C}_{1 - 2\alpha + p_{\alpha}}(x) &= a_{\alpha,1,1}\tilde{C}_{1 + p_{0}}(x)\alpha
                                                         + \mathcal{O}(\alpha^{2}),\\
    a_{\alpha,2}\tilde{C}_{1 - 2\alpha + p_{\alpha}}(x) &= a_{\alpha,2,1}\tilde{C}_{1 + 2p_{0}}(x)\alpha
                                                         + \mathcal{O}(\alpha^{2}).
  \end{align*}

  For \(j = 0\) the function \(\tilde{C}_{1 - \alpha}(x)\) has a
  singularity at \(\alpha = 0\). More precisely
  \begin{equation*}
    \tilde{C}_{1 - \alpha}(x) = C_{1 - \alpha}(x) - \zeta(1 - \alpha),
  \end{equation*}
  where for \(0 < x < 2\pi\) the factor \(C_{1 - \alpha}(x)\) is
  finite, but \(\zeta(1 - \alpha)\) has a singularity at
  \(\alpha = 0\), similarly for \(\tilde{C}_{1 - 2\alpha}(x)\). After
  multiplying with \(a_{\alpha,0}\) the singularity is removable. To
  begin with we have
  \begin{equation*}
    a_{\alpha,0}C_{1 - \alpha}(x) = C_{1}(x)\alpha + \mathcal{O}(\alpha^{2}).
  \end{equation*}
  Using the Laurent series of the zeta function we have
  \begin{equation*}
    \zeta(1 - \alpha) = -\alpha^{-1} + \sum_{n = 0}^{\infty} \frac{1}{n!}\gamma_{n}\alpha^{n}
    = -\alpha^{-1} + \gamma + \mathcal{O}(\alpha),
  \end{equation*}
  where \(\gamma_{n}\) is the Stieltjes constant and
  \(\gamma = \gamma_{0}\) is Euler's constant, from which we get
  \begin{equation*}
    a_{\alpha,0}\zeta(1 - \alpha)
    = (\alpha + \mathcal{O}(\alpha^{3}))(-\alpha^{-1} + \gamma + \mathcal{O}(\alpha))
    = -1 + \gamma\alpha + \mathcal{O}(\alpha^{2}).
  \end{equation*}
  Giving us
  \begin{equation*}
    a_{\alpha,0}\tilde{C}_{1 - \alpha}(x)
    = 1 + (C_{1}(x) - \gamma)\alpha + \mathcal{O}(\alpha^{2}).
  \end{equation*}
  In the same way we get
  \begin{equation*}
    a_{\alpha,0}\tilde{C}_{1 - 2\alpha}(x)
    = \frac{1}{2} + (C_{1}(x) - \gamma)\alpha + \mathcal{O}(\alpha^{2}).
  \end{equation*}

  Combining all the above gives us
  \begin{equation*}
    u_{\alpha}(x) = 1
    + (C_{1}(x) - \gamma + \tilde{C}_{1 + p_{0}}(x) + \tilde{C}_{1 + 2p_{0}}(x))\alpha
    + \mathcal{O}(\alpha^{2})
  \end{equation*}
  and
  \begin{equation*}
    \Hop{\alpha}[u_{\alpha}](x) = -\frac{1}{2}
    - (C_{1}(x) - \gamma + \tilde{C}_{1 + p_{0}}(x) + \tilde{C}_{1 + 2p_{0}}(x))\alpha
    + \mathcal{O}(\alpha^{2}),
  \end{equation*}
  from which the result follows.
\end{proof}

\subsection{Hybrid cases}
\label{sec:hybrid-cases}

While it is technically possible to use the construction from
Section~\ref{sec:construction-i_2} to handle the entire interval
\(I_{2}\) it is not computationally feasible. As \(\alpha\) gets close
to either \(-1\) or \(0\) the bounds that needs to be computed becomes
harder and harder to handle.

For \(\alpha\) near \(0\) it is mainly bounding \(\delta_{\alpha}\)
that becomes hard to deal with. It goes to zero as \(\alpha \to 0\),
which means that absolute errors in the bound that are negligible when
\(\delta_{\alpha}\) is larger quickly grow to become very large
relative errors.

Near \(\alpha = -1\) it is mainly the behavior of \(F_{\alpha}(x)\)
and \(\mathcal{T}_{\alpha}(x)\) (from Equation~\eqref{eq:mathcal-T})
around \(x = 0\) that become problematic. This is related to the
powers for the terms \(\nu_{\alpha}|x|^{-\alpha}\) and
\(\mathcal{O}(|x|^{p})\) in Theorem~\ref{thm:main} getting closer and
closer to each other, giving us less space to work with. Another issue
is, as mentioned in Section~\ref{sec:construction-i_1}, that the two
Clausen functions \(a_{\alpha,0}\tilde{C}_{1 - \alpha}\) and
\(a_{\alpha,1}\tilde{C}_{1 - \alpha + p_{\alpha}}\) grow quickly in
opposite directions as \(\alpha\) gets closer to \(-1\) and as a
result there are large cancellations to handle.

For \(\alpha\) near \(0\) we use the approximation from
Equation~\eqref{eq:u0-I-3}, but instead of using expansions centered
at \(\alpha = 0\) we use expansions centered at \(\alpha\) (or the
midpoint, when computing with intervals). Compared to the case
\(\alpha = 0\) there are no removable singularities to handle, which
makes the process simpler.

For \(\alpha\) near \(-1\) we use the ideas from
Section~\ref{sec:construction-i_1} for handling \(\alpha \in I_{1}\).
Instead of using
\begin{equation*}
  u_{\alpha}(x) = \sum_{j = 0}^{N_{\alpha,0}} a_{\alpha,j}\tilde{C}_{1 - \alpha + jp_{\alpha}}(x)
  + \sum_{n = 1}^{N_{\alpha,1}} b_{\alpha,n}(\cos(nx) - 1),
\end{equation*}
from Equation~\eqref{eq:u0} we modify it in the same spirit as
Equation~\eqref{eq:u0-I-1}. More precisely we use
\begin{equation*}
  u_{\alpha}(x) = a_{\alpha,0}(\tilde{C}_{1 - \alpha}(x) - \tilde{C}_{1 - \alpha + p_{\alpha}}(x))
  + \hat{a}_{\alpha,1}\tilde{C}_{1 - \alpha + p_{\alpha}}(x)
  + \sum_{j = 2}^{N_{\alpha,0}} a_{\alpha,j}\tilde{C}_{1 - \alpha + jp_{\alpha}}(x)
  + \sum_{n = 1}^{N_{\alpha,1}} b_{\alpha,n}(\cos(nx) - 1).
\end{equation*}
If we take \(\hat{a}_{\alpha,1} = a_{\alpha,0} + a_{\alpha,1}\) this
is equivalent to Equation~\eqref{eq:u0-I-1} The benefit with this
approach is that we don't have to use an enclosure of \(a_{\alpha,0}\)
in \(\hat{a}_{\alpha,1}\), but can take a numerical approximation
instead.

In both cases we make adjustments to how the bounds are computed, the
details are given in the relevant sections.

\section{Choice of \(w_{\alpha}\)}
\label{sec:choice-weight}

In the above section we discussed how to construct the approximation
\(u_{\alpha}\) for the ansatz \eqref{eq:main-ansatz}. We here discuss
the choice of the weight \(w_{\alpha}\) in the ansatz. To get the
statement in Theorem~\ref{thm:main} we need
\(w_{\alpha}(x) = \mathcal{O}(|x|^{p})\) for some \(p\) satisfying
\(-\alpha < p \leq 1\). The natural choice is \(w_{\alpha}(x) = |x|\),
however this turns out to not work for all values of \(\alpha\).

The main obstruction when choosing \(w_{\alpha}\) is its effect on the
value of \(D_{\alpha}\). For Proposition~\ref{prop:contraction} to
apply we require that \(D_{\alpha} < 1\), if we take
\(w_{\alpha}(x) = |x|\) this is not always the case. Using the
procedure described in Section~\ref{sec:analysis-T}, in particular
lemma~\ref{lemma:evaluation-U-I-1}, we can compute a lower bound of
\(D_{\alpha}\) by computing \(\mathcal{T}_{\alpha}(0)\), introduced in
\eqref{eq:mathcal-T} and \eqref{eq:D}. A plot of
\(\mathcal{T}_{\alpha}(0)\) as a function of \(\alpha\), using
\(w_{\alpha}(x) = |x|\), is given in Figure~\ref{fig:T-alpha-0}. From
this plot we can see that with the weight \(w_{\alpha}(x) = |x|\) we
have \(D_{\alpha} > 1\) for \(\alpha\) between \(-1\) and some value
slightly smaller than \(-0.5\), in which case
Proposition~\ref{prop:contraction} won't apply.

\begin{figure}
  \centering
  \includegraphics[width=0.5\textwidth]{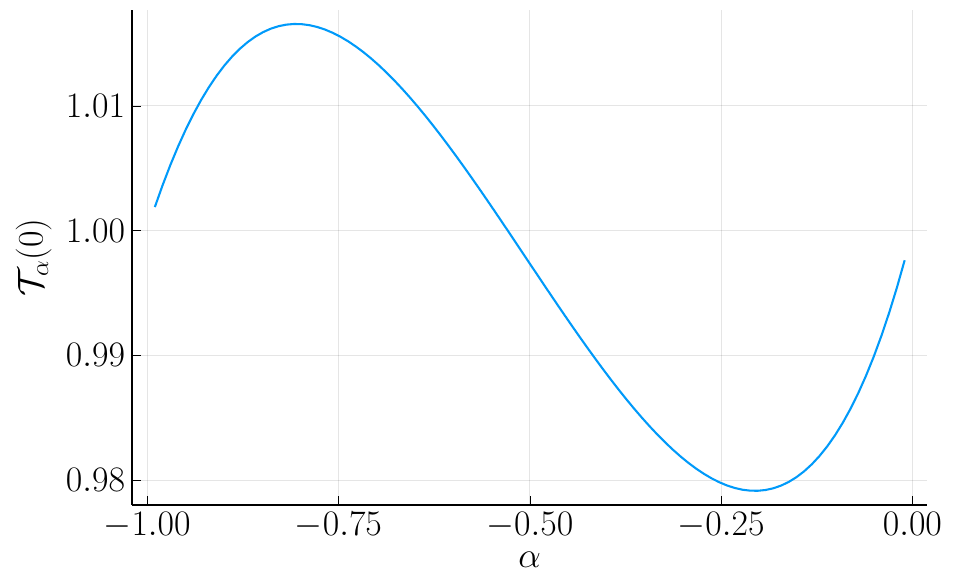}
  \caption{Value of \(\mathcal{T}_{\alpha}(0)\), defined by
    \eqref{eq:mathcal-T}, as a function of \(\alpha\) using the weight
    \(w_{\alpha}(x) = |x|\). This gives a lower bound of
    \(D_{\alpha}\) and since we need \(D_{\alpha} < 1\) it means that
    this choice of weight does not work for all values of \(\alpha\).}
  \label{fig:T-alpha-0}
\end{figure}

By taking \(w_{\alpha}(x) = |x|^{p}\) with \(-\alpha < p < 1\) the
value of \(D_{\alpha}\) can be made smaller than \(1\), so that
Proposition~\ref{prop:contraction} applies. The precise value of \(p\)
to use depends on \(\alpha\) and is made more precise in
Section~\ref{sec:bounds-I-2}.

The limits \(\alpha \to -1\) and \(\alpha \to 0\) both require extra
attention. For \(\alpha \to 0\) with the weight
\(w_{\alpha}(x) = |x|\) the value of \(D_{\alpha}\) tends to \(1\).
However, in this case \(\delta_{\alpha}\) tends to \(0\) and by
controlling the rate of convergence of \(D_{\alpha}\) and
\(\delta_{\alpha}\) it is still possible to apply
Proposition~\ref{prop:contraction}, see Section~\ref{sec:bounds-I-3}
for more details.

For \(\alpha \to -1\) the situation is more complicated. It is not
possible to use the weight \(w_{\alpha}(x) = |x|\) because
\(D_{\alpha}\) would be strictly greater than \(1\) close to
\(\alpha = -1\). It is also not possible to use the weight
\(w_{\alpha}(x) = |x|^{p}\) with \(p\) satisfying \(-\alpha < p < 1\)
since in the limit we would have need to have \(p \to 1\), and again
we find that \(D_{\alpha}\) is greater than \(1\). A natural choice
would be to add a log-factor,
\(w_{\alpha}(x) = |x|\log(1 + 1 / |x|)\). Lengthy calculations however
shows that this would not work either. Instead, we have to take
\(w_{\alpha}(x)\) to behave like \(|x|^{p}\log(1 + 1 / |x|)\) with
\(p < 1\) and \(p \to 1\) as \(\alpha \to -1\). More precisely we take
\(w_{\alpha}(x) = |x|^{(1 - \alpha) / 2}\log(2e + 1 / |x|)\), the
constant \(2e\) does not change the asymptotic behavior close to
\(x = 0\) but does change the non-asymptotic behavior in a way that
turns out to make it slightly easier to satisfy the required
inequality in Proposition~\ref{prop:contraction}. That we have to take
such a complicated weight, compared to just
\(w_{\alpha}(x) = |x|^{p}\), makes the analysis of \(D_{\alpha}\) in
Section~\ref{sec:analysis-T} significantly harder.

For the hybrid cases we mimic what happens at the endpoints.
Summarizing we have the following choice for \(w_{\alpha}\):
\begin{itemize}
\item For \(\alpha \in I_{1}\) we take
  \(w_{\alpha}(x) = |x|^{(1 - \alpha) / 2}\log(2e + 1 / |x|)\).
\item For \(\alpha \in I_{2}\) we have three cases
  \begin{itemize}
  \item Near \(\alpha = -1\) we take
    \(w_{\alpha}(x) = |x|^{p}\log(2e + 1 / |x|)\) with
    \(-\alpha < p < 1\).
  \item Near \(\alpha = 0\) we take \(w_{\alpha}(x) = |x|\)
  \item In between we take \(w_{\alpha}(x) = |x|^{p}\) with
    \(-\alpha < p < 1\).
  \end{itemize}
  The precise choice of \(p\) as a function of \(\alpha\) is given in
  Section~\ref{sec:bounds-I-2}.
\item For \(\alpha \in I_{3}\) we take \(w_{\alpha}(x) = |x|\).
\end{itemize}

\section{Proof strategy}
\label{sec:proof-strategy}
The main part of the proof of Theorem~\ref{thm:main} is the
computation of the bounds of \(n_{\alpha}\), \(\delta_{\alpha}\) and
\(D_{\alpha}\) required to apply Proposition~\ref{prop:contraction}.
Explicit bounds of these values are given in
Section~\ref{sec:bounds-for-values}. In this section we describe the
general procedure for computing these bounds. The details for the
different cases are given in Sections~\ref{sec:evaluation}
to~\ref{sec:analysis-T}.

The values we want to bound are given by
\begin{equation*}
  n_{\alpha} = \sup_{x \in [0, \pi]} |N_{\alpha}(x)|,\quad
  \delta_{\alpha} = \sup_{x \in [0, \pi]} |F_{\alpha}(x)|,\quad
  D_{\alpha} = \sup_{x \in [0, \pi]} |\mathcal{T}_{\alpha}(x)|.
\end{equation*}
With \(N_{\alpha}\) and \(F_{\alpha}\) as in \eqref{eq:N} and
\eqref{eq:F} in Section~\ref{sec:reduct-fixed-point} and
\(\mathcal{T}_{\alpha}\) as in \eqref{eq:mathcal-T} in
Section~\ref{sec:analysis-T}. At \(x = 0\) the functions all have
removable singularities, for small values of \(x\) the functions
therefore need extra care when evaluating them. For that reason the
computation of the supremum is split into two parts, one on the
interval \([0, \epsilon]\) and one on the interval
\([\epsilon, \pi]\), for some \(\epsilon\) depending on \(\alpha\) and
the function. On the interval \([0, \epsilon]\) we use a method of
evaluation which takes into account the removable singularity.

To enclose the supremum on the intervals \([0, \epsilon]\) and
\([\epsilon, \pi]\) we make use of methods from interval arithmetic
and rigorous numerics. In this section we give a brief description of
these methods and also discuss what we require of \(N_{\alpha}\),
\(F_{\alpha}\) and \(\mathcal{T}_{\alpha}\) to be able to apply these
methods.

\subsection{Enclosing the supremum}
\label{sec:enclosing-supremum}
Consider the general problem of enclosing the maximum of \(f\) on some
interval \(I\) to some predetermined tolerance. Assume that we can
evaluate \(f\) using interval arithmetic, i.e.\ given any interval
\(\inter{x} \subseteq I\) we can compute an interval enclosing the set
\(f(\inter{x}) = \{f(x): x \in \inter{x}\}\). The main idea is to
iteratively bisect the interval \(I\) into smaller and smaller
subintervals. At every iteration we compute an enclosure of \(f\) on
each subinterval. From these enclosures a lower bound of the maximum
can be computed. We then discard all subintervals for which the
enclosure is less than the lower bound of the maximum, the maximum
cannot be attained there. For the remaining subintervals we check if
their enclosure satisfies the required tolerance, in that case we
don't bisect them further. If there are any subintervals left we
bisect them and continue with the next iteration. In the end, either
when there are no subintervals left to bisect or we have reached some
maximum number of iterations (to guarantee that the procedure
terminates), we return the maximum of all subintervals that were not
discarded. This is guaranteed to give an enclosure of the maximum of
\(f\) on the interval.

If we are able to compute Taylor expansions of the function \(f\) we
can improve the performance of this procedure significantly (see
e.g.~\cite{dahne20_comput_tight_enclos_laplac_eigen,
  dahne21_count_to_paynes_nodal_line} where a similar approach is
used). Consider a subinterval \(I_{i}\), instead of computing an
enclosure of \(f(I_{i})\) we compute a Taylor polynomial \(P\) at the
midpoint and an enclosure \(R\) of the remainder term such that
\(f(x) \in P(x) + R\) for \(x \in I_{i}\). We then have
\begin{equation}
  \label{eq:taylor-bound}
  \sup_{x \in I_{i}} f(x) \in \sup_{x \in I_{i}} P(x) + R.
\end{equation}
To compute \(\sup_{x \in I_{i}} P(x)\) we isolate the roots of \(P'\)
on \(I_{i}\) and evaluate \(P\) on the roots as well as the endpoints
of the interval. In practice the computation of \(R\) involves
computing an enclosure of the Taylor expansion of \(f\) on the full
interval \(I_{i}\). Since this includes the derivative we can as an
extra optimization check if the derivative is non-zero, in which case
\(f\) is monotone, and it is enough to evaluate \(f\) on either the
left or the right endpoint of \(I_{i}\), depending on the sign of the
derivative.

The above procedures can easily be adapted to instead compute the
minimum of \(f\) on the interval, joining them together we can thus
compute the extrema on the interval. In some cases we don't care about
computing an enclosure of the maximum, but only to prove that it is
bounded by some value. Instead of using a tolerance we then discard
any subintervals for which the enclosure of the maximum is less than
the bound.

\subsection{Evaluation of \(N_{\alpha}\), \(F_{\alpha}\) and \(\mathcal{T}_{\alpha}\) on intervals}
\label{sec:evaluation-all}
To be able to use the above method we need to be able to evaluate
\(N_{\alpha}\), \(F_{\alpha}\) and \(\mathcal{T}_{\alpha}\) using
interval arithmetic. That is, given some interval
\(\inter{x} \subseteq [0, \pi]\) we need to be able to compute
enclosures of \(N_{\alpha}(\inter{x})\), \(F_{\alpha}(\inter{x})\) and
\(\mathcal{T}_{\alpha}(\inter{x})\) respectively.

In Sections~\ref{sec:evaluation} to~\ref{sec:analysis-T} we discuss
how to evaluate these functions for (non-interval) \(x \in [0, \pi]\).
This then needs to be extended to also work for intervals
\(\inter{x} \subseteq (0, \pi)\). In general this is straight forward
and handled directly by just replacing the point \(x \in [0, \pi]\)
with the interval \(\inter{x} \subseteq (0, \pi)\) in the
computations, the operations on the interval are then done using
Arb~\cite{Johansson2013arb}. In some cases we can make use of
monotonicity properties of the function. For example when computing
the root \(r_{\alpha,x}\) introduced in Lemma~\ref{lemma:I-0-x} for an
interval \(\inter{x} = [\lo{x}, \hi{x}]\) we can use that it is
decreasing in \(x\), giving us
\(r_{\alpha,\inter{x}} = [r_{\alpha,\hi{x}}, r_{\alpha,\lo{x}}].\).

To cover all values of \(\alpha\) in \((-1, 0)\) we need to use
interval values also for \(\alpha\), and not only for \(x\). Similar
to \(x\) this is in general done by just replacing
\(\alpha \in (-1, 0)\) with \(\boldsymbol{\alpha} \subseteq (-1, 0)\)
in the computations.

To be able to use the improved version of the method for enclosing the
supremum discussed in Section~\ref{sec:enclosing-supremum} we need to
be able to compute Taylor expansions of \(N_{\alpha}\), \(F_{\alpha}\)
and \(\mathcal{T}_{\alpha}\). This is possible in many, but not all
cases. For \(x \in [\epsilon, \pi]\) this is in general straight
forward for \(N_{\alpha}\) and \(F_{\alpha}\), for example the methods
in Appendix~\ref{sec:expansion-x} can be used for computing Taylor
expansions of the involved Clausen functions, but not for
\(\mathcal{T}_{\alpha}\).

For \(x \in [0, \epsilon]\) the functions are in general expanded at
\(x = 0\) and you have to be slightly careful when computing Taylor
expansions from this expansion. For example, for
\(\frac{\tilde{C}_{1 - \alpha}(x)}{x^{\alpha}}\) we have the expansion
\begin{equation*}
  \frac{\tilde{C}_{1 - \alpha}(x)}{x^{\alpha}}
  = \Gamma(\alpha)\sin\left(\frac{\pi}{2}(1 - \alpha)\right)
  + \sum_{m = 1}^{\infty} (-1)^{m}\zeta(1 - \alpha - 2m)\frac{x^{2m + \alpha}}{(2m)!}.
\end{equation*}
When computing it we need to bound the tail. If we let \(R\) be an
interval bounding the remainder term given in
Lemma~\ref{lemma:clausen-tails} on the interval \([0, \epsilon]\) then
for \(x \in [0, \epsilon]\) we have
\begin{equation*}
  \frac{\tilde{C}_{1 - \alpha}(x)}{x^{\alpha}}
  \in \Gamma(\alpha)\sin\left(\frac{\pi}{2}(1 - \alpha)\right)
  + \sum_{m = 1}^{M - 1} (-1)^{m}\zeta(1 - \alpha - 2m)\frac{x^{2m + \alpha}}{(2m)!}
  + Rx^{2M + \alpha}.
\end{equation*}
If we consider \(R\) to be fixed then it is straight forward to
compute Taylor expansions in \(x\) from this representation. This will
in general not give us a Taylor expansion of
\(\frac{\tilde{C}_{1 - \alpha}(x)}{x^{\alpha}}\) since we have lost
the remainder terms dependence on \(x\). When enclosing the supremum
this is however not an issue, we can apply the method on
\begin{equation*}
  \Gamma(\alpha)\sin\left(\frac{\pi}{2}(1 - \alpha)\right)
  + \sum_{m = 1}^{M - 1} (-1)^{m}\zeta(1 - \alpha - 2m)\frac{x^{2m + \alpha}}{(2m)!}
  + Rx^{2M + \alpha}
\end{equation*}
directly.

\section{Evaluation of \(u_{\alpha}\) and \(\Hop{\alpha}[u_{\alpha}]\)}
\label{sec:evaluation}
To compute bounds for \(D_{\alpha}\), \(\delta_{\alpha}\) and
\(n_{\alpha}\) we need to be able to compute enclosures of
\(u_{\alpha}(x)\) and \(\Hop{\alpha}[u_{\alpha}](x)\). In this section
we describe how this is done for the approximations used on the
intervals \(I_{1}\), \(I_{2}\) and \(I_{3}\) respectively.

\subsection{Evaluation of \(u_{\alpha}\) and \(\Hop{\alpha}[u_{\alpha}]\) for \(I_2\)}
\label{sec:evaluation-I-2}
In this case the expressions for \(u_{\alpha}\) and
\(\Hop{\alpha}[u_{\alpha}]\) are given by \eqref{eq:u0} and
\eqref{eq:Hu0}. The parameter \(a_{\alpha,0}\) is given by
\eqref{eq:a0}, which is easily enclosed for a given \(\alpha\), and
\(p_{\alpha}\), \(a_{\alpha,j}\) for \(j \geq 1\) and \(b_{\alpha,n}\)
are fixed, numerically determined, numbers. Evaluation is straight
forward, using the approach in Appendix~\ref{sec:computing-clausen} to
enclose the Clausen functions.

\subsection{Evaluation of \(u_{\alpha}\) and \(\Hop{\alpha}[u_{\alpha}]\) for \(I_1\)}
\label{sec:evaluation-I-1}
In this case the expressions for \(u_{\alpha}\) and
\(\Hop{\alpha}[u_{\alpha}]\) are given by \eqref{eq:u0-I-1} and
\eqref{eq:Hu0-I-1}. Most of the parameters are fixed, numerically
determined, numbers, these parts of the functions are straight forward
to enclose. The only problematic part are the leading terms,
\begin{equation*}
  a_{\alpha,0}(\tilde{C}_{1 - \alpha}(x) - \tilde{C}_{2 + (1 + \alpha)^{2} / 2}(x))
  \text{ and }
  -a_{\alpha,0}(\tilde{C}_{1 - 2\alpha}(x) - \tilde{C}_{2 - \alpha + (1 + \alpha)^{2} / 2}(x)),
\end{equation*}
which both have removable singularities at \(\alpha = -1\). They are
handled by rewriting them as
\begin{equation*}
  a_{\alpha,0}(\tilde{C}_{1 - \alpha}(x) - \tilde{C}_{2 + (1 + \alpha)^{2} / 2}(x))
  = ((\alpha + 1)a_{\alpha,0}) \cdot \frac{\tilde{C}_{1 - \alpha}(x) - \tilde{C}_{2 + (1 + \alpha)^{2} / 2}(x)}{\alpha + 1},
\end{equation*}
as well as
\begin{equation*}
  -a_{\alpha,0}(\tilde{C}_{1 - 2\alpha}(x) - \tilde{C}_{2 - \alpha + (1 + \alpha)^{2} / 2}(x))
  = -((\alpha + 1)a_{\alpha,0}) \cdot \frac{\tilde{C}_{1 - 2\alpha}(x) - \tilde{C}_{2 - \alpha + (1 + \alpha)^{2} / 2}(x)}{\alpha + 1}
\end{equation*}
and using the approach in Appendix~\ref{sec:removable-singularities}
to handle the removable singularities.

\subsection{Evaluation of \(u_{\alpha}\) and \(\Hop{\alpha}[u_{\alpha}]\) for \(I_3\)}
\label{sec:evaluation-I-3}
In this case the expressions for \(u_{\alpha}\) and
\(\Hop{\alpha}[u_{\alpha}]\) are, as for \(I_{2}\), given by
\eqref{eq:u0-I-3} and \eqref{eq:Hu0-I-3}. The parameters
\(p_{\alpha}\) and \(a_{\alpha,j}\) are not fixed numbers, but all
depend on \(\alpha\). Moreover, it is not enough to just compute
enclosures of \(u_{\alpha}(x)\) and \(\Hop{\alpha}[u_{\alpha}]\), we
need their expansions in \(\alpha\). For this we make use of Taylor
models~\cite{Joldes2011}, which we give a brief introduction of in
Appendix~\ref{sec:taylor-models}, centered at \(\alpha = 0\). For
\(a_{\alpha,0}\), \(a_{\alpha,1}\) and \(a_{\alpha,2}\) we have
explicit formulas in \eqref{eq:a0} and \eqref{eq:a1-a2-I-3} and Taylor
models can be computed directly from them, there are several removable
singularities which are handled as described in
Appendix~\ref{sec:removable-singularities}. The value of
\(p_{\alpha}\) is given implicitly as a function of \(\alpha\) by
\eqref{eq:p-alpha}, from this we can compute a Taylor model using the
approach described in Appendix~\ref{sec:taylor-models-p-alpha}. Once
these Taylor models are computed it is straight forward to compute
Taylor models of \eqref{eq:u0-I-3} and \eqref{eq:Hu0-I-3}.

\subsection{Evaluation of \(u_{\alpha}\) and \(\Hop{\alpha}[u_{\alpha}]\) in hybrid cases}
\label{sec:evaluation-hybrid}
For \(\alpha\) near \(-1\) the only part that needs special care is
the evaluation of
\(\tilde{C}_{1 - \alpha}(x) - \tilde{C}_{1 - \alpha + p_{\alpha}}(x)\)
for \(u_{\alpha}\) and
\(\tilde{C}_{1 - 2\alpha}(x) - \tilde{C}_{1 - 2\alpha +
  p_{\alpha}}(x)\) for \(\Hop{\alpha}[u_{\alpha}]\), in both cases due
to the large cancellations between the two terms. For interval
arguments \(\inter{x}\) and \(\inter{s}\) with midpoints \(x_{0}\) and
\(s_{0}\) respectively we compute
\(\tilde{C}_{\inter{s}}(\inter{x}) - \tilde{C}_{\inter{s} +
  p_{\alpha}}(\inter{x})\) using a midpoint approximation as
\begin{equation*}
  \tilde{C}_{\inter{s}}(\inter{x}) - \tilde{C}_{\inter{s} + p_{\alpha}}(\inter{x})
  = (\tilde{C}_{s_{0}}(x_{0}) - \tilde{C}_{s_{0} + p_{\alpha}}(x_{0}))
  + (\inter{s} - s_{0})(\tilde{C}_{\inter{s}}^{(1)}(x_{0}) - \tilde{C}_{\inter{s} + p_{\alpha}}^{(1)}(x_{0}))
  - (\inter{x} - x_{0})(\tilde{S}_{\inter{s} - 1}(\inter{x}) - \tilde{S}_{\inter{s} + p_{\alpha} - 1}(\inter{x})).
\end{equation*}

For \(\alpha\) near \(0\) we follow exactly the same approach as for
\(\alpha \in I_{3}\), the only difference being that the Taylor models
are centered at \(\alpha\) (or the midpoint of \(\boldsymbol{\alpha}\)
in the case of intervals) instead of at \(0\).

\section{Division by \(u_{\alpha}\)}
\label{sec:inv-u0}
The computations of the values \(n_{\alpha}\), \(\delta_{\alpha}\) and
\(D_{\alpha}\) all involve functions given by fractions where the
denominator contains \(u_{\alpha}\). For \(x > 0\) this is not an
issue since the denominators are non-zero, we can enclose the
numerator and denominator separately and then perform the division.
For \(x = 0\) we get a removable singularity and to handle that we
need to understand the asymptotic behavior of \(u_{\alpha}(x)\) at
\(x = 0\). For this we start of with the following, straight forward,
lemma
\begin{lemma}
  \label{lemma:asymptotic-inv-u0}
  For \(-1 < \alpha < 0\) the function
  \(\frac{|x|^{-\alpha}}{u_{\alpha}(x)}\) is non-zero and bounded at
  \(x = 0\).
\end{lemma}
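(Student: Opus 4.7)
The plan is to identify the leading term of $u_{\alpha}(x)$ as $x\to 0$ from the explicit construction in Section~\ref{sec:construction} and then observe that this leading term is a nonzero multiple of $|x|^{-\alpha}$. First I would treat the three cases uniformly: for $\alpha\in I_{2}$ Lemma~\ref{lemma:u0-asymptotic} gives
\[
u_{\alpha}(x)=a_{\alpha,0}^{0}|x|^{-\alpha}+\mathcal{O}(|x|^{-\alpha+p_{\alpha}}),\qquad a_{\alpha,0}^{0}=\Gamma(\alpha)\cos\!\left(\tfrac{\pi}{2}\alpha\right)a_{\alpha,0},
\]
and the same expansion governs $\alpha\in I_{3}$ since \eqref{eq:u0-I-3} is just \eqref{eq:u0} with $N_{\alpha,0}=2$ and $N_{\alpha,1}=0$. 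For $\alpha\in I_{1}$ Lemma~\ref{lemma:u0-asymptotic-I-1} gives
\[
u_{\alpha}(x)=a_{\alpha,0}\Gamma(\alpha)\cos\!\left(\tfrac{\pi}{2}\alpha\right)|x|^{-\alpha}+\mathcal{O}\!\left(|x|^{1+(1+\alpha)^{2}/2}\right)+\sum_{j=1}^{N_{\hat{\alpha},0}}\hat{a}_{\alpha,j}^{0}|x|^{-\hat{\alpha}+jp_{\hat{\alpha}}}+\mathcal{O}(x^{2}),
\]
and on $(-1,0)$ we have $1+(1+\alpha)^{2}/2>-\alpha$ because $1+\alpha+(1+\alpha)^{2}/2>0$. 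In all three cases the leading coefficient collapses, by the choice \eqref{eq:a0}, to
\[
a_{\alpha,0}\Gamma(\alpha)\cos\!\left(\tfrac{\pi}{2}\alpha\right)=\frac{2\Gamma(2\alpha)\cos(\pi\alpha)}{\Gamma(\alpha)\cos\!\left(\tfrac{\pi}{2}\alpha\right)}=\nu_{\alpha},
\]
the coefficient identified in Lemma~\ref{lemma:asymptotic-coefficient}.

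Next I would verify that $\nu_{\alpha}\neq 0$ for every $\alpha\in(-1,0)$. The denominator $\Gamma(\alpha)\cos(\tfrac{\pi}{2}\alpha)$ is finite and nonzero there since $\cos(\tfrac{\pi}{2}\alpha)>0$ on $(-1,0)$ and $\Gamma$ has no poles on that interval. The numerator $\Gamma(2\alpha)\cos(\pi\alpha)$ is finite and nonzero away from $\alpha=-1/2$, and at $\alpha=-1/2$ the simple pole of $\Gamma(2\alpha)$ is cancelled exactly by the simple zero of $\cos(\pi\alpha)$, leaving a finite nonzero limit. Thus $\nu_{\alpha}$ extends continuously across $\alpha=-1/2$ and is nowhere zero on $(-1,0)$.

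Combining the two ingredients, as $x\to 0^{+}$,
\[
\frac{|x|^{-\alpha}}{u_{\alpha}(x)}=\frac{1}{\nu_{\alpha}+o(1)}\longrightarrow\frac{1}{\nu_{\alpha}},
\]
a finite nonzero limit; extending the ratio by this value at $x=0$ makes it continuous and nonzero in a neighborhood of the origin, which is the content of the lemma.

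The one place I expect any bookkeeping is the $I_{1}$ construction, where the leading asymptotic is hidden inside the compensated pair $a_{\alpha,0}(\tilde{C}_{1-\alpha}-\tilde{C}_{2+(1+\alpha)^{2}/2})$ and one has to check that none of the auxiliary Clausen terms $\hat{a}_{\alpha,j}^{0}|x|^{-\hat{\alpha}+jp_{\hat{\alpha}}}$ or the trigonometric contributions produces an exponent smaller than $-\alpha$. The two required comparisons, $1+(1+\alpha)^{2}/2>-\alpha$ for the compensating Clausen and $-\hat{\alpha}+jp_{\hat{\alpha}}>-\alpha$ on $I_{1}$ (where $\alpha<\hat{\alpha}$ and $p_{\hat{\alpha}}>0$), are both elementary, so the argument goes through cleanly in all three regimes.
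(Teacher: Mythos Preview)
Your proof is correct and follows essentially the same approach as the paper: extract the leading term $a_{\alpha,0}\Gamma(\alpha)\cos(\tfrac{\pi}{2}\alpha)\,|x|^{-\alpha}$ from the asymptotic expansion of $u_{\alpha}$, identify the coefficient as $\nu_{\alpha}$, and verify it is finite and nonzero on $(-1,0)$ by handling the removable singularity at $\alpha=-\tfrac12$. The paper's proof is terser---it appeals once to ``the asymptotic expansion of $u_{\alpha}(x)$'' without splitting into the $I_{1}$, $I_{2}$, $I_{3}$ constructions, and handles the $\alpha=-\tfrac12$ point by the explicit rewrite $2\Gamma(2\alpha)\cos(\pi\alpha)=\frac{\Gamma(2\alpha+2)}{\alpha}\cdot\frac{\sin(\tfrac{\pi}{2}(2\alpha+1))}{2\alpha+1}$ rather than your pole/zero cancellation argument---but the content is the same, and your extra bookkeeping for the $I_{1}$ auxiliary terms is a reasonable precaution the paper leaves implicit.
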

\begin{proof}
  From the asymptotic expansion of \(u_{\alpha}(x)\) at \(x = 0\) we
  have
  \begin{equation*}
    u_{\alpha}(x) =
    a_{\alpha,0}\Gamma(\alpha)\cos\left(\frac{\pi}{2}\alpha\right)|x|^{-\alpha} + o(|x|^{-\alpha}).
  \end{equation*}
  This gives us
  \begin{equation*}
    \frac{|x|^{-\alpha}}{u_{\alpha}(x)} =
    \frac{1}{a_{\alpha,0}\Gamma(\alpha)\cos\left(\frac{\pi}{2}\alpha\right) + o(1)}.
  \end{equation*}
  This is non-zero and bounded as long as
  \begin{equation*}
    a_{\alpha,0}\Gamma(\alpha)\cos\left(\frac{\pi}{2}\alpha\right)
  \end{equation*}
  is non-zero and bounded. We get
  \begin{equation*}
    a_{\alpha,0}\Gamma(\alpha)\cos\left(\frac{\pi}{2}\alpha\right) = \frac{2\Gamma(2\alpha)\cos\left(\pi\alpha\right)}{\Gamma(\alpha)\cos\left(\frac{\pi}{2}\alpha\right)}
  \end{equation*}
  The denominator is easily seen to be non-zero and bounded for
  \(\alpha \in (-1, 0)\). For the numerator we can rewrite it as
  \begin{equation*}
    2\Gamma(2\alpha)\cos\left(\pi\alpha\right)
    = \frac{2\Gamma(2\alpha + 2)\cos\left(\pi\alpha\right)}{2\alpha(2\alpha + 1)}
    = \frac{\Gamma(2\alpha + 2)}{\alpha}\frac{\sin\left(\frac{\pi}{2}(2\alpha + 1)\right)}{(2\alpha + 1)},
  \end{equation*}
  which is bounded and non-zero.
\end{proof}
Computing \(\frac{|x|^{-\alpha}}{u_{\alpha}(x)}\) near \(x = 0\) is
straight forward. Compute the expansion of \(u_{\alpha}(x)\) at
\(x = 0\) and subtract \(-\alpha\) from the exponents of all terms in
the expansion, this gives an expansion where all exponents are
non-negative. Evaluate the expansion at \(x\) and then compute the
inverse.

While Lemma~\ref{lemma:asymptotic-inv-u0} is valid for the full
interval \((-1, 0)\) the bound is not uniform in \(\alpha\). For
\(\alpha \to 0\) it converges to \(1\), and we are fine, but for
\(\alpha \to -1\) the value of \(\frac{x^{-\alpha}}{u_{\alpha}(x)}\)
goes to zero, and we need an understanding of the rate in which it
does so. For this we use the following, modified version of the lemma
\begin{lemma}
  \label{lemma:asymptotic-inv-u0-I-1}
  For \(\alpha \in I_{1}\) the function
  \begin{equation*}
    \frac{\Gamma(1 + \alpha)|x|^{-\alpha}(1 - |x|^{1 + \alpha + (1 + \alpha)^{2} / 2})}{u_{\alpha}(x)}
  \end{equation*}
  is non-zero and uniformly bounded in \(\alpha\) at \(x = 0\).
\end{lemma}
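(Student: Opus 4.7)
The strategy is to identify the value of the expression at $x=0$ as a removable singularity, express it as an explicit function of $\alpha$, and then show this function extends continuously and non-trivially to $\alpha = -1$; continuity on the closed interval $[-1, -1+\delta_1]$ then gives the required uniform bound on the open interval $I_1$.

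By Lemma~\ref{lemma:u0-asymptotic-I-1}, the leading asymptotic behavior of $u_{\alpha}(x)$ near $x=0$ is $a_{\alpha,0}\Gamma(\alpha)\cos(\pi\alpha/2)|x|^{-\alpha}$, with the next term of strictly higher order in $|x|$ for every $\alpha \in (-1,0)$, and $1 - |x|^{1+\alpha+(1+\alpha)^2/2} \to 1$ as $x \to 0$. The limit of the expression at $x = 0$ thus equals
\begin{equation*}
  L(\alpha) := \frac{\Gamma(1+\alpha)}{a_{\alpha,0}\Gamma(\alpha)\cos(\pi\alpha/2)} = \frac{\alpha}{a_{\alpha,0}\cos(\pi\alpha/2)},
\end{equation*}
using $\Gamma(1+\alpha) = \alpha\Gamma(\alpha)$. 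The next step is to substitute the definition \eqref{eq:a0} of $a_{\alpha,0}$ and change variables to $h = 1+\alpha$ to cancel the singularities of the individual factors at $\alpha = -1$.

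The key step is this algebraic cancellation. Using $\Gamma(\alpha) = \Gamma(1+h)/(h(h-1))$, an analogous reduction for $\Gamma(2\alpha)$, and the identities $\cos(\pi\alpha/2) = \sin(\pi h/2)$ and $\cos(\pi\alpha) = -\cos(\pi h)$, I expect the algebra to collapse all apparent singularities and yield an expression of the form
\begin{equation*}
  L(\alpha) = \frac{\pi(1-2h)\Gamma(1+h)^2\sinc(\pi h/2)}{\Gamma(1+2h)\cos(\pi h)},
\end{equation*}
which is real analytic and positive for $h \in [0, 1/2)$, with $L(\alpha) \to \pi$ as $\alpha \to -1$. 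Taking $\delta_1 < 1/2$, every factor above is bounded and bounded away from zero on $[0, \delta_1]$, so $L$ is uniformly bounded and uniformly bounded below by a positive number on $I_1$. The main technical obstacle is to carry out the cancellation cleanly, since each of the factors $\Gamma(\alpha)$, $\Gamma(2\alpha)$, $\cos(\pi\alpha/2)$ individually blows up or vanishes at $\alpha = -1$; once this is done, the conclusion follows by continuity on a compact set.
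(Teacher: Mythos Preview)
Your argument is correct and establishes the lemma as stated: for each fixed $\alpha\in I_1$ the limit at $x=0$ is indeed $L(\alpha)=\Gamma(1+\alpha)/(a_{\alpha,0}\,\Gamma(\alpha)\cos(\pi\alpha/2))$, and your reduction to the variable $h=1+\alpha$ cleanly shows this extends to a positive analytic function on $[0,\delta_1]$ with $L(-1)=\pi$. The paper proceeds differently. Rather than passing to the limit $x\to 0$ first, it replaces $u_\alpha(x)$ by its full two-term leading asymptotic $a_{\alpha,0}(c(\alpha)-c(\alpha-\tilde p_\alpha)x^{\tilde p_\alpha})x^{-\alpha}$ with $c(\alpha)=\Gamma(\alpha)\cos(\pi\alpha/2)$, and then splits the resulting ratio into three factors, $\Gamma(1+\alpha)/a_{\alpha,0}$, $1/c(\alpha)$, and $(1-x^{\tilde p_\alpha})/(1-\tfrac{c(\alpha-\tilde p_\alpha)}{c(\alpha)}x^{\tilde p_\alpha})$, each of which is shown to be uniformly bounded and bounded away from zero. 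At $x=0$ the third factor is $1$, so the two proofs agree there. The payoff of the paper's route is that the third factor is controlled for all small $x>0$ (upper bound $1$, lower bound obtained by monotonicity in $x$), which yields a computable enclosure of the whole expression on an interval $[0,\epsilon]$; this is exactly what is used in Sections~\ref{sec:evaluation-N-I-1} and~\ref{sec:evaluation-F-I-1} for the computer-assisted bounds. Your approach is slicker for the bare statement but does not immediately supply that quantitative information for $x>0$.
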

\begin{proof}
  Since the function is even with respect to \(x\) we can take
  \(x \geq 0\).

  From Lemma~\ref{lemma:u0-asymptotic-I-1} we have that the leading
  term in the asymptotic expansion at \(x = 0\) for \(u_{\alpha}\) is
  given by
  \begin{equation*}
    a_{\alpha,0}\left(
      \Gamma(\alpha)\cos\left(\frac{\pi}{2}\alpha\right)
      - \Gamma(-1 - (1 + \alpha)^{2} / 2)\cos\left(\frac{\pi}{2}(1 + (1 + \alpha)^{2} / 2)\right)x^{1 + \alpha + (1 + \alpha)^{2} / 2}
    \right)x^{-\alpha}.
  \end{equation*}
  We therefore study the asymptotic behavior of
  \begin{equation}
    \label{eq:u-I-1-asym-behavior-upper-bound}
    \frac{
      \Gamma(1 + \alpha)x^{-\alpha}(1 - x^{1 + \alpha + (1 + \alpha)^{2} / 2})
    }{
      a_{\alpha,0}\left(
        \Gamma(\alpha)\cos\left(\frac{\pi}{2}\alpha\right)
        - \Gamma(-1 - (1 + \alpha)^{2} / 2)\cos\left(\frac{\pi}{2}(1 + (1 + \alpha)^{2} / 2)\right)x^{1 + \alpha + (1 + \alpha)^{2} / 2}
      \right)x^{-\alpha}
    }.
  \end{equation}
  We can cancel the \(x^{-\alpha}\) factor and split it as
  \begin{equation*}
    \frac{\Gamma(1 + \alpha)}{a_{\alpha,0}}
    \frac{1 - x^{1 + \alpha + (1 + \alpha)^{2} / 2}}{
      \Gamma(\alpha)\cos\left(\frac{\pi}{2}\alpha\right)
      - \Gamma(-1 - (1 + \alpha)^{2} / 2)\cos\left(\frac{\pi}{2}(1 + (1 + \alpha)^{2} / 2)\right)x^{1 + \alpha + (1 + \alpha)^{2} / 2}
    }.
  \end{equation*}

  From the definition of \(a_{\alpha,0}\) as given in Equation
  \eqref{eq:a0} we get that the first factor is given by
  \begin{equation*}
    \frac{\Gamma(1 + \alpha)\Gamma(\alpha)^{2}\cos\left(\frac{\pi}{2}\alpha\right)^{2}}{2\Gamma(2\alpha)\cos\left(\pi\alpha\right)}
    = \frac{\alpha\Gamma(\alpha)^{3}\cos\left(\frac{\pi}{2}\alpha\right)^{2}}{2\Gamma(2\alpha)\cos\left(\pi\alpha\right)}.
  \end{equation*}
  This has a removable singularity at \(\alpha = -1\). It is then
  easily checked to be non-zero.

  For the second factor we let
  \(c(\alpha) = \Gamma(\alpha)\cos\left(\frac{\pi}{2}\alpha\right)\)
  and \(\tilde{p}_{\alpha} = 1 + \alpha + (1 + \alpha)^{2} / 2\),
  allowing us to write it as
  \begin{equation*}
    \frac{1 - x^{\tilde{p}_{\alpha}}}{c(\alpha) - c(\alpha - \tilde{p}_{\alpha})x^{\tilde{p}_{\alpha}}}
    = \frac{1}{c(\alpha)}\frac{1 - x^{\tilde{p}_{\alpha}}}{1 - \frac{c(\alpha - \tilde{p}_{\alpha})}{c(\alpha)}x^{\tilde{p}_{\alpha}}}
  \end{equation*}
  Here \(c(\alpha)\) has a removable singularity, which can be handled
  in the same way as the one for \(\frac{\Gamma(1 + \alpha)}{a_{0}}\).
  Again it can easily be checked to be non-zero, so that
  \(\frac{1}{c(\alpha)}\) is bounded. What remains to handle is thus
  \begin{equation*}
    \frac{1 - x^{\tilde{p}_{\alpha}}}{1 - \frac{c(\alpha - \tilde{p}_{\alpha})}{c(\alpha)}x^{\tilde{p}_{\alpha}}}.
  \end{equation*}
  Computing an enclosure of the derivative of
  \(\frac{c(\alpha - \tilde{p}_{\alpha})}{c(\alpha)}\) allows us to
  check that it is negative. This together with the fact that
  \(\lim_{\alpha \to -1} \frac{c(\alpha -
    \tilde{p}_{\alpha})}{c(\alpha)} = 1\) means that an upper bound is
  given by
  \begin{equation*}
    \frac{1 - x^{\tilde{p}_{\alpha}}}{1 - x^{\tilde{p}_{\alpha}}} = 1.
  \end{equation*}
  For a lower bound we note that since
  \(\frac{c(\alpha - \tilde{p}_{\alpha})}{c(\alpha)} < 1\) it is
  decreasing in \(x\) as long as \(x < 1\). It is therefore enough to
  check the value at some \(0 < x_{0} < 1\) to get a lower bound. For
  this fixed \(x_{0} > 0\) we can handle the removable singularity of
  \begin{equation*}
    \frac{1 - x_{0}^{\tilde{p}_{\alpha}}}{1 - \frac{c(\alpha - \tilde{p}_{\alpha})}{c(\alpha)}x_{0}^{\tilde{p}_{\alpha}}}
  \end{equation*}
  at \(\alpha = -1\) to compute an enclosure of the lower bound.
\end{proof}
The proof of this lemma also tells us how to compute an upper bound of
\begin{equation*}
  \frac{\Gamma(1 + \alpha)|x|^{-\alpha}(1 - |x|^{1 + \alpha + (1 + \alpha)^{2} / 2})}{u_{\alpha}(x)}.
\end{equation*}
It is enough to check that the terms in the asymptotic expansion of
\(u_{\alpha}\) that we skipped are positive, in that case Equation
\eqref{eq:u-I-1-asym-behavior-upper-bound} gives an upper bound. For
\(x\) not overlapping zero we instead split it as
\begin{equation*}
  \left(\Gamma(1 + \alpha)(1 - |x|^{1 + \alpha + (1 + \alpha)^{2} / 2})\right) \cdot
  \frac{|x|^{-\alpha}}{u_{\alpha}(x)}
\end{equation*}
and handle the removable singularity for the first factor and enclose
the second factor directly using the asymptotic expansion.

\section{Evaluation of \(N_{\alpha}\)}
\label{sec:evaluation-N}
Recall that
\begin{equation*}
  n_{\alpha} := \|N_{\alpha}\|_{L^{\infty}(\mathbb{T})} = \sup_{x \in [0, \pi]} \left|N_{\alpha}(x)\right|
\end{equation*}
where
\begin{equation*}
  N_{\alpha}(x) = \frac{w_{\alpha}(x)}{2u_{\alpha}(x)}.
\end{equation*}
In this section we discuss how to evaluate \(N_{\alpha}\). The
interval \([0, \pi]\) is split into two subintervals,
\([0, \epsilon]\) and \([\epsilon, \pi]\). On the interval
\([0, \epsilon]\) we make use of asymptotic expansions, on
\([\epsilon, \pi]\) the function is evaluated directly. The value of
\(\epsilon\) depends on \(\alpha\) and is discussed more in
Section~\ref{sec:bounds-for-values}.

We start by describe the procedure for evaluation for
\(\alpha \in I_{2}\) and then discuss the alterations required to
handle \(I_{1}\) and \(I_{3}\).

\subsection{Evaluation of \(N_{\alpha}\) for \(I_{2}\)}
\label{sec:evaluation-N-I-2}
For \(x \in [\epsilon, \pi]\) both \(w_{\alpha}(x)\),
\(u_{\alpha}(x)\) and the resulting quotient are straight forward to
evaluate.

For \(x \in [0, \epsilon]\) we use that the weight is given by
\(w_{\alpha}(x) = x^{p}\) and write the function as
\begin{equation*}
  N_{\alpha}(x) = \frac{1}{2}x^{p + \alpha} \cdot \frac{x^{-\alpha}}{u_{\alpha}(x)}.
\end{equation*}
The first factor is easily enclosed and we use
Lemma~\ref{lemma:asymptotic-inv-u0} for enclosing the second factor.

\subsection{Evaluation of \(N_{\alpha}\) for \(I_{1}\)}
\label{sec:evaluation-N-I-1}
For \(x \in [\epsilon, \pi]\) we make one optimization compared to
\(I_{2}\), instead of computing an enclosure we only compute an upper
bound. For this we use that \(u_{\alpha}(x) \geq u_{-1}(x)\) for
\(\alpha \in I_{1}\) and hence
\begin{equation*}
  N_{\alpha}(x) \leq \frac{w_{\alpha}(x)}{2u_{-1}(x)},
\end{equation*}
as long as \(u_{-1}(x)\) is positive.

For \(x \in [0, \epsilon]\) we use that the weight is given by
\(w_{\alpha}(x) = x^{(1 - \alpha) / 2}\log(2e + 1 / x)\) and write the
function as
\begin{align*}
  N_{\alpha}(x) &= \frac{1}{2}\frac{w_{\alpha}(x)}{\Gamma(1 + \alpha)x^{-\alpha}(1 - x^{1 + \alpha + (1 + \alpha)^{2} / 2})} \cdot
                  \frac{\Gamma(1 + \alpha)x^{-\alpha}(1 - x^{1 + \alpha + (1 + \alpha)^{2} / 2})}{u_{\alpha}(x)}\\
                &= \frac{1}{\Gamma(2 + \alpha)}\cdot
                  \frac{\log(2e + 1 / x)}{2\log(1 / x)}\cdot
                  \frac{x^{(1 + \alpha) / 2}\log(1 / x)(1 + \alpha)}{1 - x^{1 + \alpha + (1 + \alpha)^{2} / 2}} \cdot
                  \frac{\Gamma(1 + \alpha)x^{-\alpha}(1 - x^{1 + \alpha + (1 + \alpha)^{2} / 2})}{u_{\alpha}(x)}.
\end{align*}
The first factor is well-behaved, the second factor can be enclosed
using that it is increasing in \(x\) and the limit as \(x \to 0\) is
\(\frac{1}{2}\). The fourth factor can be enclosed using
Lemma~\ref{lemma:asymptotic-inv-u0-I-1}. For the third factor we get a
bound from the following lemma
\begin{lemma}
  Let \(-1 < \alpha < 0\) and \(0 < x < 1\), then
  \begin{equation*}
    0 < \frac{x^{(1 + \alpha) / 2}\log(1 / x)(1 + \alpha)}{1 - x^{1 + \alpha + (1 + \alpha)^{2} / 2}} < 1.
  \end{equation*}
\end{lemma}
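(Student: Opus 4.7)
My plan is to split the inequality into two halves. The positivity is essentially free: for $\alpha \in (-1,0)$ and $x \in (0,1)$ the factors $x^{(1+\alpha)/2}$, $\log(1/x)$, and $(1+\alpha)$ are all positive, and the exponent $1 + \alpha + (1+\alpha)^2/2 = (1+\alpha)(3+\alpha)/2$ is strictly positive, so $1 - x^{1+\alpha+(1+\alpha)^2/2} > 0$. Hence the quotient is positive, and all the work goes into the upper bound.

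For the upper bound, I would reparametrize to eliminate $\alpha$ in favor of a single small parameter. Setting $p = (1+\alpha)/2 \in (0, 1/2)$, the inequality becomes
\begin{equation*}
2p\, x^{p} \log(1/x) < 1 - x^{2p(1+p)}.
\end{equation*}
The substitution $y = x^p \in (0,1)$ (so $\log(1/x) = \log(1/y)/p$ and $x^{2p(1+p)} = y^{2(1+p)}$) then clears the $p$ from the logarithmic term and reduces the claim to
\begin{equation*}
y^{2(1+p)} + 2y \log(1/y) < 1, \qquad 0 < y < 1,\ 0 < p < 1/2.
\end{equation*}
Since $y \in (0,1)$ and $p > 0$, we have $y^{2(1+p)} \le y^{2}$ (with equality only in the limit $p \to 0$), so it suffices to prove the $p$-free bound
\begin{equation*}
h(y) := y^{2} + 2y\log(1/y) < 1 \qquad \text{for } 0 < y < 1.
\end{equation*}

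The last step is a one-variable monotonicity argument. I would compute $h'(y) = 2y - 2\log y - 2 = 2(y - 1 - \log y)$. Letting $k(y) = y - 1 - \log y$, one has $k'(y) = 1 - 1/y < 0$ on $(0,1)$ and $k(1) = 0$, so $k > 0$ on $(0,1)$, giving $h'(y) > 0$ there. Since $h(1) = 1$, strict monotonicity yields $h(y) < 1$ for $0 < y < 1$, which closes the argument. The only step that requires any real care is the observation that $y^{2(1+p)} < y^{2}$ is a strict inequality for $p > 0$ and $y \in (0,1)$, ensuring the final bound is strict on the whole range of $(\alpha, x)$; everything else is algebraic rearrangement and a routine one-variable calculus check.
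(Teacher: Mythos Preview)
Your proof is correct, and it is genuinely different from the paper's argument. The paper also begins by noting positivity of the denominator and then sets $s = (1+\alpha)\log x$, rewriting the inequality as $f(s) = 1 - e^{s(1+(1+\alpha)/2)} + se^{s/2} > 0$ for $s<0$. It then differentiates, reduces to showing an auxiliary function $g(s)$ is negative, locates its unique critical point explicitly, and finally verifies negativity at that critical point via the elementary inequality $\log(1+(1+\alpha)/2)+\log(1+(1+\alpha)) > 1+\alpha$. In other words, the paper keeps the $\alpha$-dependence throughout and performs a two-layer derivative analysis with a critical-point computation.

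Your route is shorter and more transparent: the substitution $p=(1+\alpha)/2$ followed by $y=x^{p}$ collapses the problem to $y^{2(1+p)}+2y\log(1/y)<1$, and the single observation $y^{2(1+p)} \le y^{2}$ on $(0,1)$ removes the parameter entirely, leaving a one-variable monotonicity check that needs only one derivative. What the paper's approach might buy in principle is a handle on the $\alpha$-dependence of the bound (since it never throws $\alpha$ away), but for the bare statement your argument is both simpler and sharper in spirit.
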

\begin{proof}
  The lower bound follows directly from that all the factors are
  positive. What remains is proving the upper bound.

  If we let \(s = (1 + \alpha)\log x\) we can write the quotient as
  \begin{equation*}
    \frac{-se^{s / 2}}{1 - e^{s(1 + (1 + \alpha) / 2)}}.
  \end{equation*}
  From \(-1 < \alpha < 0\) and \(0 < x < 1\) we get that \(s < 0\). We
  want to prove that
  \begin{equation*}
    -se^{s / 2} < 1 - e^{s(1 + (1 + \alpha) / 2)},
  \end{equation*}
  which is equivalent to proving that
  \begin{equation*}
    f(s) = 1 - e^{s(1 + (1 + \alpha) / 2)} + se^{s / 2}
  \end{equation*}
  is positive for \(s < 0\). We have \(\lim_{s \to 0} f(s) = 0\), and
  it is hence enough to prove that \(f\) is decreasing for \(s < 0\).
  The derivative is given by
  \begin{equation*}
    f'(s) = -(1 + (1 + \alpha) / 2)e^{s(1 + (1 + \alpha) / 2)} + \left(1 + \frac{s}{2}\right)e^{s / 2}
    = e^{s / 2}\left(1 + \frac{s}{2} - (1 + (1 + \alpha) / 2)e^{s(1 + (1 + \alpha)) / 2}\right)
  \end{equation*}
  so it is enough to prove that
  \begin{equation*}
    g(s) = 1 + \frac{s}{2} - (1 + (1 + \alpha) / 2)e^{s(1 + (1 + \alpha)) / 2}
  \end{equation*}
  is negative for \(s < 0\). We have
  \(g(0) = -\frac{1 + \alpha}{2} < 0\) and
  \(\lim_{s \to -\infty} g(s) = -\infty\). The unique critical point
  with respect to \(s\) is
  \begin{equation*}
    -2\frac{\log\left((1 + (1 + \alpha) / 2)(1 + (1 + \alpha))\right)}{1 + (1 + \alpha)},
  \end{equation*}
  which is negative. Inserting this into \(g\) gives us
  \begin{multline*}
    1 - \frac{\log\left((1 + (1 + \alpha) / 2)(1 + (1 + \alpha))\right)}{1 + (1 + \alpha)}
    - (1 + (1 + \alpha) / 2)e^{-\log\left((1 + (1 + \alpha) / 2)(1 + (1 + \alpha))\right)}\\
    = 1 - \frac{1 + \log\left((1 + (1 + \alpha) / 2)(1 + (1 + \alpha))\right)}{1 + (1 + \alpha)}\\
    = 1 - \frac{1 + \log(1 + (1 + \alpha) / 2) + \log(1 + (1 + \alpha))}{1 + (1 + \alpha)}
  \end{multline*}
  For this to be negative we need
  \begin{equation*}
    \log(1 + (1 + \alpha) / 2) + \log(1 + (1 + \alpha)) > 1 + \alpha,
  \end{equation*}
  which holds for \(-1 < \alpha < 0\). It follows that \(g(s)\) is
  negative for \(s < 0\) and hence \(f(s)\) is decreasing in \(s\) and
  from \(\lim_{s \to 0} f(s) = 0\) we then get that \(f(s)\) is
  positive for \(s < 0\) and the result follows.
\end{proof}

\subsection{Evaluation of \(N_{\alpha}\) for \(I_{3}\)}
\label{sec:evaluation-N-I-3}
Compared to \(\delta_{\alpha}\) and \(D_{\alpha}\) we don't need to
compute an expansion in \(\alpha\) of \(n_{\alpha}\) for \(I_{3}\), it
is therefore enough to just compute enclosures of \(N_{\alpha}(x)\).
The asymptotic expansions perform very well in this case and it is
possible to take \(\epsilon = \pi\), we hence only have to consider
the interval \([0, \epsilon]\). The evaluation is done in the same way
as for \(I_{2}\) (with \(p = 1\)), by rewriting it as
\begin{equation*}
  N_{\alpha}(x) = \frac{1}{2}x^{1 + \alpha} \cdot \frac{x^{-\alpha}}{u_{\alpha}(x)}.
\end{equation*}
and using Lemma~\ref{lemma:asymptotic-inv-u0}.

\subsection{Evaluation of \(N_{\alpha}\) in hybrid cases}
\label{sec:evaluation-N-hybrid}
For \(\alpha\) near \(-1\) we don't have to do anything special and
follow the same approach as in Section~\ref{sec:evaluation-N-I-2}. For
\(\alpha\) near \(0\) the approach is the same as for
\(\alpha \in I_{3}\).

\section{Evaluation of \(F_{\alpha}\)}
\label{sec:evaluation-F}
Recall that
\begin{equation*}
  \delta_{\alpha} = \|F_{\alpha}\|_{L^{\infty}(\mathbb{T})} = \sup_{x \in [0, \pi]} |F_{\alpha}(x)|,
\end{equation*}
where
\begin{equation*}
  F_{\alpha}(x) = \frac{\Hop{\alpha}[u_{\alpha}](x) + \frac{1}{2}u_{\alpha}(x)^{2}}{w_{\alpha}(x)u_{\alpha}(x)}.
\end{equation*}
In this section we discuss how to evaluate \(F_{\alpha}\). The
interval \([0, \pi]\) is split into two subintervals,
\([0, \epsilon]\) and \([\epsilon, \pi]\). On the interval
\([0, \epsilon]\) we make use of asymptotic expansions, on
\([\epsilon, \pi]\) the function is evaluated directly. The value of
\(\epsilon\) depends on \(\alpha\) and is discussed more in
Section~\ref{sec:bounds-for-values}.

We start by describe the procedure for evaluation for
\(\alpha \in I_{2}\) and then discuss the alterations required to
handle \(I_{1}\) and \(I_{3}\).

\subsection{Evaluation of \(F_{\alpha}\) for \(I_{2}\)}
\label{sec:evaluation-F-I-2}
For \(x \in [\epsilon, \pi]\) we evaluate each part separately. That
is, we compute \(w_{\alpha}(x)\), \(u_{\alpha}(x)\) and
\(\Hop{\alpha}[u_{\alpha}](x)\), it is then straight forward to
compute \(F_{\alpha}(x)\) from these values.

For \(x \in [0, \epsilon]\) we use that the weight is given by
\(w_{\alpha}(x) = x^{p}\) and write the function as
\begin{equation*}
  F_{\alpha}(x) = \frac{x^{-\alpha}}{u_{\alpha}(x)} \cdot \frac{\Hop{\alpha}[u_{\alpha}](x) + \frac{1}{2}u_{\alpha}(x)^{2}}{x^{p - \alpha}}.
\end{equation*}
The first factor is enclosed using
Lemma~\ref{lemma:asymptotic-inv-u0}. For the second factor we use the
expansion from Lemma~\ref{lemma:Du0-asymptotic} and explicitly cancel
the \(x^{p - \alpha}\) from the denominator. Note that since
\(-2\alpha < p - \alpha\) we need the leading term in the expansion to
be identically zero, hence our choice of \(a_{\alpha,0}\). For the
second term in the expansion we have
\(p - \alpha < -2\alpha + p_{\alpha}\), so it is bounded at \(x = 0\),
as required.

\subsection{Evaluation of \(F_{\alpha}\) for \(I_{1}\)}
\label{sec:evaluation-F-I-1}
For \(x \in [\epsilon, \pi]\) we make one optimization compared to
\(I_{2}\), instead of computing an enclosure we only compute an upper
bound. For this we use that \(u_{\alpha}(x) \geq u_{-1}(x)\) for
\(\alpha \in I_{1}\) and hence
\begin{equation*}
  |F_{\alpha}(x)| \leq \left|\frac{\Hop{\alpha}[u_{\alpha}](x) + \frac{1}{2}u_{\alpha}(x)^{2}}{w_{\alpha}(x)u_{-1}(x)}\right|,
\end{equation*}
as long as \(u_{-1}(x)\) is positive.

For \(x \in [0, \epsilon]\) we use that the weight is given by
\(w_{\alpha}(x) = x^{(1 - \alpha) / 2}\log(2e + 1 / x)\) and write the
function as
\begin{equation*}
  F_{\alpha}(x) = \frac{\log(1 / x)}{\log(2e + 1 / x)} \cdot
  \frac{\Gamma(1 + \alpha)x^{-\alpha}(1 - x^{1 + \alpha + (1 + \alpha)^{2} / 2})}{u_{\alpha}(x)} \cdot
  \frac{\Hop{\alpha}[u_{\alpha}](x) + \frac{1}{2}u_{\alpha}(x)^{2}}{\Gamma(1 + \alpha)\log(1 / x)(1 - x^{1 + \alpha + (1 + \alpha)^{2} / 2})x^{(1 - \alpha) / 2 - \alpha}}.
\end{equation*}
The first two factors are handled in the same way as when handling
\(N_{\alpha}\) in Section~\ref{sec:evaluation-N-I-1}. For the third
factor we use that \((1 - \alpha) / 2 < 1\) and hence, if we require
\(\epsilon \leq 1\), an upper bound, in absolute value, is given by
the absolute value of
\begin{equation}
  \label{eq:evaluation-F-I-1-factor}
  \frac{\Hop{\alpha}[u_{\alpha}](x) + \frac{1}{2}u_{\alpha}(x)^{2}}{\Gamma(1 + \alpha)\log(1 / x)(1 - x^{1 + \alpha + (1 + \alpha)^{2} / 2})x^{1 - \alpha}}.
\end{equation}
A bound for this is given in
Appendix~\ref{sec:evaluation-F-I-1-asymptotic}.

\subsection{Evaluation of \(F_{\alpha}\) for \(I_{3}\)}
\label{sec:evaluation-F-I-3}
In this case we need to not only compute an enclosure of
\(F_{\alpha}(x)\), but understand it behavior in \(\alpha\). We
therefore compute Taylor models of degree \(1\) centered at
\(\alpha = 0\).

We start with the following lemma that gives us information about the
first two terms in the Taylor model.
\begin{lemma}
  \label{lemma:evaluation-F-expansion-alpha-I-3}
  For \(x \in [0, \pi]\) the constant and linear terms in the
  expansion at \(\alpha = 0\) of \(F_{\alpha}(x)\) are both zero.
\end{lemma}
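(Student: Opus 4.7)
The plan is to carry out a direct computation using Lemma~\ref{lemma:u0-expansion-alpha-I-3}. For \(x \in (0, \pi]\), that lemma gives
\begin{equation*}
  u_{\alpha}(x) = 1 + b(x)\alpha + \mathcal{O}(\alpha^{2}),
  \qquad
  \Hop{\alpha}[u_{\alpha}](x) = -\tfrac{1}{2} - b(x)\alpha + \mathcal{O}(\alpha^{2}).
\end{equation*}
Squaring the first expansion yields
\begin{equation*}
  \tfrac{1}{2}u_{\alpha}(x)^{2} = \tfrac{1}{2} + b(x)\alpha + \mathcal{O}(\alpha^{2}),
\end{equation*}
so the numerator of \(F_{\alpha}(x)\) satisfies
\begin{equation*}
  \Hop{\alpha}[u_{\alpha}](x) + \tfrac{1}{2}u_{\alpha}(x)^{2} = \mathcal{O}(\alpha^{2})
\end{equation*}
by cancellation of both the constant and the linear terms in \(\alpha\). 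That this cancellation is not an accident is precisely the content of our choice of \(a_{\alpha,0}\): the leading-order (in \(x\)) defect is killed by construction, and the same algebraic identity is what now kills the constant and linear \(\alpha\)-coefficients.

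Next I would handle the denominator. Since \(w_{\alpha}(x) = |x|\) for \(I_{3}\), we have
\begin{equation*}
  w_{\alpha}(x)u_{\alpha}(x) = x\bigl(1 + b(x)\alpha + \mathcal{O}(\alpha^{2})\bigr),
\end{equation*}
whose constant \(\alpha\)-term \(x\) is strictly positive for \(x \in (0,\pi]\). Dividing the \(\mathcal{O}(\alpha^{2})\) numerator by this denominator preserves the \(\mathcal{O}(\alpha^{2})\) behavior, so \(F_{\alpha}(x) = \mathcal{O}(\alpha^{2})\) for every \(x \in (0,\pi]\); equivalently, the constant and linear Taylor coefficients of \(F_{\alpha}(x)\) at \(\alpha = 0\) vanish on \((0, \pi]\).

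The only delicate point, and what I expect to be the main obstacle, is the endpoint \(x = 0\), where both numerator and denominator vanish and Lemma~\ref{lemma:u0-expansion-alpha-I-3} as stated does not apply. To handle this I would extend the conclusion from \((0, \pi]\) to \(x = 0\) by continuity: the coefficients of the \(\alpha\)-expansion of \(F_{\alpha}(x)\) are continuous functions of \(x\) on \([0,\pi]\) (the removable singularity at \(x = 0\) being handled as in Section~\ref{sec:inv-u0} via the asymptotic expansion of \(u_{\alpha}\) and Lemma~\ref{lemma:u0-asymptotic}), so the identities \(F_{0}(x) = 0\) and \(\partial_{\alpha}F_{\alpha}(x)|_{\alpha = 0} = 0\) on \((0,\pi]\) persist to \(x = 0\). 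Alternatively, one can repeat the cancellation argument directly on the asymptotic expansion from Lemma~\ref{lemma:Du0-asymptotic}, noting that each surviving power \(|x|^{-2\alpha + k p_{\alpha}}\) in the numerator produces an expansion in \(\alpha\) whose zeroth- and first-order coefficients match, after the \(a_{\alpha,0},\,a_{\alpha,1},\,a_{\alpha,2}\) choices dictated by \eqref{eq:a0} and \eqref{eq:a1-a2-I-3}, the corresponding coefficients contributed by the \(x^{2}\) terms. Either route yields the claim.
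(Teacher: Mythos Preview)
Your argument is correct and follows essentially the same route as the paper: invoke Lemma~\ref{lemma:u0-expansion-alpha-I-3} to get the expansions of \(u_{\alpha}(x)\) and \(\Hop{\alpha}[u_{\alpha}](x)\), observe the cancellation in the numerator to order \(\alpha^{2}\), and divide by the denominator \(x(1 + b(x)\alpha + \mathcal{O}(\alpha^{2}))\). You are in fact more careful than the paper about the endpoint \(x = 0\), which the paper's proof does not single out; your continuity argument (or the alternative via the asymptotic expansion) is a reasonable way to close that gap.
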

\begin{proof}
  From lemma~\ref{lemma:u0-expansion-alpha-I-3} we get
  \begin{equation*}
    F_{\alpha}(x) = \frac{
      \left(-\frac{1}{2} - b(x)\alpha + \mathcal{O}(\alpha^{2})\right) + \frac{1}{2}\left(1 + b(x)\alpha + \mathcal{O}(\alpha^{2})\right)^{2}
    }{
      x(1 + b(x)\alpha + \mathcal{O}(\alpha^{2}))
    }
    = \frac{\mathcal{O}(\alpha^{2})}{x(1 + b(x)\alpha + \mathcal{O}(\alpha^{2}))}
    = \mathcal{O}(\alpha^{2}),
  \end{equation*}
  which gives us the result.
\end{proof}

For \(x \in [\epsilon, \pi]\) we compute the Taylor models of
\(u_{\alpha}(x)\) and \(\Hop{\alpha}[u_{\alpha}](x)\) using the
approach described in Section~\ref{sec:evaluation-I-3}. For this case
we have \(w_{\alpha}(x) = |x|\) and hence \(w_{\alpha}\) does not
depend on \(\alpha\) and its corresponding Taylor model is just a
constant.

For \(x \in [0, \epsilon]\) we, similarly to for \(I_{2}\), write the
function as
\begin{equation*}
  F_{\alpha}(x) = \frac{x^{-\alpha}}{u_{\alpha}(x)} \cdot \frac{\Hop{\alpha}[u_{\alpha}](x) + \frac{1}{2}u_{\alpha}(x)^{2}}{x^{1 - \alpha}}
\end{equation*}
and compute Taylor models of the two factors independently, which are
then multiplied. See Appendix~\ref{sec:taylor-models-expansions-x} for
how to compute Taylor models of expansions in \(x\).

\subsection{Evaluation of \(F_{\alpha}\) in hybrid cases}
\label{sec:evaluation-F-hybrid}
Near \(\alpha = -1\) we don't have to do anything special for
\(x \in [\epsilon, \pi]\), the approach is thus the same as described
in Section~\ref{sec:evaluation-F-I-2}. For \(x \in [0, \epsilon]\) it
is in principle the same as well, but to get good enclosures it
requires slightly more care. The reasons this needs to be done is that
for \(\alpha\) near \(-1\) the factor
\begin{equation*}
  \frac{\Hop{\alpha}[u_{\alpha}](x) + \frac{1}{2}u_{\alpha}(x)^{2}}{x^{p - \alpha}}
\end{equation*}
tends to zero very slowly and there are several terms in the
asymptotic expansions between which there are large cancellations. The
main adjustment is to more carefully handle the cancellations between
the leading terms of \(\Hop{\alpha}[u_{\alpha}](x)\) and
\(\frac{1}{2}u_{\alpha}(x)^{2}\). We also make heavy use of
Lemma~\ref{lemma:clausenc-expansion-singular-term} when evaluating the
expansion of \(\Hop{\alpha}[u_{\alpha}](x)\).

Near \(\alpha = 0\) the approach is similar to that used for
\(\alpha \in I_{3}\). Instead of the Taylor models being centered at
zero they are centered at (the midpoint) of \(\alpha\). Since we in
the end don't need an expansion in \(\alpha\) we enclose the computed
Taylor models over the interval, giving us an enclosure of
\(F_{\alpha}(x)\).

\section{Analysis of \(T_{\alpha}\)}
\label{sec:analysis-T}
In this section we give more details about the operator \(T_{\alpha}\)
defined by
\begin{equation*}
  T_{\alpha}v = -\frac{1}{w_{\alpha}u_{\alpha}}\Hop{\alpha}[w_{\alpha}v]
\end{equation*}
and discuss how to bound \(D_{\alpha} := \|T_{\alpha}\|\).

The operator \(\Hop{\alpha}\) is defined by
\begin{equation*}
  \Hop{\alpha}[v](x) = |D|^{\alpha}[v](x) - |D|^{\alpha}[v](0).
\end{equation*}
For even functions \(v(x)\) and \(0 < x < \pi\) it has the integral
representation
\begin{equation*}
  \Hop{\alpha}[v](x) = -\frac{1}{\pi}\int_{0}^{\pi}(C_{-\alpha}(x - y) + C_{-\alpha}(x + y) - 2C_{-\alpha}(y))v(y)\ dy.
\end{equation*}
This gives us
\begin{equation*}
  T_{\alpha}[v](x) = \frac{1}{\pi w_{\alpha}(x) u_{\alpha}(x)}
  \int_{0}^{\pi}(C_{-\alpha}(x - y) + C_{-\alpha}(x + y) - 2C_{-\alpha}(y))w_{\alpha}(y)v(y)\ dy.
\end{equation*}

Using the above expressions it is standard that the norm of
\(T_{\alpha}\) is given by
\begin{equation}
  \label{eq:C_alpha}
  D_{\alpha} = \|T_{\alpha}\| =
  \sup_{0 < x < \pi} \frac{1}{\pi |w_{\alpha}(x)| |u_{\alpha}(x)|}
  \int_{0}^{\pi}|C_{-\alpha}(x - y) + C_{-\alpha}(x + y) - 2C_{-\alpha}(y)||w_{\alpha}(y)|\ dy.
\end{equation}
Let
\begin{equation*}
  I_{\alpha}(x, y) = C_{-\alpha}(x - y) + C_{-\alpha}(x + y) - 2C_{-\alpha}(y)
\end{equation*}
and
\begin{equation*}
  U_{\alpha}(x) = \int_{0}^{\pi}|I_{\alpha}(x, y)|w_{\alpha}(y)\ dt,
\end{equation*}
where we have removed the absolute value around \(w_{\alpha}\) since
it is positive. We are then interested in computing
\begin{equation}
  \label{eq:D}
  D_{\alpha} = \|T_{\alpha}\| = \sup_{0 < x < \pi} \frac{U_{\alpha}(x)}{\pi |w_{\alpha}(x)| |u_{\alpha}(x)|}.
\end{equation}
We use the notation
\begin{equation}
  \label{eq:mathcal-T}
  \mathcal{T}_{\alpha}(x) = \frac{U_{\alpha}(x)}{\pi |w_{\alpha}(x)| |u_{\alpha}(x)|}.
\end{equation}

\subsection{Properties of \(U_{\alpha}\)}
\label{sec:properties-T}
Before discussing how to evaluate \(\mathcal{T}_{\alpha}\) we give
some general properties about \(U_{\alpha}\) that will be useful.

The integrand of \(U_{\alpha}(x)\) has a singularity at \(y = x\). It
is therefore natural to split the integral into two parts
\begin{equation*}
  U_{\alpha,1}(x) = \int_{0}^{x}|I_{\alpha}(x, y)|w_{\alpha}(y)\ dy,\quad
  U_{\alpha,2}(x) = \int_{x}^{\pi}|I_{\alpha}(x, y)|w_{\alpha}(y)\ dy.
\end{equation*}
In some cases it will be beneficial to make the change of variables
\(y = tx\), giving us
\begin{equation*}
  U_{\alpha,1}(x) = x\int_{0}^{1}|\hat{I}_{\alpha}(x, t)|w_{\alpha}(tx)\ dt,\quad
  U_{\alpha,2}(x) = x\int_{1}^{\pi / x}|\hat{I}_{\alpha}(x, t)|w_{\alpha}(tx)\ dt,\quad
\end{equation*}
where
\begin{equation*}
  \hat{I}_{\alpha}(x, t) = C_{-\alpha}(x(1 - t)) + C_{-\alpha}(x(1 + t)) - 2C_{-\alpha}(xt).
\end{equation*}
The following lemmas give information about the sign of \(I(x, y)\)
and \(\hat{I}(x, t)\), allowing us to remove the absolute value.
\begin{lemma}
  \label{lemma:I-0-x-helper}
  For all \(\alpha \in (-1, 0)\) the function
  \begin{equation*}
    (1 - t)^{-\alpha - 1} + (1 + t)^{-\alpha - 1} - 2t^{-\alpha - 1}
  \end{equation*}
  is increasing and continuous in \(t\) for \(t \in (0, 1)\) and has a
  unique root \(r_{\alpha,0}\). For \(x \in (0, \pi)\) and
  \(t \in (0, r_{\alpha,0})\) the function \(\hat{I}_{\alpha}(x, t)\)
  is increasing in \(x\).
\end{lemma}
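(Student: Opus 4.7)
The plan is to treat the two parts of the lemma separately. The first part is a direct calculus exercise on $f(t) = (1 - t)^{-\alpha - 1} + (1 + t)^{-\alpha - 1} - 2 t^{-\alpha - 1}$. Differentiating gives
\[
f'(t) = (\alpha + 1)\bigl[(1 - t)^{-\alpha - 2} - (1 + t)^{-\alpha - 2} + 2 t^{-\alpha - 2}\bigr].
\]
Since $\alpha + 1 > 0$, $-\alpha - 2 < 0$, and $1 - t < 1 + t$, the first two terms combine to a positive quantity and the third is strictly positive, so the bracket is strictly positive and $f$ is strictly increasing on $(0, 1)$. Continuity is immediate, and inspecting the singular terms gives $\lim_{t \to 0^+} f(t) = -\infty$ and $\lim_{t \to 1^-} f(t) = +\infty$, so the intermediate value theorem provides a unique root $r_{\alpha, 0} \in (0, 1)$.

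For the second part, my plan is to use the Clausen expansion on $|y| < 2\pi$ to split $C_{-\alpha}(y) = L_{\alpha} |y|^{-\alpha - 1} + G(y)$, where $L_{\alpha} = \Gamma(1 + \alpha)\sin(-\pi \alpha / 2) > 0$ and $G(y) = \sum_{m \geq 0}(-1)^m \zeta(-\alpha - 2m)\, y^{2m} / (2m)!$ is the analytic remainder. Substituting into $\hat{I}_{\alpha}(x, t)$ and differentiating in $x$, the singular pieces combine through the same algebraic pattern into the factor $f(t)$, giving
\[
\partial_x \hat{I}_{\alpha}(x, t) = L_{\alpha}(-\alpha - 1)\, x^{-\alpha - 2} f(t) + \sum_{m \geq 1} (-1)^m \zeta(-\alpha - 2m) \frac{x^{2m - 1}}{(2m - 1)!} \bigl[(1 - t)^{2m} + (1 + t)^{2m} - 2 t^{2m}\bigr],
\]
which converges termwise for $x \in (0, \pi)$ since then $x(1 + t) < 2\pi$.

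It remains to show both the leading term and every term of the series are strictly positive. The coefficient $L_{\alpha}(-\alpha - 1)$ is negative, and $f(t) < 0$ for $t \in (0, r_{\alpha, 0})$ by the first part, so the leading term is positive. Expanding the bracket gives $(1 - t)^{2m} + (1 + t)^{2m} - 2 t^{2m} = 2 \sum_{k = 0}^{m - 1} \binom{2m}{2k} t^{2k} > 0$, so the sign of each summand reduces to the sign of $(-1)^m \zeta(-\alpha - 2m)$. The decisive step is that for $\alpha \in (-1, 0)$ and $m \geq 1$ the argument $-\alpha - 2m$ lies in the open interval $(-2m, -2m + 1)$, which contains no zeros of $\zeta$; at its right endpoint $\zeta(1 - 2m) = -B_{2m}/(2m)$ carries sign $(-1)^m$ by the standard sign of Bernoulli numbers, so by continuity $(-1)^m \zeta(-\alpha - 2m) > 0$ throughout the interval. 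Thus every summand is positive and $\partial_x \hat{I}_{\alpha}(x, t) > 0$ on the stated region. The main obstacle is precisely this sign analysis of $\zeta$ on $(-2m, -2m + 1)$: it is what makes the analytic contribution reinforce, rather than compete with, the leading singular term, turning a potentially delicate cancellation estimate into a clean term-by-term positivity argument.
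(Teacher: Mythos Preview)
Your proof is correct and follows essentially the same approach as the paper's: both compute $f'(t)$ and use $-\alpha-2<0$ with $1-t<1+t$ for the first part, and both differentiate $\hat{I}_\alpha$ in $x$, expand via the Clausen series, and reduce the sign analysis to the positivity of $(-1)^m\zeta(-\alpha-2m)$ together with $f(t)<0$ on $(0,r_{\alpha,0})$. Your write-up is in fact slightly more careful than the paper's in justifying the sign of $(-1)^m\zeta(-\alpha-2m)$ via the Bernoulli-number values at $1-2m$ and continuity on $(-2m,-2m+1)$.
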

\begin{proof}
  Differentiating the function with respect to \(t\) gives us
  \begin{equation*}
    (-\alpha - 1)(-(1 - t)^{-\alpha - 2} + (1 + t)^{-\alpha - 2} - 2t^{-\alpha - 2}),
  \end{equation*}
  we want to prove that this is positive. Since \(-\alpha - 1 < 0\)
  and \(t^{-\alpha - 2} > 0\) it is enough to prove that
  \begin{equation*}
    -(1 - t)^{-\alpha - 2} + (1 + t)^{-\alpha - 2} < 0,
  \end{equation*}
  which follows immediately from that \(-\alpha - 2 < 0\) and
  \(1 - t < 1 + t\). This proves that the function is increasing.
  Noticing that the limit \(t \to 0\) is negative and \(t \to 1\) is
  positive gives us the existence of a unique root \(r_{\alpha,0}\).

  To get the monotonicity for \(\hat{I}_{\alpha}\) we differentiate
  with respect to \(x\),
  \begin{equation*}
    \frac{d}{dx}\hat{I}_{\alpha}(x, t) = -(1 - t)S_{-\alpha - 1}(x(1 - t)) - (1 + t)S_{-\alpha - 1}(x(1 + t)) + 2tS_{\alpha - 1}(xt),
  \end{equation*}
  and expand,
  \begin{multline*}
    \frac{d}{dx}\hat{I}_{\alpha}(x, t) =
    -\Gamma(2 + \alpha)\cos\left(-\frac{\pi}{2}(\alpha + 1)\right)x^{-\alpha - 2}((1 - t)^{-\alpha - 1} + (1 + t)^{-\alpha - 1} - 2t^{\alpha - 1})\\
    - \sum_{m = 0}^{\infty}(-1)^{m}\zeta(-\alpha - 1 - 2m)\frac{x^{2m}}{(2m)!}((1 - t)^{2m + 1} + (1 + t)^{2m + 1} - 2t^{2m + 1}).
  \end{multline*}
  We have
  \(-\Gamma(2 + \alpha)\cos\left(-\frac{\pi}{2}(\alpha +
    1)\right)x^{-\alpha - 2} < 0\) and since \(0 < t < r_{\alpha,0}\)
  we have from the above that
  \((1 - t)^{-\alpha - 1} + (1 + t)^{-\alpha - 1} - 2t^{\alpha - 1} <
  0\), the first term is hence positive. Due to the location of the
  zeros of the zeta function on the negative real axis the factor
  \((-1)^{m}\zeta(-\alpha - 1 - 2m)\) is negative. Hence all terms in
  the sum are negative and the sum is decreasing in \(x\). Taking into
  account the minus sign in front of the sum gives us something
  increasing.

  All terms in the sum are, due
  to the location of the zeta function on the negative real axis,
  adding the minus sign in front means that the whole expression is
  positive. It follows that \(\hat{I}_{\alpha}(x, t)\) is increasing
  in \(x\).
\end{proof}

\begin{lemma}
  \label{lemma:I-0-x}
  For all \(\alpha \in (-1, 0)\) and \(x \in (0, \pi)\) the function
  \(\hat{I}_{\alpha}(x, t)\) is increasing and continuous in \(t\) for
  \(t \in (0, 1)\) and has the limits
  \begin{align*}
    \lim_{t \to 0^{+}} \hat{I}_{\alpha}(x, y) &= -\infty,\\
    \lim_{t \to 1^{-}} \hat{I}_{\alpha}(x, y) &= \infty.
  \end{align*}
  Moreover, the unique root, \(r_{\alpha,x}\), in \(t\) is decreasing
  in \(x\) and satisfies the inequality
  \begin{equation*}
    \frac{1}{2} < r_{\alpha,x} < r_{\alpha,0},
  \end{equation*}
  with \(r_{\alpha,0}\) as in lemma~\ref{lemma:I-0-x-helper}.
\end{lemma}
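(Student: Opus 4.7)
The plan is to work directly from the series expansions of $C_{-\alpha}$ and $S_{-\alpha-1}$ given in Section~\ref{sec:clausen-functions}, combining sign analysis of the leading singular term with the sign pattern of $(-1)^m \zeta(\,\cdot\,)$ that was used in the proof of Lemma~\ref{lemma:I-0-x-helper}. Continuity of $\hat{I}_\alpha(x, \cdot)$ on $(0, 1)$ is immediate, since the three arguments $x(1 \pm t)$ and $xt$ stay in $(0, 2\pi)$, where $C_{-\alpha}$ is continuous. The two limits read off the strictly positive leading singular term $\Gamma(1 + \alpha)\sin(-\pi\alpha/2) |z|^{-\alpha - 1}$ of $C_{-\alpha}(z)$: as $t \to 0^+$ only $-2 C_{-\alpha}(xt)$ diverges, giving $\hat{I}_\alpha \to -\infty$, and as $t \to 1^-$ only $C_{-\alpha}(x(1 - t))$ diverges, giving $\hat{I}_\alpha \to +\infty$.

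For strict monotonicity in $t$, differentiate via $C_{-\alpha}'(z) = -S_{-\alpha - 1}(z)$ to obtain
\[
\partial_t \hat{I}_\alpha(x, t) = x\bigl[S_{-\alpha - 1}(x(1 - t)) - S_{-\alpha - 1}(x(1 + t)) + 2 S_{-\alpha - 1}(xt)\bigr],
\]
and expand each $S_{-\alpha - 1}$. The singular piece is proportional to $x^{-\alpha - 1}[(1 - t)^{-\alpha - 2} - (1 + t)^{-\alpha - 2} + 2 t^{-\alpha - 2}]$, strictly positive because $u \mapsto u^{-\alpha - 2}$ is decreasing on $(0, \infty)$ and $1 - t < 1 + t$. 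In the regular series the $m = 0$ bracket $(1 - t) - (1 + t) + 2t$ vanishes identically, and for $m \geq 1$ the bracket $(1 - t)^{2m + 1} - (1 + t)^{2m + 1} + 2 t^{2m + 1}$ is strictly negative on $(0, 1)$, as it vanishes at $t = 0$ and its derivative is negative by the convexity bound $(1 - t)^{2m} + (1 + t)^{2m} \geq 2 > 2 t^{2m}$. The coefficient $(-1)^m \zeta(-\alpha - 2 - 2m)$ is also negative (same zeta-sign argument as in Lemma~\ref{lemma:I-0-x-helper}), so every surviving term is strictly positive. Existence and uniqueness of $r_{\alpha, x}$ then follow from the intermediate value theorem.

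For the upper bound, evaluate the expansion of $\hat{I}_\alpha(x, t)$ at $t = r_{\alpha, 0}$: the $x^{-\alpha - 1}$ coefficient vanishes by the defining equation of $r_{\alpha, 0}$, the three $\zeta(-\alpha)$ constants cancel, and the first surviving coefficient at $x^2$ equals $-\zeta(-\alpha - 2) > 0$ because $\zeta < 0$ on $(-2, -1)$; in particular $\hat{I}_\alpha(x, r_{\alpha, 0}) \to 0$ as $x \to 0^+$. The proof of Lemma~\ref{lemma:I-0-x-helper} extends to $t = r_{\alpha, 0}$ because the singular part of $\partial_x \hat{I}_\alpha$ is merely zero there while the series part is strictly positive, so $\hat{I}_\alpha(\cdot, r_{\alpha, 0})$ is strictly increasing from $0$, giving $\hat{I}_\alpha(x, r_{\alpha, 0}) > 0$ on $(0, \pi)$ and hence $r_{\alpha, x} < r_{\alpha, 0}$. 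Implicit differentiation of $\hat{I}_\alpha(x, r_{\alpha, x}) = 0$ now yields $dr_{\alpha, x}/dx = -\partial_x \hat{I}_\alpha / \partial_t \hat{I}_\alpha < 0$: positivity of the numerator comes from Lemma~\ref{lemma:I-0-x-helper} applied at $r_{\alpha, x} < r_{\alpha, 0}$, positivity of the denominator from the previous paragraph.

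The main obstacle is the lower bound $r_{\alpha, x} > 1/2$, equivalent to $\hat{I}_\alpha(x, 1/2) = C_{-\alpha}(3x/2) - C_{-\alpha}(x/2) < 0$. Since the integration range $(x/2, 3x/2)$ stays in $(0, 2\pi)$ for $x < \pi$, I would rewrite this as $-\int_{x/2}^{3x/2} S_{-\alpha - 1}(\xi)\, d\xi$ and, when $3x/2 > \pi$, fold the tail back using the antisymmetry $S_{-\alpha - 1}(\pi + \eta) = -S_{-\alpha - 1}(\pi - \eta)$ to reduce the integral to $\int_{x/2}^{2\pi - 3x/2} S_{-\alpha - 1}(\xi)\, d\xi$ over a subinterval of $(0, \pi)$. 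Both cases reduce to showing $S_{-\alpha - 1} > 0$ on $(0, \pi)$ for $\alpha \in (-1, 0)$, which I would verify by sign analysis of its expansion: the leading piece $\Gamma(\alpha + 2)\cos(-\pi(\alpha + 1)/2) z^{-\alpha - 2}$ is strictly positive, and the regular corrections have coefficients $(-1)^m \zeta(-\alpha - 2 - 2m)$ of a single sign together with vanishing of $S_{-\alpha - 1}$ at the endpoint $z = \pi$; this positivity (a known feature of Clausen sine functions with order in $(-1, 0)$) is the main technical point.
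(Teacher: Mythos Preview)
Your proof is correct, but differs from the paper's in two notable places.

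For monotonicity in $t$, the paper does not expand $S_{-\alpha-1}$ term by term as you do. Instead it observes that $\partial_t\hat I_\alpha(x,t)=x\bigl[S_{-\alpha-1}(x(1-t))-S_{-\alpha-1}(x(1+t))+2S_{-\alpha-1}(xt)\bigr]$ and then invokes two separately proved facts: $S_{-\alpha-1}>0$ on $(0,\pi)$ (Lemma~\ref{lemma:clausenc-monotone}) handles the $2S_{-\alpha-1}(xt)$ term, and $S_{-\alpha-1}$ is strictly decreasing on $(0,2\pi)$ (Lemma~\ref{lemma:clausens-monotone}) handles the difference. Your series argument is self-contained and avoids appealing to those lemmas, which is a genuine alternative; the paper's route is shorter because those lemmas are proved once, via integral representations, and reused.

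For the lower bound $r_{\alpha,x}>\tfrac12$, the paper takes a much shorter path than your integral-and-folding argument. It simply notes that
\[
\hat I_\alpha(\pi,\tfrac12)=C_{-\alpha}(\pi/2)+C_{-\alpha}(3\pi/2)-2C_{-\alpha}(\pi/2)=0,
\]
using $C_{-\alpha}(3\pi/2)=C_{-\alpha}(\pi/2)$ by evenness and $2\pi$-periodicity, so $r_{\alpha,\pi}=\tfrac12$; combined with the already-proved fact that $r_{\alpha,x}$ is decreasing in $x$, this gives $r_{\alpha,x}>\tfrac12$ for $x<\pi$ in one line. Your approach works too, but it re-derives the positivity of $S_{-\alpha-1}$ on $(0,\pi)$ (the very fact the paper already has in Lemma~\ref{lemma:clausenc-monotone}), and your sketched justification of that positivity via ``single-sign corrections plus vanishing at $\pi$'' is not a complete argument on its own---you are right to flag it as the main technical point, but the paper settles it with an integral representation rather than the series.
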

\begin{proof}
  The continuity in \(t\) follows from that \(C_{\alpha}\) is
  continuous on the interval \((0, 2\pi)\) and that the arguments all
  stay in this range. For the left limit we note that
  \(C_{-\alpha}(x(1 - t))\) and \(C_{-\alpha}(x(1 + t))\) both remain
  finite, while \(C_{-\alpha}(xt)\) diverges towards \(\infty\). For
  the right limit \(C_{-\alpha}(x(1 + t))\) and \(C_{-\alpha}(xt)\)
  are finite while \(C_{-\alpha}(x(1 - t))\) diverges towards
  \(\infty\).

  To show that it is increasing in \(t\) we differentiate, giving us
  \begin{equation*}
    \frac{d}{dt}\hat{I}_{\alpha}(x, t) = x(S_{-\alpha - 1}(x(1 - t)) - S_{-\alpha - 1}(x(1 + t)) + 2S_{-\alpha - 1}(xt)).
  \end{equation*}
  We want to prove that this is positive. Note that \(0 < xt < \pi\)
  and from Lemma~\ref{lemma:clausenc-monotone} we have that
  \(S_{-\alpha - 1}\) is positive on the interval \((0, \pi)\). Since
  \(x\) is also positive it thus satisfies to check that
  \begin{equation*}
    S_{-\alpha - 1}(x(1 - t)) - S_{-\alpha - 1}(x(1 + t)) > 0,
  \end{equation*}
  for which it is enough to assert that \(S_{-\alpha - 1}\) is
  decreasing on \((0, 2\pi)\), which is the result of
  Lemma~\ref{lemma:clausens-monotone}. This proves the existence of a
  unique root \(r_{\alpha, x}\) on the interval \((0, 1)\).

  To prove that \(r_{\alpha,x}\) is decreasing in \(x\) we first prove
  that it is upper bounded by \(r_{\alpha,0}\). Expanding the Clausen
  functions gives us
  \begin{multline*}
    \hat{I}_{\alpha}(x, t) =
    \Gamma(1 + \alpha)\sin\left(-\frac{\pi}{2}\alpha\right)x^{-\alpha - 1}((1 - t)^{-\alpha - 1} + (1 + t)^{-\alpha - 1} - 2t^{\alpha - 1})\\
    + \sum_{m = 1}^{\infty}(-1)^{m}\zeta(-\alpha - 2m)\frac{x^{2m}}{(2m)!}((1 - t)^{2m} + (1 + t)^{2m} - 2t^{2m}).
  \end{multline*}
  All terms in the sum are positive, due to the location of the roots
  of the zeta function on the negative real axis, and to have a root
  we must hence have
  \begin{equation*}
    \Gamma(1 + \alpha)\sin\left(-\frac{\pi}{2}\alpha\right)x^{-\alpha - 1}((1 - t)^{-\alpha - 1} + (1 + t)^{-\alpha - 1} - 2t^{\alpha - 1}) < 0.
  \end{equation*}
  Since
  \(\Gamma(1 + \alpha)\sin\left(-\frac{\pi}{2}\alpha\right)x^{-\alpha
    - 1} > 0\) this means we must have
  \begin{equation*}
    (1 - t)^{-\alpha - 1} + (1 + t)^{-\alpha - 1} - 2t^{\alpha - 1} < 0,
  \end{equation*}
  but from the previous lemma we know that this only holds for
  \(t < r_{\alpha,0}\), it follows that
  \(r_{\alpha,x} < r_{\alpha,0}\). The monotonicity in \(x\) now
  follows directly from that \(\hat{I}_{\alpha}(x, t)\) is increasing
  in \(x\) for \(t \in (0, r_{\alpha,0})\). To get the lower bound
  \(\frac{1}{2} < r_{\alpha,x}\) it is enough to notice that
  \begin{equation*}
    \hat{I}_{\alpha}(\pi, 1 / 2) = C_{-\alpha}(\pi / 2) + C_{-\alpha}(\pi / 2) - 2C_{-\alpha}(\pi / 2) = 0.
  \end{equation*}
\end{proof}
In practice the root \(r_{\alpha,x}\) is decreasing also in
\(\alpha\). However, instead of proving this in the general case the
following lemma can be used as an a posteriori check.
\begin{lemma}
  \label{lemma:I-0-x-r-alpha}
  Let
  \(\boldsymbol{\alpha} = [\lo{\alpha}, \hi{\alpha}] \subseteq (-1,
  0)\) and \(x \in (0, \pi)\). If
  \begin{equation*}
    \frac{d}{d\alpha} \hat{I}_{\alpha}(x, r_{\lo{\alpha},x}) > 0
    \quad\text{for all}\quad
    \alpha \in \boldsymbol{\alpha},
  \end{equation*}
  then \(r_{\alpha,x} \leq r_{\lo{\alpha},x}\) for all
  \(\alpha \in \boldsymbol{\alpha}\). Similarly, if
  \(\frac{d}{d\alpha} \hat{I}_{\alpha}(x, r_{\hi{\alpha},x}) > 0\) for
  all \(\alpha \in \boldsymbol{\alpha}\), then
  \(r_{\alpha,x} \geq r_{\hi{\alpha},x}\) for all
  \(\alpha \in \boldsymbol{\alpha}\).
\end{lemma}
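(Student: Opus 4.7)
The plan is to prove the two claims by combining the defining property of the roots with the fundamental theorem of calculus applied to the one-parameter family $\alpha \mapsto \hat{I}_{\alpha}(x, t)$ at a fixed test point $t$, and then using the strict monotonicity of $\hat{I}_{\alpha}(x, \cdot)$ in $t$ established in Lemma~\ref{lemma:I-0-x} to translate the sign of $\hat{I}_{\alpha}(x, t)$ into a comparison between $t$ and $r_{\alpha, x}$.

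For the first claim, fix $x \in (0, \pi)$ and set $t_{0} = r_{\lo{\alpha}, x}$, so by definition $\hat{I}_{\lo{\alpha}}(x, t_{0}) = 0$. The function $\alpha \mapsto \hat{I}_{\alpha}(x, t_{0})$ is $C^{1}$ on $(-1, 0)$ because it is a finite combination of Clausen functions $C_{-\alpha}$ evaluated at points in $(0, 2\pi)$, where these functions depend smoothly on the order. Thus for any $\alpha \in \boldsymbol{\alpha}$ with $\alpha > \lo{\alpha}$ the fundamental theorem of calculus gives
\begin{equation*}
  \hat{I}_{\alpha}(x, t_{0})
  = \hat{I}_{\lo{\alpha}}(x, t_{0}) + \int_{\lo{\alpha}}^{\alpha} \frac{d}{ds}\hat{I}_{s}(x, t_{0}) \, ds
  = \int_{\lo{\alpha}}^{\alpha} \frac{d}{ds}\hat{I}_{s}(x, t_{0}) \, ds > 0,
\end{equation*}
where the strict positivity uses the hypothesis pointwise on the interval of integration. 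By Lemma~\ref{lemma:I-0-x}, $\hat{I}_{\alpha}(x, \cdot)$ is strictly increasing on $(0, 1)$ and vanishes exactly at $t = r_{\alpha, x}$; since it is strictly positive at $t_{0}$, we conclude $r_{\alpha, x} < t_{0} = r_{\lo{\alpha}, x}$.

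The second claim is completely symmetric: set $t_{1} = r_{\hi{\alpha}, x}$ and, for $\alpha \leq \hi{\alpha}$, write
\begin{equation*}
  \hat{I}_{\alpha}(x, t_{1})
  = \hat{I}_{\hi{\alpha}}(x, t_{1}) - \int_{\alpha}^{\hi{\alpha}} \frac{d}{ds}\hat{I}_{s}(x, t_{1}) \, ds
  = -\int_{\alpha}^{\hi{\alpha}} \frac{d}{ds}\hat{I}_{s}(x, t_{1}) \, ds < 0,
\end{equation*}
so by the same monotonicity argument $r_{\alpha, x} > t_{1} = r_{\hi{\alpha}, x}$.

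There is no real obstacle here, since the differentiability of $\hat{I}_{\alpha}$ in $\alpha$ is inherited from the smoothness of the Clausen functions in their order parameter, and all other ingredients have already been supplied by Lemma~\ref{lemma:I-0-x}. The only thing to be slightly careful about is that the hypothesis on the sign of $\frac{d}{d\alpha}\hat{I}_{\alpha}(x, t_{0})$ must hold on the full closed interval $\boldsymbol{\alpha}$ so that the integral above is strictly positive (or negative) for every admissible $\alpha$; this is exactly how the lemma is formulated, and it is what makes the statement usable as an a posteriori, interval-arithmetic check inside the computer-assisted proof.
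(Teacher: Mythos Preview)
Your proof is correct and follows essentially the same approach as the paper: fix the test point at the root for one endpoint of $\boldsymbol{\alpha}$, use the sign of the $\alpha$-derivative to determine the sign of $\hat{I}_{\alpha}$ at that point, and then invoke the monotonicity in $t$ from Lemma~\ref{lemma:I-0-x}. The paper's version is terser (it simply states that a positive derivative forces $\hat{I}_{\alpha}(x, r_{\lo{\alpha},x}) \geq 0$ without writing out the integral), but the logic is identical.
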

\begin{proof}
  We only prove the first statement, the second one is similar. By
  definition of \(r_{x,\lo{\alpha}}\) we have
  \({\hat{I}_{\lo{\alpha}}(x, r_{x,\lo{\alpha}}) = 0}\) and hence if
  \(\frac{d}{d\alpha} \hat{I}_{\alpha}(x, r_{x,\lo{\alpha}}) > 0\) we
  have \(\hat{I}_{\alpha}(x, r_{x,\lo{\alpha}}) \geq 0\). Since
  \(\hat{I}_{\alpha}(x, t)\) is increasing in \(t\) this means that
  \(r_{x,\alpha} \leq r_{x,\lo{\alpha}}\).
\end{proof}

\begin{lemma}
  \label{lemma:I-x-pi}
  For all \(\alpha \in (-1, 0)\) and \(x \in (0, \pi)\) we have
  \(I_{\alpha}(x, y) > 0\) for \(y \in (x, \pi)\) and
  \(\hat{I}_{\alpha}(x, t) > 0\) for \(t \in (1, \pi / x)\).
\end{lemma}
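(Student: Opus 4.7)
The plan is to reduce both assertions to showing $I_{\alpha}(x,y) > 0$ for $0 < x < y < \pi$, and then obtain this from a termwise analysis of the expansion of $C_{-\alpha}$ stated earlier in the paper. The change of variables $y = tx$ transforms the range $t \in (1, \pi/x)$ into $y \in (x, \pi)$ and converts $\hat{I}_{\alpha}(x,t)$ into $I_{\alpha}(x,y)$, so the second claim follows from the first.

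For the first claim, I would apply the expansion
\begin{equation*}
  C_{-\alpha}(z) = \Gamma(1+\alpha)\sin\!\left(-\tfrac{\pi}{2}\alpha\right)|z|^{-\alpha-1}
  + \sum_{m=0}^{\infty}(-1)^m \zeta(-\alpha - 2m)\frac{z^{2m}}{(2m)!},
\end{equation*}
which is valid for $|z| < 2\pi$, to each of $C_{-\alpha}(x-y)$, $C_{-\alpha}(x+y)$ and $C_{-\alpha}(y)$; the arguments have absolute value at most $x+y < 2\pi$. The $m=0$ polynomial contributions cancel in the combination $C_{-\alpha}(x-y)+C_{-\alpha}(x+y)-2C_{-\alpha}(y)$, leaving a singular term plus a power series in $x^2$. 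Setting $a = y-x > 0$ and $b = y+x$, so that $a$ and $b$ are symmetric about $y$, the singular contribution equals
\begin{equation*}
  \Gamma(1+\alpha)\sin\!\left(-\tfrac{\pi}{2}\alpha\right)\bigl(a^{-\alpha-1} + b^{-\alpha-1} - 2y^{-\alpha-1}\bigr),
\end{equation*}
and the remaining sum is $\sum_{m\ge 1}(-1)^m\zeta(-\alpha-2m)\bigl((y-x)^{2m}+(y+x)^{2m}-2y^{2m}\bigr)/(2m)!$.

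The prefactor $\Gamma(1+\alpha)\sin(-\pi\alpha/2)$ is positive for $\alpha \in (-1,0)$, and the function $u \mapsto u^{-\alpha-1}$ is strictly convex on $(0,\infty)$ since its second derivative $(-\alpha-1)(-\alpha-2)u^{-\alpha-3}$ is a product of two negative numbers times a positive one. Thus $a^{-\alpha-1} + b^{-\alpha-1} > 2\bigl(\tfrac{a+b}{2}\bigr)^{-\alpha-1} = 2y^{-\alpha-1}$ whenever $x > 0$, making the singular term strictly positive. For each $m \ge 1$, strict convexity of $u \mapsto u^{2m}$ gives $(y-x)^{2m} + (y+x)^{2m} > 2y^{2m}$, and the sign of $(-1)^m \zeta(-\alpha-2m)$ is positive: for $m \ge 1$, $-\alpha - 2m \in (-2m, -2m+1)$, and on that interval $\zeta$ has sign $(-1)^m$ by the location of the trivial zeros together with the signs of $\zeta$ at odd negative integers, so $(-1)^m\zeta(-\alpha-2m) > 0$. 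The only step that merits care is this sign tracking for $\zeta$, but it is the same argument already used in the proof of Lemma~\ref{lemma:I-0-x}. Summing, every term in the expansion is non-negative and the singular term is strictly positive, which yields $I_{\alpha}(x,y) > 0$ and completes the proof.
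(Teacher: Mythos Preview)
Your proof is correct and rests on the same idea as the paper's: the midpoint inequality coming from strict convexity. The paper is more concise --- it observes that \(C_{-\alpha}\) itself is strictly convex on \((0,2\pi)\) (because its second derivative \(-C_{-\alpha-2}\) is positive by the integral representation from Lemma~\ref{lemma:clausens-monotone}), so \(C_{-\alpha}(y-x)+C_{-\alpha}(y+x)-2C_{-\alpha}(y)>0\) follows in one line, and the change of variables gives the statement for \(\hat{I}_{\alpha}\). You instead expand \(C_{-\alpha}\) and verify the midpoint inequality term by term, which is a valid but more laborious route to the same conclusion.
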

\begin{proof}
  The function \(C_{-\alpha}(y)\) is strictly convex for
  \(y \in (0, 2\pi)\), this can be seen from the integral
  representation of
  \(\frac{d^{2}}{dy^{2}}C_{-\alpha}(y) = -C_{-\alpha - 2}(y)\) given
  in the proof of Lemma~\ref{lemma:clausens-monotone}. It immediately
  follows that
  \begin{equation*}
    I_{\alpha}(x, y) = C_{-\alpha}(y - x) + C_{-\alpha}(y + x) - 2C_{-\alpha}(y) < 0.
  \end{equation*}
  A simple change of variables gives the result for
  \(\hat{I}_{\alpha}(x, t)\).
\end{proof}

With the help of Lemma~\ref{lemma:I-0-x} and~\ref{lemma:I-x-pi} we can
slightly simplify \(U_{1}\) and \(U_{2}\), we get
\begin{equation*}
  U_{\alpha,1}(x) = -x\int_{0}^{r_{\alpha,x}} \hat{I}(x, t)w_{\alpha}(tx)\ dt
  + x\int_{r_{\alpha,x}}^{1} \hat{I}(x, t)w_{\alpha}(tx)\ dt
  = U_{\alpha,1,1}(x) + U_{\alpha,1,2}(x)
\end{equation*}
and
\begin{equation*}
  U_{\alpha,2}(x) = \int_{x}^{\pi}I(x, y)w_{\alpha}(y)\ dy.
\end{equation*}

In general the integrals cannot be explicitly computed, the exception
is when \(w_{\alpha}(x) = |x|\) in which case we have the following
lemma.
\begin{lemma}
  \label{lemma:U-primitive}
  For \(\alpha \in (-1, 0)\), \(x \in (0, \pi)\) and
  \(w_{\alpha}(x) = |x|\) we have
  \begin{multline*}
    U_{\alpha}(x) = 2\tilde{C}_{2 - \alpha}(x) + 2(C_{2 - \alpha}(x + \pi) - C_{2 - \alpha}(\pi))\\
    - 2\left(C_{2 - \alpha}(x(1 - r_{\alpha,x})) + C_{2 - \alpha}(x(1 + r_{\alpha,x})) - 2C_{2 - \alpha}(xr_{\alpha,x})\right)\\
    - 2xr_{\alpha,x}\left(-S_{1 - \alpha}(x(1 - r_{\alpha,x})) + S_{1 - \alpha}(x(1 + r_{\alpha,x})) - 2S_{1 - \alpha}(xr_{\alpha,x})\right).
  \end{multline*}
\end{lemma}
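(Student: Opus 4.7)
The plan is to remove the absolute value in the definition of $U_\alpha(x)$ using the sign information from Lemmas~\ref{lemma:I-0-x} and~\ref{lemma:I-x-pi}, then evaluate the resulting integrals explicitly by integrating by parts using the Clausen antiderivatives $\int C_{-\alpha}(u)\,du = S_{1-\alpha}(u)$ and $\int S_{1-\alpha}(u)\,du = -C_{2-\alpha}(u)$.

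\textbf{Step 1: Sign splitting.} By Lemma~\ref{lemma:I-0-x}, $I_\alpha(x,y)$ is negative on $(0,xr_{\alpha,x})$ and positive on $(xr_{\alpha,x},x)$, and by Lemma~\ref{lemma:I-x-pi} it is positive on $(x,\pi)$ (by the strict convexity of $C_{-\alpha}$ on $(0,2\pi)$). Writing $|I_\alpha|$ as $I_\alpha - 2\,[\text{negative part}]$ I get the clean decomposition
\begin{equation*}
  U_\alpha(x) = \int_0^\pi I_\alpha(x,y)\,y\,dy - 2\int_0^{xr_{\alpha,x}} I_\alpha(x,y)\,y\,dy.
\end{equation*}

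\textbf{Step 2: Integration by parts.} For each of the three pieces $C_{-\alpha}(x-y)$, $C_{-\alpha}(x+y)$, $2C_{-\alpha}(y)$ making up $I_\alpha$, I apply integration by parts twice on both of the integrals above, using $u=y$ and $dv = C_{-\alpha}(\cdot)\,dy$ to bring a factor of $S_{1-\alpha}$, then integrating the leftover $S_{1-\alpha}$ terms to produce $C_{2-\alpha}$. For the integral over $[0,\pi]$ this yields boundary terms involving $S_{1-\alpha}(\pi\pm x)$, $S_{1-\alpha}(\pi)$ and $C_{2-\alpha}(\pi\pm x)$, $C_{2-\alpha}(0)$, $C_{2-\alpha}(\pi)$; for the integral over $[0,xr_{\alpha,x}]$ the boundary terms contain $S_{1-\alpha}$ and $C_{2-\alpha}$ evaluated at $x(1\pm r_{\alpha,x})$ and $xr_{\alpha,x}$, together with the factor $xr_{\alpha,x}$ coming from the upper endpoint.

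\textbf{Step 3: Symmetry simplifications.} I then use the parity/periodicity identities $C_s(-u)=C_s(u)$, $S_s(-u)=-S_s(u)$, $C_s(\pi+x)=C_s(\pi-x)$, $S_s(\pi+x)=-S_s(\pi-x)$, and $S_{1-\alpha}(\pi)=0$ to cancel the $\pi S_{1-\alpha}(\pi-x)$ contributions coming from the $C_{-\alpha}(x-y)$ and $C_{-\alpha}(x+y)$ pieces against each other, and to merge $C_{2-\alpha}(\pi-x)+C_{2-\alpha}(\pi+x) = 2C_{2-\alpha}(x+\pi)$. After this, $\int_0^\pi I_\alpha(x,y)y\,dy$ collapses to $-2\tilde C_{2-\alpha}(x) + 2(C_{2-\alpha}(x+\pi)-C_{2-\alpha}(\pi))$. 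Combining with $-2$ times the second integral (whose IBP output contributes a compensating $+4\tilde C_{2-\alpha}(x)$ together with the Clausen/sine cluster evaluated at $x(1\pm r_{\alpha,x})$ and $xr_{\alpha,x}$) produces exactly the claimed formula.

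\textbf{Expected obstacle.} There is no deep obstacle here; the computation is essentially a careful application of IBP combined with bookkeeping. The main place to be careful is the sign convention in the IBP of $C_{-\alpha}(x-y)$ (where the chain rule introduces an extra minus sign when differentiating in $y$), and the parity rewriting that converts the argument $x-\pi$ appearing naturally into the $\pi-x$ (for $S_{1-\alpha}$) and $\pi+x$ (for $C_{2-\alpha}$) form used in the statement. Beyond this the claim follows by direct collection of terms.
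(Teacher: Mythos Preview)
Your approach is correct and essentially the same as the paper's: the paper makes the change of variables $y=tx$, writes down the antiderivative $J_\alpha(x,t)$ of $\hat I_\alpha(x,t)\,t$ directly (which is precisely what your two integrations by parts yield), and evaluates it at $t=0$, $r_{\alpha,x}$, $1$, $\pi/x$, then simplifies the endpoint values using the same parity/periodicity identities you invoke. Your decomposition $|I_\alpha|=I_\alpha-2I_\alpha^{-}$ is a slightly slicker bookkeeping of the same computation, since it bypasses the intermediate evaluation at $t=1$ that the paper must note is finite and then observe cancels.
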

\begin{proof}
  For \(w_{\alpha}(x) = |x|\) we have
  \begin{equation*}
    U_{\alpha}(x) = -U_{\alpha,1,1}(x) + U_{\alpha,1,2}(x) + U_{\alpha,2}(x)
  \end{equation*}
  with
  \begin{align*}
    U_{\alpha,1,1}(x) &= x^{2} \int_{0}^{r_{\alpha,x}}(C_{-\alpha}(x(1 - t)) + C_{-\alpha}(x(1 + t)) - 2C_{-\alpha}(xt)) t\ dt,\\
    U_{\alpha,1,2}(x) &= x^{2} \int_{r_{\alpha,x}}^{1}(C_{-\alpha}(x(1 - t)) + C_{-\alpha}(x(1 + t)) - 2C_{-\alpha}(xt)) t\ dt,\\
    U_{\alpha,2}(x) = & x^{2} \int_{1}^{\pi / x} (C_{-\alpha}(x(1 - t)) + C_{-\alpha}(x(1 + t)) - 2C_{-\alpha}(xt)) t\ dt.
  \end{align*}
  If we let
  \begin{align*}
    J_{\alpha}(x, t) &= \int (C_{-\alpha}(x(1 - t)) + C_{-\alpha}(x(1 + t)) - 2C_{-\alpha}(xt)) t\ dt\\
                     &= \frac{1}{x^{2}}\left(C_{2 - \alpha}(x(1 - t)) + C_{2 - \alpha}(x(1 + t)) - 2C_{2 - \alpha}(xt)\right)
                       + \frac{t}{x}\left(-S_{1 - \alpha}(x(1 - t)) + S_{1 - \alpha}(x(1 + t)) - 2S_{1 - \alpha}(xt)\right)
  \end{align*}
  we get
  \begin{align*}
    U_{\alpha}(x) &= -x^{2}(J_{\alpha}(x, r_{\alpha,x}) - J_{\alpha}(x, 0))
                    + x^{2}(J_{\alpha}(x, 1) - J_{\alpha,x}(x, r_{\alpha,x}))
                    + x^{2}(J_{\alpha}(x, \pi / x) - J_{\alpha}(x, 1))\\
                  &= x^{2}J_{\alpha}(x, 0) - 2x^{2}J_{\alpha}(x, r_{x,\alpha}) + x^{2}J_{\alpha}(x, \pi / x),
  \end{align*}
  where we have used that \(J_{\alpha}(x, 1)\) is finite. Both
  \(J_{\alpha}(x, 0)\) and \(J_{\alpha}(x, \pi / x)\) can be simplified
  further. We have
  \begin{equation*}
    J_{\alpha}(x, 0) = \frac{2}{x^{2}}\left(C_{2 - \alpha}(x) - C_{2 - \alpha}(0)\right)
    = \frac{2}{x^{2}}\tilde{C}_{2 - \alpha}(x).
  \end{equation*}
  and
  \begin{align*}
    J_{\alpha}(x, \pi / x) &= \frac{1}{x^{2}}\left(C_{2 - \alpha}(x - \pi) + C_{2 - \alpha}(x + \pi) - 2C_{2 - \alpha}(\pi)\right)
                             + \frac{\pi}{x}\left(-S_{1 - \alpha}(x - \pi) + S_{1 - \alpha}(x + \pi) - 2S_{1 - \alpha}(\pi)\right)\\
                           &= \frac{2}{x^{2}}(C_{2 - \alpha}(x + \pi) - C_{2 - \alpha}(\pi)),
  \end{align*}
  where we have used that \(C_{s}(x - \pi) = C_{s}(x + \pi)\),
  \(S_{s}(x - \pi) = S_{s}(x + \pi)\) and \(S_{s}(\pi) = 0\). Putting
  all of this together gives us the result.
\end{proof}

\subsection{Evaluation of \(\mathcal{T}_{\alpha}\)}
\label{sec:evaluation-T}
Similarly to in the previous sections we divide the interval
\([0, \pi]\), on which we take the supremum, into two parts,
\([0, \epsilon]\) and \([\epsilon, \pi]\).

\subsubsection{Evaluation of \(\mathcal{T}_{\alpha}\) for \(I_{2}\)}
\label{sec:evaluation-T-I-2}
For the weight \(w_{\alpha}(x) = |x|\) evaluation is straight forward
for \(x \in [\epsilon, \pi]\), using Lemma~\ref{lemma:U-primitive}.
For \(x \in [0, \epsilon]\) we write
\begin{equation}
  \label{eq:evaluation-T-I-2-split}
  \mathcal{T}_{\alpha}(x) = \frac{1}{\pi}\frac{x^{-\alpha}}{u_{\alpha}(x)} \cdot
  \frac{U_{\alpha}(x)}{x^{1 - \alpha}}.
\end{equation}
The first factor is enclosed using
Lemma~\ref{lemma:asymptotic-inv-u0}. For the second factor we use
Lemma~\ref{lemma:U-primitive} and expand the Clausen functions at zero
to be able to cancel the removable singularity. For both
\(x \in [0, \epsilon]\) and \(x \in [\epsilon, \pi]\) we need to
compute enclosures of the root of the integrand \(r_{\alpha,x}\), this
is done using standard interval methods for root enclosures, the
monotonicity in \(x\) from Lemma~\ref{lemma:I-0-x} and
Lemma~\ref{lemma:I-0-x-r-alpha}.

When the weight is \(w_{\alpha}(x) = |x|^{p}\) with \(p < 1\) more
work is required when evaluating \(U_{\alpha}\). For
\(x \in [\epsilon, \pi]\) it uses a combination of asymptotic analysis
near the singularities of the integrand and a rigorous integrator, as
described in Appendix~\ref{sec:rigorous-integration}. For
\(x \in [0, \epsilon]\) we split it as in
\eqref{eq:evaluation-T-I-2-split} and use the following lemma to
handle the second factor.
\begin{lemma}
  \label{lemma:evaluation-U-I-2}
  Let \(0 < \epsilon < \frac{\pi}{2}\), for \(\alpha \in (-1, 0)\),
  \(x \in [0, \epsilon]\) and \(w_{\alpha}(x) = |x|^{p}\) with
  \(-\alpha < p < 1\) and \(1 + \alpha \not= p\) we have
  \begin{equation*}
    \frac{U_{\alpha,1}(x)}{x^{-\alpha + p}} \leq c_{\alpha,1} + d_{\alpha,1}x^{3 + \alpha}
  \end{equation*}
  and
  \begin{equation*}
    \frac{U_{\alpha,2}(x)}{x^{-\alpha + p}} \leq c_{\alpha,2} + d_{\alpha,2}x^{2 + \alpha - p},
  \end{equation*}
  where
  \begin{align*}
    c_{\alpha,1} =& \Gamma(1 + \alpha)\sin\left(-\frac{\pi}{2}\alpha\right)\Bigg(\frac{2}{\alpha - p} + \frac{\Gamma(-\alpha)\Gamma(1 + p)}{\Gamma(1 - \alpha + p)} + \frac{{}_{2}F_{1}(1 + \alpha, 1 + p; 2 + p; -1)}{1 + p}\\
                  &- 2r_{\alpha,0}^{p}\left(\frac{2r_{\alpha,0}^{-\alpha}}{\alpha - p} + \frac{r_{\alpha,0}\ {}_{2}F_{1}(1 + \alpha, 1 + p, 2
                    + p, -r_{\alpha,0})}{1 + p} + \frac{r_{\alpha,0}\ {}_{2}F_{1}(1 + \alpha, 1 + p, 2 + p, r_{\alpha,0})}{1 + p}\right)\Bigg)\\
    c_{\alpha,2} =& \Gamma(1 + \alpha)\sin\left(-\frac{\pi}{2}\alpha\right)\left(\frac{\Gamma(-\alpha)\Gamma(\alpha - p)}{\Gamma(-p)} + \frac{{}_{2}F_{1}(1 + \alpha, \alpha - p; 1 + \alpha - p; -1) - 2}{\alpha - p}\right);\\
    d_{\alpha,1} =& 2\sum_{m = 1}^{M - 1} (-1)^{m}\zeta(-\alpha - 2m)\frac{\epsilon^{2m - 2}}{(2m)!}\sum_{k = 0}^{m - 1}\binom{2m}{2k}\frac{1}{2k + 1 + p}
                    + \frac{1}{\epsilon^{2}}\sum_{m = M}^{\infty} (-1)^{m}\zeta(-\alpha - 2m)\frac{(2\epsilon)^{2m}}{(2m)!};\\
    d_{\alpha,2} =& -\Gamma(1 + \alpha)\sin\left(-\frac{\pi}{2}\alpha\right)\frac{(1 + \alpha)(2 + \alpha)}{(2 + \alpha - p)\pi^{2 + \alpha - p}}\\
                  &+ 2\pi^{p - 1}\sum_{m = 1}^{M-1} (-1)^{m}\zeta(-\alpha - 2m)\frac{\pi^{2m}}{(2m)!}
                    \sum_{k = 0}^{m - 1}\binom{2m}{2k}\frac{\left(\frac{\epsilon}{\pi}\right)^{2(m - 1 - k)}}{2k + 1 + p}
                    + 6\pi^{p - 1}\sum_{m = M}^{\infty} (-1)^{m}\zeta(-\alpha - 2m)\frac{(\frac{3\pi}{2})^{2m}}{(2m)!}
  \end{align*}
  The tails for \(d_{\alpha,1}\) and \(d_{\alpha,2}\) are of the same
  form as in those for the Clausen functions and can be bounded using
  Lemma~\ref{lemma:clausen-tails}.
\end{lemma}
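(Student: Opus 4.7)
The plan is to change variables to $t = y/x$ and decompose the kernel via the expansion of $C_{-\alpha}$ at zero into a singular part
\begin{equation*}
  \hat{I}_{\alpha}^{\mathrm{sing}}(x, t) = \Gamma(1+\alpha)\sin\left(-\tfrac{\pi}{2}\alpha\right) x^{-\alpha-1} S(t),\qquad S(t) = |1-t|^{-\alpha-1} + (1+t)^{-\alpha-1} - 2t^{-\alpha-1},
\end{equation*}
and a regular part
\begin{equation*}
  \hat{I}_{\alpha}^{\mathrm{reg}}(x, t) = \sum_{m=1}^{\infty} (-1)^{m}\zeta(-\alpha - 2m)\frac{x^{2m}}{(2m)!}\bigl[(1-t)^{2m} + (1+t)^{2m} - 2t^{2m}\bigr].
\end{equation*}
Since $U_{\alpha,j}(x)/x^{-\alpha+p} = x^{1+\alpha}\int \lvert\hat{I}_{\alpha}(x,t)\rvert t^{p}\,dt$, multiplying by $x^{1+\alpha}$ turns the singular contribution into an $x$-independent constant (producing $c_{\alpha,j}$), whereas each $m$-th term of the regular part contributes a power of $x$ (producing $d_{\alpha,j}$). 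The functional equation of $\zeta$ implies $(-1)^{m}\zeta(-\alpha - 2m) > 0$ for all $\alpha \in (-1, 0)$ and $m \geq 1$, and combined with the identity $(1-t)^{2m} + (1+t)^{2m} - 2t^{2m} = 2\sum_{k=0}^{m-1}\binom{2m}{2k}t^{2k}$ this shows that $\hat{I}_{\alpha}^{\mathrm{reg}} > 0$ on the whole interval $(0, \pi/x)$.

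For $U_{\alpha,1}$ I would apply the triangle inequality $\lvert\hat{I}_{\alpha}\rvert \leq \lvert\hat{I}_{\alpha}^{\mathrm{sing}}\rvert + \hat{I}_{\alpha}^{\mathrm{reg}}$. By Lemma~\ref{lemma:I-0-x-helper} the factor $S(t)$ in $\hat{I}_{\alpha}^{\mathrm{sing}}$ is negative on $(0, r_{\alpha,0})$ and positive on $(r_{\alpha,0}, 1)$, so
\begin{equation*}
  \int_{0}^{1}\lvert S(t)\rvert t^{p}\,dt = \int_{0}^{1}S(t) t^{p}\,dt - 2\int_{0}^{r_{\alpha,0}} S(t) t^{p}\,dt,
\end{equation*}
and each of the four underlying integrals $\int_{0}^{a}(1 \mp t)^{-\alpha-1} t^{p}\,dt$ and $\int_{0}^{a} t^{p-\alpha-1}\,dt$ for $a \in \{1, r_{\alpha,0}\}$ has a closed form in terms of a beta value or Gauss hypergeometric ${}_{2}F_{1}$ via $\int_{0}^{a} t^{b-1}(1 \mp t)^{-c}\,dt = \frac{a^{b}}{b}\,{}_{2}F_{1}(c, b; b+1; \pm a)$ and $\int_{0}^{1}(1-t)^{-\alpha-1}t^{p}\,dt = B(1+p, -\alpha)$; assembling these recovers $c_{\alpha,1}$. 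For the regular part, positivity gives $\int_{0}^{1}\hat{I}_{\alpha}^{\mathrm{reg}}t^{p}\,dt = 2\sum_{m\geq 1}(-1)^{m}\zeta(-\alpha-2m)\frac{x^{2m}}{(2m)!}\sum_{k=0}^{m-1}\binom{2m}{2k}\frac{1}{2k+p+1}$; factoring out $x^{3+\alpha}$, replacing $x^{2(m-1)}$ by $\epsilon^{2(m-1)}$ on $[0,\epsilon]$, and bounding the tail $m \geq M$ with $\sum_{k=0}^{m-1}\binom{2m}{2k}/(2k+p+1) \leq 2^{2m-1}$ produces the head and tail of $d_{\alpha,1}$.

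For $U_{\alpha,2}$, Lemma~\ref{lemma:I-x-pi} gives $\hat{I}_{\alpha} > 0$ on $(1, \pi/x)$, so no triangle inequality is needed. For the singular part I would write $\int_{1}^{\pi/x} = \int_{1}^{\infty} - \int_{\pi/x}^{\infty}$. The value of $\Gamma(1+\alpha)\sin(-\pi\alpha/2)\int_{1}^{\infty}S(t)t^{p}\,dt$ is computed by analytic continuation in the parameters (using $u = 1/t$ to reduce each individual piece to a beta or hypergeometric integral) and yields $c_{\alpha,2}$. The subtracted tail on $(\pi/x, \infty)$ is bounded from below using the binomial expansion $(t \mp 1)^{-\alpha-1} = t^{-\alpha-1}\sum_{n \geq 0}\binom{-\alpha-1}{n}(\mp 1/t)^{n}$; for $\alpha \in (-1, 0)$ the odd-$n$ pieces cancel between the two signs and the even-$n$ coefficients are non-negative, giving $S(t) \geq (1+\alpha)(2+\alpha)t^{-\alpha-3}$ on $(1, \infty)$, and integrating against $t^{p}$ produces the negative $x^{2+\alpha-p}$ term in $d_{\alpha,2}$. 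The regular part is expanded exactly as before with $\int_{1}^{\pi/x}t^{2k+p}\,dt = ((\pi/x)^{2k+p+1} - 1)/(2k+p+1)$; dropping the $-1$ (every summand is positive), factoring out $x^{2+\alpha-p}$ from the leading $(\pi/x)^{2k+p+1}$ piece, and absorbing the remaining $(x/\pi)^{2(m-1-k)} \leq (\epsilon/\pi)^{2(m-1-k)}$ on $[0,\epsilon]$ yields the head of $d_{\alpha,2}$, while the tail $m \geq M$ is controlled using $\sum_{k=0}^{m}\binom{2m}{2k}y^{2k} = ((1+y)^{2m} + (1-y)^{2m})/2$ together with $\epsilon/\pi \leq 1/2$, giving the $(3\pi/2)^{2m}$-tail in $d_{\alpha,2}$.

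The main difficulty I anticipate is the singular integral on $(1, \infty)$ for $U_{\alpha,2}$: each of the three pieces $\int_{1}^{\infty}(t \mp 1)^{-\alpha-1}t^{p}\,dt$ and $\int_{1}^{\infty}t^{p-\alpha-1}\,dt$ diverges individually (their integrands grow like $t^{p-\alpha-1}$ with $p - \alpha > 0$), and only the cancellation of leading orders in the combination $S(t)$ makes the combined integral convergent; expressing the finite value in the clean hypergeometric and beta form of $c_{\alpha,2}$ requires analytic continuation from a parameter region where each piece is absolutely convergent. A secondary bookkeeping issue is tracking the negative $x^{2+\alpha-p}$ contribution from the singular tail together with the positive one from the regular part to ensure they combine to exactly the stated $d_{\alpha,2}$.
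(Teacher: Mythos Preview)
Your proposal is correct and follows essentially the same decomposition and bounding strategy as the paper: expand $\hat{I}_{\alpha}$ via the Clausen asymptotics into singular and regular parts, use the binomial identity and positivity of $(-1)^{m}\zeta(-\alpha-2m)$, apply the triangle inequality on $(0,1)$ and positivity on $(1,\pi/x)$, and bound the tails combinatorially with $\epsilon/\pi\le 1/2$.

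The one genuine difference is your treatment of the singular contribution to $U_{\alpha,2}$. The paper computes $\int_{1}^{\pi/x} S(t)t^{p}\,dt$ directly on the finite interval (where each piece converges), obtains an exact closed form containing ${}_{2}F_{1}(1+\alpha,\alpha-p;1+\alpha-p;\pm x/\pi)$, expands these hypergeometrics in their power series in $x/\pi$, observes that every coefficient $\frac{(1+\alpha)_{2k}}{2k+\alpha-p}$ is positive, and then simply truncates at $k=1$ to extract the $-\frac{(1+\alpha)(2+\alpha)}{2+\alpha-p}(x/\pi)^{2+\alpha-p}$ term. Your route instead splits $\int_{1}^{\pi/x}=\int_{1}^{\infty}-\int_{\pi/x}^{\infty}$ and uses the pointwise lower bound $S(t)\ge (1+\alpha)(2+\alpha)t^{-\alpha-3}$ (from positivity of $\binom{-\alpha-1}{2k}$) on the subtracted tail. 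Both arrive at exactly the same negative contribution to $d_{\alpha,2}$, and both produce the same $c_{\alpha,2}$ as the $x\to 0$ limit. Your version has the advantage of making the source of the lower bound transparent, while the paper's version avoids the analytic-continuation step you flagged as the main difficulty: by working on a finite interval it never has to justify regrouping divergent pieces. Either argument is complete.
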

\begin{proof}
  Recall that
  \begin{align*}
    U_{\alpha,1}(x) &= x^{1 + p} \int_{0}^{1}\left|C_{-\alpha}(x(1 - t)) + C_{-\alpha}(x(1 + t)) - 2C_{-\alpha}(xt)\right| t^{p}\ dt,\\
    U_{\alpha,2}(x) = & x^{1 + p} \int_{1}^{\pi / x} (C_{-\alpha}(x(1 - t)) + C_{-\alpha}(x(1 + t)) - 2C_{-\alpha}(xt)) t^{p}\ dt.
  \end{align*}
  The idea is to expand the integrals in \(x\) and integrate the
  expansions termwise.

  From the asymptotic expansion of \(C_{s}(x)\) we get, with
  \(x, t > 0\),
  \begin{multline}
    \label{eq:integrand-expansion}
    C_{-\alpha}(x(1 - t)) + C_{-\alpha}(x(1 + t)) - 2C_{-\alpha}(xt) =
    \Gamma(1 + \alpha)\sin\left(-\frac{\pi}{2}\alpha\right)(|1 - t|^{-\alpha - 1} + (1 + t)^{-\alpha - 1} - 2t^{-\alpha - 1})x^{-\alpha - 1}\\
    + \sum_{m = 1}^{\infty} (-1)^{m}\zeta(-\alpha - 2m)((1 - t)^{2m} + (1 + t)^{2m} - 2t^{2m})\frac{x^{2m}}{(2m)!}.
  \end{multline}
  Using that
  \begin{equation*}
    (1 - t)^{2m} + (1 + t)^{2m} - 2t^{2m} = \sum_{k = 0}^{2m}\binom{2m}{k}(1 + (-1)^{k})t^{k} - 2t^{2m}
    = 2\sum_{k = 0}^{m - 1}\binom{2m}{2k}t^{2k}
  \end{equation*}
  the sum can be rewritten as
  \begin{equation*}
    \sum_{m = 1}^{\infty} (-1)^{m}\zeta(-\alpha - 2m)((1 - t)^{2m} + (1 + t)^{2m} - 2t^{2m})\frac{x^{2m}}{(2m)!}
    = 2\sum_{m = 1}^{\infty} (-1)^{m}\zeta(-\alpha - 2m)\frac{x^{2m}}{(2m)!}\sum_{k = 0}^{m - 1}\binom{2m}{2k}t^{2k}.
  \end{equation*}
  We can further note that \((-1)^{m}\zeta(-\alpha - 2m) > 0\) for
  \(-1 < \alpha < 0\) and \(m = 1, 2, \dots\), so all terms in the sum
  are positive.

  For \(U_{\alpha,1}\) we get
  \begin{multline*}
    U_{\alpha,1}(x) \leq \Gamma(1 + \alpha)\sin\left(-\frac{\pi}{2}\alpha\right)x^{-\alpha + p}\int_{0}^{1}\left|(1 - t)^{-\alpha - 1} + (1 + t)^{-\alpha - 1} - 2t^{-\alpha - 1}\right|t^{p}\ dt\\
    + 2x^{1 + p}\sum_{m = 1}^{\infty} (-1)^{m}\zeta(-\alpha - 2m)\frac{x^{2m}}{(2m)!}\sum_{k = 0}^{m - 1}\binom{2m}{2k}\int_{0}^{1}t^{2k + p}\ dt.
  \end{multline*}
  Where we have use that \(1 - t > 0\) and the positivity of the terms
  in the sum to remove some absolute values. For the first term we get
  from Lemma~\ref{lemma:I-0-x} that the integrand has the unique root
  \(r_{\alpha,0}\) on the interval \([0, 1]\). This gives us
  \begin{multline*}
    \int_{0}^{1}\left|(1 - t)^{-\alpha - 1} + (1 + t)^{-\alpha - 1} - 2t^{-\alpha - 1}\right|t^{p}\ dt\\
    = -\int_{0}^{r_{\alpha,0}}((1 - t)^{-\alpha - 1} + (1 + t)^{-\alpha - 1} - 2t^{-\alpha - 1})t^{p}\ dt\\
    + \int_{r_{\alpha,0}}^{1}((1 - t)^{-\alpha - 1} + (1 + t)^{-\alpha - 1} - 2t^{-\alpha - 1})t^{p}\ dt.
  \end{multline*}
  Integrating this gives us
  \begin{multline*}
    \int_{0}^{1}\left|(1 - t)^{-\alpha - 1} + (1 + t)^{-\alpha - 1} - 2t^{-\alpha - 1}\right|t^{p}\ dt
    = \frac{2}{\alpha - p} + \frac{\Gamma(-\alpha)\Gamma(1 + p)}{\Gamma(1 - \alpha + p)} + \frac{{}_{2}F_{1}(1 + \alpha, 1 + p; 2 + p; -1)}{1 + p}\\
    - 2r_{\alpha,0}^{p}\left(\frac{2r_{\alpha,0}^{-\alpha}}{\alpha - p} + \frac{r_{\alpha,0}\ {}_{2}F_{1}(1 + \alpha, 1 + p, 2 + p, -r_{\alpha,0})}{1 + p} + \frac{r_{\alpha,0}\ {}_{2}F_{1}(1 + \alpha, 1 + p, 2 + p, r_{\alpha,0})}{1 + p}\right)
  \end{multline*}
  This, together with the factor
  \(\Gamma(1 + \alpha)\sin\left(-\frac{\pi}{2}\alpha\right)\) gives us
  \(c_{\alpha,1}\).

  For the sum we have
  \(\int_{0}^{1}t^{2k + p}\ dt = \frac{1}{2k + 1 + p}\), giving us
  \begin{equation*}
    2x^{1 + p}\sum_{m = 1}^{\infty} (-1)^{m}\zeta(-\alpha - 2m)\frac{x^{2m}}{(2m)!}
    \sum_{k = 0}^{m - 1}\binom{2m}{2k}\frac{1}{2k + 1 + p}.
  \end{equation*}
  Factoring out \(x^{2}\) and using that the sum is increasing in
  \(x\) we get the bound
  \begin{equation*}
    2x^{3 + p}\sum_{m = 1}^{\infty} (-1)^{m}\zeta(-\alpha - 2m)\frac{\epsilon^{2m - 2}}{(2m)!}
    \sum_{k = 0}^{m - 1}\binom{2m}{2k}\frac{1}{2k + 1 + p}.
  \end{equation*}
  To bound the tail we use that
  \begin{equation*}
    \sum_{k = 0}^{m - 1}\binom{2m}{2k}\frac{1}{2k + 1 + p}
    \leq \sum_{k = 0}^{m}\binom{2m}{2k}\frac{1}{2k + 1}
    = \frac{2^{2m}}{1 + 2m} \leq 2^{2m}
  \end{equation*}
  and hence
  \begin{equation*}
    2\sum_{m = M}^{\infty} (-1)^{m}\zeta(-\alpha - 2m)\frac{\epsilon^{2m - 2}}{(2m)!}
    \sum_{k = 0}^{m - 1}\binom{2m}{2k}\frac{1}{2k + 1 + p}
    \leq \frac{1}{\epsilon^{2}}\sum_{m = M}^{\infty} (-1)^{m}\zeta(-\alpha - 2m)\frac{(2\epsilon)^{2m}}{(2m)!}
  \end{equation*}
  This together with the first \(M - 1\) terms in the sum gives us
  \(d_{\alpha,1}\).

  For \(U_{\alpha,2}\) there is no absolute value and integrating
  termwise gives us
  \begin{multline*}
    U_{\alpha,2}(x) = \Gamma(1 + \alpha)\sin\left(-\frac{\pi}{2}\alpha\right)x^{-\alpha + p}\int_{1}^{\pi / x}((t - 1)^{-\alpha - 1} + (1 + t)^{-\alpha - 1} - 2t^{-\alpha - 1})t^{p}\ dt\\
    + 2x^{1 + p}\sum_{m = 1}^{\infty} (-1)^{m}\zeta(-\alpha - 2m)\frac{x^{2m}}{(2m)!}\sum_{k = 0}^{m - 1}\binom{2m}{2k}\int_{1}^{\pi / x}t^{2k + p}\ dt
  \end{multline*}

  For the first term we get after long calculations that
  \begin{multline*}
    \int_{1}^{\pi / x}((t - 1)^{-\alpha - 1} + (1 + t)^{-\alpha - 1} - 2t^{-\alpha - 1})t^{p}\ dt =
    \frac{\Gamma(-\alpha)\Gamma(\alpha - p)}{\Gamma(-p)}
    + \frac{{}_{2}F_{1}(1 + \alpha, \alpha - p; 1 + \alpha - p; -1) - 2}{\alpha - p}\\
    - \frac{1}{\alpha - p}\left(\frac{x}{\pi}\right)^{\alpha - p}\left(
      {}_{2}F_{1}\left(1 + \alpha, \alpha - p; 1 + \alpha - p; \frac{x}{\pi}\right)
      + {}_{2}F_{1}\left(1 + \alpha, \alpha - p; 1 + \alpha - p; -\frac{x}{\pi}\right)
      -2
    \right).
  \end{multline*}
  To handle the hypergeometric functions we use the series
  representation
  \begin{equation*}
    {}_{2}F_{1}\left(1 + \alpha, \alpha - p; 1 + \alpha - p; \frac{x}{\pi}\right)
    = \sum_{k = 0}^{\infty}\frac{(1 + \alpha)_{k}(\alpha - p)_{k}}{(1 + \alpha - p)_{k}}\frac{1}{k!}\left(\frac{x}{\pi}\right)^{k},
  \end{equation*}
  and similarly for \(-\frac{x}{\pi}\), which holds since
  \(\frac{x}{\pi} < 1\) and \(1 + \alpha - p\) is not equal to a
  non-positive integer. This gives us
  \begin{multline*}
    {}_{2}F_{1}\left(1 + \alpha, \alpha - p; 1 + \alpha - p; \frac{x}{\pi}\right)
    + {}_{2}F_{1}\left(1 + \alpha, \alpha - p; 1 + \alpha - p; -\frac{x}{\pi}\right)
    -2\\
    = \sum_{k = 0}^{\infty}\frac{(1 + \alpha)_{k}(\alpha - p)_{k}}{(1 + \alpha - p)_{k}}\frac{1}{k!}\left(\left(\frac{x}{\pi}\right)^{k} + \left(-\frac{x}{\pi}\right)^{k}\right) - 2
    = 2\sum_{k = 1}^{\infty}\frac{(1 + \alpha)_{2k}(\alpha - p)_{2k}}{(1 + \alpha - p)_{2k}}\frac{1}{(2k)!}\left(\frac{x}{\pi}\right)^{2k}
  \end{multline*}
  Putting this together we have
  \begin{multline*}
    \int_{1}^{\pi / x}((t - 1)^{-\alpha - 1} + (1 + t)^{-\alpha - 1} - 2t^{-\alpha - 1})t^{p}\ dt
    = \frac{\Gamma(-\alpha)\Gamma(\alpha - p)}{\Gamma(-p)}
    + \frac{{}_{2}F_{1}(1 + \alpha, \alpha - p; 1 + \alpha - p; -1) - 2}{\alpha - p}\\
    - \frac{2}{\alpha - p}\left(\frac{x}{\pi}\right)^{\alpha - p}
    \sum_{k = 1}^{\infty}\frac{(1 + \alpha)_{2k}(\alpha - p)_{2k}}{(1 + \alpha - p)_{2k}}\frac{1}{(2k)!}\left(\frac{x}{\pi}\right)^{2k}
  \end{multline*}
  Noticing that
  \begin{equation*}
    \frac{1}{\alpha - p}\frac{(1 + \alpha)_{2k}(\alpha - p)_{2k}}{(1 + \alpha - p)_{2k}} = \frac{(1 + \alpha)_{2k}}{2k + \alpha - p} > 0
  \end{equation*}
  for \(k \geq 1\) we see that all terms in the sum are positive.
  Since we are subtracting the sum we get an upper bound even if we
  truncate the sum to any finite number of terms. In particular,
  keeping only the first term we get
  \begin{multline*}
    \int_{1}^{\pi / x}((t - 1)^{-\alpha - 1} + (1 + t)^{-\alpha - 1} - 2t^{-\alpha - 1})t^{p}\ dt
    \leq \frac{\Gamma(-\alpha)\Gamma(\alpha - p)}{\Gamma(-p)}
    + \frac{{}_{2}F_{1}(1 + \alpha, \alpha - p; 1 + \alpha - p; -1) - 2}{\alpha - p}\\
    - \frac{(1 + \alpha)(2 + \alpha)}{2 + \alpha - p}\left(\frac{x}{\pi}\right)^{2 + \alpha - p}
  \end{multline*}
  Multiplying this with
  \(\Gamma(1 + \alpha)\sin\left(-\frac{\pi}{2}\alpha\right)\) (which
  is positive) we get \(c_{\alpha,2}\) from the constant term and the
  first part of \(d_{\alpha,2}\) from the non-constant term.

  For the sum we get
  \begin{equation*}
    2x^{1 + p}\sum_{m = 1}^{\infty} (-1)^{m}\zeta(-\alpha - 2m)\frac{x^{2m}}{(2m)!}
    \sum_{k = 0}^{m - 1}\binom{2m}{2k}\frac{\left(\frac{\pi}{x}\right)^{2k + 1 + p} - 1}{2k + 1 + p}
  \end{equation*}
  Factoring out \((\pi / x)^{2m - 1 + p}\) gives us
  \begin{multline*}
    2x^{1 + p}\sum_{m = 1}^{\infty} (-1)^{m}\zeta(-\alpha - 2m)\frac{x^{2m}}{(2m)!}\left(\frac{\pi}{x}\right)^{2m - 1 + p}
    \sum_{k = 0}^{m - 1}\binom{2m}{2k}\frac{\left(\frac{x}{\pi}\right)^{2(m - 1 - k)} - \left(\frac{x}{\pi}\right)^{2m - 1 + p}}{2k + 1 + p}\\
    =  2x^{2}\pi^{p - 1}\sum_{m = 1}^{\infty} (-1)^{m}\zeta(-\alpha - 2m)\frac{\pi^{2m}}{(2m)!}
    \sum_{k = 0}^{m - 1}\binom{2m}{2k}\frac{\left(\frac{x}{\pi}\right)^{2(m - 1 - k)} - \left(\frac{x}{\pi}\right)^{2m - 1 + p}}{2k + 1 + p}.
  \end{multline*}
  Looking at the inner sum an upper bound is given by
  \begin{equation*}
    \sum_{k = 0}^{m - 1}\binom{2m}{2k}\frac{\left(\frac{x}{\pi}\right)^{2(m - 1 - k)}}{2k + 1 + p}.
  \end{equation*}
  This is increasing in \(x\) and an upper bound is hence given by
  \begin{equation*}
    2x^{2}\pi^{p - 1}\sum_{m = 1}^{\infty} (-1)^{m}\zeta(-\alpha - 2m)\frac{\pi^{2m}}{(2m)!}
    \sum_{k = 0}^{m - 1}\binom{2m}{2k}\frac{\left(\frac{\epsilon}{\pi}\right)^{2(m - 1 - k)}}{2k + 1 + p}.
  \end{equation*}
  To bound the tail we use that \(\epsilon \leq \pi / 2\) and hence
  \begin{equation*}
    \sum_{k = 0}^{m - 1}\binom{2m}{2k}\frac{\left(\frac{\epsilon}{\pi}\right)^{2(m - 1 - k)}}{2k + 1 + p}
    \leq \sum_{k = 0}^{m - 1}\binom{2m}{2k}\frac{\left(\frac{1}{2}\right)^{2(m - 1 - k)}}{2k + 1}
    = \frac{2^{-2m}(1 + 3^{1 + 2m}) - 4}{1 + 2m}
    < 3\left(\frac{3}{2}\right)^{2m}
  \end{equation*}
  Inserting this we have
  \begin{equation*}
    \sum_{m = M}^{\infty} (-1)^{m}\zeta(-\alpha - 2m)\frac{\pi^{2m}}{(2m)!}
    \sum_{k = 0}^{m - 1}\binom{2m}{2k}\frac{\left(\frac{\epsilon}{\pi}\right)^{2(m - 1 - k)}}{2k + 1}
    \leq 3\sum_{m = M}^{\infty} (-1)^{m}\zeta(-\alpha - 2m)\frac{(\frac{3\pi}{2})^{2m}}{(2m)!}.
  \end{equation*}
  This together with the first \(M - 1\) terms in the sum gives us the
  second part of \(d_{\alpha,2}\).
\end{proof}

\subsubsection{Evaluation of \(\mathcal{T}_{\alpha}\) for \(I_{1}\)}
\label{sec:evaluation-T-I-1}
For \(x \in [\epsilon, \pi]\) we make one optimization compared to
\(I_{2}\), instead of computing an enclosure we only compute an upper
bound. For this we use that \(u_{\alpha}(x) \geq u_{-1}(x)\) for
\(\alpha \in I_{1}\) and hence
\begin{equation*}
  \mathcal{T}_{\alpha}(x) \leq \frac{U_{\alpha}(x)}{\pi |w_{\alpha}(x)| |u_{-1}(x)|}.
\end{equation*}
The different weight also means that the asymptotic analysis near the
singularities of the integrand has to be adjusted, this is discussed
in Appendix~\ref{sec:rigorous-integration}.

For \(x \in [0, \epsilon]\) we write
\begin{equation*}
  \mathcal{T}_{\alpha}(x) = \frac{\log(1 / x)}{\pi\log(2e + 1 / x)} \cdot
  \frac{\Gamma(1 + \alpha)x^{-\alpha}(1 - x^{1 + \alpha + (1 + \alpha)^{2} / 2})}{u_{\alpha}(x)}
  \cdot \frac{U_{\alpha}(x)}{x^{(1 - \alpha) / 2 - \alpha}\log(1 / x)(1 - x^{1 + \alpha + (1 + \alpha)^{2} / 2})\Gamma(1 + \alpha)}.
\end{equation*}
The first two factors are handled in the same way as in
Section~\ref{sec:evaluation-F-I-1}. For the third factor we start with
the following lemma
\begin{lemma}
  \label{lemma:evaluation-U-I-1}
  For \(\alpha \in (-1, 0)\) and
  \(w_{\alpha}(x) = |x|^{(1 - \alpha) / 2}\log(2e + 1 / |x|)\) we have
  \begin{align*}
    \frac{U_{\alpha}(x)}{x^{(1 - \alpha) / 2 - \alpha}\log(1 / x)(1 - x^{1 + \alpha + (1 + \alpha)^{2} / 2})\Gamma(1 + \alpha)}
    \leq& \sin\left(-\frac{\pi}{2}\alpha\right)\left(G_{\alpha,1}(x) + G_{\alpha,2}(x)\right)\\
    &+ \frac{x^{1 + \alpha}}{\Gamma(1 + \alpha)\log(1 / x)(1 - x^{1 + \alpha + (1 + \alpha)^{2} / 2})}R_{\alpha}(x)
  \end{align*}
  with
  \begin{align}
    G_{\alpha,1}(x)
    &= \frac{
      \int_{0}^{1}\left|(1 - t)^{-\alpha - 1} + (1 + t)^{-\alpha - 1} - 2t^{-\alpha - 1}\right|t^{(1 - \alpha) / 2}\log(2e + 1 / (xt))\ dt
      }{
      \log(1 / x)(1 - x^{1 + \alpha + (1 + \alpha)^{2} / 2})
      }; \label{eq:norm-I-1-G1}\\
    G_{\alpha,2}(x)
    &= \frac{
      \int_{1}^{\pi / x}\left((t - 1)^{-\alpha - 1} + (1 + t)^{-\alpha - 1} - 2t^{-\alpha - 1}\right)t^{(1 - \alpha) / 2}\log(2e + 1 / (xt))\ dt
      }{
      \log(1 / x)(1 - x^{1 + \alpha + (1 + \alpha)^{2} / 2})
      }; \label{eq:norm-I-1-G2}\\
    R_{\alpha}(x)
    &=  2\sum_{m = 1}^{\infty} (-1)^{m}\zeta(-\alpha - 2m)\frac{x^{2m}}{(2m)!}\sum_{k = 0}^{m - 1}\binom{2m}{2k}\int_{0}^{\pi / x}t^{2k + (1 - \alpha) / 2}\log(2e + 1 / (xt))\ dt. \label{eq:norm-I-1-R}
  \end{align}
\end{lemma}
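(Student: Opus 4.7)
The plan is to reuse the expansion of the integrand $\hat{I}_\alpha(x,t)$ already derived in the proof of Lemma~\ref{lemma:evaluation-U-I-2}, namely equation~\eqref{eq:integrand-expansion}, written compactly as $\hat{I}_\alpha(x,t) = L(t)\, x^{-\alpha-1} + S(x,t)$ with $L(t) = \Gamma(1+\alpha)\sin(-\pi\alpha/2)(|1-t|^{-\alpha-1}+(1+t)^{-\alpha-1}-2t^{-\alpha-1})$ and $S(x,t) = 2\sum_{m\ge 1}(-1)^m\zeta(-\alpha-2m)\frac{x^{2m}}{(2m)!}\sum_{k=0}^{m-1}\binom{2m}{2k}t^{2k}$. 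First I would change variables $y = tx$ inside $U_\alpha$, obtaining $U_\alpha(x) = x^{1+(1-\alpha)/2}\int_0^{\pi/x}|\hat{I}_\alpha(x,t)|\,t^{(1-\alpha)/2}\log(2e + 1/(xt))\,dt$, and then split the $t$-range at $t=1$. Lemma~\ref{lemma:I-x-pi} lets me drop the absolute value on $(1,\pi/x)$, while on $(0,1)$ it is retained.

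The crucial observation, already used in the proof of Lemma~\ref{lemma:evaluation-U-I-2}, is that the coefficients $(-1)^m\zeta(-\alpha-2m)$ are strictly positive for $\alpha \in (-1,0)$ and $m \ge 1$, so $S(x,t)\ge 0$ uniformly in both arguments. On $(0,1)$ the triangle inequality therefore yields $|\hat{I}_\alpha(x,t)| \le |L(t)|x^{-\alpha-1} + S(x,t)$; on $(1,\pi/x)$ both summands in the expansion are already nonnegative (convexity of $u\mapsto u^{-\alpha-1}$ for $\alpha\in(-1,0)$ gives $L(t)\ge 0$, consistent with $\hat{I}_\alpha\ge 0$ there). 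Substituting these bounds, interchanging the sum with the $t$-integral by Tonelli, and collecting terms produces
\begin{equation*}
U_\alpha(x) \le x^{1+(1-\alpha)/2}\,\Gamma(1+\alpha)\sin(-\pi\alpha/2)\,x^{-\alpha-1}(\mathcal{I}_1(x) + \mathcal{I}_2(x)) + x^{1+(1-\alpha)/2}\, R_\alpha(x),
\end{equation*}
where $\mathcal{I}_1$ and $\mathcal{I}_2$ denote the unnormalized numerators of \eqref{eq:norm-I-1-G1} and \eqref{eq:norm-I-1-G2}.

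Dividing by $x^{(1-\alpha)/2-\alpha}\log(1/x)(1-x^{1+\alpha+(1+\alpha)^2/2})\Gamma(1+\alpha)$ is then a bookkeeping exercise: since $1+(1-\alpha)/2+(-\alpha-1) = (1-\alpha)/2 - \alpha$, every $x$-factor and the $\Gamma(1+\alpha)$ in the first term cancel, leaving precisely $\sin(-\pi\alpha/2)(G_{\alpha,1}(x)+G_{\alpha,2}(x))$; in the second term the exponent $1+(1-\alpha)/2-((1-\alpha)/2-\alpha) = 1+\alpha$ survives, producing $\frac{x^{1+\alpha}}{\Gamma(1+\alpha)\log(1/x)(1-x^{1+\alpha+(1+\alpha)^2/2})}R_\alpha(x)$ as claimed. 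I do not foresee a genuinely hard step: the entire argument is a careful unpacking of the already-established expansion~\eqref{eq:integrand-expansion}, and the only real subtlety is the sign analysis needed to simplify the absolute value, which Lemmas~\ref{lemma:I-0-x} and~\ref{lemma:I-x-pi}, together with the uniform positivity of $S$, handle completely.
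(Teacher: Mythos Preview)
Your proposal is correct and follows essentially the same route as the paper: change variables to $t=y/x$, insert the expansion~\eqref{eq:integrand-expansion}, use the termwise positivity of the series part to apply the triangle inequality on $(0,1)$, drop the absolute value on $(1,\pi/x)$ via the convexity/positivity argument, and then divide through and match exponents. The only cosmetic difference is that the paper carries out the split at $t=1$ after writing the full bound rather than before, but the content is identical.
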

\begin{proof}
  With the given weight we have
  \begin{equation*}
    U_{\alpha}(x) = x\int_{0}^{\pi / x}\left|C_{-\alpha}(x(1 - t)) + C_{-\alpha}(x(1 + t)) - 2C_{-\alpha}(xt)\right|
    (xt)^{(1 - \alpha) / 2}\log(2e + 1 / (xt))\ dt.
  \end{equation*}
  This means that we want to bound
  \begin{equation*}
    \frac{
      x^{1 + \alpha}\int_{0}^{\pi / x}\left|C_{-\alpha}(x(1 - t)) + C_{-\alpha}(x(1 + t)) - 2C_{-\alpha}(xt)\right|
      t^{(1 - \alpha) / 2}\log(2e + 1 / (xt))\ dt
    }{
      \log(1 / x)(1 - x^{1 + \alpha + (1 + \alpha)^{2} / 2})\Gamma(1 + \alpha)
    }.
  \end{equation*}
  Using the asymptotic expansion of the Clausen terms in the integrand
  from \eqref{eq:integrand-expansion} we can split the integral as
  \begin{multline*}
    \int_{0}^{\pi / x}\left|C_{-\alpha}(x(1 - t)) + C_{-\alpha}(x(1 + t)) - 2C_{-\alpha}(xt)\right|
    t^{(1 - \alpha) / 2}\log(2e + 1 / (xt)) dt\\
    \leq \Gamma(1 + \alpha)\sin(-\pi\alpha / 2)x^{-\alpha - 1}\int_{0}^{\pi / x}\left||1 - t|^{-\alpha - 1} + (1 + t)^{-\alpha - 1} - 2t^{-\alpha - 1}\right|t^{(1 - \alpha) / 2}\log(2e + 1 / (xt))\ dt\\
    + 2\sum_{m = 1}^{\infty} (-1)^{m}\zeta(-\alpha - 2m)\frac{x^{2m}}{(2m)!}\sum_{k = 0}^{m - 1}\binom{2m}{2k}\int_{0}^{\pi / x}t^{2k + (1 - \alpha) / 2}\log(2e + 1 / (xt))\ dt
  \end{multline*}
  From this we get
  \begin{multline*}
    \frac{
      \sin(-\pi\alpha / 2)\int_{0}^{\pi / x}\left||1 - t|^{-\alpha - 1} + (1 + t)^{-\alpha - 1} - 2t^{-\alpha - 1}\right|t^{(1 - \alpha) / 2}\log(2e + 1 / (xt))\ dt
    }{
      \log(1 / x)(1 - x^{1 + \alpha + (1 + \alpha)^{2} / 2})
    }\\
    + \frac{
      x^{1 + \alpha}2\sum_{m = 1}^{\infty} (-1)^{m}\zeta(-\alpha - 2m)\frac{x^{2m}}{(2m)!}\sum_{k = 0}^{m - 1}\binom{2m}{2k}\int_{0}^{\pi / x}t^{2k + (1 - \alpha) / 2}\log(2e + 1 / (xt))\ dt
    }{
      \log(1 / x)(1 - x^{1 + \alpha + (1 + \alpha)^{2} / 2})\Gamma(1 + \alpha)
    }
  \end{multline*}
  as an upper bound. For the integral in the first term we can split
  the interval of integration at \(t = 1\) to get
  \begin{multline*}
    \int_{0}^{\pi / x}\left||1 - t|^{-\alpha - 1} + (1 + t)^{-\alpha - 1} - 2t^{-\alpha - 1}\right|t^{(1 - \alpha) / 2}\log(2e + 1 / (xt))\ dt\\
    = \int_{0}^{1}\left|(1 - t)^{-\alpha - 1} + (1 + t)^{-\alpha - 1} - 2t^{-\alpha - 1}\right|t^{(1 - \alpha) / 2}\log(2e + 1 / (xt))\ dt\\
    + \int_{1}^{\pi / x}\left((t - 1)^{-\alpha - 1} + (1 + t)^{-\alpha - 1} - 2t^{-\alpha - 1}\right)t^{(1 - \alpha) / 2}\log(2e + 1 / (xt))\ dt.
  \end{multline*}
  Here we have removed the absolute values around \(1 - t\) according
  to its sign and also used that
  \begin{equation*}
    (t - 1)^{-\alpha - 1} + (1 + t)^{-\alpha - 1} - 2t^{-\alpha - 1}
  \end{equation*}
  is positive for \(t > 1\), which can be shown in the same way as in
  Lemma~\ref{lemma:I-x-pi}.
\end{proof}

Bounds of \(G_{\alpha,1}\), \(G_{\alpha,2}\) and \(R_{\alpha}\) are
given in Appendix~\ref{sec:evaluation-T-I-1-asymptotic}. The factor
\begin{equation*}
  \frac{x^{1 + \alpha}}{\Gamma(1 + \alpha)\log(1 / x)(1 - x^{1 + \alpha + (1 + \alpha)^{2} / 2})}
\end{equation*}
has a removable singularity at \(\alpha = -1\) that needs to be
treated but is otherwise straightforward to enclose.

\subsubsection{Evaluation of \(\mathcal{T}_{\alpha}\) for \(I_{3}\)}
\label{sec:evaluation-T-I-3}
In this case we need to not only compute an enclosure of
\(\mathcal{T}_{\alpha}(x)\), but understand it behavior in \(\alpha\).
We therefore compute Taylor models of degree \(0\) centered at
\(\alpha = 0\). The weight is given by \(w_{\alpha}(x) = |x|\), and we
can therefore use the explicit expression for the integral given in
Lemma~\ref{lemma:U-primitive}.

We start with the following lemma that gives us information about the
first term in the Taylor model.
\begin{lemma}
  \label{lemma:evaluation-T-expansion-alpha-I-3}
  For \(x \in [0, \pi]\) the constant term in the expansion at
  \(\alpha = 0\) of \(\mathcal{T}_{\alpha}(x)\) is \(1\).
\end{lemma}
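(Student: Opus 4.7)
My plan is to evaluate $\mathcal{T}_{\alpha}(x)=U_{\alpha}(x)/(\pi|x|u_{\alpha}(x))$ in the limit $\alpha\to 0$ directly, using the explicit formula for $U_{\alpha}(x)$ provided by Lemma~\ref{lemma:U-primitive} (applicable since $w_{\alpha}(x)=|x|$) together with $u_{\alpha}(x)\to 1$ from Lemma~\ref{lemma:u0-expansion-alpha-I-3}. It is therefore enough to show $U_{\alpha}(x)\to\pi x$ as $\alpha\to 0$ for each $x\in(0,\pi]$; the case $x=0$ then follows by continuous extension.

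The key facts are the elementary closed forms
\[
C_{2}(y)=\frac{\pi^{2}}{6}-\frac{\pi y}{2}+\frac{y^{2}}{4},\qquad S_{1}(y)=\frac{\pi-y}{2}\qquad(y\in(0,2\pi)),
\]
together with the observation that all arguments appearing in Lemma~\ref{lemma:U-primitive}, namely $x$, $x+\pi$, $xr$, and $x(1\pm r)$ for $r\in(1/2,1)$ and $x\in(0,\pi]$, lie in $(0,2\pi)$, where these formulas apply. Substituting them at $\alpha=0$ and computing each of the four terms in Lemma~\ref{lemma:U-primitive} gives
\begin{align*}
2\tilde{C}_{2}(x)+2\bigl(C_{2}(x+\pi)-C_{2}(\pi)\bigr) &= -\pi x+x^{2},\\
-2\bigl(C_{2}(x(1-r))+C_{2}(x(1+r))-2C_{2}(xr)\bigr) &= 2\pi x(1-r)-x^{2},\\
-2xr\bigl(-S_{1}(x(1-r))+S_{1}(x(1+r))-2S_{1}(xr)\bigr) &= 2\pi xr,
\end{align*}
which sum to $\pi x$. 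Crucially, all dependence on the parameter $r$ cancels, so the sum is the same whatever value of $r$ one uses.

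To promote this formal identity to a genuine limit, I would use that $C_{2-\alpha}(y)$ and $S_{1-\alpha}(y)$ depend continuously on $\alpha$ at $\alpha=0$ for $y$ in compact subsets of $(0,2\pi)$, and that Lemma~\ref{lemma:I-0-x} confines $r_{\alpha,x}$ to $(1/2,r_{\alpha,0})\subset(1/2,1)$. A subsequence argument then applies: along any sequence $\alpha_{n}\to 0$ one extracts a further subsequence with $r_{\alpha_{n},x}\to r^{*}$, and the above computation with $r=r^{*}$ gives $U_{\alpha_{n}}(x)\to\pi x$; since the resulting value is independent of $r^{*}$, every subsequence converges and hence $U_{\alpha}(x)\to\pi x$. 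Combined with $u_{\alpha}(x)\to 1$ this yields $\mathcal{T}_{\alpha}(x)\to 1$.

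The main obstacle is the algebraic bookkeeping needed to verify the cancellation of the $r$-terms across the four pieces of $U_{\alpha}$; the fact that the sum is independent of $r$ is exactly what makes the approach succeed, since $r_{\alpha,x}$ is only defined for $\alpha<0$ (at $\alpha=0$ the kernel $\hat{I}_{\alpha}$ vanishes identically, so $r_{\alpha,x}$ has no intrinsic limiting value).
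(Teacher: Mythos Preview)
Your approach is essentially the same as the paper's: reduce to showing $U_{\alpha}(x)\to\pi x$ via Lemma~\ref{lemma:U-primitive}, plug in the closed forms for $C_{2}$ and $S_{1}$, and observe that the $r$-dependence cancels. The only difference is in handling $r_{\alpha,x}$: the paper identifies the limit $r_{0,x}$ explicitly as the root of $\frac{d}{d\alpha}\hat{I}_{\alpha}(x,t)\big|_{\alpha=0}$ and then computes with that value, whereas you bypass this via a subsequence argument exploiting the $r$-independence of the final expression. Both are valid; your route is slightly more economical since it avoids characterizing $r_{0,x}$, though to make the compactness step airtight you should note that $r_{\alpha,0}$ stays bounded away from $1$ as $\alpha\to 0$ (it tends to $(\sqrt{5}-1)/2$), so that subsequential limits $r^{*}$ lie in a compact subset of $(0,1)$ and the argument $x(1-r^{*})$ remains in $(0,2\pi)$ where the formula for $S_{1}$ applies.
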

\begin{proof}
  Recall that
  \begin{equation*}
    \mathcal{T}_{\alpha}(x) = \frac{U_{\alpha}(x)}{\pi |w_{\alpha}(x)||u_{\alpha}(x)|}.
  \end{equation*}
  In this case \(w_{\alpha}(x) = |x|\), giving us
  \begin{equation*}
    \mathcal{T}_{\alpha}(x) = \frac{U_{\alpha}(x)}{\pi x|u_{\alpha}(x)|}.
  \end{equation*}
  By Lemma~\ref{lemma:u0-expansion-alpha-I-3} the constant term in the
  expansion at \(\alpha = 0\) of \(u_{\alpha}(x)\) is \(1\), it is
  therefore enough to show that the constant term for the numerator is
  \(\pi x\).

  From Lemma~\ref{lemma:U-primitive} we have that
  \begin{multline*}
    U_{\alpha}(x) = 2\tilde{C}_{2 - \alpha}(x) + 2(C_{2 - \alpha}(x + \pi) - C_{2 - \alpha}(\pi))\\
    - 2\left(C_{2 - \alpha}(x(1 - r_{\alpha,x})) + C_{2 - \alpha}(x(1 + r_{\alpha,x})) - 2C_{2 - \alpha}(xr_{\alpha,x})\right)\\
    - 2xr_{\alpha,x}\left(-S_{1 - \alpha}(x(1 - r_{\alpha,x})) + S_{1 - \alpha}(x(1 + r_{\alpha,x})) - 2S_{1 - \alpha}(xr_{\alpha,x})\right).
  \end{multline*}
  To get the constant term in the expansion we want to compute the
  limit of this as \(\alpha \to 0\), which we denote by \(U_{0}(x)\).

  To do that we first need to compute
  \(\lim_{\alpha \to 0^{-}} r_{\alpha,x}\). The function
  \(\hat{I}_{\alpha}(x, t)\) for which \(r_{\alpha,x}\) is a zero
  converges to \(0\) everywhere as \(\alpha \to 0\). If we divide by
  \(\alpha\) and compute the zero of
  \(\frac{\hat{I}_{\alpha}(x, t)}{\alpha}\) instead, we get a
  well-defined limit, and we get that \(r_{0,x}\) is the zero of
  \begin{equation*}
    \frac{d}{d\alpha} \hat{I}_{\alpha}(x, t) =
    C_{-\alpha}^{(1)}(x(1 - t)) + C_{-\alpha}^{(1)}(x(1 + t)) - 2C_{-\alpha}^{(1)}(xt).
  \end{equation*}
  This function satisfies the same properties as in
  Lemma~\ref{lemma:I-0-x} with respect to the root \(r_{0,x}\).

  Taking the limit in \(\alpha\) now gives us
  \begin{multline*}
    U_{0}(x) = 2\tilde{C}_{2}(x) + 2(C_{2}(x + \pi) - C_{2}(\pi))\\
    - 2\left(C_{2}(x(1 - r_{0,x})) + C_{2}(x(1 + r_{0,x})) - 2C_{2}(xr_{0,x})\right)\\
    - 2xr_{0,x}\left(-S_{1}(x(1 - r_{0,x})) + S_{1}(x(1 + r_{0,x})) - 2S_{1}(xr_{0,x})\right).
  \end{multline*}
  For these parameters to the Clausen functions we have the explicit
  expressions
  \begin{equation*}
    C_{2}(x) = \frac{\pi^{2}}{6} - \frac{\pi}{2}x + \frac{1}{4}x^{2},\quad
    S_{1}(x) = \frac{\pi}{2} - \frac{1}{2}x,
  \end{equation*}
  valid when \(x \in [0, 2\pi]\). For the different parts of
  \(U_{\alpha}(x)\) we get
  \begin{multline*}
    2\tilde{C}_{2}(x) + 2(C_{2}(x + \pi) - C_{2}(\pi))\\
    = 2\left(
      \left(-\frac{\pi}{2}x + \frac{1}{4}x^{2}\right)
      + \left(\frac{\pi^{2}}{6} - \frac{\pi}{2}(x + \pi) + \frac{1}{4}(x + \pi)^{2}\right)
      - \left(\frac{\pi^{2}}{6} - \frac{\pi}{2}\pi + \frac{1}{4}\pi^{2}\right)
    \right)\\
    = 2\left(-\frac{\pi}{2} x + \frac{1}{2}x^{2}\right) = -\pi x + x^{2},
  \end{multline*}
  \begin{multline*}
    2\left(C_{2}(x(1 - r_{0,x})) + C_{2}(x(1 + r_{0,x})) - 2C_{2}(xr_{0,x})\right)\\
    = 2\bigg(
      \left(\frac{\pi^{2}}{6} - \frac{\pi}{2}x(1 - r_{0,x}) + \frac{1}{4}x^{2}(1 - r_{0,x})^{2}\right)
      + \left(\frac{\pi^{2}}{6} - \frac{\pi}{2}x(1 + r_{0,x}) + \frac{1}{4}x^{2}(1 + r_{0,x})^{2}\right)\\
      - 2\left(\frac{\pi^{2}}{6} - \frac{\pi}{2}xr_{0,x} + \frac{1}{4}x^{2}r_{0,x}^{2}\right)
    \bigg)
    = 2\left(-\pi x (1 - r_{0,x}) + \frac{1}{2}x^{2}\right) =-2\pi x(1 - r_{0,x}) + x^{2},
  \end{multline*}
  \begin{multline*}
    2xr_{0,x}\left(-S_{1}(x(1 - r_{0,x})) + S_{1}(x(1 + r_{0,x})) - 2S_{1}(xr_{0,x})\right)\\
    = 2xr_{0,x}\left(
      - \left(\frac{\pi}{2} - \frac{1}{2}x(1 - r_{0,x})\right)
      + \left(\frac{\pi}{2} - \frac{1}{2}x(1 + r_{0,x})\right)
      - 2\left(\frac{\pi}{2} - \frac{1}{2}xr_{0,x}\right)
    \right) =
    -2\pi xr_{0,x}.
  \end{multline*}
  Putting this together we get
  \begin{equation*}
    U_{0}(x) = (-\pi x + x^{2}) - (-2\pi x(1 - r_{0,x}) + x^{2}) + 2\pi xr_{0,x} = \pi x,
  \end{equation*}
  which is exactly what we needed to show.
\end{proof}

For \(x \in [\epsilon, \pi]\) we compute the Taylor model of
\(u_{\alpha}(x)\) using the approach described in
Section~\ref{sec:evaluation-I-3}. For \(U_{\alpha}\) we use the
expression
\begin{multline*}
    U_{\alpha}(x) = 2\tilde{C}_{2 - \alpha}(x) + 2(C_{2 - \alpha}(x + \pi) - C_{2 - \alpha}(\pi))\\
    - 2\left(C_{2 - \alpha}(x(1 - r_{\alpha,x})) + C_{2 - \alpha}(x(1 + r_{\alpha,x})) - 2C_{2 - \alpha}(xr_{\alpha,x})\right)\\
    - 2xr_{\alpha,x}\left(-S_{1 - \alpha}(x(1 - r_{\alpha,x})) + S_{1 - \alpha}(x(1 + r_{\alpha,x})) - 2S_{1 - \alpha}(xr_{\alpha,x})\right)
\end{multline*}
from Lemma~\ref{lemma:U-primitive}. For the Clausen functions not
depending on \(r_{\alpha,x}\) we can compute Taylor models directly.
If we let
\begin{multline*}
  K_{\alpha}(x) = \left(C_{2 - \alpha}(x(1 - r_{\alpha,x})) + C_{2 - \alpha}(x(1 + r_{\alpha,x})) - 2C_{2 - \alpha}(xr_{\alpha,x})\right)\\
  + xr_{\alpha,x}\left(-S_{1 - \alpha}(x(1 - r_{\alpha,x})) + S_{1 - \alpha}(x(1 + r_{\alpha,x})) - 2S_{1 - \alpha}(xr_{\alpha,x})\right)
\end{multline*}
then the part of \(U_{\alpha}(x)\) depending on \(r_{\alpha,x}\) is
given by \(-2K_{\alpha}(x)\). For the Taylor model of
\(K_{\alpha}(x)\) we need to take into account that \(r_{\alpha,x}\)
also depends on \(\alpha\). The constant term in the Taylor model is
given by \(K_{0}(x)\), which we can compute. For the remainder term we
want to enclose \(\frac{d}{d\alpha} K_{\alpha}(x)\) for
\(\alpha \in I_{3}\). Differentiation gives us
\begin{multline*}
  \frac{d}{d\alpha}K_{\alpha}(x) =
  -\left(C_{2 - \alpha}^{(1)}(x(1 - r_{\alpha,x})) + C_{2 - \alpha}^{(1)}(x(1 + r_{\alpha,x})) - 2C_{2 - \alpha}^{(1)}(xr_{\alpha,x})\right)\\
  - x\left(\frac{d}{d\alpha}r_{\alpha,x}\right)\left(-S_{1 - \alpha}(x(1 - r_{\alpha,x})) + S_{1 - \alpha}^{(1)}(x(1 + r_{\alpha,x})) - 2S_{1 - \alpha}^{(1)}(xr_{\alpha,x})\right)\\
  + x\left(\frac{d}{d\alpha}r_{\alpha,x}\right)\left(-S_{1 - \alpha}(x(1 - r_{\alpha,x})) + S_{1 - \alpha}(x(1 + r_{\alpha,x})) - 2S_{1 - \alpha}(xr_{\alpha,x})\right)\\
  - xr_{\alpha,x}\left(-S_{1 - \alpha}^{(1)}(x(1 - r_{\alpha,x})) + S_{1 - \alpha}^{(1)}(x(1 + r_{\alpha,x})) - 2S_{1 - \alpha}^{(1)}(xr_{\alpha,x})\right)\\
  + x^{2}r_{\alpha,x}\left(\frac{d}{d\alpha}r_{\alpha,x}\right)\left(-C_{-\alpha}(x(1 - r_{\alpha,x})) + C_{-\alpha}(x(1 + r_{\alpha,x})) - 2C_{-\alpha}(xr_{\alpha,x})\right).
\end{multline*}
We see that two of the terms cancel out, and the last term is zero
since \(r_{\alpha,x}\) is a root of exactly this function, this gives
us
\begin{multline*}
  \frac{d}{d\alpha}K_{\alpha}(x) =
  -\left(C_{2 - \alpha}^{(1)}(x(1 - r_{\alpha,x})) + C_{2 - \alpha}^{(1)}(x(1 + r_{\alpha,x})) - 2C_{2 - \alpha}^{(1)}(xr_{\alpha,x})\right)\\
  - xr_{\alpha,x}\left(-S_{1 - \alpha}^{(1)}(x(1 - r_{\alpha,x})) + S_{1 - \alpha}^{(1)}(x(1 + r_{\alpha,x})) - 2S_{1 - \alpha}^{(1)}(xr_{\alpha,x})\right).
\end{multline*}
In particular we see that the derivative only depends on
\(r_{\alpha,x}\), for which we can easily compute an enclosure, and
not on \(\frac{d}{d\alpha}r_{\alpha,x}\).

For \(x \in [0, \epsilon]\) we write the function as
in~\ref{eq:evaluation-T-I-2-split} and the only problematic part is
the evaluation of \(U_{\alpha}(x)x^{\alpha - 1}\). The terms not
depending on \(r_{\alpha,x}\) are handled by expanding the Clausen
functions, following Appendix~\ref{sec:taylor-models-expansions-x} to
get a Taylor model of the expansion. What remains is to compute a
Taylor model of \(K_{\alpha}(x)x^{\alpha - 1}\). For that we compute
the constant term and an enclosure of the derivative separately. The
constant term is done by just expanding the Clausen functions. For the
derivative we get
\begin{equation*}
  \frac{d}{d\alpha} K_{\alpha}(x)x^{\alpha - 1} = \left(\frac{d}{d\alpha}K_{\alpha}(x)\right)x^{\alpha - 1}
  + K_{\alpha}(x)\log(x)x^{\alpha - 1}.
\end{equation*}
When expanding the Clausen functions for this we have to handle the
cancellations between the two terms since they individually blow up at
\(x = 0\).

\begin{lemma}
  We have the following expansion for \(0 < x < \pi\)
  \begin{align*}
    \frac{d}{d\alpha} K_{\alpha}(x)x^{\alpha - 1}
    =& -\frac{d}{ds}\left(\Gamma(1 - s)\sin\left(\frac{\pi}{2}s\right)\left(
        (1 - r_{\alpha,x})^{s - 1} + (1 + r_{\alpha,x})^{s - 1} - 2r_{\alpha,x}^{s - 1}
      \right)\right)\Bigg|_{s = 2 - \alpha}\\
    &- r_{\alpha,x}\frac{d}{ds}\left(\Gamma(1 - s)\cos\left(\frac{\pi}{2}s\right)\left(
        -(1 - r_{\alpha,x})^{s - 1} + (1 + r_{\alpha,x})^{s - 1} - 2r_{\alpha,x}^{s - 1}
      \right)\right)\Bigg|_{s = 1 - \alpha}\\
    &+ \log(x)\sum_{m = 1}^{\infty} \frac{(-1)^{m}}{(2m)!}\zeta(2 - \alpha - 2m)\left(
      (1 - r_{\alpha,x})^{2m} + (1 + r_{\alpha,x})^{2m} - 2r_{\alpha,x}^{2m}
    \right)x^{2m - 1 + \alpha}\\
    &+ r_{\alpha,x}\log(x)\sum_{m = 0}^{\infty} \frac{(-1)^{m}}{(2m + 1)!}\zeta(-\alpha - 2m)\left(
      -(1 - r_{\alpha,x})^{2m + 1} + (1 + r_{\alpha,x})^{2m + 1} - 2r_{\alpha,x}^{2m + 1}
    \right)x^{2m + 1 + \alpha}\\
    &- \sum_{m = 1}^{\infty} \frac{(-1)^{m}}{(2m)!}\zeta'(2 - \alpha - 2m)\left(
      (1 - r_{\alpha,x})^{2m} + (1 + r_{\alpha,x})^{2m} - 2r_{\alpha,x}^{2m}
    \right)x^{2m - 1 + \alpha}\\
    &- r_{\alpha,x}\sum_{m = 0}^{\infty} \frac{(-1)^{m}}{(2m + 1)!}\zeta'(-\alpha - 2m)\left(
      -(1 - r_{\alpha,x})^{2m + 1} + (1 + r_{\alpha,x})^{2m + 1} - 2r_{\alpha,x}^{2m + 1}
    \right)x^{2m + 1 + \alpha}.
  \end{align*}
\end{lemma}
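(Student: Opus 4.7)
The plan is to differentiate $K_\alpha(x) x^{\alpha-1}$ directly using the product rule and then substitute the Clausen asymptotic expansions from Section~\ref{sec:clausen-functions} (together with the expressions obtained by differentiating those expansions termwise in the order parameter $s$). Recall
\begin{equation*}
  \tfrac{d}{d\alpha}[K_\alpha(x) x^{\alpha-1}] = \bigl(\tfrac{d}{d\alpha} K_\alpha(x)\bigr) x^{\alpha-1} + K_\alpha(x)\, x^{\alpha-1}\log x,
\end{equation*}
and that $\tfrac{d}{d\alpha}K_\alpha(x)$ has already been simplified above to a sum of $C^{(1)}_{2-\alpha}$ and $S^{(1)}_{1-\alpha}$ terms (evaluated at $x(1\pm r_{\alpha,x})$ and $xr_{\alpha,x}$). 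So the task is to expand both $(\tfrac{d}{d\alpha}K_\alpha) x^{\alpha-1}$ and $K_\alpha x^{\alpha-1}\log x$ in $x$ using these expansions and then observe the cancellations.

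First I would write, for $y=x(1-r)$, $x(1+r)$, $xr$ (with $r=r_{\alpha,x}$, all arguments in $(0,2\pi)$),
\begin{equation*}
  C_s(y) = \Gamma(1-s)\sin(\tfrac{\pi}{2}s)\, y^{s-1} + \sum_{m\ge 0}(-1)^m \zeta(s-2m)\frac{y^{2m}}{(2m)!},
\end{equation*}
and the analogous expansion for $S_s(y)$. Differentiating termwise in $s$ gives $C_s^{(1)}(y)$ and $S_s^{(1)}(y)$; the key point is that $\tfrac{d}{ds}y^{s-1} = y^{s-1}\log y = y^{s-1}\log x + y^{s-1}\log(y/x)$, so that the $\log y$ generated by differentiating the singular term splits into a piece depending on $\log x$ and a piece depending only on $\log(1\pm r)$ or $\log r$. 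The latter pieces, taken together with the derivative of the prefactor $\Gamma(1-s)\sin(\tfrac{\pi}{2}s)$, reassemble by the product rule into exactly $\tfrac{d}{ds}\bigl[\Gamma(1-s)\sin(\tfrac{\pi}{2}s)\,A(s)\bigr]$, where $A(s) = (1-r)^{s-1}+(1+r)^{s-1}-2r^{s-1}$, and analogously for the $S$-side with $B(s)=-(1-r)^{s-1}+(1+r)^{s-1}-2r^{s-1}$.

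Next I would substitute $s=2-\alpha$ (resp.\ $s=1-\alpha$), multiply by $x^{\alpha-1}$ so that $x^{s-1}\cdot x^{\alpha-1}=1$ for the $C$-singular term (resp.\ $x r\cdot x^{-\alpha}\cdot x^{\alpha-1}=r$ for the $xr$ times $S$-singular term), and collect. The singular contribution of $-\Sigma_C\cdot x^{\alpha-1}$ then produces (i) the compact term $-\tfrac{d}{ds}[\Gamma(1-s)\sin(\tfrac{\pi}{2}s)A(s)]_{s=2-\alpha}$ and (ii) a leftover $-\Gamma(\alpha-1)\sin(\tfrac{\pi}{2}(2-\alpha))A(2-\alpha)\log x$; the analogous $S$-contribution of $-xr\,\Sigma_S\cdot x^{\alpha-1}$ produces the corresponding second line of the claim together with a leftover $\log x$-piece. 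Meanwhile, expanding $K_\alpha(x) x^{\alpha-1}\log x$ separately with the same Clausen expansions, the singular parts contribute precisely these leftover $\log x$-pieces with opposite sign, while the analytic $\zeta$-tails contribute the $\log x\cdot\sum_{m}\zeta(\dots)$ lines of the claim; the analytic $\zeta'$-tails from $-\Sigma_C x^{\alpha-1}$ and $-xr\Sigma_S x^{\alpha-1}$ give the final two lines.

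The main obstacle is bookkeeping: there are three independent sources of $\log x$ in the computation—from $\tfrac{d}{d\alpha}x^{\alpha-1}$, from the $y^{s-1}\log y$ generated by differentiating the singular term of the Clausen expansion, and implicitly inside $\tfrac{d}{ds}[\Gamma(1-s)\sin(\tfrac{\pi}{2}s)A(s)]$—and one must verify that exactly one combination survives with $\zeta$-coefficients (pairing $\log x$ with the analytic tail of $K_\alpha$), while the singular $\log x$ pieces cancel pairwise. A small additional check is that the $m=0$ combinatorial factors $(1-r)^0+(1+r)^0-2r^0$ and $-(1-r)+(1+r)-2r$ both vanish, which is why the $\zeta$ and $\zeta'$ sums can equivalently be written starting at $m=1$ (for the $C$-lines) or $m=0$ (for the $S$-lines), and makes the claimed expansion uniform. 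Once these cancellations are verified the formula follows by matching coefficients.
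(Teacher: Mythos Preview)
Your proof sketch is correct and follows essentially the same approach as the paper: both apply the product rule to write $\tfrac{d}{d\alpha}[K_\alpha x^{\alpha-1}]=(\tfrac{d}{d\alpha}K_\alpha)x^{\alpha-1}+K_\alpha x^{\alpha-1}\log x$, expand each piece via the Clausen asymptotic expansions, and then observe that the singular $\log x$ contributions from the two pieces cancel pairwise, leaving exactly the claimed formula. Your explicit splitting $\log y=\log x+\log(y/x)$ is a nice way to see why the surviving singular pieces reassemble into the compact $\tfrac{d}{ds}[\cdots]$ terms, which the paper handles by applying the product rule in the reverse direction on $\tfrac{d}{ds}[\Gamma(1-s)\sin(\tfrac{\pi}{2}s)A(s)x^{s-1}]\big|_{s=2-\alpha}\cdot x^{\alpha-1}$.
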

\begin{proof}
  We get directly from expanding the Clausen functions that
  \begin{multline}
    \label{eq:evaluation-T-I-3-K}
    K_{\alpha}(x)\log(x)x^{\alpha - 1} = \Gamma(\alpha - 1)\sin\left(\frac{\pi}{2}(2 - \alpha)\right)\left(
      (1 - r_{\alpha,x})^{1 - \alpha} + (1 + r_{\alpha,x})^{1 - \alpha} - 2r_{\alpha,x}^{1 - \alpha}
    \right)\log(x)\\
    + r_{\alpha,x}\Gamma(\alpha)\cos\left(\frac{\pi}{2}(1 - \alpha)\right)\left(
      (1 - r_{\alpha,x})^{-\alpha} + (1 + r_{\alpha,x})^{-\alpha} - 2r_{\alpha,x}^{-\alpha}
    \right)\log(x)\\
    + \log(x)\sum_{m = 1}^{\infty} \frac{(-1)^{m}}{(2m)!}\zeta(2 - \alpha - 2m)\left(
      (1 - r_{\alpha,x})^{2m} + (1 + r_{\alpha,x})^{2m} - 2r_{\alpha,x}^{2m}
    \right)x^{2m - 1 + \alpha}\\
    + r_{\alpha,x}\log(x)\sum_{m = 0}^{\infty} \frac{(-1)^{m}}{(2m + 1)!}\zeta(-\alpha - 2m)\left(
      -(1 - r_{\alpha,x})^{2m + 1} + (1 + r_{\alpha,x})^{2m + 1} - 2r_{\alpha,x}^{2m + 1}
    \right)x^{2m + 1 + \alpha}
  \end{multline}
  and
  \begin{multline}
    \label{eq:evaluation-T-I-3-dK}
    \left(\frac{d}{d\alpha}K_{\alpha}(x)\right)x^{\alpha - 1} =
    -\frac{d}{ds}\left(\Gamma(1 - s)\sin\left(\frac{\pi}{2}s\right)\left(
        (1 - r_{\alpha,x})^{s - 1} + (1 + r_{\alpha,x})^{s - 1} - 2r_{\alpha,x}^{s - 1}
      \right)x^{s - 1}\right)\Bigg|_{s = 2 - \alpha}x^{\alpha - 1}\\
    - r_{\alpha,x}\frac{d}{ds}\left(\Gamma(1 - s)\cos\left(\frac{\pi}{2}s\right)\left(
        (1 - r_{\alpha,x})^{s - 1} + (1 + r_{\alpha,x})^{s - 1} - 2r_{\alpha,x}^{s - 1}
      \right)x^{s - 1}\right)\Bigg|_{s = 1 - \alpha}x^{\alpha}\\
    - \sum_{m = 1}^{\infty} \frac{(-1)^{m}}{(2m)!}\zeta'(2 - \alpha - 2m)\left(
      (1 - r_{\alpha,x})^{2m} + (1 + r_{\alpha,x})^{2m} - 2r_{\alpha,x}^{2m}
    \right)x^{2m - 1 + \alpha}\\
    - r_{\alpha,x}\sum_{m = 0}^{\infty} \frac{(-1)^{m}}{(2m + 1)!}\zeta'(-\alpha - 2m)\left(
      -(1 - r_{\alpha,x})^{2m + 1} + (1 + r_{\alpha,x})^{2m + 1} - 2r_{\alpha,x}^{2m + 1}
    \right)x^{2m + 1 + \alpha}.
  \end{multline}

  For the two first terms in the expansion for
  \(\left(\frac{d}{d\alpha}K_{\alpha}(x)\right)x^{\alpha - 1}\) we
  have
  \begin{multline*}
    \frac{d}{ds}\left(\Gamma(1 - s)\sin\left(\frac{\pi}{2}s\right)\left(
        (1 - r_{\alpha,x})^{s - 1} + (1 + r_{\alpha,x})^{s - 1} - 2r_{\alpha,x}^{s - 1}
      \right)x^{s - 1}\right)\Bigg|_{s = 2 - \alpha}x^{\alpha - 1}\\
    = \frac{d}{ds}\left(\Gamma(1 - s)\sin\left(\frac{\pi}{2}s\right)\left(
        (1 - r_{\alpha,x})^{s - 1} + (1 + r_{\alpha,x})^{s - 1} - 2r_{\alpha,x}^{s - 1}
      \right)\right)\Bigg|_{s = 2 - \alpha}\\
    + \Gamma(1 - s)\sin\left(\frac{\pi}{2}s\right)\left(
      (1 - r_{\alpha,x})^{s - 1} + (1 + r_{\alpha,x})^{s - 1} - 2r_{\alpha,x}^{s - 1}
    \right)\log(x)
  \end{multline*}
  and
  \begin{multline*}
    r_{\alpha,x}\frac{d}{ds}\left(\Gamma(1 - s)\cos\left(\frac{\pi}{2}s\right)\left(
        (1 - r_{\alpha,x})^{s - 1} + (1 + r_{\alpha,x})^{s - 1} - 2r_{\alpha,x}^{s - 1}
      \right)x^{s - 1}\right)\Bigg|_{s = 1 - \alpha}x^{\alpha}\\
    = r_{\alpha,x}\frac{d}{ds}\left(\Gamma(1 - s)\cos\left(\frac{\pi}{2}s\right)\left(
        (1 - r_{\alpha,x})^{s - 1} + (1 + r_{\alpha,x})^{s - 1} - 2r_{\alpha,x}^{s - 1}
      \right)\right)\Bigg|_{s = 1 - \alpha}\\
    + r_{\alpha,x}\Gamma(1 - s)\cos\left(\frac{\pi}{2}s\right)\left(
      (1 - r_{\alpha,x})^{s - 1} + (1 + r_{\alpha,x})^{s - 1} - 2r_{\alpha,x}^{s - 1}
    \right)\log(x).
  \end{multline*}
  For both of these the second term exactly cancels the corresponding
  one in~\eqref{eq:evaluation-T-I-3-K}, this gives us the result.
\end{proof}

\subsubsection{Evaluation of \(\mathcal{T}_{\alpha}\) in hybrid cases}
\label{sec:evaluation-T-hybrid}
For \(\alpha\) near \(0\) this is straight forward, the weight is in
this case given by \(w_{\alpha}(x) = |x|\), and we can use the same
approach as described in Section~\ref{sec:evaluation-T-I-2}.

For \(\alpha\) near \(-1\) we can use the same approach for
\(x \in [\epsilon, \pi]\). For \(x \in [0, \epsilon]\) this doesn't
work since the weight is not on the form \(w_{\alpha}(x) = |x|^{p}\)
so Lemma~\ref{lemma:evaluation-U-I-2} doesn't apply. Instead, we have
to use an approach more similar to that used for \(\alpha \in I_{1}\).
We write \(\mathcal{T}_{\alpha}\) as
\begin{equation*}
  \mathcal{T}_{\alpha}(x) = \frac{\log(1 / x)}{\pi\log(2e + 1 / x)} \cdot
  \frac{x^{-\alpha}}{u_{\alpha}(x)}
  \cdot \frac{U_{\alpha}(x)}{\log(1 / x)x^{-\alpha + p}}.
\end{equation*}
The first two factors are bounded in the same way as in the above
sections. For the third one we give a bound in
Appendix~\ref{sec:evaluation-T-hybrid-asymptotic}.

\section{Bounds for \(D_{\alpha}\), \(\delta_{\alpha}\) and \(n_{\alpha}\)}
\label{sec:bounds-for-values}
We are now ready to give bounds for \(D_{\alpha}\),
\(\delta_{\alpha}\) and \(n_{\alpha}\). Recall that they are given by
\begin{equation*}
  n_{\alpha} = \sup_{x \in [0, \pi]} |N_{\alpha}(x)|,\quad
  \delta_{\alpha} = \sup_{x \in [0, \pi]} |F_{\alpha}(x)|,\quad
  D_{\alpha} = \sup_{x \in [0, \pi]} |\mathcal{T}_{\alpha}(x)|.
\end{equation*}
In each case we split the interval \([0, \pi]\) into two parts,
\([0, \epsilon]\) and \([\epsilon, \pi]\), with \(\epsilon\) varying
for the different cases, and threat them separately. For the interval
\([0, \epsilon]\) we use the asymptotic bounds for the different
functions that were introduced in the previous sections. For the
interval \([\epsilon, \pi]\) we evaluate the functions directly using
interval arithmetic. The method we use for bounding the supremum is
the one described in Section~\ref{sec:enclosing-supremum}.

In most cases the subintervals in the computations are bisected at the
midpoint, meaning that the interval \([\lo{x}, \hi{x}]\) would be
bisected into the two intervals \([\lo{x}, (\lo{x} + \hi{x}) / 2]\)
and \([(\lo{x} + \hi{x}) / 2, \hi{x}]\). However, when the magnitude
of \(\lo{x}\) and \(\hi{x}\) are very different it can be beneficial
to bisect at the geometric midpoint (see e.g.~\cite{GmezSerrano2014}),
in that case we split the interval into
\(\left[\lo{x}, \sqrt{\lo{x}\hi{x}}\right]\) and
\(\left[\sqrt{\lo{x}\hi{x}}, \hi{x}\right]\), where we assume that
\(\lo{x} > 0\).

The code~\cite{HighestCuspedWave} for the computer-assisted parts is
implemented in Julia~\cite{Julia-2017}. The main tool for the rigorous
numerics is Arb~\cite{Johansson2013arb} which we use through the Julia
wrapper \textit{Arblib.jl}
\footnote{\url{https://github.com/kalmarek/Arblib.jl}}. Many of the
basic interval arithmetic algorithms, such as isolating roots or
enclosing maximum values, are implemented in a separate package,
\textit{ArbExtras.jl}
\footnote{\url{https://github.com/Joel-Dahne/ArbExtras.jl}}. For
finding the coefficients \(\{a_{j}\}\) with \(j \geq 1\) and
\(\{b_{n}\}\) of \(u_{\alpha}\) we make use of non-linear solvers from
\textit{NLsolve.jl}~\cite{patrick_kofod_mogensen_2020_4404703}.

For \(\alpha \in I_{1}\) and \(\alpha \in I_{3}\) the computations
were done on an AMD Ryzen 9 5900X processor with 32 GB of RAM using 12
threads. For \(\alpha \in I_{2}\) most of the computations where done
on the Dardel HPC system at the PDC Center for High Performance
Computing, KTH Royal Institute of Technology. The nodes are equipped
with two AMD EPYC\texttrademark{} Zen2 2.25 GHz 64 core processors and
256 GB of RAM.

We handle the intervals \(I_{1}\), \(I_{2}\) and \(I_{3}\) separately,
the all require slightly different approaches for computing the
bounds.

\subsection{Bounds for \(I_{1}\)}
\label{sec:bounds-I-1}
We here give bounds of \(n_{\alpha}\), \(\delta_{\alpha}\) and
\(D_{\alpha}\) for \(\alpha \in I_{1} = (-1, -1 + \delta_{1})\), with
\(\delta_{1} = 10^{-4}\). The bounds are split into three lemmas.
Recall that in this case the weight is given by
\(w_{\alpha}(x) = |x|^{(1 - \alpha) / 2}\log(2e + 1 / |x|)\). We take
\(u_{\alpha}\) is in~\eqref{eq:u0-I-1} with
\(\hat{\alpha} = -0.9997\), \(N_{\hat{\alpha},0} = 1929\) and
\(N_{-1,1} = 16\).

\begin{lemma}
  \label{lemma:bounds-I-1-n}
  The constant \(n_{\alpha}\) satisfies the inequality
  \(n_{\alpha} \leq \bar{n}_{1} = 1.79\) for all \(\alpha \in I_{1}\).
\end{lemma}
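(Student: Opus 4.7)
The plan is to follow the general bisection-based supremum enclosure strategy from Section~\ref{sec:enclosing-supremum}, combined with the evaluation scheme for $N_{\alpha}$ on $I_{1}$ spelled out in Section~\ref{sec:evaluation-N-I-1}. First, I would split $[0,\pi]$ into $[0,\epsilon]$ and $[\epsilon,\pi]$ for a suitably chosen $\epsilon$ (small enough that the asymptotic factorization is sharp, but large enough that direct evaluation on $[\epsilon,\pi]$ remains cheap). Throughout I would treat $\alpha$ as the interval $\boldsymbol{\alpha} = (-1,-1+\delta_{1})$ rather than as a point, so the output of the procedure is a single bound valid uniformly on $I_{1}$.

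On $[0,\epsilon]$, I would use the four-factor decomposition from Section~\ref{sec:evaluation-N-I-1}:
\begin{equation*}
  N_{\alpha}(x) = \frac{1}{\Gamma(2+\alpha)}\cdot
  \frac{\log(2e+1/x)}{2\log(1/x)}\cdot
  \frac{x^{(1+\alpha)/2}\log(1/x)(1+\alpha)}{1-x^{1+\alpha+(1+\alpha)^{2}/2}}\cdot
  \frac{\Gamma(1+\alpha)x^{-\alpha}(1-x^{1+\alpha+(1+\alpha)^{2}/2})}{u_{\alpha}(x)}.
\end{equation*}
The first factor is enclosed directly; the second is monotone in $x$ with limit $1/2$; the third is bounded by $1$ by the lemma proved immediately after \eqref{eq:evaluation-F-I-1-factor} in Section~\ref{sec:evaluation-N-I-1}; and the fourth is uniformly bounded in $\alpha$ by Lemma~\ref{lemma:asymptotic-inv-u0-I-1}. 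Multiplying the four enclosures gives an enclosure of $N_{\alpha}(x)$ valid for all $\alpha \in I_{1}$ without ever having to enclose the divergent quantity $a_{\alpha,0}$.

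On $[\epsilon,\pi]$, I would drop to the upper bound $|N_{\alpha}(x)| \leq \tfrac{w_{\alpha}(x)}{2u_{-1}(x)}$ provided by $u_{\alpha}(x) \geq u_{-1}(x)$ (Lemma~\ref{lemma:u0-i_1-main-limit}), where $u_{-1}$ is the explicit limit function \eqref{eq:u0-bh}. This reduces the two-variable problem in $(x,\boldsymbol{\alpha})$ to a one-variable problem in $x$ plus a mild $\alpha$-dependence through $w_{\alpha}$, which is easy to bound termwise on $\boldsymbol{\alpha}$. I would then apply the bisection procedure of Section~\ref{sec:enclosing-supremum}: recursively subdivide the $x$-interval, evaluate the enclosure of $|N_{\alpha}(\cdot)|$ using ball arithmetic via Arb, discard subintervals whose upper bound already falls below $1.79$, and bisect the rest until every remaining subinterval has upper bound $\leq 1.79$ or a prescribed minimum width is reached.

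The main obstacle will be the endpoint behavior as $x \to 0^+$ combined with $\alpha \to -1^+$. Both the logarithmic factor in the weight and the removable singularity of the fourth factor degenerate simultaneously, so naive ball arithmetic is likely to produce excessive overestimation. The resolution is entirely contained in the factorization above: each factor is either manifestly monotone or controlled by a lemma that handles the $\alpha\to -1$ limit explicitly, and Lemma~\ref{lemma:asymptotic-inv-u0-I-1} in particular gives a finite, $\alpha$-uniform bound on the most delicate piece. A secondary concern is that on $[\epsilon,\pi]$ one must verify $u_{-1}(x) > 0$ before dividing; this would be checked as part of the same bisection loop, by monitoring the sign of the enclosure of $u_{-1}(x)$ and aborting with a clear diagnostic otherwise. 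Finally, the specific constant $1.79$ is chosen with slack, so once the subdivision is fine enough the target inequality will be satisfied with margin; the only tuning required is the choice of $\epsilon$ and the bisection tolerance, both of which are free parameters of the computer-assisted run.
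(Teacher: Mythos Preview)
Your proposal is correct and follows essentially the same approach as the paper: split $[0,\pi]$ at some $\epsilon$, use the four-factor asymptotic decomposition of Section~\ref{sec:evaluation-N-I-1} on $[0,\epsilon]$, use the $u_{-1}$ lower bound on $[\epsilon,\pi]$, and enclose the supremum by bisection. The paper adds only implementation details you left unspecified---it chooses $\epsilon$ dynamically (starting at $0.45$ and shrinking until the asymptotic enclosure on $[0,\epsilon]$ falls below $\bar{N}_\alpha(\pi)$, landing near $\epsilon\approx 0.075$), computes the $[\epsilon,\pi]$ maximum first and uses it as the target on $[0,\epsilon]$, and uses degree-$0$ Taylor expansions to pick up monotonicity on $[\epsilon,\pi]$.
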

\begin{proof}
  A plot of \(N_{\alpha}(x)\) on the interval \([0, \pi]\) is given in
  Figure~\ref{fig:bounds-I-1-N}. It hints at a local maximum at
  \(x = \pi\) but does not fully show what happens near \(x = 0\), a
  plot closer to the origin is given in
  Figure~\ref{fig:bounds-I-1-N-asymptotic}, and we see that the
  function flattens out around \(\pi / 2\) when \(x\) approaches zero,
  which indicates that the maximum indeed is attained at \(x = \pi\).

  For the interval \([\epsilon, \pi]\) we don't compute with
  \(N_{\alpha}(x)\) directly but instead with
  \(\bar{N}_{\alpha}(x) = \frac{w_{\alpha}(x)}{2u_{-1}(x)}\) which
  satisfies \(N_{\alpha}(x) \leq \bar{N}_{\alpha}(x)\), as mentioned
  in Section~\ref{sec:evaluation-N-I-1}.

  The value of \(\epsilon\) is chosen dynamically. We don't compute an
  enclosure of the maximum for the interval \([0, \epsilon]\) but only
  prove that it is bounded by \(\bar{N}_{\alpha}(\pi)\). The value for
  \(\epsilon\) is determined in the following way. Starting with
  \(\epsilon = 0.45\) we compute an enclosure of
  \(N_{\alpha}([0, \epsilon])\). If this enclosure is bounded by
  \(\bar{N}_{\alpha}(\pi)\) we stop, otherwise we multiply
  \(\epsilon\) by \(0.8\) and try again. Once the bound holds we stop,
  in practice this happens for \(\epsilon \approx 0.075\).

  For the interval \([\epsilon, \pi]\) we compute an enclosure of the
  maximum of \(\bar{N}_{\alpha}(x)\). This gives us
  \begin{equation*}
    n_{\alpha} \leq [1.7870 \pm 8.20 \cdot 10^{-5}],
  \end{equation*}
  which is upper bounded by \(\bar{n}_{1}\). On this interval we are
  able to compute Taylor expansions of \(\bar{N}_{\alpha}(x)\) on the
  interval \([\epsilon, \pi]\), allowing us to use the better version
  of the algorithm, based on the Taylor polynomial, for enclosing the
  maximum. We use a Taylor expansion of degree \(0\), which is enough
  to pick up the monotonicity after only a few bisections in most
  cases.

  The runtime is about 6 seconds, most of it for handling the interval
  \([\epsilon, \pi]\).
\end{proof}

\begin{lemma}
  \label{lemma:bounds-I-1-delta}
  The constant \(\delta_{\alpha}\) satisfies the inequality
  \(\delta_{\alpha} \leq \bar{\delta}_{1} = 0.0005\) for all
  \(\alpha \in I_{1}\).
\end{lemma}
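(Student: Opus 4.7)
The plan is to follow the same computer-assisted bisection pattern as in the proof of Lemma~\ref{lemma:bounds-I-1-n}, but applied to $F_{\alpha}$ instead of $N_{\alpha}$. I split $[0, \pi]$ into $[0, \epsilon]$ and $[\epsilon, \pi]$ and, since I only need to verify $\delta_{\alpha} \leq \bar{\delta}_{1}$, I run the bisection algorithm of Section~\ref{sec:enclosing-supremum} in its bound-only variant on each subinterval, discarding a subinterval as soon as my enclosure of $|F_{\alpha}|$ on it lies below $\bar{\delta}_{1}$.

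For $x \in [\epsilon, \pi]$ I use the optimization from Section~\ref{sec:evaluation-F-I-1}: after first checking by bisection that $u_{-1}$ is positive on $[\epsilon, \pi]$, I replace $u_{\alpha}(x)$ in the denominator by its lower bound $u_{-1}(x)$ (valid by Lemma~\ref{lemma:u0-i_1-main-limit}) and enclose
\begin{equation*}
  \left|\frac{\Hop{\alpha}[u_{\alpha}](x) + \tfrac{1}{2} u_{\alpha}(x)^{2}}{w_{\alpha}(x) u_{-1}(x)}\right|
\end{equation*}
with interval arithmetic following Section~\ref{sec:evaluation-I-1}, taking care of the removable singularity at $\alpha = -1$ in $a_{\alpha,0}(\tilde{C}_{1 - \alpha} - \tilde{C}_{2 + (1 + \alpha)^{2}/2})$ and its $\Hop{\alpha}$-image by pulling out the factor $(\alpha + 1)a_{\alpha,0}$ and invoking Appendix~\ref{sec:removable-singularities}.

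For $x \in [0, \epsilon]$ I use the three-factor factorization from Section~\ref{sec:evaluation-F-I-1}: the first (logarithmic) factor is enclosed elementarily, the second is enclosed via Lemma~\ref{lemma:asymptotic-inv-u0-I-1}, and the third factor, given in \eqref{eq:evaluation-F-I-1-factor}, is bounded in absolute value using the results of Appendix~\ref{sec:evaluation-F-I-1-asymptotic}. As in the previous lemma, I choose $\epsilon$ adaptively, starting from a small value and shrinking it by a constant factor (e.g.\ $0.8$) until the asymptotic bound on $[0, \epsilon]$ drops below $\bar{\delta}_{1}$, before then running the bisection on $[\epsilon, \pi]$.

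The main obstacle I anticipate is the one flagged in Section~\ref{sec:hybrid-cases}: as $\alpha \to -1$ the exponents $-\alpha$ and $(1 - \alpha)/2$ in the ansatz approach each other, and the factor $(\Hop{\alpha}[u_{\alpha}] + \tfrac{1}{2} u_{\alpha}^{2})/x^{1 - \alpha}$ becomes the result of large cancellations between many Clausen terms from Lemma~\ref{lemma:u0-asymptotic-I-1}, several of which carry removable singularities at $\alpha = -1$. Without the careful regrouping of the leading singular terms carried out in Appendix~\ref{sec:evaluation-F-I-1-asymptotic}, naive interval arithmetic would yield enclosures on $[0, \epsilon]$ that vastly exceed $\bar{\delta}_{1}$; similarly, on $[\epsilon, \pi]$ the large cancellations between $a_{\alpha,0}\tilde{C}_{1-2\alpha}$ and $a_{\alpha,0}\tilde{C}_{2-\alpha+(1+\alpha)^{2}/2}$ force us to keep both terms grouped and work at fairly high precision. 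Once those asymptotic tools are in place, the remaining cost is just raw compute: bisecting $[\epsilon, \pi]$ finely enough that the enclosures of the above quotient stay below $\bar{\delta}_{1}$ uniformly in $\alpha \in I_{1}$.
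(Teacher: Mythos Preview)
Your overall plan matches the paper's: split at some $\epsilon$, use the three-factor asymptotic bound from Section~\ref{sec:evaluation-F-I-1} and Appendix~\ref{sec:evaluation-F-I-1-asymptotic} near zero, and on $[\epsilon,\pi]$ replace $u_{\alpha}$ by $u_{-1}$ in the denominator. The mathematical ingredients you list are exactly the ones used.

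However, the $\epsilon$-selection you propose --- copied from the $N_{\alpha}$ lemma --- would not work here as written. For $N_{\alpha}$ a direct interval enclosure of $N_{\alpha}([0,\epsilon])$ already falls below the target at $\epsilon\approx 0.075$. For $F_{\alpha}$ the paper reports that the direct asymptotic enclosure of $F_{\alpha}([0,\epsilon_{1}])$ only drops below $\bar{\delta}_{1}$ at $\epsilon_{1}\approx 10^{-7000}$. Shrinking by factors of $0.8$ from any reasonable starting value will not reach that in practice, and even if it did, you would then be running the \emph{non-asymptotic} evaluation on $[10^{-7000},\pi]$, where the direct formula for $F_{\alpha}$ is useless because $w_{\alpha}(x)u_{-1}(x)$ is essentially zero. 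The paper's fix is a two-scale split: take a moderate $\epsilon\approx 0.48$ (chosen so that the asymptotic \emph{point} evaluation $F_{\alpha}(\epsilon)$ already satisfies the bound), then inside $[0,\epsilon]$ find the tiny $\epsilon_{1}\approx 10^{-7000}$ where the whole-interval asymptotic enclosure succeeds, and bisect $[\epsilon_{1},\epsilon]$ at the \emph{geometric} midpoint (arithmetic bisection would be hopeless across $7000$ orders of magnitude). The asymptotic version of $F_{\alpha}$ does not support Taylor expansions in $x$, so this inner bisection goes to depth $\sim 30$; by contrast on $[\epsilon,\pi]$ the paper uses degree-$4$ Taylor expansions and only needs depth $4$.

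So your plan is correct in spirit but needs this two-level structure and the geometric bisection to actually terminate.
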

\begin{proof}
  A plot of \(F_{\alpha}(x)\) on the interval \([0, \pi]\) is given in
  Figure~\ref{fig:bounds-I-1-F}, a logarithmic plot on the interval
  \([10^{-5}, 10^{-1}]\) is given in
  Figure~\ref{fig:bounds-I-1-F-asymptotic}.

  In this case we don't try to compute a very accurate bound but
  instead only prove that \(\delta_{\alpha}\) is bounded by
  \(\bar{\delta}_{1}\).

  The value for \(\epsilon\) is chosen such that
  \(F_{\alpha}(\epsilon)\) evaluated using the asymptotic version
  gives an enclosure that satisfies the bound. We find that
  \(\epsilon \approx 0.48\) works.

  For the interval \([0, \epsilon]\) we first find \(\epsilon_{1}\)
  such that \(F_{\alpha}([0, \epsilon_{1}])\) directly gives an
  enclosure that is bounded by \(\bar{\delta}_{1}\). We get that
  \(\epsilon_{1} \approx 10^{-7000}\) works. For the interval
  \([\epsilon_{1}, \epsilon]\) we compute an enclosure of the maximum
  by iteratively bisecting the interval. Since the endpoints are very
  different in size we bisect at the geometric midpoint. We stop once
  the enclosure is bounded by \(\bar{\delta}_{1}\). The asymptotic
  version of \(F_{\alpha}\) doesn't allow for evaluation with Taylor
  expansions, it therefore requires many bisections to get a good
  enough enclosure, the maximum depth for the bisection is \(30\).

  For the interval \([\epsilon, \pi]\) we also compute a bound by
  bisection, stopping once the bound is less than
  \(\bar{\delta}_{1}\). Similar to in the previous lemma, and as
  mentioned in Section~\ref{sec:evaluation-F-I-1}, we use
  \(u_{-1}(x)\) in the numerator of \(F_{\alpha}(x)\), which gives an
  upper bound. In this case we use Taylor polynomials of degree \(4\)
  and maximum depth for the bisection is only \(4\).

  The runtime is about \(380\) seconds.
\end{proof}

\begin{lemma}
  \label{lemma:bounds-I-1-D}
  The constant \(D_{\alpha}\) satisfies the inequality
  \(D_{\alpha} \leq \bar{D}_{1} = 0.938\) for all
  \(\alpha \in I_{1}\).
\end{lemma}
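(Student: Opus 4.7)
The plan is to follow the same two-step pattern as in Lemmas~\ref{lemma:bounds-I-1-n} and~\ref{lemma:bounds-I-1-delta}: split $[0,\pi]$ into a singular region $[0,\epsilon]$ treated via the asymptotic decomposition from Section~\ref{sec:evaluation-T-I-1}, and a bulk region $[\epsilon,\pi]$ treated by direct evaluation and rigorous integration. On each piece we bound $\mathcal{T}_\alpha(x)$ by $\bar D_1 = 0.938$ using the method of Section~\ref{sec:enclosing-supremum} (iterative bisection, discarding subintervals whose enclosure is already below the target).

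For $x \in [0,\epsilon]$ we use the factorization
\begin{equation*}
  \mathcal{T}_{\alpha}(x) = \frac{\log(1/x)}{\pi\log(2e + 1/x)} \cdot
  \frac{\Gamma(1+\alpha)x^{-\alpha}(1 - x^{1+\alpha+(1+\alpha)^{2}/2})}{u_{\alpha}(x)}
  \cdot \frac{U_{\alpha}(x)}{x^{(1-\alpha)/2 - \alpha}\log(1/x)(1 - x^{1+\alpha+(1+\alpha)^{2}/2})\Gamma(1+\alpha)}.
\end{equation*}
The first factor is an elementary monotone quantity, the second is enclosed by Lemma~\ref{lemma:asymptotic-inv-u0-I-1}, and the third is handled by Lemma~\ref{lemma:evaluation-U-I-1} together with the asymptotic bounds of Appendix~\ref{sec:evaluation-T-I-1-asymptotic} for $G_{\alpha,1}$, $G_{\alpha,2}$ and $R_\alpha$, after clearing the removable singularity of $x^{1+\alpha}/(\Gamma(1+\alpha)\log(1/x)(1-x^{1+\alpha+(1+\alpha)^2/2}))$ at $\alpha=-1$. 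As in Lemma~\ref{lemma:bounds-I-1-delta} we pick $\epsilon$ dynamically by shrinking a candidate value until the asymptotic evaluation at $x=\epsilon$ already lies below $\bar D_1$, then bisect $[0,\epsilon]$ (at the geometric midpoint, since the endpoints differ by many orders of magnitude) with a generous maximum depth until every surviving subinterval is shown to satisfy the bound.

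For $x \in [\epsilon,\pi]$ we use the monotonicity trick $u_\alpha(x) \geq u_{-1}(x)$ from Lemma~\ref{lemma:u0-i_1-main-limit} to replace $u_\alpha$ in the denominator by $u_{-1}$, after verifying $u_{-1}(x) > 0$ on this range. The numerator $U_\alpha(x)$ is no longer given by the closed form of Lemma~\ref{lemma:U-primitive} because the weight is not $|x|$; instead we follow Section~\ref{sec:evaluation-T-I-1} and compute $U_\alpha(x)$ by combining the asymptotic expansion of the integrand near the singularities $y=x$ and $y=0$ with the rigorous integrator of Appendix~\ref{sec:rigorous-integration} on the remaining smooth portion. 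The root $r_{\alpha,x}$ of $\hat I_\alpha(x,\cdot)$, needed for the sign split, is enclosed using Lemmas~\ref{lemma:I-0-x} and~\ref{lemma:I-0-x-r-alpha}. We then bisect $[\epsilon,\pi]$, retaining any subinterval whose current enclosure exceeds $\bar D_1$.

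The main obstacle is the bulk interval $[\epsilon,\pi]$: unlike for $n_\alpha$ and $\delta_\alpha$, evaluating $\mathcal{T}_\alpha$ here requires the rigorous numerical integration of $U_\alpha$, with an integrand that is singular at $y=x$ and whose logarithmic weight interacts nontrivially with the $|D|^\alpha$ kernel. Producing integration enclosures sharp enough to certify a bound as tight as $0.938$ uniformly in the interval $\boldsymbol\alpha \subseteq I_1$ is the expensive and delicate step, and we expect the runtime to be dominated by this portion together with many bisections in $x$. No Taylor-model acceleration in $x$ is available because the integrand depends nonsmoothly on $x$ through both the singularity location and $r_{\alpha,x}$, so we rely on degree-$0$ enclosures and moderate bisection depth, exactly as in Lemma~\ref{lemma:bounds-I-1-delta}.
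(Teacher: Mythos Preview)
Your proposal is correct and follows essentially the same strategy as the paper: split $[0,\pi]$ at a threshold $\epsilon$, use the factorization of Section~\ref{sec:evaluation-T-I-1} with Lemma~\ref{lemma:asymptotic-inv-u0-I-1} and Lemma~\ref{lemma:evaluation-U-I-1} on $[0,\epsilon]$, and on $[\epsilon,\pi]$ replace $u_\alpha$ by $u_{-1}$ and compute $U_\alpha$ via rigorous integration as in Appendix~\ref{sec:rigorous-integration}. The paper differs only in minor implementation details: it fixes $\epsilon=0.1$ rather than choosing it dynamically, handles $[\epsilon,\pi]$ first and then verifies $[0,\epsilon]$ against the computed bound $[0.93 \pm 7.86\cdot 10^{-3}]$ (starting from a direct enclosure on $[0,10^{-10}]$), and adds one optimization you omit---checking monotonicity of $u_{-1}$ on each subinterval so it can be enclosed from endpoint values.
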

\begin{proof}
  A plot of \(\mathcal{T}_{\alpha}(x)\) on the interval \([0, \pi]\)
  is given in Figure~\ref{fig:bounds-I-1-T}. It hints at the maximum being
  attained around \(x \approx 1.4\).

  We take \(\epsilon = 0.1\). We first bound the maximum on
  \([\epsilon, \pi]\), for which we get that the maximum is bounded by
  \([0.93 \pm 7.86 \cdot 10^{-3}]\), which in turn is bounded by
  \(\bar{D}_{1}\).

  For the interval \([0, \epsilon]\) we only prove that the maximum is
  bounded by \([0.93 \pm 7.86 \cdot 10^{-3}]\). This is done by first
  computing \(\mathcal{T}_{\alpha}([0, 10^{-10}])\) and verifying that
  this satisfies the bound. The interval \([10^{-10}, \epsilon]\) is
  then bisected until the bound can be verified, in this case the
  bisection is done at the geometric midpoint.

  For the interval \([0, \epsilon]\) we do not make use of Taylor
  expansions but only use direct evaluation. For the interval
  \([\epsilon, \pi]\) we are not able to compute Taylor expansion of
  \(U_{\alpha}\), and hence not of \(\mathcal{T}_{\alpha}\). We do
  however make one optimization to get better enclosures. Recall that
  \begin{equation*}
    \mathcal{T}_{\alpha}(x) \leq \frac{U_{\alpha}(x)}{\pi|w_{\alpha}(x)||u_{-1}(x)|}.
  \end{equation*}
  For a given interval \(\inter{x}\) both \(U_{\alpha}(\inter{x})\)
  and \(w_{\alpha}(\inter{x})\) are evaluated directly. To enclose
  \(u_{-1}(\inter{x})\) we first check if it is monotone by computing
  the derivative and checking that it is non-zero. If it is monotone
  we compute an enclosure by evaluating it at the endpoints of
  \(\inter{x}\).

  The runtime for the computation is around 410 seconds, the majority
  for handling the interval \([\epsilon, \pi]\).
\end{proof}

Combining the above three lemmas we get the following result.
\begin{lemma}
  \label{lemma:bounds-I-1-inequality}
  For all \(\alpha \in I_{1}\) the following inequality is satisfied
  \begin{equation*}
    \delta_{\alpha} < \frac{(1 - D_{\alpha})^{2}}{4n_{\alpha}}.
  \end{equation*}
\end{lemma}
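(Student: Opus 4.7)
The plan is to combine the three bounds established in Lemmas~\ref{lemma:bounds-I-1-n}, \ref{lemma:bounds-I-1-delta}, and~\ref{lemma:bounds-I-1-D} and reduce the claim to a single numerical inequality involving only the explicit constants $\bar{n}_1$, $\bar{\delta}_1$, and $\bar{D}_1$. The right-hand side $\frac{(1 - D_\alpha)^2}{4 n_\alpha}$ is decreasing in $D_\alpha$ on $[0,1)$ (since the numerator decreases) and decreasing in $n_\alpha$. Consequently, the uniform upper bounds $D_\alpha \leq \bar{D}_1$ and $n_\alpha \leq \bar{n}_1$ give a uniform lower bound
\begin{equation*}
  \frac{(1 - D_\alpha)^2}{4 n_\alpha} \geq \frac{(1 - \bar{D}_1)^2}{4 \bar{n}_1}
\end{equation*}
valid for every $\alpha \in I_1$.

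Next I would combine this with $\delta_\alpha \leq \bar{\delta}_1$ from Lemma~\ref{lemma:bounds-I-1-delta} to see that it suffices to verify the single inequality
\begin{equation*}
  \bar{\delta}_1 < \frac{(1 - \bar{D}_1)^2}{4 \bar{n}_1}.
\end{equation*}
Plugging in $\bar{D}_1 = 0.938$, $\bar{n}_1 = 1.79$, $\bar{\delta}_1 = 0.0005$, the right-hand side equals $(0.062)^2 / (4 \cdot 1.79) \approx 5.37 \cdot 10^{-4}$, which exceeds $5 \cdot 10^{-4}$. To make the proof rigorous, I would carry this check out in interval arithmetic through Arb, producing a verified enclosure of the right-hand side whose lower endpoint strictly exceeds $\bar{\delta}_1$.

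There is essentially no obstacle: the three preceding lemmas have already absorbed all the analytic and computer-assisted work, so the present lemma reduces to an elementary monotonicity argument together with one rigorous arithmetic comparison of concrete rationals. The only thing to watch is that $\bar{D}_1 < 1$ so that the squaring is genuinely monotone in the desired direction, but this is already ensured by $\bar{D}_1 = 0.938$.
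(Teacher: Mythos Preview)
Your proposal is correct and matches the paper's own proof: both reduce the claim to the single numerical inequality $\bar{\delta}_1 < (1-\bar{D}_1)^2/(4\bar{n}_1)$ via the obvious monotonicity of the right-hand side in $D_\alpha$ and $n_\alpha$, and then verify that inequality with the explicit constants. The paper states this in one line; your extra remarks about the monotonicity direction and the interval-arithmetic check are reasonable elaborations but add nothing new.
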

\begin{proof}
  Follows immediately from the above lemmas, noticing that
  \begin{equation*}
    \delta_{\alpha} \leq \bar{\delta}_{-1}
    < \frac{(1 - \bar{D}_{\alpha})^{2}}{4\bar{n}_{\alpha}} \leq \frac{(1 - D_{\alpha})^{2}}{4n_{\alpha}}.
  \end{equation*}
\end{proof}

\begin{figure}
  \centering
  \begin{subfigure}[t]{0.3\textwidth}
    \includegraphics[width=\textwidth]{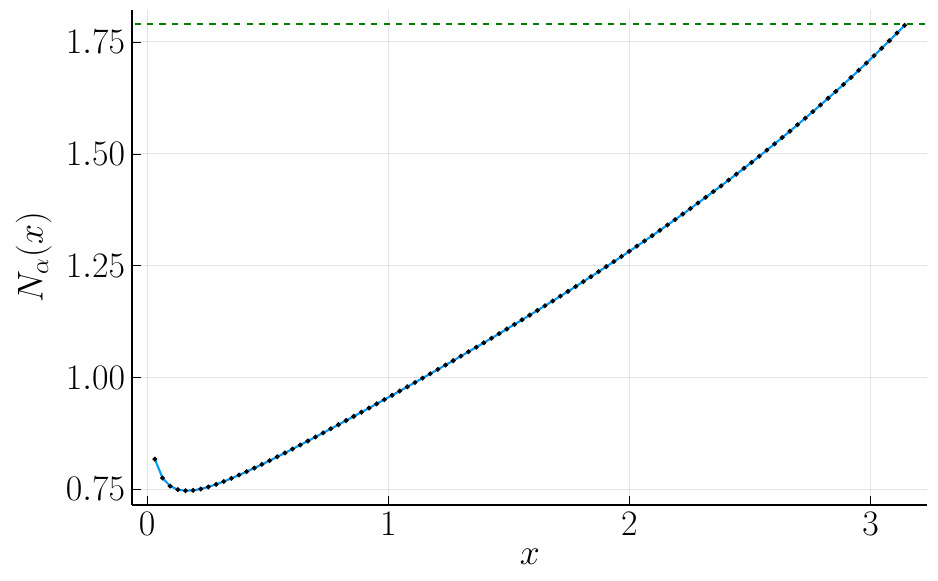}
    \caption{}
    \label{fig:bounds-I-1-N}
  \end{subfigure}
  \begin{subfigure}[t]{0.3\textwidth}
    \includegraphics[width=\textwidth]{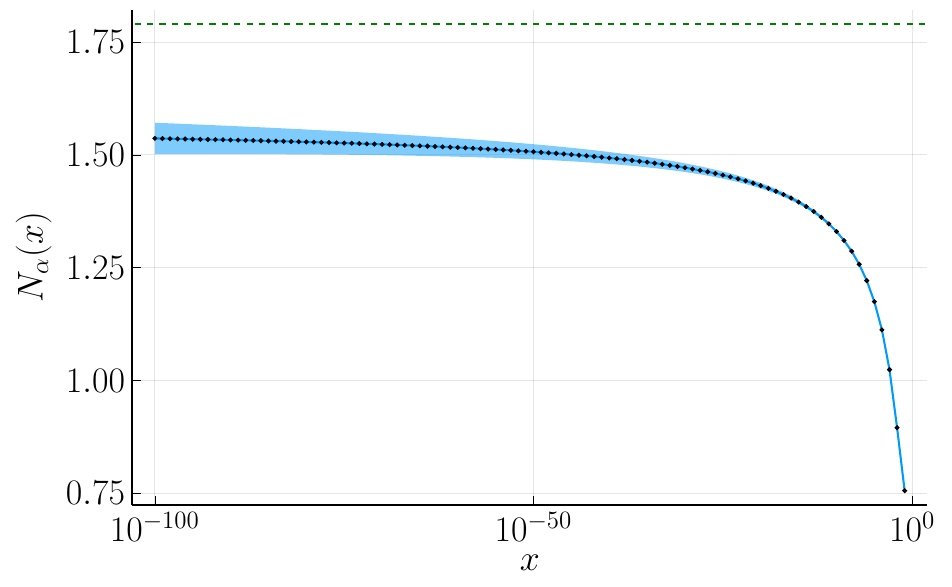}
    \caption{}
    \label{fig:bounds-I-1-N-asymptotic}
  \end{subfigure}
  \begin{subfigure}[t]{0.3\textwidth}
    \includegraphics[width=\textwidth]{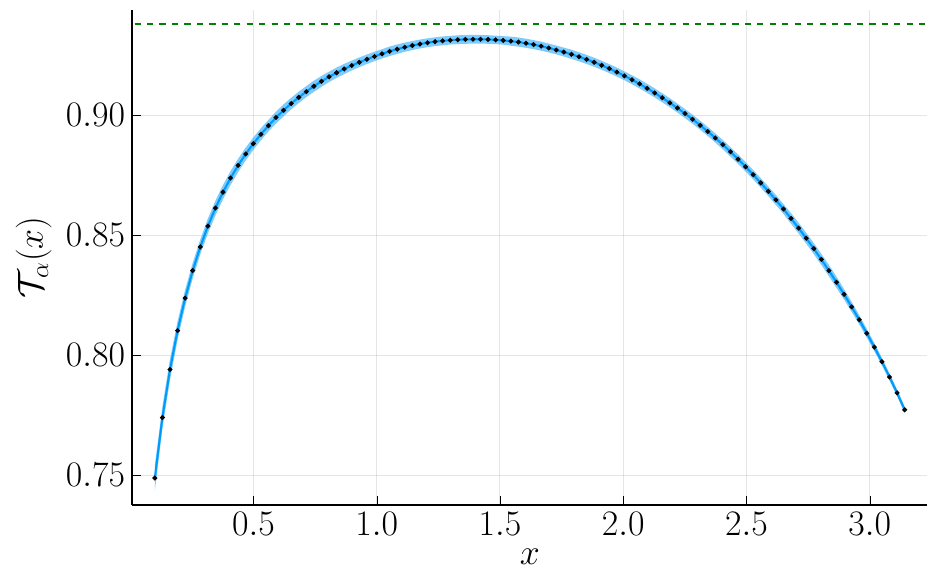}
    \caption{}
    \label{fig:bounds-I-1-T}
  \end{subfigure}
  \caption{Plot of the functions \(N_{\alpha}\) and
    \(\mathcal{T}_{\alpha}\) for \(\alpha \in I_{1}\). The dashed
    green lines show the upper bounds as given in
    Lemmas~\ref{lemma:bounds-I-1-n} and~\ref{lemma:bounds-I-1-D}}
\end{figure}

\begin{figure}
  \centering
  \begin{subfigure}[t]{0.45\textwidth}
    \includegraphics[width=\textwidth]{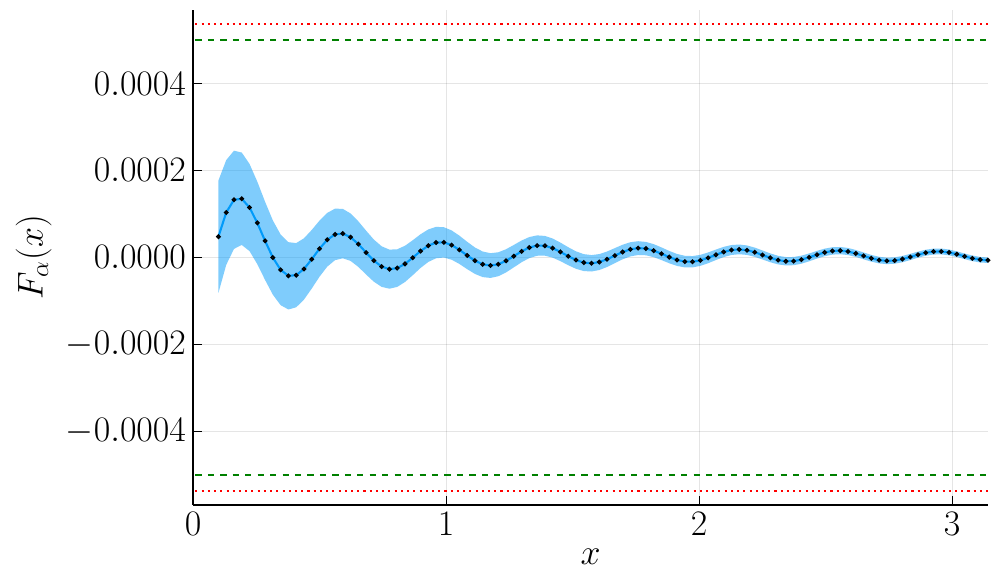}
    \caption{}
    \label{fig:bounds-I-1-F}
  \end{subfigure}
  \begin{subfigure}[t]{0.45\textwidth}
    \includegraphics[width=\textwidth]{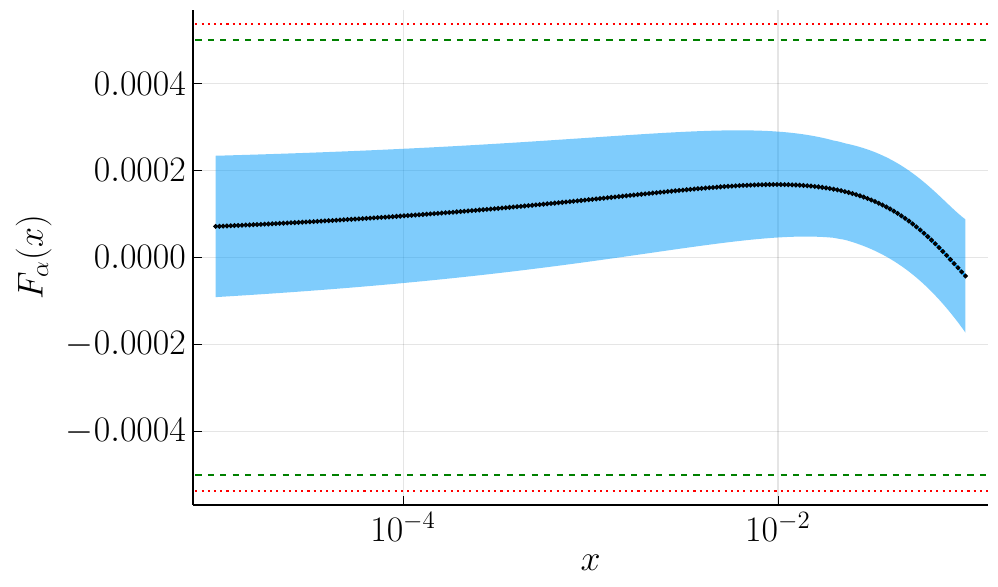}
    \caption{}
    \label{fig:bounds-I-1-F-asymptotic}
  \end{subfigure}
  \caption{Plot of the function \(F_{\alpha}\) for
    \(\alpha \in I_{1}\). The function is plotted on the interval
    \([0, \pi]\) and \([10^{-5}, 10^{-1}]\). The dashed green line
    shows the upper bound as given in
    Lemma~\ref{lemma:bounds-I-1-delta}. The dotted red line shows
    \(\frac{(1 - \bar{D}_{1})^{2}}{4\bar{n}_{1}}\), which is the value
    we want \(\delta_{\alpha}\) to be smaller than.}
\end{figure}

\subsection{Bounds for \(I_{2}\)}
\label{sec:bounds-I-2}
We here give bounds of \(n_{\alpha}\), \(\delta_{\alpha}\) and
\(D_{\alpha}\) for \(\alpha \in I_{2} = (-0.9999, -0.0012)\).

In this case it is not possible to give uniform bounds valid on the
whole interval \(I_{2}\). Instead we split \(I_{2}\) into many
subintervals and for each individual subinterval bounds are computed
and they are checked to satisfy
\begin{equation*}
  \delta_{\alpha} < \frac{(1 - D_{\alpha})^{2}}{4n_{\alpha}},
\end{equation*}
as required for Proposition~\ref{prop:contraction}.

When the midpoint of the subinterval is less than \(-0.95\) the hybrid
approach corresponding to \(\alpha\) near \(-1\) is used. In this case
the weight is given by \(w_{\alpha}(x) = |x|^{p}\log(2e + 1 / |x|)\).
If the midpoint of the subinterval is larger than \(-0.16\) then the
hybrid approach corresponding to \(\alpha\) near \(0\) is used, in
which case the weight is \(w_{\alpha}(x) = |x|\). For the rest of the
interval the default approach is used, with the weight
\(w_{\alpha}(x) = |x|^{p}\).

The value of \(p\) is chosen based on the midpoint of the subinterval.
If the midpoint is above \(-0.33\) we use \(p = 1\) and if it is
between \(-0.5\) and \(-0.33\) we use \(p = \frac{3}{4}\). If it is
below \(-0.5\) we take \(p = \frac{1 - \alpha}{2}\), evaluated at the
midpoint of the interval. Note that in all cases we have
\(-\alpha < p \leq 1\).

The interval is split into \(72 000\) subintervals. The sizes of the
subintervals are smaller near \(\alpha = -1\) and \(\alpha = 0\). The
interval \((-0.9999, -0.0012)\) is first split non-uniformly into
\(32\) subintervals, each of these subintervals are then further split
uniformly into a given number of subintervals, the number varying for
the different parts, see Table~\ref{table:I-2-subintervals}. Of these
\(72 000\) subintervals there are a few for which the computed bounds
are too bad to satisfy the required inequality. This happens in \(28\)
cases, in all of these cases it is enough to bisect the subinterval
once for the bounds to be good enough. The final result is therefore
based on a total of \(72 028\) subintervals.

\begin{table}
  \centering
  \begin{tabular}{c|c|c|c}
    Interval & Subintervals & Interval & Subintervals\\
    \hline
    \([-0.9999, -0.999875]\)  & \(500\)  & \([-0.7, -0.6]\)            & \(1000\) \\
    \([-0.999875, -0.99985]\) & \(500\)  & \([-0.6, -0.5]\)            & \(1500\) \\
    \([-0.99985, -0.9998]\)   & \(1000\) & \([-0.5, -0.45]\)           & \(1000\) \\
    \([-0.9998, -0.9996]\)    & \(1000\) & \([-0.45, -0.41]\)          & \(1000\) \\
    \([-0.9996, -0.9993]\)    & \(1000\) & \([-0.41, -0.37]\)          & \(1000\) \\
    \([-0.9993, -0.999]\)     & \(1000\) & \([-0.37, -0.33]\)          & \(1000\) \\
    \([-0.999, -0.998]\)      & \(1000\) & \([-0.33, -0.2]\)           & \(10000\) \\
    \([-0.998, -0.996]\)      & \(1000\) & \([-0.2, -0.16]\)           & \(3000\) \\
    \([-0.996, -0.993]\)      & \(1000\) & \([-0.16, -0.1]\)           & \(500\) \\
    \([-0.993, -0.99]\)       & \(1000\) & \([-0.1, -0.05]\)           & \(500\) \\
    \([-0.99, -0.95]\)        & \(2000\) & \([-0.05, -0.025]\)         & \(500\) \\
    \([-0.95, -0.935]\)       & \(5000\) & \([-0.025, -0.0125]\)       & \(1000\) \\
    \([-0.935, -0.9]\)        & \(8000\) & \([-0.0125, -0.00625]\)     & \(2000\) \\
    \([-0.9, -0.85]\)         & \(4000\) & \([-0.00625, -0.003125]\)   & \(4000\) \\
    \([-0.85, -0.8]\)         & \(2000\) & \([-0.003125, -0.0015625]\) & \(8000\) \\
    \([-0.8, -0.7]\)          & \(2000\) & \([-0.0015625, -0.0012]\)   & \(4000\) \\
  \end{tabular}
  \caption{The interval \([-0.9999, -0.0012]\) is split into these
    \(32\) subintervals which are then further split uniformly into
    the given number of subintervals. The first \(11\) use the hybrid
    approach corresponding to \(\alpha\) near \(-1\) and the last
    \(8\) use the hybrid approach corresponding to \(\alpha\) near
    \(0\), the ones in between use the default approach.}
  \label{table:I-2-subintervals}
\end{table}

Due to the large number of subintervals, explicit bounds for each case
is not given here, we refer to the repository~\cite{HighestCuspedWave}
where the data can be found. The bounds are visualized in
Figures~\ref{fig:bounds-I-2-n},~\ref{fig:bounds-I-2-delta}
and~\ref{fig:bounds-I-2-D}. However, these figures only give a rough
picture since the number of subintervals is larger than the number of
pixels in the picture and the subintervals are concentrated near
\(\alpha = -1\) and \(\alpha = 0\).

For the precise values of \(N_{\alpha,0}\) and \(N_{\alpha,1}\) we
refer to the repository~\cite{HighestCuspedWave}. In general
\(N_{\alpha,0}\) takes it maximum value near \(\alpha = -1\) and
minimum value near \(\alpha = 0\), varying from \(5799\) to \(2\). For
\(N_{\alpha,1}\) it varies between \(0\) and \(16\).

The approach for bounding \(n_{\alpha}\), \(\delta_{\alpha}\) and
\(D_{\alpha}\) is given in the following three lemmas. The total
runtime for the calculations on Dardel were around 12000 core hours
(corresponding to a wall time of around 48 hours using 256 cores).
\begin{lemma}
  \label{lemma:bounds-I-2-n}
  For all \(\alpha \in I_{2}\) the value \(N_{\alpha}\) is bounded. A
  sketch of the bound is given in Figure~\ref{fig:bounds-I-2-n}, for
  the precise bound for each value of \(\alpha\) we refer to the
  repository~\cite{HighestCuspedWave}.
\end{lemma}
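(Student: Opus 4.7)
The plan is to follow the same general scheme as in Lemma~\ref{lemma:bounds-I-1-n}, but applied uniformly across the \(72\,028\) subintervals into which \(I_{2}\) has been partitioned (see Table~\ref{table:I-2-subintervals}). For each subinterval \(\boldsymbol{\alpha} \subseteq I_{2}\) with midpoint \(\alpha_{m}\), I would first select the appropriate evaluation regime according to the rules laid out in the paragraph preceding this lemma: the hybrid-near-\(-1\) scheme when \(\alpha_{m} < -0.95\) with weight \(w_{\alpha}(x) = |x|^{p}\log(2e + 1/|x|)\); the hybrid-near-\(0\) scheme when \(\alpha_{m} > -0.16\) with weight \(w_{\alpha}(x) = |x|\); and the default \(I_{2}\) scheme with weight \(w_{\alpha}(x) = |x|^{p}\) in between, where \(p\) is chosen from the piecewise rule already given so that \(-\alpha < p \leq 1\) on all of \(\boldsymbol{\alpha}\).

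On each subinterval \(\boldsymbol{\alpha}\), I would split \([0, \pi]\) into \([0, \epsilon]\) and \([\epsilon, \pi]\), with \(\epsilon\) selected dynamically as in the proof of Lemma~\ref{lemma:bounds-I-1-n}: start from a moderate value and shrink it until the asymptotic evaluation of \(N_{\alpha}\) on \([0, \epsilon]\) returns an enclosure compatible with the target bound. On \([0, \epsilon]\) I would apply the appropriate rewriting from Section~\ref{sec:evaluation-N} (splitting off the factor \(x^{-\alpha}/u_{\alpha}(x)\) bounded via Lemma~\ref{lemma:asymptotic-inv-u0} or, near \(\alpha = -1\), via Lemma~\ref{lemma:asymptotic-inv-u0-I-1}, and enclosing the remaining elementary factor directly). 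On \([\epsilon, \pi]\) I would compute an enclosure of the maximum of \(|N_{\alpha}|\) by the bisection-with-Taylor-polynomial method of Section~\ref{sec:enclosing-supremum}, using Taylor expansions of low degree in \(x\) to detect monotonicity quickly; this gives the sharp value that appears in Figure~\ref{fig:bounds-I-2-n}.

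The bound obtained on each subinterval is recorded, and the union of these bounds yields the function \(\alpha \mapsto \bar{n}_{\alpha}\) depicted in Figure~\ref{fig:bounds-I-2-n}. For the \(28\) subintervals on which the first pass produces an enclosure too loose to be useful downstream (that is, too loose to feed into Lemma~\ref{lemma:bounds-I-2-inequality}, the analogue of Lemma~\ref{lemma:bounds-I-1-inequality}), I would bisect once and rerun the procedure; empirically this single bisection always suffices, so after adding those children we obtain the claimed \(72\,028\) subintervals covering \(I_{2}\) with a verified pointwise bound.

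The main obstacle is not mathematical but algorithmic: the evaluation of \(u_{\alpha}\) through the large Clausen sum \eqref{eq:u0} with up to \(N_{\alpha,0} \approx 5800\) and \(N_{\alpha,1} \leq 16\) must remain numerically stable across all three regimes, and in particular the hybrid-near-\(-1\) case must handle the large cancellations between \(a_{\alpha,0}\tilde{C}_{1 - \alpha}\) and \(\hat{a}_{\alpha,1}\tilde{C}_{1 - \alpha + p_{\alpha}}\) via the midpoint-expansion trick of Section~\ref{sec:evaluation-hybrid}, while the hybrid-near-\(0\) case must propagate Taylor models in \(\alpha\) through the asymptotic expansion without blowup. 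Once these evaluation primitives are in place, the proof reduces to running the procedure above on each of the \(72\,028\) subintervals, which is what the code in~\cite{HighestCuspedWave} performs; the output is the table of enclosures whose envelope is Figure~\ref{fig:bounds-I-2-n}.
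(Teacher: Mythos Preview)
Your proposal is correct and follows essentially the same approach as the paper's proof: dynamic choice of \(\epsilon\), asymptotic evaluation on \([0,\epsilon]\) via the rewritings of Section~\ref{sec:evaluation-N}, direct enclosure of the maximum on \([\epsilon,\pi]\) using low-degree Taylor information, with the three regimes (default, hybrid near \(-1\), hybrid near \(0\)) handled exactly as you describe. The only minor discrepancies are implementation details (the paper starts the \(\epsilon\)-search at \(3\) rather than a value inherited from Lemma~\ref{lemma:bounds-I-1-n}, and fixes Taylor degree \(0\)), and the \(28\) bisections you mention are a global feature of the \(I_2\) pipeline rather than specific to the \(n_\alpha\) computation, but none of this affects the validity of your outline.
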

\begin{proof}
  Similar to in Lemma~\ref{lemma:bounds-I-1-n} the maximum of
  \(N_{\alpha}\) is in practice attained at \(x = \pi\), which makes
  it easy to compute an accurate bound. As seen in the plot of
  \(n_{\alpha}\) in Figure~\ref{fig:bounds-I-2-n} it in general varies
  smoothly with \(\alpha\), the exception being the points where the
  choice of weight is changed. We can also note that the bounds are
  well-behaved near \(\alpha = -1\) and \(\alpha = 0\).

  When computing the bounds the value of \(\epsilon\) is not fixed,
  instead it is determined dynamically. Starting with \(\epsilon = 3\)
  we check if the asymptotic version of \(N_{\alpha}\) evaluated at
  this point is less than \(N_{\alpha}(\pi)\). if this is the case we
  take this value of \(\epsilon\), otherwise we try with a slightly
  smaller \(\epsilon\), stopping once we find one that works. On the
  interval \([0, \epsilon]\) we then check that \(N_{\alpha}\) is
  bounded by \(N_{\alpha}(\pi)\).

  For the interval \([\epsilon, \pi]\) we compute an enclosure of the
  maximum.

  In both cases we use Taylor expansions of degree \(0\) when
  computing the bounds.

  For the hybrid case when \(\alpha\) is close to \(-1\) the approach
  is exactly the same as above. For the hybrid case when \(\alpha\) is
  close to \(0\) we use the same approach as in
  Lemma~\ref{lemma:bounds-I-3-n} for computing a bound.

  The computational time varies with \(\alpha\), on average the
  computations took 4.05 seconds for each subinterval with a minimum
  and maximum of 0.018 and 144 seconds respectively.
\end{proof}

\begin{lemma}
  \label{lemma:bounds-I-2-delta}
  For all \(\alpha \in I_{2}\) the value \(\delta_{\alpha}\) is
  bounded. A sketch of the bound is given in
  Figure~\ref{fig:bounds-I-2-delta}, for the precise bound for each
  value of \(\alpha\) we refer to the
  repository~\cite{HighestCuspedWave}.
\end{lemma}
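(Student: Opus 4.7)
The plan is to bound $\delta_\alpha$ on each of the $72\,028$ subintervals $\boldsymbol{\alpha}$ of $I_2$ separately, applying the same two-region strategy as in Lemma~\ref{lemma:bounds-I-1-delta}. For each subinterval I would split $[0,\pi]$ into $[0,\epsilon]$ and $[\epsilon,\pi]$, with $\epsilon = \epsilon(\boldsymbol{\alpha})$ determined dynamically by starting from a moderate value and shrinking it geometrically until the asymptotic evaluation of $F_\alpha([0,\epsilon])$ yields an enclosure compatible with the target bound. On $[0,\epsilon]$ I would use the asymptotic representation of $F_\alpha$ from Section~\ref{sec:evaluation-F-I-2} or the appropriate hybrid variant of Section~\ref{sec:evaluation-F-hybrid}, and on $[\epsilon,\pi]$ evaluate $F_\alpha$ directly from \eqref{eq:u0} and \eqref{eq:Hu0}. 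In both regions the supremum is enclosed by the bisection procedure of Section~\ref{sec:enclosing-supremum}, accelerated on $[\epsilon,\pi]$ by the Taylor-enclosure bound \eqref{eq:taylor-bound}; branches are pruned as soon as their enclosure falls below the threshold dictated by Proposition~\ref{prop:contraction} combined with the bounds on $n_\alpha$ and $D_\alpha$ for the same $\boldsymbol{\alpha}$.

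Each of the three regimes listed in Section~\ref{sec:hybrid-cases} requires slight adjustments. In the middle (default) regime with weight $w_\alpha(x)=|x|^p$, the leading-order cancellation from Lemma~\ref{lemma:Du0-asymptotic} guarantees that $F_\alpha$ is bounded as $x \to 0$, so the routine of Section~\ref{sec:evaluation-F-I-2} applies verbatim. Near $\alpha=-1$ the logarithmic weight forces the more delicate splitting of Section~\ref{sec:evaluation-F-hybrid}, and the large cancellations between the leading Clausen terms of $\Hop{\alpha}[u_\alpha]$ and $\tfrac12 u_\alpha^2$ must be made explicit at the symbolic level before enclosing, otherwise the naive interval computation inflates catastrophically. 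Near $\alpha = 0$ the value of $\delta_\alpha$ itself tends to zero, so any enclosure must be sharp in an absolute sense; I would achieve this by computing Taylor models in $\alpha$ centered at the midpoint of $\boldsymbol{\alpha}$ and enclosing the resulting model over $\boldsymbol{\alpha}$, following Section~\ref{sec:evaluation-F-hybrid}.

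The main obstacle is balancing two competing pressures: near $\alpha = -1$ the factor $(1-D_\alpha)^2/(4n_\alpha)$ is small and the cancellations in $F_\alpha$ are severe, so bisection in $x$ must go very deep and be performed at the geometric midpoint to span many decades; while near $\alpha = 0$ both sides of the inequality in Proposition~\ref{prop:contraction} tend to zero simultaneously, and any overestimate in a direct enclosure of $F_\alpha$ is fatal, forcing the use of expansions in $\alpha$. The non-uniform distribution of subintervals in Table~\ref{table:I-2-subintervals} reflects precisely this tradeoff, with the densest packing at both endpoints. For the $28$ subintervals where the initial bounds turn out to be insufficient, a single additional bisection in $\alpha$ is expected to suffice, bringing the total to $72\,028$; the explicit numerical bounds for each subinterval are recorded in~\cite{HighestCuspedWave} and the envelope is displayed in Figure~\ref{fig:bounds-I-2-delta}.
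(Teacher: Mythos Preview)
Your proposal is correct and follows essentially the same approach as the paper: per-subinterval computation, a dynamic split $[0,\epsilon]\cup[\epsilon,\pi]$ with asymptotic versus direct evaluation, Taylor-accelerated bisection, and the hybrid handling near both endpoints (including Taylor models in $\alpha$ near $0$). One small organizational difference: you prune against a threshold derived from $n_\alpha$ and $D_\alpha$, whereas the paper computes an actual enclosure of $\delta_\alpha$ first and then uses that to set the target for the (more expensive) $D_\alpha$ computation; either order works, but the paper's choice avoids redoing the costly $D_\alpha$ bound if $\delta_\alpha$ turns out larger than hoped.
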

\begin{proof}
  The value of \(\delta_{\alpha}\) is heavily dependent on the precise
  approximation used. Very small changes in the approximation can give
  relatively large changes for \(\delta_{\alpha}\). This is clearly
  seen in Figure~\ref{fig:bounds-I-2-delta} where the bound is seen to
  vary a lot from subinterval to subinterval. The patches where the
  bound is more or less continuous correspond to certain values of
  \(N_{\alpha,0}\) in the approximation, when the value changes from
  one to the other it gives a large change in defect. There are also
  many isolated points where the bound is very different from the
  nearby points, this is mostly due to numerical instabilities when
  computing the coefficients for the approximation.

  Similar to in the previous lemma the value of \(\epsilon\) is not
  fixed but chosen dynamically. The precise choice is based on
  comparing the asymptotic and the non-asymptotic versions at
  different values and see where the asymptotic one starts to give
  tighter enclosures. The bounds on \([0, \epsilon]\) and
  \([\epsilon, \pi]\) are then computed separately, and their maximum
  is returned.

  In both cases we use Taylor expansions, the degree is tuned
  depending on \(\alpha\) and is between \(2\) and \(6\).

  For the hybrid case when \(\alpha\) is close to \(-1\) the approach
  is exactly the same as above. For the hybrid case when \(\alpha\) is
  close to \(0\) some modifications needs to be made. For the interval
  \([0, \epsilon]\) we use Taylor models in \(\alpha\) to compute
  enclosures for the coefficients in the expansion at \(x = 0\). Once
  we have the expansion we are able to compute Taylor expansions in
  \(x\) of it, where we use degree \(8\). For the interval
  \([\epsilon, \pi]\) we are not able to compute Taylor expansions in
  \(x\) and have to fall back to using direct enclosures.

  The computational time varies with \(\alpha\), on average the
  computations took 37 seconds for each subinterval with a minimum and
  maximum of 0.11 and 1084 seconds respectively.
\end{proof}

\begin{lemma}
  \label{lemma:bounds-I-2-D}
  For all \(\alpha \in I_{2}\) the value \(D_{\alpha}\) is bounded by
  a value smaller than \(1\). A sketch of the bound is given in
  Figure~\ref{fig:bounds-I-2-D}, for the precise bound for each value
  of \(\alpha\) we refer to the repository~\cite{HighestCuspedWave}.
\end{lemma}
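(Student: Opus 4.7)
The plan is to follow the same overall scheme as Lemmas~\ref{lemma:bounds-I-2-n} and~\ref{lemma:bounds-I-2-delta}: partition $I_{2}$ into the $72\,028$ subintervals described in Table~\ref{table:I-2-subintervals} (with the few extra bisections), and for each subinterval $\boldsymbol{\alpha}$ compute an explicit numerical upper bound for $\sup_{x \in [0,\pi]} |\mathcal{T}_{\alpha}(x)|$ that is strictly below $1$. On every subinterval we choose $\epsilon = \epsilon(\boldsymbol{\alpha})$ dynamically by comparing the asymptotic and non-asymptotic evaluations of $\mathcal{T}_{\alpha}$ at a few test points, then handle $[0,\epsilon]$ using the asymptotic machinery of Section~\ref{sec:evaluation-T} and $[\epsilon,\pi]$ by direct evaluation combined with the bisection algorithm of Section~\ref{sec:enclosing-supremum}. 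The key difference from $\delta_{\alpha}$ and $n_{\alpha}$ is that we are not able to produce Taylor expansions of $\mathcal{T}_{\alpha}$ in $x$ on $[\epsilon,\pi]$, so the bisection algorithm must rely on bare enclosures (together with monotonicity of $u_{\alpha}$, as in Lemma~\ref{lemma:bounds-I-1-D}).

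The treatment of the three weight regimes proceeds as follows. When $\operatorname{mid}(\boldsymbol{\alpha}) \geq -0.16$ we have $w_{\alpha}(x) = |x|$, so on $[0,\epsilon]$ we use the split \eqref{eq:evaluation-T-I-2-split} and the explicit primitive from Lemma~\ref{lemma:U-primitive}, while on $[\epsilon,\pi]$ we evaluate the same primitive directly, using the enclosure of $r_{\alpha,x}$ supplied by Lemma~\ref{lemma:I-0-x} and Lemma~\ref{lemma:I-0-x-r-alpha}. When $\operatorname{mid}(\boldsymbol{\alpha})$ is in the middle regime we use $w_{\alpha}(x) = |x|^{p}$ with $p$ chosen as in Section~\ref{sec:bounds-I-2}; Lemma~\ref{lemma:U-primitive} no longer applies, so on $[0,\epsilon]$ we use the hypergeometric bound of Lemma~\ref{lemma:evaluation-U-I-2}, and on $[\epsilon,\pi]$ we evaluate $U_{\alpha}(x)$ by the rigorous integration procedure of Appendix~\ref{sec:rigorous-integration}, separating the integrable singularity at $y = x$. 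When $\operatorname{mid}(\boldsymbol{\alpha}) \leq -0.95$ we are in the hybrid $\alpha$-near-$-1$ case with $w_{\alpha}(x) = |x|^{p}\log(2e + 1/|x|)$, in which case on $[0,\epsilon]$ we apply the decomposition from Section~\ref{sec:evaluation-T-hybrid} together with the asymptotic bound of Appendix~\ref{sec:evaluation-T-hybrid-asymptotic}, and on $[\epsilon,\pi]$ we again use rigorous integration, bounding $u_{\alpha}$ from below by $u_{-1}$ wherever this improves the enclosure.

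I expect the main obstacle to be the subintervals near the two endpoints of $I_{2}$. Near $\alpha = -1$, both the large cancellations between the leading Clausen terms in $u_{\alpha}$ and the slow decay of $\mathcal{T}_{\alpha}$ toward its singular behavior force us to use very high-order expansions (hence the large values of $N_{\hat\alpha,0}$) and small subintervals, and the computation of $U_{\alpha}$ via rigorous integration becomes expensive because the integrand is very peaked at $y = x$ with a weight that is no longer a pure power. Near $\alpha = 0$ the obstacle is different but equally serious: as $\alpha \to 0^{-}$ we have $\mathcal{T}_{\alpha}(0) \to 1$ (this is consistent with Lemma~\ref{lemma:evaluation-T-expansion-alpha-I-3}), so the margin $1 - D_{\alpha}$ shrinks and any overestimation in the bisection immediately destroys the bound; this is precisely why we refine the partition aggressively on the right end of $I_{2}$ and why the hybrid-near-$0$ approximation is used there, even though on $I_{2}$ itself we only need an enclosure of $D_{\alpha}$ (not a Taylor model in $\alpha$, which is reserved for $I_{3}$).

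Once every subinterval has produced a certified upper bound $\bar{D}_{\boldsymbol{\alpha}} < 1$, we aggregate them into the piecewise bound shown in Figure~\ref{fig:bounds-I-2-D} and record the numerical data in \cite{HighestCuspedWave}. The final step is purely bookkeeping: verify that on each subinterval the triple $(\bar{n}_{\boldsymbol{\alpha}}, \bar{\delta}_{\boldsymbol{\alpha}}, \bar{D}_{\boldsymbol{\alpha}})$ coming from Lemmas~\ref{lemma:bounds-I-2-n}, \ref{lemma:bounds-I-2-delta} and the present one satisfies $\bar{\delta}_{\boldsymbol{\alpha}} < (1 - \bar{D}_{\boldsymbol{\alpha}})^{2}/(4\bar{n}_{\boldsymbol{\alpha}})$, which is the inequality required by Proposition~\ref{prop:contraction}; the $28$ subintervals where this first fails are simply bisected once and retried, as described in Section~\ref{sec:bounds-I-2}.
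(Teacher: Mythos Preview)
Your proposal is essentially correct and follows the same overall architecture as the paper's proof. Two procedural points differ and are worth knowing.

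First, the paper does not aim to compute a tight upper bound for \(D_{\alpha}\) on each subinterval and then check the inequality afterwards. Instead, it first uses the already-computed bounds for \(n_{\alpha}\) and \(\delta_{\alpha}\) from Lemmas~\ref{lemma:bounds-I-2-n} and~\ref{lemma:bounds-I-2-delta} to set a target \(1 - 2\sqrt{n_{\alpha}\delta_{\alpha}} - 2^{-26}\), and then only proves that \(\mathcal{T}_{\alpha}\) stays below this target; the bisection stops as soon as the target is met. Your ordering (bound \(D_{\alpha}\), then verify) is logically equivalent but computationally more expensive, which matters here since this is the dominant cost in the whole computation. Relatedly, the paper's choice of \(\epsilon\) is driven by this target: start at \(\epsilon = 1\) and shrink until the asymptotic evaluation at \(\epsilon\) already meets the target.

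Second, in the hybrid case near \(\alpha = -1\) on \(I_{2}\), the paper does \emph{not} replace \(u_{\alpha}\) by \(u_{-1}\) in the denominator; that trick is specific to \(I_{1}\) (Lemma~\ref{lemma:bounds-I-1-D}). On \(I_{2}\) the paper uses the same device for all three regimes: enclose \(u_{\alpha}(\inter{x})\) by first checking monotonicity via the derivative and, if monotone, evaluating at the endpoints of \(\inter{x}\). Otherwise your description of the three weight regimes and the tools used on \([0,\epsilon]\) versus \([\epsilon,\pi]\) matches the paper.
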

\begin{proof}
  The computation of the bound of \(D_{\alpha}\) is the most
  time-consuming part. For that reason we do not attempt to compute a
  very accurate upper bound but only as much as we need for
  Lemma~\ref{lemma:bounds-I-2-inequality} to go through. We need
  \(D_{\alpha}\) to satisfy the inequality
  \begin{equation*}
    D_{\alpha} < 1 - 2\sqrt{n_{\alpha}\delta_{\alpha}}.
  \end{equation*}
  Taking the upper bounds of \(n_{\alpha}\) and \(\delta_{\alpha}\)
  from the previous two lemmas we compute the value that
  \(D_{\alpha}\) needs to be bounded by. To give a little headroom we
  subtract \(2^{-26}\) to get the bound that we use. This bound is
  seen in Figure~\ref{fig:bounds-I-2-D}, and we prove that
  \(D_{\alpha}\) is bounded by this value for all the subintervals.

  To get an understanding of how this bound compares to the actual
  value of \(D_{\alpha}\) we also compute a non-rigorous approximation
  of it. This approximation is computed by evaluating
  \(\mathcal{T}_{\alpha}\) on a few points in the interval
  \([0, \pi]\) and taking the maximum, but with no control of what
  happens in between these points. This estimate can also be seen in
  Figure~\ref{fig:bounds-I-2-D}. From this estimate we can also
  compute an estimate of \(\frac{(1 - D_{\alpha})^{2}}{4n_{\alpha}}\),
  which is the value we want \(\delta_{\alpha}\) to be less than, this
  estimate is seen in Figure~\ref{fig:bounds-I-2-delta}.

  Similar to in the previous two lemmas the value of \(\epsilon\) is
  not fixed but chosen dynamically. It is determined by starting at
  \(\epsilon = 1\) and then taking smaller and smaller values until
  the asymptotic version of \(\mathcal{T}_{\alpha}\) evaluated at
  \(\epsilon\) satisfies the prescribed bound. We then prove that the
  bound holds on both \([0, \epsilon]\) and \([\epsilon, \pi]\).

  For the interval \([0, \epsilon]\) we do not make use of Taylor
  expansions but only use direct evaluation. For the interval
  \([\epsilon, \pi]\) we are not able to compute Taylor expansion of
  \(U_{\alpha}\), and hence not of \(\mathcal{T}_{\alpha}\). We do
  however make one optimization to get better enclosures. Recall that
  \begin{equation*}
    \mathcal{T}_{\alpha}(x) = \frac{U_{\alpha}(x)}{\pi|w_{\alpha}(x)||u_{\alpha}(x)|}.
  \end{equation*}
  For a given interval \(\inter{x}\) both \(U_{\alpha}(\inter{x})\)
  and \(w_{\alpha}(\inter{x})\) are evaluated directly. To enclose
  \(u_{\alpha}(\inter{x})\) we first check if it is monotone by
  computing the derivative and checking that it is non-zero. If it is
  monotone we compute an enclosure by evaluating it at the endpoints
  of \(\inter{x}\).

  Both the hybrid cases use exactly the same approach.

  The computational time varies with \(\alpha\), on average the
  computations took 72 seconds for each subinterval with a minimum and
  maximum of 0.740.68 and 1075 seconds respectively.
\end{proof}

\begin{lemma}
  \label{lemma:bounds-I-2-inequality}
  For all \(\alpha \in I_{2}\) the following inequality is satisfied
  \begin{equation*}
    \delta_{\alpha} < \frac{(1 - D_{\alpha})^{2}}{4n_{\alpha}}.
  \end{equation*}
\end{lemma}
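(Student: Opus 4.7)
The plan is to deduce the inequality on each of the $72\,028$ subintervals covering $I_2$ directly from the three preceding lemmas, exploiting the fact that the upper bound for $D_\alpha$ in Lemma~\ref{lemma:bounds-I-2-D} was chosen precisely so that the required inequality would hold. Specifically, on each subinterval, I would let $\bar n_\alpha$, $\bar \delta_\alpha$, $\bar D_\alpha$ denote the rigorous upper bounds produced by Lemmas~\ref{lemma:bounds-I-2-n}--\ref{lemma:bounds-I-2-D} (all three being piecewise constant in $\alpha$ within the subinterval). It suffices to verify on each subinterval that
\begin{equation*}
  \bar{\delta}_{\alpha} < \frac{(1 - \bar{D}_{\alpha})^{2}}{4\bar{n}_{\alpha}},
\end{equation*}
since $\bar D_\alpha < 1$ (from Lemma~\ref{lemma:bounds-I-2-D}) makes the map $D \mapsto (1-D)^2$ monotonically decreasing in the relevant range, so this bound implies the original inequality with the true values.

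The key point is that, by construction, the threshold $\bar D_\alpha$ in Lemma~\ref{lemma:bounds-I-2-D} was computed from $\bar n_\alpha$ and $\bar \delta_\alpha$ precisely so that
\begin{equation*}
  \bar{D}_{\alpha} \leq 1 - 2\sqrt{\bar{n}_{\alpha}\bar{\delta}_{\alpha}} - 2^{-26},
\end{equation*}
which is algebraically equivalent (up to the $2^{-26}$ safety margin) to the displayed inequality. Rearranging gives $(1 - \bar D_\alpha)^2 \geq 4\bar n_\alpha \bar \delta_\alpha + \text{(strictly positive slack from the }2^{-26}\text{ margin)}$, and dividing by $4\bar n_\alpha > 0$ yields the strict inequality. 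Therefore, once Lemmas~\ref{lemma:bounds-I-2-n}--\ref{lemma:bounds-I-2-D} are established for a given subinterval, the conclusion is a purely arithmetic check with guaranteed headroom.

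The only real work is the verification step itself: for each of the $72\,028$ subintervals, compute $4\bar n_\alpha \bar\delta_\alpha$ and $(1-\bar D_\alpha)^2$ in interval arithmetic and confirm the comparison. Since the safety margin $2^{-26}$ was baked into the choice of $\bar D_\alpha$, no subinterval can fail; the $28$ subintervals for which the initial bounds were inadequate were already bisected further in the construction of Lemma~\ref{lemma:bounds-I-2-D}, and the final list of $72\,028$ subintervals consists exactly of those where all three bounds fit together. The main (and only) obstacle is therefore bookkeeping: checking that the data files in~\cite{HighestCuspedWave} cover $I_2$ without gaps and that the three per-subinterval bounds truly correspond to the same partition, which is a routine computational verification rather than a mathematical difficulty.
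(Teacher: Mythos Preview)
Your proposal is correct and follows essentially the same approach as the paper: check the inequality on each of the $72\,028$ subintervals using the computed upper bounds from the three preceding lemmas, and note that these subintervals cover all of $I_2$. Your write-up is in fact more explicit than the paper's, spelling out why the per-subinterval check is guaranteed to succeed (the $2^{-26}$ headroom built into the construction of $\bar D_\alpha$), whereas the paper simply states that the check is ``straight forward.''
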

\begin{proof}
  Using the computed upper bounds from Lemma~\ref{lemma:bounds-I-2-n},
  \ref{lemma:bounds-I-2-delta} and~\ref{lemma:bounds-I-2-D} it is
  straight forward to check that the inequality
  \begin{equation*}
    \delta_{\alpha} < \frac{(1 - D_{\alpha})^{2}}{4n_{\alpha}}
  \end{equation*}
  holds for each of the \(72 028\) subintervals. Since the union of
  these subintervals cover the entire of \(I_{2}\) it follows that the
  inequality holds for all \(\alpha \in I_{2}\).
\end{proof}

\begin{figure}
  \centering
  \begin{subfigure}[t]{0.3\textwidth}
    \includegraphics[width=\textwidth]{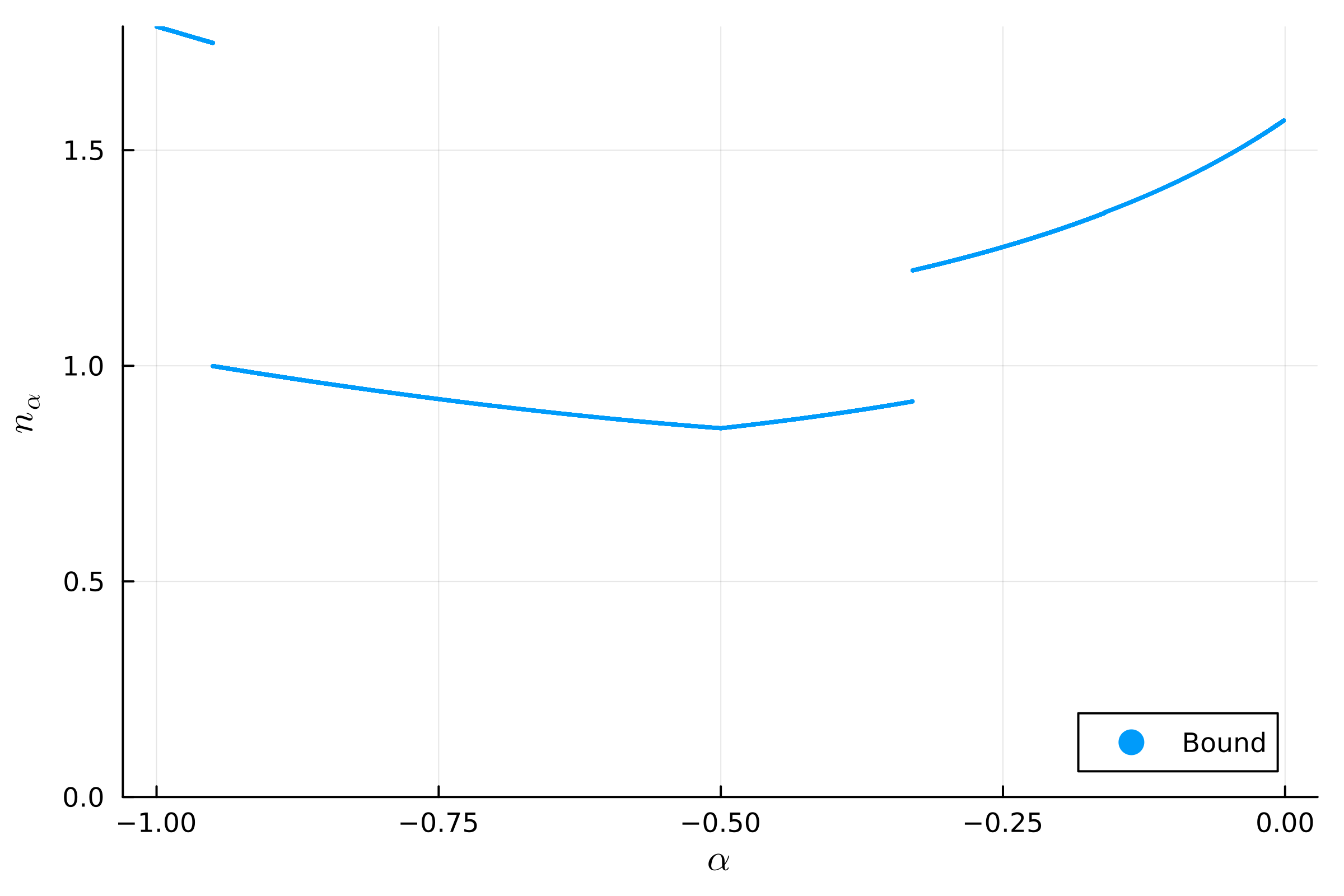}
    \caption{}
    \label{fig:bounds-I-2-n}
  \end{subfigure}
  \begin{subfigure}[t]{0.3\textwidth}
    \includegraphics[width=\textwidth]{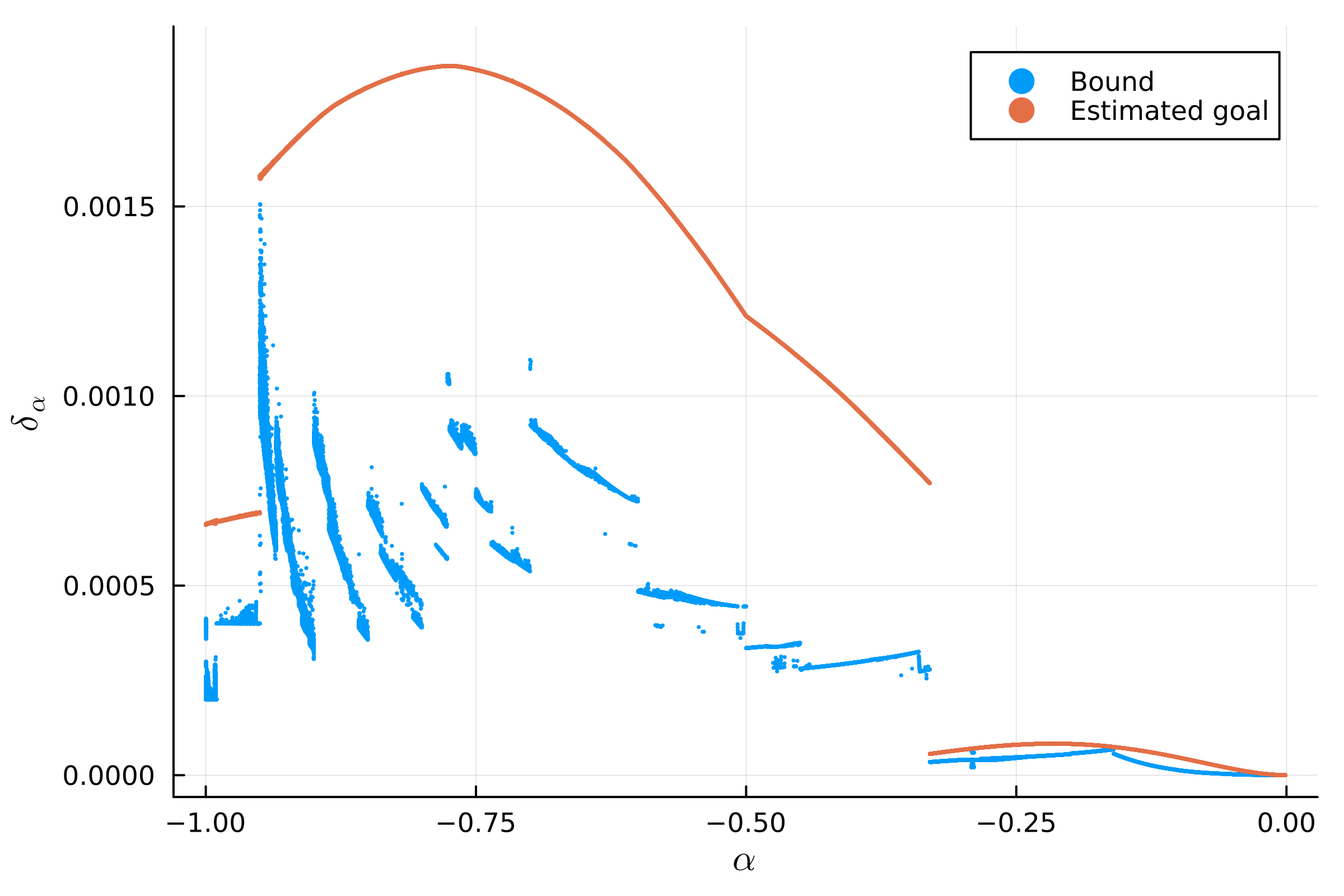}
    \caption{}
    \label{fig:bounds-I-2-delta}
  \end{subfigure}
  \begin{subfigure}[t]{0.3\textwidth}
    \includegraphics[width=\textwidth]{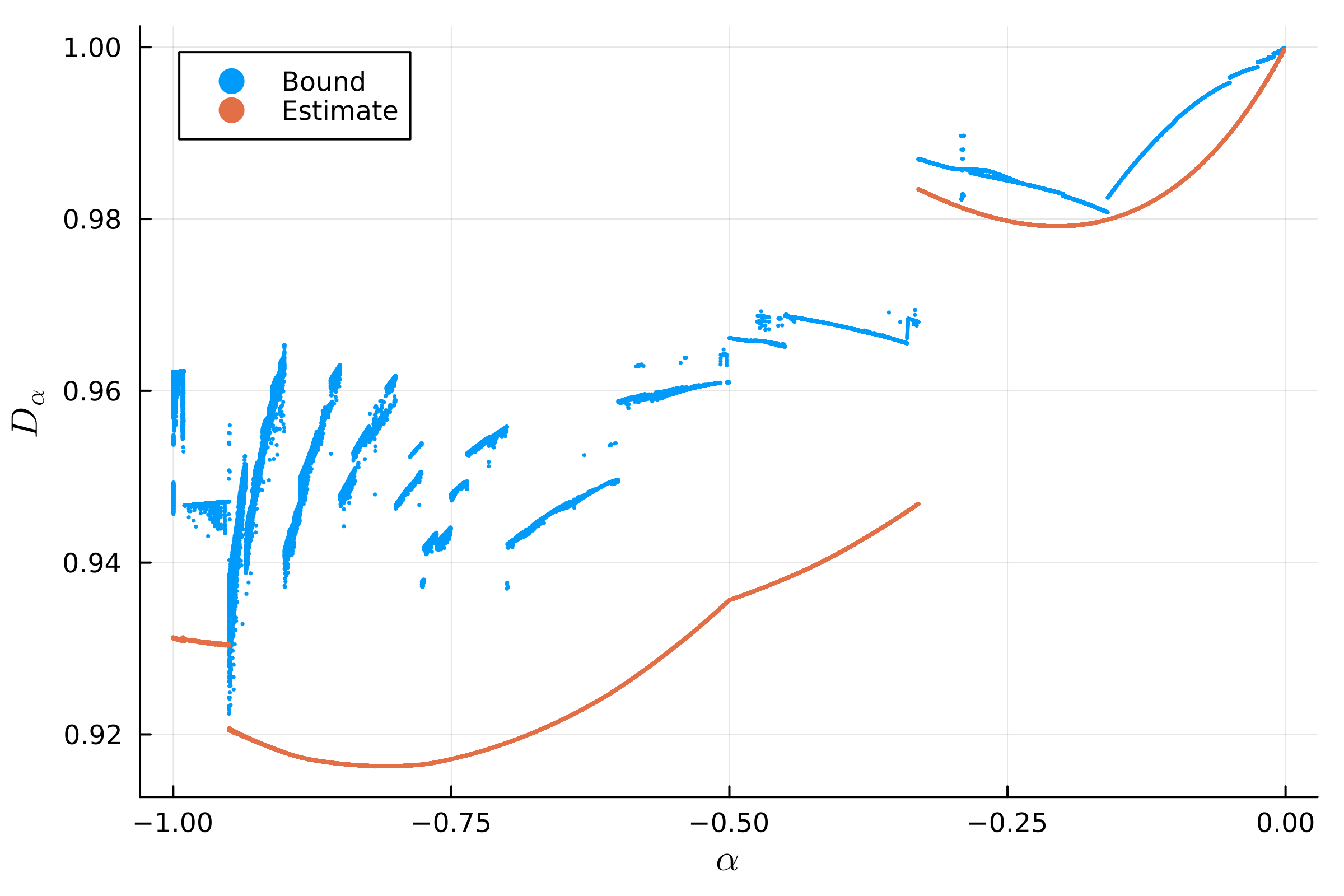}
    \caption{}
    \label{fig:bounds-I-2-D}
  \end{subfigure}
  \caption{Plot of bounds of \(n_{\alpha}\), \(\delta_{\alpha}\) and
    \(D_{\alpha}\) for \(\alpha \in I_{2}\). For \(\delta_{\alpha}\)
    the estimated goal is given by
    \(\frac{(1 - D_{\alpha})^{2}}{4n_{\alpha}}\) computed with the
    upper bound of \(n_{\alpha}\) and the estimate of \(D_{\alpha}\).
    For \(D_{\alpha}\) the non-rigorous estimate is also included.}
\end{figure}

\subsection{Bounds for \(I_{3}\)}
\label{sec:bounds-I-3}
For this interval the bounds are given in a slightly different form.
As \(\alpha\) gets close to \(0\) the value of \(\delta_{\alpha}\)
tends to zero and \(D_{\alpha}\) tends to one. To be able to say
something about the inequality
\begin{equation*}
  \delta_{\alpha} < \frac{(1 - D_{\alpha})^{2}}{4n_{\alpha}}
\end{equation*}
in Proposition~\ref{prop:contraction} in the limit we therefore need
information about the rate that they go to zero and one respectively.
This is however not the case for \(n_{\alpha}\), for which we have the
following lemma:

\begin{lemma}
  \label{lemma:bounds-I-3-n}
  The constant \(n_{\alpha}\) satisfies the inequality
  \(n_{\alpha} \leq \bar{n}_{3} = 1.58\) for all \(\alpha \in I_{3}\).
\end{lemma}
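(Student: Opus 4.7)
The plan is to apply the bisection strategy from Section~\ref{sec:enclosing-supremum} to $N_\alpha$ on $[0,\pi]$ for $\alpha$ ranging over all of $I_3$ simultaneously (as an interval $\boldsymbol{\alpha}$), using only the asymptotic evaluation described in Section~\ref{sec:evaluation-N-I-3}; that is, effectively taking $\epsilon = \pi$ throughout. Concretely, I would write
\[
N_\alpha(x) = \tfrac{1}{2}\, x^{1+\alpha} \cdot \frac{x^{-\alpha}}{u_\alpha(x)},
\]
enclose the first factor directly, and enclose the second using Lemma~\ref{lemma:asymptotic-inv-u0} together with the expansion of $u_\alpha$ at $x=0$ (after canceling the $|x|^{-\alpha}$ singularity as described there).

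The choice of the numerical constant $\bar n_3 = 1.58$ is motivated by Lemma~\ref{lemma:u0-expansion-alpha-I-3}: since $u_\alpha(x) = 1 + b(x)\alpha + \mathcal O(\alpha^2)$ uniformly on $[0,\pi]$, one has $N_\alpha(x) \to x/2$ as $\alpha \to 0$, whose supremum on $[0,\pi]$ is $\pi/2 \approx 1.5708$. Thus $\bar n_3 = 1.58$ leaves a modest margin above this limiting value, and — unlike $\delta_\alpha$ and $D_\alpha$ later — we do not need a Taylor model in $\alpha$ here, a single uniform enclosure over $\boldsymbol{\alpha} = I_3$ suffices.

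With this in hand, I would run the bisection on $[0,\pi]$, discarding any subinterval whose enclosure of $|N_\alpha|$ is below $\bar n_3$ and refining the rest. Because the function is essentially $x/2$ up to a small $\alpha$-dependent correction, subintervals away from $x = \pi$ are eliminated in very few bisections, and only a small neighborhood of $x = \pi$ requires further refinement. For that neighborhood we expect an enclosure of the form $\pi/2 + \mathcal O(\delta_2)$, comfortably below $1.58$ once $\delta_2$ is small enough.

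The main obstacle is keeping the enclosure of $x^{-\alpha}/u_\alpha(x)$ tight uniformly in $\alpha \in \boldsymbol{\alpha}$ near $x = 0$, where both the numerator and denominator tend to nontrivial limits through cancellation. This is precisely what the rewriting in Lemma~\ref{lemma:asymptotic-inv-u0} is designed for: expanding $u_\alpha$ at zero and subtracting $-\alpha$ from the exponents gives a series with non-negative exponents, whose coefficients (including the removable singularity at $\alpha = 0$ inside $a_{\alpha,0}\Gamma(\alpha)\cos(\pi\alpha/2)$) are handled via Appendix~\ref{sec:removable-singularities}. Granted this, the bisection terminates rapidly and the verification $n_\alpha \leq 1.58$ is immediate.
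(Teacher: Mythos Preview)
Your proposal is correct and follows essentially the same approach as the paper: use the asymptotic evaluation from Section~\ref{sec:evaluation-N-I-3} on the entire interval (i.e.\ $\epsilon = \pi$), enclose $N_\alpha(x) = \tfrac{1}{2}x^{1+\alpha}\cdot x^{-\alpha}/u_\alpha(x)$ via Lemma~\ref{lemma:asymptotic-inv-u0}, and apply the bisection strategy of Section~\ref{sec:enclosing-supremum} uniformly over $\boldsymbol{\alpha} = I_3$. The paper's proof is terser---it simply reports the computed enclosure $n_\alpha \in [1.57 \pm 2.45\cdot 10^{-3}]$---but your additional motivation via the $\alpha\to 0$ limit $N_\alpha(x)\to x/2$ and $\sup = \pi/2 \approx 1.5708$ is exactly the right heuristic.
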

\begin{proof}
  A plot of \(N_{\alpha}(x)\) on the interval \([0, \pi]\) is given in
  Figure~\ref{fig:bounds-I-3-N}, clearly hinting at the maximum being
  attained at \(x = \pi\). We use the asymptotic expansion on the
  entire interval, i.e.\ \(\epsilon = \pi\). The computation of the
  enclosure takes less than a second and gives us
  \begin{equation*}
    n_{\alpha} \in [1.57 \pm 2.45 \cdot 10^{-3}],
  \end{equation*}
  which is upper bounded by \(\bar{n}_{3}\).
\end{proof}

For \(D_{\alpha}\) and \(\delta_{\alpha}\) we give bounds which are
functions of \(\alpha\).

\begin{lemma}
  \label{lemma:bounds-I-3-delta}
  The constant \(\delta_{\alpha}\) satisfies the inequality
  \(\delta_{\alpha} \leq \bar{\Delta}_{\delta} \cdot \alpha^{2}\),
  where \(\bar{\Delta}_{\delta} = 0.0077\), for all
  \(\alpha \in I_{3}\).
\end{lemma}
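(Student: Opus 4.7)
The plan is to exploit the fact, established in Lemma~\ref{lemma:evaluation-F-expansion-alpha-I-3}, that the constant and linear terms in the expansion of $F_\alpha(x)$ at $\alpha = 0$ vanish for every $x \in [0, \pi]$. Consequently, if we compute a Taylor model of degree~1 in $\alpha$ centered at $\alpha = 0$ for $F_\alpha(x)$, the polynomial part is identically zero and the whole contribution is carried by the remainder, which is quadratic in $\alpha$. Concretely, we write
\begin{equation*}
    F_\alpha(x) = R(x)\,\alpha^2
\end{equation*}
where $R(x)$ is an enclosure of the second-order remainder obtained by interval-evaluation of the relevant derivative on $I_3$. Taking suprema gives $\delta_\alpha \le \bigl(\sup_{x \in [0,\pi]} |R(x)|\bigr)\alpha^2$, so the whole task reduces to bounding $\sup_{x \in [0,\pi]} |R(x)|$ by $\bar\Delta_\delta = 0.0077$.

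To carry this out I would follow exactly the same scheme used for the other bounds in Section~\ref{sec:bounds-for-values}. First I would split $[0,\pi]$ into $[0,\epsilon]$ and $[\epsilon,\pi]$ with some $\epsilon$ chosen dynamically (as in Lemma~\ref{lemma:bounds-I-2-delta}) to be small enough that the asymptotic evaluation of $F_\alpha/\alpha^2$ from Section~\ref{sec:evaluation-F-I-3} produces tight enclosures on $[0,\epsilon]$. On $[0,\epsilon]$ the Taylor model computation uses the factorisation $F_\alpha(x) = \tfrac{x^{-\alpha}}{u_\alpha(x)}\cdot\tfrac{\mathcal{H}^\alpha[u_\alpha](x) + \tfrac12 u_\alpha(x)^2}{x^{1-\alpha}}$, with Taylor models of the two factors computed via the methods in Appendix~\ref{sec:taylor-models-expansions-x}, handling the removable singularities at $\alpha = 0$ coming from $a_{\alpha,0}$, $\zeta(1-\alpha)$ and $\zeta(1-2\alpha)$ as in Lemma~\ref{lemma:u0-expansion-alpha-I-3}. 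On $[\epsilon,\pi]$ the Clausen functions are all finite at $\alpha = 0$ and Taylor models in $\alpha$ of $u_\alpha(x)$ and $\mathcal{H}^\alpha[u_\alpha](x)$ are straightforward.

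Once the Taylor model is in hand on each subinterval, applying the bisection procedure from Section~\ref{sec:enclosing-supremum} to $R(x) = F_\alpha(x)/\alpha^2$ yields an enclosure of its supremum. Since Taylor expansions of $R$ in $x$ are available (the asymptotic expansion on $[0,\epsilon]$ gives an expansion as in Section~\ref{sec:evaluation-all}, and direct expansion works on $[\epsilon,\pi]$), we can use the improved, Taylor-based variant of the enclosure algorithm, which I expect to converge to the required tolerance quickly. The target $0.0077$ is not tight: any bound strictly below the estimate suggested by Figure~\ref{fig:bounds-I-2-delta} near $\alpha = 0$ suffices, so the bisection can be stopped as soon as the running maximum falls below $0.0077$.

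The main obstacle I anticipate is not the bisection itself but the arithmetic bookkeeping needed to actually realise the cancellation predicted by Lemma~\ref{lemma:evaluation-F-expansion-alpha-I-3} on the interval $[0,\epsilon]$. The expansions of $u_\alpha$ and $\mathcal{H}^\alpha[u_\alpha]$ separately contain terms that diverge as $\alpha \to 0$ (through $\zeta(1-\alpha)$ and $\zeta(1-2\alpha)$), so one must group them with $a_{\alpha,0}$ before applying interval arithmetic, and then track the explicit cancellation of the constant and linear coefficients inside the Taylor model so that the result is genuinely an interval of the form $R(x)\alpha^2$ rather than an $\mathcal{O}(1)$ overestimate. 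This is what the Taylor-model machinery from Appendix~\ref{sec:taylor-models} is designed to do, but getting enclosures sharp enough to close the bound by $0.0077$ will require carefully handling removable singularities exactly once per factor, rather than letting interval arithmetic widen them repeatedly.
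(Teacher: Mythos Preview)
Your proposal is correct and takes essentially the same approach as the paper: compute a degree-1 Taylor model of \(F_{\alpha}(x)\) in \(\alpha\) centered at \(0\), use Lemma~\ref{lemma:evaluation-F-expansion-alpha-I-3} to see the polynomial part vanishes, and then bound the supremum over \(x\) of the remainder by \(0.0077\). The only notable difference is that the paper finds the asymptotic evaluation sufficient on all of \([0,\pi]\) (so \(\epsilon = \pi\) and no splitting is needed), making your anticipated split and the separate handling of \([\epsilon,\pi]\) unnecessary here; also, the figure you want for comparison is Figure~\ref{fig:bounds-I-3-Delta-delta}, not Figure~\ref{fig:bounds-I-2-delta}.
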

\begin{proof}
  For each \(x \in [0, \pi]\) we can compute a Taylor model
  \(M_{F}(x)\) of degree \(1\) in \(\alpha\) of \(F_{\alpha}(x)\)
  centered at \(\alpha = 0\) and valid for \(\alpha \in I_{3}\). This
  means that \(M_{F}(x)\) satisfies
  \begin{equation*}
    F_{\alpha}(x) \in p_{M_{F}(x)}(\alpha) + \Delta_{M_{F}(x)} \alpha^{2}
  \end{equation*}
  for all \(\alpha \in I_{3}\), where \(p_{M_{F}(x)}\) is the degree
  \(1\) polynomial associated with the Taylor model and
  \(\Delta_{M_{F}(x)}\) is the remainder. From
  Lemma~\ref{lemma:evaluation-F-expansion-alpha-I-3} we get that
  \(p_{M_{F}(x)} = 0\) for all \(x \in [0, \pi]\), hence
  \begin{equation*}
    |F_{\alpha}(x)| \in |\Delta_{M_{F}(x)}| \alpha^{2}.
  \end{equation*}
  This gives us
  \begin{equation*}
    \delta_{\alpha} = \sup_{x \in [0, \pi]} |F_{\alpha}(x)|
    \in \left(\sup_{x \in [0, \pi]} |\Delta_{M_{F}(x)}|\right) \alpha^{2}
    =: \Delta_{\delta} \alpha^{2},
  \end{equation*}
  where we with the supremum of intervals we mean the interval whose
  upper endpoint is the supremum of all upper endpoints and the lower
  endpoint is the supremum of all lower endpoints. A plot of
  \(\Delta_{M_{F}(x)}\) on the interval \([0, \pi]\) is given in
  Figure~\ref{fig:bounds-I-3-Delta-delta}, we are interested in
  enclosing \(\Delta_{\delta}\).

  We use the asymptotic expansion on the entire interval, i.e.
  \(\epsilon = \pi\). Enclosing the supremum we get the interval
  \begin{equation*}
    \Delta_{\delta} \subseteq [0, 7.65 \cdot 10^{-3}],
  \end{equation*}
  for which an upper bound is given by \(\bar{\Delta}_{\delta}\). The
  runtime is around 3 seconds.
\end{proof}

\begin{lemma}
  \label{lemma:bounds-I-3-D}
  The constant \(D_{\alpha}\) satisfies the inequality
  \(D_{\alpha} \leq 1 + \bar{\Delta}_{D} \cdot \alpha\), where
  \(\bar{\Delta}_{D} = 0.226\), for all \(\alpha \in I_{3}\).
\end{lemma}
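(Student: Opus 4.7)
The approach parallels that of Lemma~\ref{lemma:bounds-I-3-delta}, but uses degree-$0$ Taylor models in $\alpha$ centered at $\alpha = 0$. For each $x \in [0, \pi]$ I would compute a Taylor model $M_{\mathcal{T}}(x)$ of $\mathcal{T}_\alpha(x)$ in $\alpha$, valid on $I_3$, so that
\begin{equation*}
  \mathcal{T}_\alpha(x) \in p_{M_{\mathcal{T}}(x)} + \Delta_{M_{\mathcal{T}}(x)}\, \alpha \quad\text{for all } \alpha \in I_3.
\end{equation*}
By Lemma~\ref{lemma:evaluation-T-expansion-alpha-I-3} the constant term satisfies $p_{M_{\mathcal{T}}(x)} = 1$ for every $x$, hence $\mathcal{T}_\alpha(x) \in 1 + \Delta_{M_{\mathcal{T}}(x)}\alpha$. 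Writing $\underline{\Delta}_{M_{\mathcal{T}}(x)}$ for the lower endpoint of the remainder interval, and using $\mathcal{T}_\alpha(x) > 0$ together with $\alpha < 0$ on $I_3$, the upper end of the enclosing interval is attained at $\underline{\Delta}_{M_{\mathcal{T}}(x)}$ so that
\begin{equation*}
  D_\alpha = \sup_{x \in [0, \pi]} \mathcal{T}_\alpha(x) \leq 1 + \alpha \inf_{x \in [0, \pi]} \underline{\Delta}_{M_{\mathcal{T}}(x)} =: 1 + \Delta_D\, \alpha,
\end{equation*}
where the passage from a sup to an inf again uses $\alpha < 0$. It is therefore enough to prove $\Delta_D \geq \bar{\Delta}_D = 0.226$.

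The Taylor models of $\mathcal{T}_\alpha$ at interval arguments $\inter{x}$ are computed exactly as in Section~\ref{sec:evaluation-T-I-3}: the interval $[0, \pi]$ is split as $[0, \epsilon] \cup [\epsilon, \pi]$. On $[\epsilon, \pi]$ I would use the closed form of $U_\alpha(x)$ from Lemma~\ref{lemma:U-primitive}; the $\alpha$-dependence of $K_\alpha(x)$ through the root $r_{\alpha, x}$ is controlled by the simplified formula for $\frac{d}{d\alpha} K_\alpha(x)$ derived in Section~\ref{sec:evaluation-T-I-3}, which crucially does not involve $\frac{d}{d\alpha} r_{\alpha, x}$. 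On $[0, \epsilon]$ I would write
\begin{equation*}
  \mathcal{T}_\alpha(x) = \frac{1}{\pi}\cdot\frac{x^{-\alpha}}{u_\alpha(x)}\cdot\bigl(U_\alpha(x)\, x^{\alpha - 1}\bigr)
\end{equation*}
and multiply a Taylor model of $x^{-\alpha}/u_\alpha(x)$, obtained from the asymptotic expansion of $u_\alpha$ as in Section~\ref{sec:inv-u0}, with a Taylor model of $U_\alpha(x) x^{\alpha-1}$ built from the expansion of $K_\alpha(x) x^{\alpha-1}$ and $(\partial_\alpha K_\alpha(x)) x^{\alpha-1}$ given at the end of Section~\ref{sec:evaluation-T-I-3}. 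The infimum of $\underline{\Delta}_{M_{\mathcal{T}}(x)}$ over $x \in [0, \pi]$ is then enclosed by the bisection procedure of Section~\ref{sec:enclosing-supremum}, adapted to compute an infimum rather than a supremum.

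The main obstacle is that near $x = 0$ the two summands $K_\alpha(x)\log(x)\, x^{\alpha-1}$ and $(\partial_\alpha K_\alpha(x))\, x^{\alpha-1}$ are individually singular as $x \to 0$ and must be combined analytically before any interval evaluation; this cancellation is precisely the content of the identity stated at the end of Section~\ref{sec:evaluation-T-I-3}, and expanding naively would produce uninformative enclosures. A secondary difficulty is that the bound has little slack: a non-rigorous evaluation of $\partial_\alpha \mathcal{T}_\alpha|_{\alpha = 0}$ suggests that $\Delta_D$ is only slightly larger than $\bar{\Delta}_D = 0.226$, so both the degree of the bisection and the tightness of the Clausen enclosures in Appendix~\ref{sec:computing-clausen} must be tuned carefully to close the inequality.
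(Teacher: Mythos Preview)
Your proposal is correct and follows essentially the same approach as the paper: compute degree-$0$ Taylor models of $\mathcal{T}_\alpha(x)$ in $\alpha$ centered at $\alpha=0$, use Lemma~\ref{lemma:evaluation-T-expansion-alpha-I-3} to identify the constant term as $1$, then enclose the infimum over $x\in[0,\pi]$ of the remainder (since $\alpha<0$) via the machinery of Section~\ref{sec:evaluation-T-I-3}. The paper adds only the concrete computational choices---$\epsilon=1$, computing the infimum first on $[0,\epsilon]$ (where it appears to be attained at $x=0$) and then verifying it is a lower bound on $[\epsilon,\pi]$, yielding $\Delta_D\subset[0.2413\pm 0.0151]$ in about 7 seconds---so the bound is not as tight as you feared.
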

\begin{proof}
  The proof is similar to the previous lemma. For each
  \(x \in [0, \pi]\) we can compute a Taylor model
  \(M_{\mathcal{T}}(x)\) of degree \(0\) in \(\alpha\) of
  \(\mathcal{T}_{\alpha}(x)\) centered at \(\alpha = 0\) and valid for
  \(\alpha \in I_{3}\). This means that \(M_{\mathcal{T}}(x)\)
  satisfies
  \begin{equation*}
    \mathcal{T}_{\alpha}(x) \in p_{M_{\mathcal{T}}(x)}(\alpha) + \Delta_{M_{\mathcal{T}}(x)}\alpha
  \end{equation*}
  for all \(\alpha \in I_{3}\), where \(p_{M_{\mathcal{T}}(x)}\) is
  the degree \(0\) polynomial associated with the Taylor model and
  \(\Delta_{M_{\mathcal{T}}(x)}\) is the remainder. From
  Lemma~\ref{lemma:evaluation-T-expansion-alpha-I-3} we get that
  \(p_{M_{\mathcal{T}}(x)} = 1\) for all \(x \in [0, \pi]\), hence
  \begin{equation*}
    \mathcal{T}_{\alpha}(x) \in 1 + \Delta_{M_{\mathcal{T}}(x)} \alpha.
  \end{equation*}
  This gives us
  \begin{equation*}
    D_{\alpha} = \sup_{x \in [0, \pi]} |\mathcal{T}_{\alpha}(x)|
    \in 1 + \left(\inf_{x \in [0, \pi]} \Delta_{M_{\mathcal{T}}(x)}\right) \alpha
    =: 1 + \Delta_{D} \alpha,
  \end{equation*}
  where we take the infimum since \(\alpha\) is negative. A plot of
  \(\Delta_{M_{\mathcal{T}}(x)}\) is given in
  Figure~\ref{fig:bounds-I-3-Delta-D}, we are interested in enclosing
  \(\Delta_{D}\).

  We take \(\epsilon = 1\). From Figure~\ref{fig:bounds-I-3-Delta-D}
  it seems like the infimum is attained at \(x = 0\), we therefore
  compute the infimum on the interval \([0, \epsilon]\) first and then
  only prove that the value on \([\epsilon, \pi]\) is lower bounded by
  this. Enclosing the infimum on \([0, \epsilon]\) we get the
  enclosure
  \begin{equation*}
    \Delta_{D} \subset [0.2413 \pm 0.0151],
  \end{equation*}
  which is then checked to be a lower bound for \([\epsilon, \pi]\).
  This computed enclosure is then lower bounded by
  \(\bar{\Delta}_{D}\). The runtime is around 7 seconds.
\end{proof}

Combining the above we get the following result.
\begin{lemma}
  \label{lemma:bounds-I-3-inequality}
  For all \(\alpha \in I_{3}\) the following inequality is satisfied
  \begin{equation*}
    \delta_{\alpha} < \frac{(1 - D_{\alpha})^{2}}{4n_{\alpha}}.
  \end{equation*}
\end{lemma}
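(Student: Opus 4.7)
The plan is to combine the three bounds from Lemmas~\ref{lemma:bounds-I-3-n}, \ref{lemma:bounds-I-3-delta} and~\ref{lemma:bounds-I-3-D} and observe that the required inequality reduces, after cancellation of $\alpha^{2}$, to a single numerical inequality involving only the constants $\bar{n}_{3}$, $\bar{\Delta}_{\delta}$ and $\bar{\Delta}_{D}$.

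More precisely, first I would use Lemma~\ref{lemma:bounds-I-3-D}: since $\alpha \in I_{3}$ is negative and $\bar{\Delta}_{D} > 0$, the bound $D_{\alpha} \leq 1 + \bar{\Delta}_{D}\alpha$ implies $1 - D_{\alpha} \geq -\bar{\Delta}_{D}\alpha = \bar{\Delta}_{D}|\alpha| > 0$, so in particular $D_{\alpha} < 1$ and $(1 - D_{\alpha})^{2} \geq \bar{\Delta}_{D}^{2}\alpha^{2}$. Together with Lemma~\ref{lemma:bounds-I-3-n}, this gives
\begin{equation*}
  \frac{(1 - D_{\alpha})^{2}}{4n_{\alpha}} \geq \frac{\bar{\Delta}_{D}^{2}\alpha^{2}}{4\bar{n}_{3}}.
\end{equation*}
Combined with the bound $\delta_{\alpha} \leq \bar{\Delta}_{\delta}\alpha^{2}$ from Lemma~\ref{lemma:bounds-I-3-delta}, it suffices to verify
\begin{equation*}
  \bar{\Delta}_{\delta}\alpha^{2} < \frac{\bar{\Delta}_{D}^{2}\alpha^{2}}{4\bar{n}_{3}},
\end{equation*}
i.e., after cancelling $\alpha^{2}$ (which is valid since $\alpha \neq 0$), the $\alpha$-independent inequality $4\bar{n}_{3}\bar{\Delta}_{\delta} < \bar{\Delta}_{D}^{2}$.

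The remaining step is the numerical check: with $\bar{n}_{3} = 1.58$, $\bar{\Delta}_{\delta} = 0.0077$ and $\bar{\Delta}_{D} = 0.226$, we have $4\bar{n}_{3}\bar{\Delta}_{\delta} \approx 0.04867$ while $\bar{\Delta}_{D}^{2} \approx 0.05108$, so the inequality holds. There is no real obstacle here; the work has been pushed entirely into the three preceding lemmas. The key structural point—which motivated the specific form of the bounds in Lemmas~\ref{lemma:bounds-I-3-delta} and~\ref{lemma:bounds-I-3-D}—is that $\delta_{\alpha}$ vanishes like $\alpha^{2}$ while $1 - D_{\alpha}$ vanishes only like $|\alpha|$, so that the quadratic factors match on both sides of the target inequality and the decision to accept $I_{3}$ up to $\alpha = 0$ rests solely on the numerical gap between the constants.
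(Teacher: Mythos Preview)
Your proof is correct and follows essentially the same approach as the paper: combine the three lemmas to reduce the inequality to the $\alpha$-independent check $\bar{\Delta}_{\delta} < \bar{\Delta}_{D}^{2}/(4\bar{n}_{3})$, then verify it numerically. Your write-up is in fact more detailed than the paper's, explicitly justifying $D_{\alpha}<1$, the cancellation of $\alpha^{2}$, and the numerical comparison.
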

\begin{proof}
  From lemma~\ref{lemma:bounds-I-3-n} and~\ref{lemma:bounds-I-3-D} we
  get
  \begin{equation*}
    \frac{(1 - D_{\alpha})^{2}}{4n_{\alpha}} \geq \frac{\bar{\Delta}_{D}^{2}}{4\bar{n}_{3}} \cdot \alpha^{2}
  \end{equation*}
  for all \(\alpha \in I_{3}\). Combining this with
  lemma~\ref{lemma:bounds-I-3-delta} means we only have to check the
  inequality
  \begin{equation*}
    \bar{\Delta}_{\delta} < \frac{\bar{\Delta}_{D}^{2}}{4\bar{n}_{3}},
  \end{equation*}
  which indeed holds.
\end{proof}

\begin{figure}
  \centering
  \begin{subfigure}[t]{0.3\textwidth}
    \includegraphics[width=\textwidth]{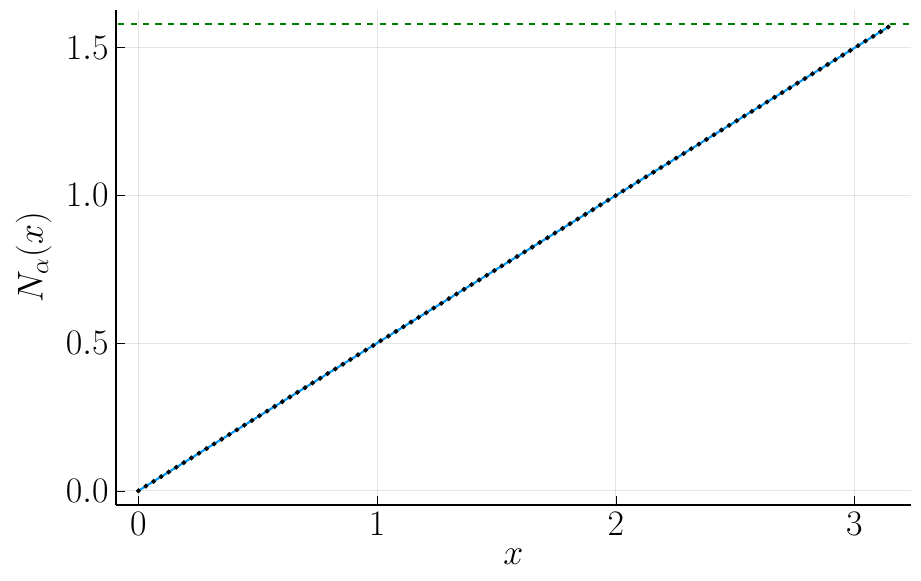}
    \caption{}
    \label{fig:bounds-I-3-N}
  \end{subfigure}
    \begin{subfigure}[t]{0.3\textwidth}
    \includegraphics[width=\textwidth]{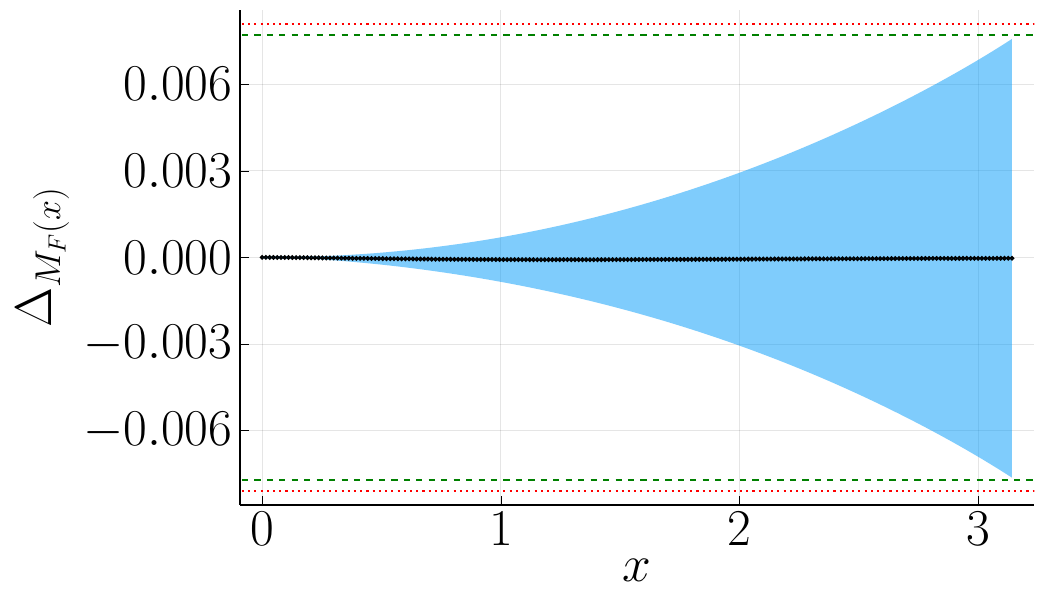}
    \caption{}
    \label{fig:bounds-I-3-Delta-delta}
  \end{subfigure}
  \begin{subfigure}[t]{0.3\textwidth}
    \includegraphics[width=\textwidth]{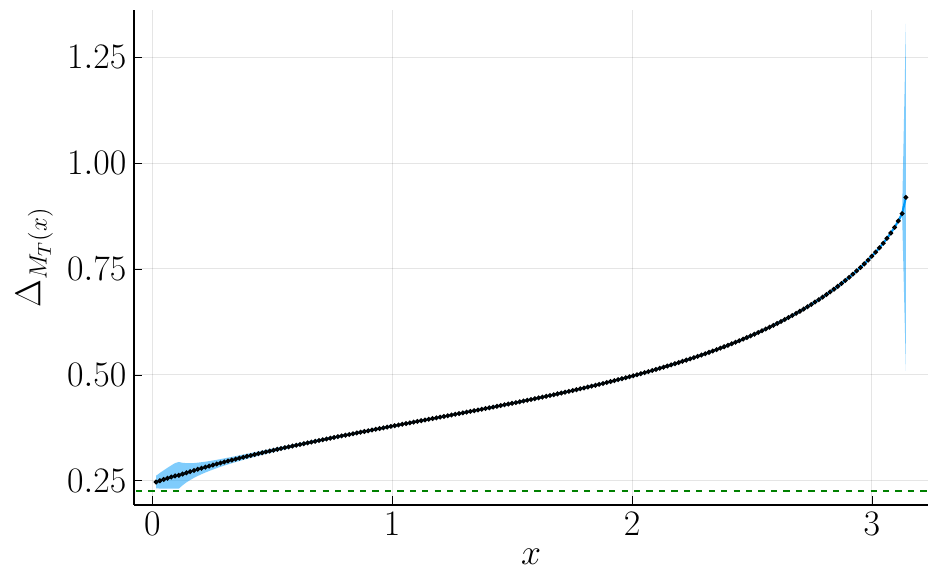}
    \caption{}
    \label{fig:bounds-I-3-Delta-D}
  \end{subfigure}
  \caption{Plots for \(\alpha \in I_{3}\). The leftmost figure shows a
    plot of \(N_{\alpha}(x)\). The two other figures show enclosures
    of \(\Delta_{M_{F}(x)}\) and \(\Delta_{M_{\mathcal{T}}(x)}\), the
    remainder terms of the Taylor models for \(F_{\alpha}(x)\) and
    \(\mathcal{T}_{\alpha}(x)\) respectively. The dashed green lines
    shows the upper/lower bounds given in
    Lemmas~\ref{lemma:bounds-I-3-n}, \ref{lemma:bounds-I-3-delta}
    and~\ref{lemma:bounds-I-3-D}. The dotted red line in the middle
    figure shows \(\frac{\bar{\Delta}_{D}^{2}}{4\bar{n}_{3}}\), the
    value that \(\Delta_{\delta}\) needs to be smaller than.}
\end{figure}

\section{Proof of Theorem~\ref{thm:main}}
\label{sec:proof-main-theorem}
We are now ready to give the proof of Theorem~\ref{thm:main}.

\begin{proof}[Proof of Theorem~\ref{thm:main}]
  Consider the operator \(G_{\alpha}\) from \eqref{eq:G} given by
  \begin{equation*}
    G_{\alpha}[v] = (I - T_{\alpha})^{-1}(-F_{\alpha} - N_{\alpha}v^{2}).
  \end{equation*}
  Combining Lemmas~\ref{lemma:bounds-I-1-D}, \ref{lemma:bounds-I-2-D}
  and~\ref{lemma:bounds-I-3-D} we have \(\|T_{\alpha}\| < 1\) for all
  \(\alpha \in (-1, 0)\), so the inverse of the operator
  \(I - T_{\alpha}\) is well-defined. Combining Lemmas
 ~\ref{lemma:bounds-I-1-inequality}, \ref{lemma:bounds-I-2-inequality}
  and~\ref{lemma:bounds-I-3-inequality} gives us that the inequality
  \begin{equation*}
    \delta_{\alpha} < \frac{(1 - D_{\alpha})^{2}}{4n_{\alpha}}
  \end{equation*}
  holds for all \(\alpha \in (-1, 0)\). This, together with
  Proposition~\ref{prop:contraction} and Banach fixed-point theorem,
  proves that for
  \begin{equation*}
    \epsilon_{\alpha} = \frac{1 - D_{\alpha} - \sqrt{(1 - D_{\alpha})^{2} - 4\delta_{\alpha}n_{\alpha}}}{2n_{\alpha}}
  \end{equation*}
  the operator \(G_{\alpha}\) has a unique fixed-point \(v_{\alpha}\)
  in \(X_{\epsilon_{\alpha}} \subseteq L^{\infty}(\mathbb{T})\).

  By the construction of the operator \(G_{\alpha}\) this means that
  the function
  \begin{equation*}
    u(x) = u_{\alpha}(x) + w_{\alpha}(x)v_{\alpha}(x)
  \end{equation*}
  solves \eqref{eq:main}, given by
  \begin{equation*}
    \frac{1}{2}u^{2} = -\Hop{\alpha}[u].
  \end{equation*}
  For any wave speed \(c \in \mathbb{R}\) we then have that the
  function
  \begin{equation*}
    \varphi(x) = c - u(x)
  \end{equation*}
  is a traveling wave solution to \eqref{eq:fkdv}. This proves the
  existence of a \(2\pi\)-periodic highest cusped traveling wave.

  The asymptotic behavior of \(u_{\alpha}\) close to \(x = 0\)
  is given by
  \begin{equation*}
    u_{\alpha}(x) = \nu_{\alpha}|x|^{-\alpha} + \mathcal{O}(|x|^{-\alpha + p_{\alpha}}).
  \end{equation*}
  with \(\nu_{\alpha}\) as in
  Lemma~\ref{lemma:asymptotic-coefficient}. The precise value for
  \(p_{\alpha}\) varies depending on the type of construction used,
  but in all cases it satisfies \(-\alpha + p_{\alpha} > 1\).
  Furthermore the weight satisfies
  \begin{equation*}
    w_{\alpha}(x) = \mathcal{O}(|x|^{p})
  \end{equation*}
  for some \(p\) with \(-\alpha < p \leq 1\). Hence
  \begin{equation*}
    u(x) = u_{\alpha}(x) + w_{\alpha}(x)v_{\alpha}(x) = \nu_{\alpha}|x|^{-\alpha} + \mathcal{O}(|x|^{p})
  \end{equation*}
  and
  \begin{equation*}
    \varphi(x) = c - \nu_{\alpha}|x|^{-\alpha} + \mathcal{O}(|x|^{p}),
  \end{equation*}
  as we wanted to show.
\end{proof}

\appendix

\section{Removable singularities}
\label{sec:removable-singularities}

In several cases we have to compute enclosures of functions with
removable singularities. For example the function
\begin{equation*}
  \Gamma(1 - s)\cos(\pi(1 - s) / 2)
\end{equation*}
comes up when computing \(C_{s}\) through equation
\eqref{eq:clausenc-periodic-zeta} and has a removable singularity
whenever \(s\) is a positive even integer. For this we follow the same
approach as in~\cite[Appendix A]{dahne2022burgershilbert}, where more
details are given. The main tool is the following lemma:

\begin{lemma}
  Let \(m \in \mathbb{Z}_{\geq 0}\) and let \(I\) be an interval
  containing zero. Consider a function \(f(x)\) with a zero of order
  \(n\) at \(x = 0\) and such that \(f^{(m + n)}(x)\) is absolutely
  continuous on \(I\). Then for all \(x \in I\) we have
  \begin{equation*}
    \frac{f(x)}{x^{n}} = \sum_{k = 0}^{m}f_{k + n}(0)x^{k} + f_{m + n + 1}(\xi)x^{m + 1}
  \end{equation*}
  for some \(\xi\) between \(0\) and \(x\). Furthermore, if
  \(f^{m + n + p}(x)\) is absolutely continuous for
  \(p \in \mathbb{Z}_{\geq 0}\) we have
  \begin{equation*}
    \frac{d^{p}}{dx^{p}}\frac{f(x)}{x^{n}}
    = \sum_{k = 0}^{m}\frac{(k + p)!}{k!}f_{k + n + p}(0)x^{k}
    + \frac{(m + p + 1)!}{(m + 1)!}f_{m + n + p + 1}(\xi)x^{m + 1}
  \end{equation*}
  for some \(\xi\) between \(0\) and \(x\).
\end{lemma}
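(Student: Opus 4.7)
The plan is to reduce both identities to Taylor's theorem by exploiting the factorization $f(x) = x^n g(x)$, where $g = f/x^n$ extends smoothly across the removable singularity.

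For the first formula I would apply Taylor's theorem with Lagrange remainder directly to $f$ at $0$, to order $m+n$. The hypothesis that $f^{(m+n)}$ is absolutely continuous is exactly what is needed for this remainder to exist in the required form. Since $f$ has a zero of order $n$, the coefficients $f_k(0)$ with $k < n$ vanish, and Taylor's theorem gives
\[
    f(x) = \sum_{k=n}^{m+n} f_k(0)\, x^k + f_{m+n+1}(\xi)\, x^{m+n+1}.
\]
Dividing by $x^n$ and reindexing $k \mapsto k+n$ yields the first identity.

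For the derivative identity, the most transparent route is through the integral representation
\[
    \frac{f(x)}{x^n} = \int_0^1 \frac{(1-s)^{n-1}}{(n-1)!}\, f^{(n)}(sx)\, ds,
\]
which follows from Taylor's theorem with integral remainder applied to $f$, using once again that all Taylor coefficients of order $<n$ are zero. Differentiating $p$ times under the integral (justified by the absolute continuity of $f^{(m+n+p)}$) gives
\[
    \frac{d^p}{dx^p}\frac{f(x)}{x^n} = \int_0^1 \frac{(1-s)^{n-1}}{(n-1)!}\, s^p\, f^{(n+p)}(sx)\, ds.
\]
I would then expand $f^{(n+p)}(sx)$ by Taylor's theorem to order $m$ around $0$ and substitute. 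The polynomial terms produce Beta integrals $\int_0^1 (1-s)^{n-1} s^{p+k}\, ds = (p+k)!(n-1)!/(p+k+n)!$, which combine with the Taylor coefficients $f^{(n+p+k)}(0)/k!$ to yield exactly $\frac{(k+p)!}{k!} f_{k+n+p}(0)\, x^k$, matching the claimed polynomial part.

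The remaining remainder integral has the form
\[
    \frac{x^{m+1}}{(m+1)!} \int_0^1 \frac{(1-s)^{n-1}}{(n-1)!}\, s^{p+m+1}\, f^{(n+p+m+1)}(\eta_s)\, ds,
\]
and the step I expect to be the most delicate is collapsing this to a single value $f_{m+n+p+1}(\xi)\, x^{m+1}$ with $\xi$ between $0$ and $x$. Under pointwise existence of $f^{(n+p+m+1)}$ this follows from the mean value theorem for integrals applied with the positive weight $(1-s)^{n-1} s^{p+m+1}$, using also the Beta integral $(p+m+1)!(n-1)!/(p+m+n+1)!$ to match factorials. In the absolute-continuity setting, where $f^{(m+n+p+1)}$ exists only almost everywhere, the same conclusion is obtained via a generalized mean value theorem, yielding a $\xi$ in the closure $[0,x]$ as stated.
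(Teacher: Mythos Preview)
The paper does not actually prove this lemma in-text; it states the result and defers to \cite[Appendix~A]{dahne2022burgershilbert} for details. So there is no in-paper argument to compare against directly.

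Your approach is correct and is essentially the standard one. The first identity follows exactly as you say from Taylor's theorem applied to $f$ plus cancellation of the first $n$ terms. For the derivative identity, the integral representation
\[
\frac{f(x)}{x^{n}} = \int_{0}^{1}\frac{(1-s)^{n-1}}{(n-1)!}f^{(n)}(sx)\,ds
\]
followed by differentiation under the integral and the Beta-integral computation is clean and gives the polynomial part immediately; your verification of the coefficients is right.

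Your own flagged concern is the only genuine subtlety: collapsing the remainder integral to a single evaluation $f_{m+n+p+1}(\xi)$ when $f^{(m+n+p+1)}$ exists only almost everywhere. One small improvement over your sketch: rather than first producing a pointwise $\eta_{s}$ (whose dependence on $s$ need not be measurable or continuous), keep the integral form of the remainder for $f^{(n+p)}(sx)$ as well, so the full remainder becomes a double integral of $f^{(m+n+p+1)}$ against a nonnegative weight on $[0,1]^{2}$. Then a single application of the integral mean value theorem (or, in the a.e.\ setting, the observation that the value lies between the essential infimum and supremum of $f^{(m+n+p+1)}$ on $[0,x]$) produces the stated $\xi$. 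For the paper's purposes --- interval-arithmetic enclosures --- this bounded-between-extremes interpretation is in any case exactly what is used.
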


We also make use of the first statement of the lemma for
\(x \in \mathbb{C}\), the proof is the same as for
\(x \in \mathbb{R}\).

\section{Taylor models}
\label{sec:taylor-models}
An important tool for handling the limit \(\alpha \to 0\) is the use
of Taylor models for enclosing \(\delta_{\alpha}\) and \(D_{\alpha}\).
We here give a brief introduction to Taylor models, for a more
thorough introduction we refer to~\cite{Joldes2011}.

With a Taylor model we mean what in~\cite{Joldes2011} is refereed to
as a \emph{Taylor model with relative error}. We use the following
definition, compare with~\cite[Definition 2.3.2]{Joldes2011}.
\begin{definition}
  A Taylor model \(M = (p, \Delta)\) of degree \(n\) for a function
  \(f\) on an interval \(I\) centered at a point \(x_{0} \in I\) is a
  polynomial \(p\) of degree \(n\) together with an interval
  \(\Delta\), satisfying that for all \(x \in I\) there is
  \(\delta \in \Delta\) such that
  \begin{equation*}
    f(x) - p(x - x_{0}) = \delta (x - x_{0})^{n + 1}.
  \end{equation*}
\end{definition}
For an \(n + 1\) times differentiable function \(f\) the polynomial
\(p\) is the Taylor polynomial of degree \(n\) of \(f\) centered at
\(x_{0}\)~\cite[Lemma 2.3.3]{Joldes2011}. A Taylor model is thus given
by a truncated Taylor expansion plus a bound on the remainder term
valid on some interval \(I\). From Taylor's theorem we see that we can
take \(\Delta\) to be an enclosure of
\(\frac{f^{n + 1}(x)}{(n + 1)!}\) on the interval \(I\).

We can perform arithmetic on Taylor models. Given two functions \(f\)
and \(g\) with corresponding Taylor models
\(M_{f} = (p_{f}, \Delta_{f})\) and \(M_{g} = (p_{g} + \Delta_{g})\)
we can compute a Taylor model of \(f + g\) as
\(M_{f + g} = (p_{f} + p_{g}, \Delta_{f} + \Delta_{g})\), similarly
for \(f - g\). With slightly more work we can compute a Taylor model
of \(f \cdot g\), see~\cite[Algorithm 2.3.6]{Joldes2011}, as well as
of \(f / g\), see~\cite[Algorithm 2.3.12]{Joldes2011}. The division
can directly handle removable singularities for \(f / g\), if there is
a removable singularity of order \(k\) then we need Taylor models of
degree \(n + k\) for \(f\) and \(g\) to get a Taylor model of degree
\(n\) for \(f / g\). We can also compose Taylor models with arbitrary
functions, given a Taylor model \(M_{f}\) of \(f\) and a function
\(g\) we can compute a Taylor model \(M_{g \circ f}\) of
\(g \circ f\), see~\cite[Algorithm 2.3.8]{Joldes2011}.

\subsection{Taylor models in \(\alpha\) of expansions in \(x\)}
\label{sec:taylor-models-expansions-x}
For both \(F_{\alpha}(x)\) and \(\mathcal{T}_{\alpha}(x)\) we need to
compute expansions in \(x\) to evaluate them near \(x = 0\). For
\(\alpha \in I_{3}\) we then need to compute Taylor models in
\(\alpha\) from these expansions, see
Section~\ref{sec:evaluation-F-I-3} and~\ref{sec:evaluation-T-I-3}.
This is done by computing Taylor models of the coefficients in the
expansion in \(x\).

For example, consider the problem of computing an expansion in \(x\)
of \(u_{\alpha}(x)\), used e.g.\ when computing
\(\frac{x^{-\alpha}}{u_{\alpha}(x)}\). For \(\alpha \in I_{3}\) we
have from Lemma~\ref{lemma:u0-asymptotic}, and using that
\(N_{\alpha,1} = 0\) in this case,
\begin{equation*}
  u_{\alpha}(x) = \sum_{j = 0}^{N_{\alpha,0}}a_{\alpha,j}^{0}|x|^{-\alpha + jp_{\alpha}}
  + \sum_{m = 1}^{\infty}\frac{(-1)^{m}}{(2m)!}\left(
    \sum_{j = 1}^{N_{\alpha,0}}a_{\alpha,j}\zeta(1 - \alpha + jp_{\alpha} - 2m)
  \right)x^{2m}.
\end{equation*}
It is straight forward to compute Taylor models of the coefficients
\(a_{\alpha,j}^{0}\) and
\begin{equation*}
  \frac{(-1)^{m}}{(2m)!}\left(
    \sum_{j = 1}^{N_{\alpha,0}}a_{\alpha,j}\zeta(1 - \alpha + jp_{\alpha} - 2m)
  \right).
\end{equation*}
for \(m < M\). What remains is computing a Taylor model bounding the
tail. To compute a Taylor model enclosing a sum of the form
\begin{equation*}
  \sum_{m = M}^{\infty}c_{\alpha,m}x^{2m}.
\end{equation*}
it is enough to have a method for enclosing
\begin{equation*}
  \frac{d^{k}}{d\alpha^{k}}\sum_{m = M}^{\infty}c_{\alpha,m}x^{2m}.
\end{equation*}
for \(k \geq 0\). For the Clausen functions this is given in
Lemma~\ref{lemma:clausen-derivative-tails}.

\subsection{Taylor model of \(p_{\alpha}\)}
\label{sec:taylor-models-p-alpha}
The above is enough to compute Taylor models of functions in the paper
for which we have explicit expressions. For \(p_{\alpha}\) from
\eqref{eq:p-alpha} we only have an implicit equation defining it, and
we have to take a slightly different approach.

Consider the equation
\begin{equation*}
  f(x, y) = 0.
\end{equation*}
We want to compute a Taylor model of \(y = y(x)\). Consider a Taylor
model of degree \(n\), centered at \(x_{0}\) and which should be valid
on the interval \(I\). As mentioned above it is enough to compute the
Taylor polynomial of \(y(x)\) at \(x = x_{0}\) of degree \(n\) and
enclose \(\frac{y^{n + 1}(x)}{(n + 1)!}\) on \(I\).

Let \(y_{0}\) be such that \(f(x_{0}, y_{0}) = 0\). We first consider
the case when \(f_{y}(x_{0}, y_{0}) \not= 0\). If \(f\) is
sufficiently smooth we can use the implicit function theorem to
compute a Taylor polynomial of \(y\) at \(x_{0}\) to degree \(n\).
Expanding \(f(x, y(x))\) at \(x_{0}\) we get
\begin{multline*}
  f(x_{0}, y_{0})
  + (f_{x}(x_{0}, y_{0}) + y'(x_{0})f_{y}(x_{0}, y_{0}))(x - x_{0})\\
  + \frac{1}{2}(f_{xx}(x_{0}, y_{0}) + 2y'(x_{0})f_{xy}(x_{0}, y_{0}) + y'(x_{0})f_{yy}(x_{0}, y_{0}) + y''(x_{0})f_{y}(x_{0}, y_{0}))(x - x_{0})^{2} \cdots = 0.
\end{multline*}
Solving for \(y'(x_{0})\), \(y''(x_{0})\), etc., we get
\begin{align*}
  y'(x_{0}) &= \frac{f_{x}(x_{0}, y(x_{0}))}{f_{y}(x_{0}, y(x_{0}))},\\
  y''(x_{0}) &= \frac{f_{xx}(x_{0}, y_{0}) + 2y'(x_{0})f_{xy}(x_{0}, y_{0}) + y'(x_{0})f_{yy}(x_{0}, y_{0})}{2f_{y}(x_{0}, y(x_{0}))}
\end{align*}
and similarly for higher orders.

While it is possible to use the above formulas for the derivatives to
enclose \(\frac{y^{n + 1}(x)}{(n + 1)!}\) on \(I\) this gets very
complicated for \(p_{\alpha}\) at \(\alpha = 0\) due to the large
number of removable singularities to handle. Instead, we take a
slightly different, more direct, approach for computing \(\Delta\). We
want to find \(\Delta\) such that for all \(x \in I\) there is
\(\delta \in \Delta\) such that
\begin{equation*}
  f(x, p(x - x_{0}) + \delta(x - x_{0})^{n + 1}) = 0.
\end{equation*}
If we let
\begin{equation*}
  g(x, \delta) = f(x, p(x - x_{0}) + \delta(x - x_{0})^{n + 1})
\end{equation*}
we want \(\Delta\) to be an enclosure of a root in \(\delta\) of \(g\)
for all \(x \in I\). To make the derivative in \(\delta\) non-zero at
\(x = x_{0}\) we normalize \(g\) as
\begin{equation*}
  h(x, \delta) = \frac{g(x, \delta)}{(x - x_{0})^{n + 1}}.
\end{equation*}
Given a guess \(\Delta = [\lo{\Delta}, \hi{\Delta}]\), to prove that
this is an enclosure of a root we only need to verify that
\(h(x, \lo{\Delta})\) and \(h(x, \hi{\Delta})\) both have constant
signs for all \(x \in I\) and that the two signs are different. The
guess for \(\Delta\) can be given using non-rigorous means, for
example by looking at the error at the endpoints of \(I\). Only the
verification of the signs at the endpoints of \(\Delta\) needs to be
done rigorously.

For \(p_{\alpha}\) we get from \eqref{eq:p-alpha} the function
\begin{equation*}
  f(\alpha, p) = \frac{
    \Gamma(2\alpha - p)\cos\left(\frac{\pi}{2}(2\alpha - p)\right)
  }{
    \Gamma(\alpha - p)\cos\left(\frac{\pi}{2}(\alpha - p)\right)
  }
  - \frac{
    2\Gamma(2\alpha)\cos(\pi\alpha)
  }{
    \Gamma(\alpha)\cos\left(\frac{\pi}{2}\alpha\right)
  }.
\end{equation*}
In this case \(\alpha = 0\) we have \(f_{p}(0, p_{0}) = 0\), and we
cannot apply the above approach directly. For the coefficients of
polynomial of the Taylor model we expand
\begin{equation*}
  f(\alpha, p_{0} + p_{1}\alpha + p_{2}\alpha^{2})
\end{equation*}
at \(\alpha = 0\). The constant coefficient is exactly zero, and we
then solve for \(p_{0}\), \(p_{1}\) and \(p_{2}\) making the three
coming coefficients zero. For the remainder term of the Taylor model
we instead we consider the function \(\frac{f(\alpha, p)}{\alpha}\),
for which the above approach does work.

\section{Computing Clausen functions}
\label{sec:computing-clausen}
To be able to compute bounds of \(D_{\alpha}\), \(\delta_{\alpha}\)
and \(n_{\alpha}\) it is critical that we can compute accurate
enclosures of \(C_{s}(x)\) and \(S_{s}(x)\), including expansions in
the argument and derivatives in the parameter. For this we follow the
same approach as in~\cite[Appendix B]{dahne2022burgershilbert} with a
few additions and improvements. We here give a shortened version of
the approach while highlighting the additions and improvements.

We start by going through how to compute \(C_{s}(x)\) and \(S_{s}(x)\)
for \(s, x \in \mathbb{R}\). Since both \(C_{s}(x)\) and \(S_{s}(x)\)
are \(2\pi\)-periodic we can reduce it to \(x = 0\) or
\(0 < x < 2\pi\).

For \(x = 0\) and \(s > 1\) have \(C_{s}(0) = \zeta(s)\) and
\(S_{s}(0) = 0\). For \(s \leq 1\) both functions typically diverge at
\(x = 0\).

For \(0 < x < 2\pi\) we can compute the Clausen functions by going
through the polylog function,
\begin{equation*}
  C_{s}(x) = \real\left(\polylog_{s}(e^{ix})\right),\quad S_{s}(x) = \imag\left(\polylog_{s}(e^{ix})\right).
\end{equation*}
However it is computationally beneficial to instead go through the
periodic zeta function~\cite[Sec.~25.13]{NIST:DLMF},
\begin{equation*}
  F(x, s) := \polylog_{s}(e^{2\pi i x}) = \sum_{n = 1}^{\infty}\frac{e^{2\pi i nx}}{n^{s}},
\end{equation*}
for which we have
\begin{equation*}
  C_{s}(x) = \real F\left(\frac{x}{2\pi}, s\right),\quad S_{s}(x) = \imag F\left(\frac{x}{2\pi}, s\right).
\end{equation*}
Using~\cite[Eq.~25.13.2]{NIST:DLMF} we get, for \(0 < x < 2\pi\),
\begin{align}
  \label{eq:clausenc-periodic-zeta}
  C_{s}(x) &= \frac{\Gamma(1 - s)}{(2\pi)^{1 - s}}\cos(\pi(1 - s) / 2)
             \left(\zeta\left(1 - s, \frac{x}{2\pi}\right) + \zeta\left(1 - s, 1 - \frac{x}{2\pi}\right)\right),\\
  \label{eq:clausens-periodic-zeta}
  S_{s}(x) &= \frac{\Gamma(1 - s)}{(2\pi)^{1 - s}}\sin(\pi(1 - s) / 2)
             \left(\zeta\left(1 - s, \frac{x}{2\pi}\right) - \zeta\left(1 - s, 1 - \frac{x}{2\pi}\right)\right).
\end{align}
This formulation works well as long as \(s\) is not a non-negative
integer. For non-negative integers we have to handle some removable
singularities, for details see~\cite[Appendix
B]{dahne2022burgershilbert}.

Compared to~\cite{dahne2022burgershilbert} we also need to compute
\(C_{s}(x)\) for real \(s\) but complex \(x\) during the rigorous
integration discussed in Appendix~\ref{sec:rigorous-integration} In
those cases \(s\) is never an integer, and we use
Equation~\eqref{eq:clausenc-periodic-zeta} which holds for
\(0 < \real x < 2\pi\).

\subsection{Interval arguments}
\label{sec:interval-arguments}
Let \(\inter{x} = [\lo{x}, \hi{x}]\) and
\(\inter{s} = [\lo{s}, \hi{s}]\) be two finite intervals, we are
interested in computing an enclosure of \(C_{\inter{s}}(\inter{x})\)
and \(S_{\inter{s}}(\inter{x})\). Due to the periodicity we can reduce
it to three different cases for \(\inter{x}\)
\begin{enumerate}
\item \(\inter{x}\) doesn't contain a multiple of \(2\pi\), by adding
  or subtracting a suitable multiple of \(2\pi\) we can assume that
  \(0 < \inter{x} < 2\pi\);
\item \(\inter{x}\) has a diameter of at least \(2\pi\), it then
  covers a full period and can without loss of generality be taken as
  \(\inter{x} = [0, 2\pi]\);
\item \(\inter{x}\) contains a multiple of \(2\pi\) but has a diameter
  less than \(2\pi\), by adding or subtracting a suitable multiple of
  \(2\pi\) we can take \(\inter{x}\) such that
  \(-2\pi < \lo{x} \leq 0 \leq \hi{x} < 2\pi\).
\end{enumerate}
The second and third case are handled exactly as
in~\cite{dahne2022burgershilbert}, for the first case we make some
minor improvements.

For \(C_{s}\) we make use of the following lemma which is a slightly
upgraded version of~\cite[Lemma B.1]{dahne2022burgershilbert}, see
also~\cite[Lemma B.1]{enciso2018convexity}.
\begin{lemma}
  \label{lemma:clausenc-monotone}
  For all \(s \in \mathbb{R}\) the Clausen function \(C_{s}(x)\) is
  monotone in \(x\) on the interval \((0, \pi)\). For \(s > 0\) it is
  decreasing. For \(s \leq 0\) the sign of the derivative is the same
  as that of \(-\sin\left(\frac{\pi}{2}s\right)\).
\end{lemma}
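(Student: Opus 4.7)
The claim reduces to determining the sign of $S_{s-1}(x)$ on $(0,\pi)$, since termwise differentiation of the Fourier expansion $C_s(x) = \sum_{n\geq 1} \cos(nx)/n^s$ (extended by analytic continuation in $s$ from the half-plane where it converges) gives $C_s'(x) = -S_{s-1}(x)$. It suffices to prove: $S_{s-1}(x)>0$ for $s>0$, and $\operatorname{sgn} S_{s-1}(x) = \operatorname{sgn}\sin(\pi s/2)$ for $s\leq 0$. I would split the analysis at $s=1$, using an integral representation for $s>1$ and the periodic-zeta reflection formula for $s\leq 1$.

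For $s>1$, the plan is to use the Mellin-type identity $n^{-(s-1)} = \Gamma(s-1)^{-1}\int_0^\infty t^{s-2}e^{-nt}\,dt$, interchange sum and integral, and use the closed form
\begin{equation*}
  \sum_{n=1}^\infty \sin(nx)\,e^{-nt} \;=\; \operatorname{Im}\frac{e^{ix-t}}{1-e^{ix-t}} \;=\; \frac{e^{-t}\sin x}{1-2e^{-t}\cos x + e^{-2t}},
\end{equation*}
which is strictly positive for $x\in(0,\pi)$ and $t>0$. Since $\Gamma(s-1)>0$, this yields $S_{s-1}(x)>0$ on $(0,\pi)$, so $C_s$ is strictly decreasing. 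The interchange is immediate for $s>2$ by absolute convergence; for $s\in(1,2]$ I would justify it either by Abel summation on partial sums or by analytic continuation in $s$ from $s>2$ of both sides, which are holomorphic.

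For $s\leq 1$ I would use the reflection formula for the periodic zeta function (the imaginary part of \eqref{eq:clausens-periodic-zeta} read off at parameter $s-1$, or equivalently the functional equation of the Hurwitz zeta),
\begin{equation*}
  S_{s-1}(x) \;=\; \frac{\Gamma(2-s)\sin(\pi s/2)}{(2\pi)^{2-s}}\Big(\zeta\!\left(2-s,\tfrac{x}{2\pi}\right)-\zeta\!\left(2-s,1-\tfrac{x}{2\pi}\right)\Big),
\end{equation*}
valid for $s<2$ away from the trivial zero/pole balancing. For $x\in(0,\pi)$ one has $x/(2\pi)<1/2<1-x/(2\pi)$, and since $a\mapsto\zeta(w,a)$ is strictly decreasing on $(0,1)$ for $w>0$ (every term $(n+a)^{-w}$ decreases in $a$), the difference in parentheses is strictly positive. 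Combined with $\Gamma(2-s)>0$ for $s<2$, this gives $\operatorname{sgn} S_{s-1}(x) = \operatorname{sgn}\sin(\pi s/2)$, hence $\operatorname{sgn} C_s'(x) = -\operatorname{sgn}\sin(\pi s/2)$ as claimed for $s\leq 0$; for $s\in(0,1]$ one has $\sin(\pi s/2)>0$, recovering the decreasing statement. The degenerate values $s\in\{0,-2,-4,\dots\}$ correspond to $\sin(\pi s/2)=0$ and to $C_s$ being (a constant multiple of) a polynomial with zero derivative on $(0,\pi)$.

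The main obstacle I anticipate is the gluing at $s=1$: the series representation of $\zeta(1,a)$ diverges so one must use the absolutely convergent telescoping $\zeta(w,a)-\zeta(w,1-a)=\sum_{n\geq 0}\bigl((n+a)^{-w}-(n+1-a)^{-w}\bigr)$, whose terms are $O(n^{-2})$ and retain the correct sign for $w\geq 1$; and the interchange argument in the $s>1$ integral representation for $s$ near $1$ from above. Both points are handled by standard dominated-convergence/analytic-continuation reasoning, so no deeper difficulty should arise, and the monotonicity statement of the lemma follows in all cases.
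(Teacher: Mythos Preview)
Your argument is correct and follows the same skeleton as the paper's proof: both reduce to the sign of $S_{s-1}(x)$, split at $s=1$, and for $s<1$ invoke the periodic-zeta/Hurwitz zeta identity \eqref{eq:clausens-periodic-zeta}. The differences are in how each half is finished. For $s<1$ the paper substitutes the integral representation $\zeta(2-s,a)=\Gamma(2-s)^{-1}\int_0^\infty t^{1-s}e^{-at}/(1-e^{-t})\,dt$ (NIST 25.11.25) and reads positivity off the integrand, whereas you keep the Hurwitz zeta as a series and argue $a\mapsto\zeta(w,a)$ is decreasing---this is more elementary but note your stated range ``$w>0$'' is stronger than what you use or can justify from the series; you only need $w=2-s>1$. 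For $s>1$ the paper defers to the cited Lemma~B.1, while your Mellin argument is self-contained and essentially what lies behind that citation. For $s=1$ the paper just writes down $C_1(x)=-\log(2\sin(x/2))$, which is cleaner than your telescoping/continuation sketch; since $\zeta(s,a)-\zeta(s,b)$ is regular at $s=1$ (the residues cancel) your route also works, but the explicit formula avoids the discussion entirely.
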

\begin{proof}
  The derivative in \(x\) is given by \(-S_{s - 1}(x)\). For \(s > 1\)
  the proof is the same as in~\cite[Lemma
  B.1]{dahne2022burgershilbert}.

  For \(s < 1\) we use equation \eqref{eq:clausens-periodic-zeta}
  together with~\cite[Eq.~25.11.25]{NIST:DLMF} to get
  \begin{equation*}
    S_{s - 1}(x) = \frac{\sin\left(\frac{\pi}{2}(2 - s)\right)}{(2\pi)^{2 - s}}\int_{0}^{\infty}t^{1 - s}e^{-\frac{xt}{2\pi}}\frac{1 - e^{(x / \pi - 1)t}}{1 - e^{-t}}\ dt,
  \end{equation*}
  which sign depends only on the value \(s\) and is the same as that
  of \(\sin\left(\frac{\pi}{2}s\right)\). In particular, for
  \(0 < s < 1\) we have \(\sin\left(\frac{\pi}{2}s\right) > 0\) so
  \(C_{s}(x)\) is decreasing for \(0 < s < 1\).

  For \(s = 1\) the result follows directly from that
  \(C_{1}(x) = -\log(2\sin(x / 2))\) on the interval.
\end{proof}

For \(S_{s}\) we have the following, slightly weaker, result
\begin{lemma}
  \label{lemma:clausens-monotone}
  For \(s \leq 1\) the Clausen function \(S_{s}(x)\) is monotone in
  \(x\) on the interval \((0, 2\pi)\). The sign of the derivative is
  the same as that of \(-\cos\left(\frac{\pi}{2}s\right)\).
\end{lemma}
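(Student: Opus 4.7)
The plan is to mimic the proof of Lemma~\ref{lemma:clausenc-monotone} but applied to the derivative $\tfrac{d}{dx}S_s(x)=C_{s-1}(x)$ rather than to $\tfrac{d}{dx}C_s(x)=-S_{s-1}(x)$. The key idea is to use the Hurwitz-zeta representation \eqref{eq:clausenc-periodic-zeta} for $C_{s-1}$, isolate the explicit trigonometric prefactor carrying the sign, and show that the remaining sum of Hurwitz values has constant (positive) sign on $(0,2\pi)$ via an integral representation.

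First I would establish, for $s<1$ and $0<x<2\pi$, the identity
$$\frac{d}{dx}S_s(x) = C_{s-1}(x) = \frac{-\Gamma(2-s)\cos\!\left(\tfrac{\pi}{2}s\right)}{(2\pi)^{2-s}}\Bigl(\zeta\bigl(2-s,\,\tfrac{x}{2\pi}\bigr)+\zeta\bigl(2-s,\,1-\tfrac{x}{2\pi}\bigr)\Bigr),$$
which follows either by differentiating \eqref{eq:clausens-periodic-zeta} in $x$ (using $\partial_a\zeta(1-s,a)=(s-1)\zeta(2-s,a)$, $\sin(\pi(1-s)/2)=\cos(\pi s/2)$, and $(1-s)\Gamma(1-s)=\Gamma(2-s)$) or equivalently by applying \eqref{eq:clausenc-periodic-zeta} with $s$ replaced by $s-1$ together with $\cos(\pi(2-s)/2)=-\cos(\pi s/2)$. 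Next, since $2-s>1$ for $s<1$, the Hurwitz zeta admits the convergent integral representation
$$\zeta(2-s,a) = \frac{1}{\Gamma(2-s)}\int_0^\infty \frac{t^{1-s}e^{-at}}{1-e^{-t}}\,dt,$$
and summing the two terms yields
$$\zeta(2-s,a)+\zeta(2-s,1-a) = \frac{1}{\Gamma(2-s)}\int_0^\infty \frac{t^{1-s}\bigl(e^{-at}+e^{-(1-a)t}\bigr)}{1-e^{-t}}\,dt,$$
whose integrand is manifestly positive for $0<a<1$. Combined with $\Gamma(2-s)>0$, this forces the bracketed sum to be strictly positive on $(0,2\pi)$, so the sign of $\tfrac{d}{dx}S_s(x)$ equals the sign of $-\cos(\pi s/2)$ and is independent of $x$; hence $S_s$ is strictly monotone on $(0,2\pi)$ for every $s<1$ with the claimed sign.

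Finally, the edge case $s=1$ is handled separately by the explicit identity $S_1(x)=(\pi-x)/2$ on $(0,2\pi)$, which is strictly decreasing (consistent with the limit $-\cos(\pi s/2)\to 0^-$ as $s\to 1^-$). The main technical point to watch is that \eqref{eq:clausenc-periodic-zeta} picks up a removable singularity at $s-1=0$ and the Hurwitz integral representation requires $s<1$ strictly; aside from this bookkeeping at $s=1$, the argument is a direct mirror of Lemma~\ref{lemma:clausenc-monotone}, with the roles of $\sin$ and $\cos$ swapped by the shift from $S_{s-1}$ to $C_{s-1}$, so I do not expect any substantive obstacle.
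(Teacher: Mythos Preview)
Your proposal is correct and follows essentially the same route as the paper: both compute $\frac{d}{dx}S_s(x)=C_{s-1}(x)$, invoke the periodic-zeta representation \eqref{eq:clausenc-periodic-zeta} for $C_{s-1}$, and then apply the Hurwitz integral representation (NIST 25.11.25) to show the bracketed factor is positive, leaving the sign to the prefactor $-\cos(\pi s/2)$; the case $s=1$ is handled separately via $S_1(x)=(\pi-x)/2$ in both. Your presentation simply spells out the intermediate Hurwitz-zeta identity and the $\Gamma(2-s)$ cancellation more explicitly than the paper does.
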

\begin{proof}
  The derivative in \(x\) is given by \(C_{s - 1}(x)\). For \(s < 1\)
  we use equation \eqref{eq:clausenc-periodic-zeta} together
  with~\cite[Eq.~25.11.25]{NIST:DLMF} to get
  \begin{equation*}
    C_{s - 1}(x) = \frac{\cos\left(\frac{\pi}{2}(2 - s)\right)}{(2\pi)^{2 - s}}\int_{0}^{\infty}t^{1 - s}e^{-\frac{xt}{2\pi}}\frac{1 + e^{(x / \pi - 1)t}}{1 - e^{-t}}\ dt,
  \end{equation*}
  which sign depends only on the value \(s\) and is the same as that
  of \(-\cos\left(\frac{\pi}{2}s\right)\).

  For \(s = 1\) the result follows directly from that
  \(S_{1}(x) = \frac{\pi}{2} - \frac{x}{2}\) on the interval.
\end{proof}
For \(s \leq 1\) the extrema of \(S_{s}(\inter{x})\) are thus always
attained at \(x = \lo{x}\) and \(\hi{x}\). We handle \(s > 1\) by
computing and enclosure of the derivative
\(S_{\inter{s}}'(\inter{x}) = C_{\inter{s} - 1}(\inter{x})\), if the
enclosure of the derivative doesn't contain zero then the function is
monotone, and we evaluate it at the endpoints, if the derivative
contains zero we instead use the midpoint approximation
\(S_{\inter{s}}(\inter{x}) = S_{\inter{s}}(x_{0}) + (\inter{x} -
x_{0})C_{\inter{s} - 1}(\inter{x})\) where \(x_{0}\) is the midpoint
of \(\inter{x}\).

For \(\inter{s}\) we follow the same approach as
in~\cite{dahne2022burgershilbert}. The only difference is that we also
have to handle \(\inter{s}\) overlapping \(1\), in which case the
implementation of the deflated zeta function,
\begin{equation}
  \label{eq:deflated-zeta}
  \underline{\zeta}(s, x) = \zeta(s, x) + \frac{1}{1 - s} = \sum_{n = 0}^{\infty} \frac{(-1)^{n}}{n!}\gamma_{n}(x)(s - 1)^{n},
\end{equation}
in Arb does not work directly. It only supports \(s\) exactly equal to
\(1\), and not intervals containing \(1\). Looking at the
implementation it is however fairly easy to adapt it to also work for
intervals containing \(1\), which we have done.

\subsection{Expansion in \(x\)}
\label{sec:expansion-x}
We now go through how to compute expansions of the Clausen functions
in the argument \(x\). In general the procedure is the same as
in~\cite{dahne2022burgershilbert}, except that we also have to handle
the case when \(s\) overlaps positive integers for the expansion at
\(x = 0\).

At \(x = 0\) we have the following asymptotic
expansions~\cite{enciso2018convexity}
\begin{align}
  \label{eq:clausenc-asymptotic-x}
  C_{s}(x) &= \Gamma(1 - s)\sin\left(\frac{\pi}{2}s\right)|x|^{s - 1}
             + \sum_{m = 0}^{\infty} (-1)^{m}\zeta(s - 2m)\frac{x^{2m}}{(2m)!};\\
  \label{eq:clausens-asymptotic-x}
  S_{s}(x) &= \Gamma(1 - s)\cos\left(\frac{\pi}{2}s\right)\sign(x)|x|^{s - 1}
             + \sum_{m = 0}^{\infty} (-1)^{m}\zeta(s - 2m - 1)\frac{x^{2m + 1}}{(2m + 1)!}.
\end{align}
For positive integers we have to handle the poles of \(\Gamma(s)\) at
non-positive integers and the pole of \(\zeta(s)\) at \(s = 1\).

For \(C_{s}(x)\) with positive even integers \(s\) the only
problematic term is
\begin{equation*}
  \Gamma(1 - s)\sin\left(\frac{\pi}{2}s\right)|x|^{s - 1}
\end{equation*}
which has a removable singularity. Similarly, for \(S_{s}(x)\) with
positive odd integers \(s\). For \(C_{s}(x)\) with positive odd
integers \(s\) and \(S_{s}\) with positive even integers \(s\) the
singularities are not removable. The only case we encounter in the
paper is for \(C_{s}\), for which we make use of the following lemma
to bound the sum of the two singular terms.
\begin{lemma}
  \label{lemma:clausenc-expansion-singular-term}
  Let \(m \geq 1\) and
  \(s \in \left[2m + \frac{1}{2}, 2m + \frac{3}{2}\right]\), then
  \begin{equation*}
    \Gamma(1 - s)\sin\left(\frac{\pi}{2}s\right)|x|^{s - 1} + (-1)^{m}\zeta(s - 2m)\frac{x^{2m}}{(2m)!}
    = K_{1}|x|^{s - 1} + K_{2}x^{2m} + K_{3}x^{2m}\frac{|x|^{s - (2m + 1)} - 1}{s - (2m + 1)}
  \end{equation*}
  with
  \begin{align*}
    K_{1} &= \frac{\frac{\Gamma(2m + 2 - s)}{(1 - s)_{2m}}\sin\left(\frac{\pi}{2}s\right) - \frac{(-1)^{m}}{(2m!)}}{2m + 1 - s},\\
    K_{2} &= -\frac{(-1)^{m}\underline{\zeta}(s - 2m)}{(2m)!},\\
    K_{3} &= -\frac{(-1)^{m}}{(2m)!}.
  \end{align*}
\end{lemma}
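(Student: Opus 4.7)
The plan is to combine the two terms on the left by factoring out the pole at $s = 2m+1$ from each and showing the two singularities cancel exactly. The Gamma factor has a pole at $s = 2m+1$ coming from $\Gamma(1-s)$ having a pole at $1-s = -2m$, while the zeta factor has a pole coming from $\zeta(s-2m)$ at $s - 2m = 1$. Iterating the functional equation yields $\Gamma(1-s) = \Gamma(2m+2-s)/(1-s)_{2m+1}$, and splitting $(1-s)_{2m+1} = (1-s)_{2m}(2m+1-s)$ isolates the singular denominator. Writing $\zeta(s-2m)$ through the deflated zeta of equation \eqref{eq:deflated-zeta} gives the analogous decomposition $\zeta(s-2m) = \underline{\zeta}(s-2m) + 1/(s-2m-1)$.

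With these substitutions the left-hand side splits into a regular piece proportional to $\underline{\zeta}(s-2m)\, x^{2m}$, which supplies $K_2 x^{2m}$, plus two singular pieces sharing the denominator $s - (2m+1)$. Using $|x|^{s-1} = x^{2m}|x|^{s-(2m+1)}$, these combine into
\begin{equation*}
\frac{x^{2m}}{s - (2m+1)}\left[\frac{(-1)^m}{(2m)!} - P(s)\,|x|^{s-(2m+1)}\right],\qquad
P(s) := \frac{\Gamma(2m+2-s)\sin(\pi s/2)}{(1-s)_{2m}}.
\end{equation*}
The key identity is $P(2m+1) = (-1)^m/(2m)!$, which follows from $\Gamma(1) = 1$, $\sin(\pi(2m+1)/2) = (-1)^m$, and $(1-s)_{2m}\big|_{s=2m+1} = (-2m)(-2m+1)\cdots(-1) = (2m)!$. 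Splitting the bracket as $[P(2m+1) - P(s)] + P(s)\,[1 - |x|^{s-(2m+1)}]$ and dividing by $s-(2m+1)$ produces the two remaining terms: the divided difference $(P(s) - P(2m+1))/(2m+1-s)$ is exactly $K_1$ and multiplies $x^{2m}|x|^{s-(2m+1)} = |x|^{s-1}$, while the other piece gives $K_3\, x^{2m}\,(|x|^{s-(2m+1)}-1)/(s-(2m+1))$ with $K_3 = -(-1)^m/(2m)! = -P(2m+1)$.

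The only real obstacle is bookkeeping: tracking signs between $s - (2m+1)$ and $2m+1 - s$, and between $\zeta$ and $\underline{\zeta}$, and checking the identity $P(2m+1) = (-1)^m/(2m)!$ on which the whole cancellation rests. The restriction $s \in [2m+\tfrac{1}{2},\, 2m+\tfrac{3}{2}]$ plays no role in the algebraic identity itself; it is the natural range on which $K_1$ and the divided-difference factor $(|x|^{s-(2m+1)}-1)/(s-(2m+1))$ stay comfortably enclosable by interval arithmetic, by expanding around the removable singularity at $s = 2m+1$ where the latter quantity equals $\log|x|$.
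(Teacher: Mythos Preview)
Your approach is exactly the paper's: the paper's entire proof is ``add and subtract $(-1)^{m}\frac{|x|^{s-1}}{(2m+1-s)(2m)!}$ and collect terms,'' and your factorisation $\Gamma(1-s)=\Gamma(2m+2-s)/\bigl((1-s)_{2m}(2m+1-s)\bigr)$ together with the deflated-zeta split is precisely the bookkeeping that makes that one-liner work. Your key identity $P(2m+1)=(-1)^m/(2m)!$ is correct and is the heart of the cancellation.

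There is, however, one algebraic slip. The bracket split you wrote,
\[
\bigl[P(2m+1)-P(s)\bigr]\;+\;P(s)\bigl[1-|x|^{s-(2m+1)}\bigr],
\]
does reproduce the bracket, but after dividing by $s-(2m+1)$ and multiplying by $x^{2m}$ it gives $K_1\,x^{2m}$ (not $K_1|x|^{s-1}$) for the first piece and coefficient $-P(s)$ (not $K_3=-P(2m+1)$) for the second. To land on the stated form you must add and subtract the other way, i.e.\ use
\[
\bigl[P(2m+1)-P(s)\bigr]\,|x|^{s-(2m+1)}\;+\;P(2m+1)\bigl[1-|x|^{s-(2m+1)}\bigr],
\]
which is exactly the paper's add/subtract written inside the bracket. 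With this corrected split your conclusions ($K_1$ multiplies $|x|^{s-1}$, $K_3=-P(2m+1)$) follow as you stated them.
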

\begin{proof}
  The result follows directly from adding and subtracting
  \begin{equation*}
    (-1)^{m}\frac{|x|^{s - 1}}{(2m + 1 - s)(2m)!}
  \end{equation*}
  and collecting the terms.
\end{proof}
Computing \(K_{2}\) and \(K_{3}\) is straight forward, for \(K_{1}\)
there is a removable singularity to handle. The function
\begin{equation*}
  x^{2m}\frac{|x|^{s - (2m + 1)} - 1}{s - (2m + 1)}
\end{equation*}
also has a removable singularity, to compute an accurate enclosure we
use the following lemma.
\begin{lemma}
  \label{lemma:x_pow_t_m1_div_t}
  For \(x \not= 0\) the function
  \begin{equation*}
    \frac{|x|^{t} - 1}{t}
  \end{equation*}
  is non-decreasing in \(t\). In the limit \(t \to 0\) it is equal to
  \(\log|x|\).
\end{lemma}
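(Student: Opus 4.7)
Plan: Write $a = |x| > 0$ and let $f(t) = (a^t - 1)/t$ for $t \neq 0$, extended by continuity to $f(0) = \log a$. The two claims are: (i) the limit $\lim_{t \to 0} f(t) = \log a$, and (ii) monotonicity in $t$ on all of $\mathbb{R}$.

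The first claim is immediate from the Taylor expansion $a^t = e^{t\log a} = 1 + t\log a + O(t^2)$, which gives $f(t) = \log a + O(t)$ as $t \to 0$. This handles both $a \neq 1$ (where $\log a \neq 0$) and $a = 1$ (where $f \equiv 0$).

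For the monotonicity, the cleanest route is the integral representation
\begin{equation*}
\frac{a^t - 1}{t} = \int_0^1 a^{st} \log a \, ds, \qquad t \neq 0,
\end{equation*}
which follows by noting that the antiderivative in $s$ of $a^{st} \log a$ is $a^{st}/t$. Differentiating under the integral sign (the integrand is smooth in $t$ on the bounded interval of integration) yields
\begin{equation*}
f'(t) = \int_0^1 s \, a^{st} (\log a)^2 \, ds \geq 0,
\end{equation*}
which is strictly positive unless $a = 1$. This proves $f$ is non-decreasing on $(-\infty, 0)$ and on $(0, \infty)$. Since $f$ is continuous at $t = 0$ by step (i), monotonicity extends to all of $\mathbb{R}$.

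There is no substantive obstacle here; the only thing to be slightly careful about is that one cannot simply differentiate the quotient $(a^t - 1)/t$ near $t = 0$ without dealing with the removable singularity, which is precisely what the integral representation sidesteps. An alternative, equally elementary route would be to compute $f'(t) = \bigl(a^t(t \log a - 1) + 1\bigr)/t^2$ and reduce the sign analysis to the inequality $e^u(1 - u) \leq 1$ for $u = t \log a \in \mathbb{R}$, which follows from $g(u) := e^u(1-u)$ having $g'(u) = -ue^u$ and global maximum $g(0) = 1$; but the integral form avoids the case split at $t = 0$ altogether.
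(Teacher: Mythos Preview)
Your proof is correct. Your primary argument via the integral representation
\[
\frac{a^{t}-1}{t}=\int_{0}^{1}a^{st}\log a\,ds
\]
is a genuinely different route from the paper's, which instead computes the derivative directly as $\bigl(1+(t\log|x|-1)|x|^{t}\bigr)/t^{2}$, substitutes $v=t\log|x|$, and verifies that $1+(v-1)e^{v}\geq 0$ by locating its unique zero at $v=0$. In fact, your ``alternative route'' at the end is exactly the paper's argument. The integral representation has the advantage you identify: it makes the non-negativity of $f'$ immediate (the integrand $s\,a^{st}(\log a)^{2}$ is manifestly non-negative) and handles $t=0$ seamlessly, whereas the paper's direct-derivative approach requires a separate auxiliary analysis of the function $v\mapsto 1+(v-1)e^{v}$ and does not automatically cover the removable singularity. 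Both arguments are short; yours is arguably more transparent.
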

\begin{proof}
  The derivative in \(t\) is given by
  \begin{equation}
    \label{eq:x_pow_t_m1_div_t_dt}
    \frac{1 + (t \log|x| - 1)|x|^{t}}{t^{2}}.
  \end{equation}
  The sign depends on the numerator, which we can write as
  \begin{equation*}
    1 + (t \log|x| - 1)e^{t \log|x|}.
  \end{equation*}
  Letting \(v = t \log |x|\) we can write this as
  \(1 + (v - 1)e^{v}\), which has the unique root \(v = 0\) and is
  positive for other values of \(v\). It follows that
  \eqref{eq:x_pow_t_m1_div_t_dt} is non-negative and hence the
  function non-decreasing.

  For the limit \(t \to 0\) we directly get
  \begin{equation*}
    \lim_{t \to 0} \frac{|x|^{t} - 1}{t} = \lim_{t \to 0} \frac{\log|x|\ |x|^{t}}{1} = \log|x|.
  \end{equation*}
\end{proof}
These two lemmas are only strictly required for interval \(\inter{s}\)
overlapping positive odd integers. However, it gives better enclosures
also when \(\inter{s}\) is close to such integers, even if it doesn't
overlap them. This is used to compute better enclosures in some cases.

The tails for the asymptotic expansions are bounded using the
following lemma, see also~\cite[Lemma 2.1]{enciso2018convexity}. We
omit the proof since it is very similar to that
in~\cite{enciso2018convexity}.
\begin{lemma}
  \label{lemma:clausen-tails}
  Let \(s \geq 0\), \(2M \geq s + 1\) and \(|x| < 2\pi\), we then have
  the following bounds for the tails in equations
  \eqref{eq:clausenc-asymptotic-x} and
  \eqref{eq:clausens-asymptotic-x}
  \begin{align*}
    \left|\sum_{m = M}^{\infty} (-1)^{m}\zeta(s - 2m)\frac{x^{2m}}{(2m)!}\right|
    &\leq 2(2\pi)^{1 + s - 2M}\left|\sin\left(\frac{\pi}{2}s\right)\right|\zeta(2M + 1 - s) \frac{x^{2M}}{4\pi^{2} - x^{2}},\\
    \left|\sum_{m = M}^{\infty} (-1)^{m}\zeta(s - 2m - 1)\frac{x^{2m + 1}}{(2m + 1)!}\right|
    &\leq 2(2\pi)^{s - 2M}\left|\cos\left(\frac{\pi}{2}s\right)\right|\zeta(2M + 2 - s) \frac{x^{2M + 1}}{4\pi^{2} - x^{2}}.
  \end{align*}
\end{lemma}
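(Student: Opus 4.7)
The plan is to apply the functional equation for the Riemann zeta function,
\[
\zeta(1-w) = 2(2\pi)^{-w}\cos\!\left(\tfrac{\pi}{2}w\right)\Gamma(w)\zeta(w),
\]
to re-express $\zeta(s-2m)$ and $\zeta(s-2m-1)$, whose arguments tend to $-\infty$, in terms of $\zeta$ evaluated at large positive arguments where it is close to $1$ and easy to bound. After this, the bounds follow from a geometric series estimate.

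First I would treat the cosine tail. Substituting $w = 2m+1-s$ into the functional equation gives
\[
\zeta(s-2m) = 2(2\pi)^{s-2m-1}\sin\!\left(\tfrac{\pi}{2}s - \pi m\right)\Gamma(2m+1-s)\zeta(2m+1-s).
\]
Using $\sin(\tfrac{\pi}{2}s - \pi m) = (-1)^m\sin(\tfrac{\pi}{2}s)$, the alternating sign $(-1)^m$ in the original series cancels with the $(-1)^m$ from the sine, so
\[
\sum_{m=M}^{\infty}(-1)^{m}\zeta(s-2m)\frac{x^{2m}}{(2m)!}
= 2(2\pi)^{s-1}\sin\!\left(\tfrac{\pi}{2}s\right)\sum_{m=M}^{\infty}\frac{\Gamma(2m+1-s)}{(2m)!}\,\zeta(2m+1-s)\left(\frac{x}{2\pi}\right)^{\!2m}.
\]

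Next I would bound the two factors inside the sum. Since $s\ge 0$ and $2M\ge s+1$, for every $m\ge M$ the argument $2m+1-s$ lies in $[2,\infty)$. Thus $\Gamma(2m+1-s)\le \Gamma(2m+1)=(2m)!$ (using monotonicity of $\Gamma$ on $[1,\infty)$), giving $\Gamma(2m+1-s)/(2m)!\le 1$. Also, $\zeta$ is positive and strictly decreasing on $(1,\infty)$, so $\zeta(2m+1-s)\le \zeta(2M+1-s)$ for all $m\ge M$. Pulling these uniform bounds out of the sum and using $|x|<2\pi$ to evaluate the remaining geometric series,
\[
\sum_{m=M}^{\infty}\left(\frac{x}{2\pi}\right)^{\!2m} = \frac{(2\pi)^{2-2M}\,x^{2M}}{4\pi^{2}-x^{2}},
\]
combines with the prefactor $(2\pi)^{s-1}$ to yield exactly $2(2\pi)^{1+s-2M}|\sin(\tfrac{\pi}{2}s)|\zeta(2M+1-s)\,x^{2M}/(4\pi^{2}-x^{2})$, which is the first claimed bound.

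The second inequality is entirely analogous. Here I would apply the functional equation with $w=2m+2-s$ to get
\[
\zeta(s-2m-1) = 2(2\pi)^{s-2m-2}\sin\!\left(\tfrac{\pi}{2}s - \pi m - \tfrac{\pi}{2}\right)\Gamma(2m+2-s)\zeta(2m+2-s),
\]
using $\sin(\tfrac{\pi}{2}s - \pi m - \tfrac{\pi}{2}) = -(-1)^m\cos(\tfrac{\pi}{2}s)$ so that the $(-1)^m$ factors again cancel. The same monotonicity arguments give $\Gamma(2m+2-s)/(2m+1)!\le 1$ and $\zeta(2m+2-s)\le \zeta(2M+2-s)$ for $m\ge M$ (using $2M\ge s+1$), and summing the geometric series $\sum_{m=M}^{\infty}x^{2m+1}/(2\pi)^{2m}$ produces the stated bound. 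The whole proof is routine once the functional equation has been invoked; the only place requiring any care is verifying that the hypothesis $2M\ge s+1$ is precisely what puts $\zeta$'s arguments in the range where monotonicity and the $\Gamma$-bound apply simultaneously.
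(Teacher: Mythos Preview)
Your proof is correct and follows the standard route via the zeta functional equation; the paper omits the proof and defers to~\cite{enciso2018convexity}, whose argument is essentially the same. One minor correction: $\Gamma$ is not monotone on all of $[1,\infty)$ (its minimum is near $1.46$), but since you already established $2m+1-s\ge 2$ and $2m+2-s\ge 3$, both $\Gamma$ comparisons land in $[2,\infty)$ where monotonicity does hold, so the conclusion stands.
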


\subsection{Derivatives in \(s\)}
\label{sec:clausen-derivatives-s}
For \(C_{s}^{(\beta)}(x)\) and \(S_{s}^{(\beta)}(x)\) we use
\eqref{eq:clausenc-periodic-zeta} and
\eqref{eq:clausens-periodic-zeta} and differentiate directly in \(s\).
When \(s\) is not an integer this is handled directly using Taylor
arithmetic, for integers we use the approach in
Appendix~\ref{sec:removable-singularities} to handle the removable
singularities.

To get asymptotic expansions at \(x = 0\) we take the expansions
\eqref{eq:clausenc-asymptotic-x} and \eqref{eq:clausens-asymptotic-x}
and differentiate them with respect to \(s\). Giving us
\begin{align}
  \label{eq:clausenc-derivative-asymptotic-x}
  C_{s}^{(\beta)}(x) &= \frac{d}{ds^{\beta}}\left(\Gamma(1 - s)\sin\left(\frac{\pi}{2}s\right)|x|^{s - 1}\right)
             + \sum_{m = 0}^{\infty} (-1)^{m}\zeta^{(\beta)}(s - 2m)\frac{x^{2m}}{(2m)!};\\
  \label{eq:clausens-derivative-asymptotic-x}
  S_{s}^{(\beta)}(x) &= \frac{d}{ds^{\beta}}\left(\Gamma(1 - s)\cos\left(\frac{\pi}{2}s\right)\sign(x)|x|^{s - 1}\right)
             + \sum_{m = 0}^{\infty} (-1)^{m}\zeta^{(\beta)}(s - 2m - 1)\frac{x^{2m + 1}}{(2m + 1)!}.
\end{align}
These formulas work well when \(s\) is not a positive odd integer for
\(C_{s}^{(\beta)}(x)\) or a positive even integer for
\(S_{s}^{(\beta)}(x)\), and the derivatives can be computed using
Taylor expansions. We mostly make use of the functions
\(C_{2}^{(1)}(x)\) and \(C_{3}^{(1)}(x)\), in which case the
expansions can be computed explicitly using~\cite[Eq.~16]{Bailey2015},
for \(|x| < 2\pi\) we have
\begin{align*}
  C_{2}^{(1)}(x) =& \zeta^{(1)}(2) - \frac{\pi}{2}|x|\log|x| - (\gamma - 1)\frac{\pi}{2}|x|
                   + \sum_{m = 1}^{\infty}(-1)^{m}\zeta^{(1)}(2 - 2m)\frac{x^{2m}}{(2m)!}\\
  C_{3}^{(1)}(x) =& \zeta^{(1)}(3) - \frac{1}{4}x^{2}\log^2|x|
                    + \frac{3 - 2\gamma}{4}x^2\log|x|
                    - \frac{36\gamma - 12\gamma^2 - 24\gamma_{1} - 42 + \pi^{2}}{48}x^2\\
                 &+ \sum_{m = 2}^{\infty}(-1)^{m}\zeta^{(1)}(3 - 2m)\frac{x^{2m}}{(2m)!}.
\end{align*}
Where \(\gamma_{n}\) is the Stieltjes constant and
\(\gamma = \gamma_{0}\). To bound the tails we have the following
lemma from~\cite{dahne2022burgershilbert}.
\begin{lemma}
  \label{lemma:clausen-derivative-tails}
  Let \(\beta \geq 1\), \(s \geq 0\), \(2M \geq s + 1\) and
  \(|x| < 2\pi\), we then have the following bounds:
  \begin{multline*}
    \left|\sum_{m = M}^{\infty} (-1)^{m}\zeta^{(\beta)}(s - 2m)\frac{x^{2m}}{(2m)!}\right|\\
    \leq \sum_{j_{1} + j_{2} + j_{3} = \beta} \binom{\beta}{j_{1},j_{2},j_{3}}
    2\left(\log(2\pi) + \frac{\pi}{2}\right)^{j_{1}}(2\pi)^{s - 1}|\zeta^{(j_{3})}(1 + 2M - s)|
    \sum_{m = M}^{\infty} \left|p_{j_{2}}(1 + 2m - s)\left(\frac{x}{2\pi}\right)^{2m}\right|,
  \end{multline*}
  \begin{multline*}
    \left|\sum_{m = M}^{\infty} (-1)^{m}\zeta^{(\beta)}(s - 2m - 1)\frac{x^{2m + 1}}{(2m + 1)!}\right|\\
    \leq \sum_{j_{1} + j_{2} + j_{3} = \beta} \binom{\beta}{j_{1},j_{2},j_{3}}
    2\left(\log(2\pi) + \frac{\pi}{2}\right)^{j_{1}}(2\pi)^{s - 2}|\zeta^{(j_{3})}(2 + 2M - s)|
    \sum_{m = M}^{\infty} \left|p_{j_{2}}(2 + 2m - s)\left(\frac{x}{2\pi}\right)^{2m + 1}\right|.
  \end{multline*}
  Here \(p_{j_{2}}\) is given recursively by
  \begin{equation*}
    p_{k + 1}(s) = \psi^{(0)}(s)p_{k}(s) + p'_{k}(s),\quad p_{0} = 1,
  \end{equation*}
  where \(\psi^{(0)}\) is the polygamma function. It is given by a
  linear combination of terms of the form
  \begin{equation*}
    (\psi^{(0)}(s))^{q_{0}}(\psi^{(1)}(s))^{q_{1}}\cdots (\psi^{(j_{2} - 1)}(s))^{q_{j_{2} - 1}}.
  \end{equation*}
  We have the following bounds
  \begin{multline*}
    \sum_{m = M}^{\infty}\left|
      (\psi^{(0)}(1 + 2m - s))^{q_{0}}\cdots (\psi^{(j_{2} - 1)}(1 + 2m - s))^{q_{j_{2} - 1}}
      \left(\frac{x}{2\pi}\right)^{2m}
    \right|\\
    \leq |(\psi^{(1)}(1 + 2M - s))^{q_{1}}\cdots (\psi^{(j_{2} - 1)}(1 + 2M - s))^{q_{j_{2} - 1}}|
    \frac{1}{2^{q_{0} / 2}}(2\pi)^{-2M}
    \Phi\left(\frac{x^{2}}{4\pi^{2}}, -\frac{q_{0}}{2}, M + \frac{1}{2}\right)x^{2M}
  \end{multline*}
  and
  \begin{multline*}
    \sum_{m = M}^{\infty}\left|
      (\psi^{(0)}(2 + 2m - s))^{q_{0}}\cdots (\psi^{(j_{2} - 1)}(2 + 2m - s))^{q_{j_{2} - 1}}
      \left(\frac{x}{2\pi}\right)^{2m + 1}
    \right|\\
    \leq |(\psi^{(1)}(2 + 2M - s))^{q_{1}}\cdots (\psi^{(j_{2} - 1)}(2 + 2M - s))^{q_{j_{2} - 1}}|
    \frac{1}{2^{q_{0} / 2}}(2\pi)^{-2M - 1}
    \Phi\left(\frac{x^{2}}{4\pi^{2}}, -\frac{q_{0}}{2}, M + 1\right)x^{2M + 1}
  \end{multline*}
  where
  \begin{equation*}
    \Phi(z, s, a) = \sum_{m = 0}^{\infty}\frac{z^{m}}{(a + m)^{s}},
  \end{equation*}
  is the Lerch transcendent.
\end{lemma}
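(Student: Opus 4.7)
The plan is to derive the bound from the functional equation for the Riemann zeta function combined with repeated Leibniz differentiation in $s$. The starting point is
\begin{equation*}
  \zeta(z) = 2(2\pi)^{z-1}\sin(\pi z/2)\Gamma(1-z)\zeta(1-z),
\end{equation*}
which I would evaluate at $z = s - 2m$. Using $\sin(\pi(s-2m)/2) = (-1)^m \sin(\pi s/2)$, this rewrites the negative-argument values as
\begin{equation*}
  (-1)^m \zeta(s-2m) = 2(2\pi)^{s-2m-1}\sin(\pi s/2)\Gamma(1-s+2m)\zeta(1-s+2m),
\end{equation*}
a product of four analytic-in-$s$ factors each of whose derivatives I can control separately.

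Next I would apply the multinomial Leibniz rule $\beta$ times to this product, splitting the $\beta$ derivatives over the four factors with indices $j_0 + j_1 + j_2 + j_3 = \beta$. The relevant estimates are: $((2\pi)^{s-2m-1})^{(j_0)} = (\log 2\pi)^{j_0}(2\pi)^{s-2m-1}$; $|(\sin(\pi s/2))^{(j_1)}| \leq (\pi/2)^{j_1}$; the chain rule gives $\frac{d^{j_2}}{ds^{j_2}}\Gamma(1-s+2m) = (-1)^{j_2}\Gamma(1-s+2m) p_{j_2}(1-s+2m)$, where $p_{j_2}$ is the polygamma polynomial generated by the stated recursion $p_{k+1}(t) = \psi^{(0)}(t) p_k(t) + p_k'(t)$, $p_0 = 1$ (verified by iterating $\Gamma'(t) = \Gamma(t)\psi^{(0)}(t)$); finally $\frac{d^{j_3}}{ds^{j_3}}\zeta(1-s+2m) = (-1)^{j_3}\zeta^{(j_3)}(1-s+2m)$. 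After taking absolute values, I would collapse the pair $(j_0,j_1)$ into a single index by the binomial identity $\sum_{j_0+j_1 = k}\binom{k}{j_0}(\log 2\pi)^{j_0}(\pi/2)^{j_1} = (\log(2\pi) + \pi/2)^{k}$, producing exactly the multinomial sum displayed in the lemma.

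I would then insert this pointwise bound into the tail $\sum_{m \geq M} |\zeta^{(\beta)}(s-2m)| \, |x|^{2m}/(2m)!$ and extract the $m$-independent factors. Two monotonicity facts finish this step: first, $|\zeta^{(j_3)}(t)| = \sum_{n\geq 2}(\log n)^{j_3}/n^t$ is strictly decreasing in $t > 1$, so that $|\zeta^{(j_3)}(1-s+2m)| \leq |\zeta^{(j_3)}(1+2M-s)|$ for $m \geq M$; second, under the hypothesis $2M \geq s+1$ with $s \geq 0$, the ratio $\Gamma(1-s+2m)/(2m)!$ is bounded by $1$ for $m \geq M$, since $1-s+2m \leq 1+2m$ and $\Gamma$ is increasing past its minimum near $1.46$. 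This yields the first claimed inequality. The second inequality (for the odd-indexed series) is entirely analogous: at $z = s-2m-1$ the functional equation gives a factor $-(-1)^m\cos(\pi s/2)$ in place of $(-1)^m\sin(\pi s/2)$, and the rest of the computation proceeds unchanged.

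For the final $\Phi$-type bound on $\sum_{m \geq M} |p_{j_2}(1+2m-s)| (x/(2\pi))^{2m}$, I would expand $p_{j_2}$ via the recursion into a finite linear combination of monomials $(\psi^{(0)}(t))^{q_0}\cdots(\psi^{(j_2-1)}(t))^{q_{j_2-1}}$. For $i \geq 1$, the integral representation $\psi^{(i)}(t) = (-1)^{i+1} i!\sum_{n\geq 0}(t+n)^{-i-1}$ shows $|\psi^{(i)}|$ is monotonically decreasing, so those factors are bounded uniformly in $m \geq M$ by their value at $1+2M-s$ and pulled out of the sum. The one remaining obstacle, and the technical heart of the argument, is the unbounded factor $(\psi^{(0)}(1+2m-s))^{q_0}$: I would apply the elementary bound $|\psi^{(0)}(t)| \leq \sqrt{t/2}$ valid for $t \geq 1$ (provable from $\psi^{(0)}(t) \leq \log t$ together with $\log t \leq \sqrt{t/2}$ for $t$ in the relevant range, which is where the $2^{-q_0/2}$ prefactor originates). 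Shifting the index $n = m - M$ and using $s \geq 0$ then identifies the resulting series as the Lerch transcendent $\Phi(x^2/(4\pi^2),-q_0/2,M+1/2)$, completing the bound. I expect the polygamma estimate on $\psi^{(0)}$ to be the most delicate part, both because it is the one non-monotone ingredient and because the precise form of the $2^{-q_0/2}$ factor depends on how sharply that bound is stated.
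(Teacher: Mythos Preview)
The paper does not prove this lemma in-text; it is quoted from \cite{dahne2022burgershilbert}. Your approach via the functional equation $\zeta(z) = 2(2\pi)^{z-1}\sin(\pi z/2)\Gamma(1-z)\zeta(1-z)$, the multinomial Leibniz rule, and monotonicity of $|\zeta^{(j)}|$ and $|\psi^{(i)}|$ for $i \geq 1$ is exactly the expected argument, and your collapse of the $(2\pi)^{s-2m-1}$ and $\sin(\pi s/2)$ derivatives into the single factor $(\log 2\pi + \pi/2)^{j_1}$ correctly accounts for why the statement carries only a three-index multinomial. The bound $\Gamma(1-s+2m)/(2m)! \leq 1$ via $1-s+2m \geq 2$ under the hypothesis $2M \geq s+1$ is also right.

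One concrete caution on the final step: the inequality $|\psi^{(0)}(t)| \leq \sqrt{t/2}$ is \emph{not} valid for all $t \geq 1$; it fails on roughly $t \in (6.5,12)$ (for instance $\psi^{(0)}(10) \approx 2.252 > \sqrt{5} \approx 2.236$, and your suggested route through $\log t \leq \sqrt{t/2}$ fails already at $t=8$). Unwinding the Lerch bound termwise, what is actually required is $|\psi^{(0)}(1+2m-s)| \leq \tfrac{1}{2}\sqrt{2m+1}$, which has the same defect for $s$ near $0$ and small $m$. You already flag this as the most delicate point, and you are right to: either the constant in front of the Lerch transcendent needs to be relaxed slightly, or one argues that in the paper's actual applications $s$ lies in $(1,2)$ or higher and $M$ is taken large enough that the arguments of $\psi^{(0)}$ avoid the problematic window. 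The overall strategy is sound; only this one sharpness claim needs adjustment.
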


For \(\alpha \in I_{1}\) the expansions in \(x\) of \(u_{\alpha}\) and
\(\Hop{\alpha}[u_{\alpha}]\) are given in
Lemma~\ref{lemma:u0-asymptotic-I-1}. Bounding the remainder terms
for the sums
\begin{equation*}
  \sum_{m = 1}^{\infty}\frac{(-1)^{m}}{(2m)!}a_{\alpha,0}\left(
    \zeta(1 - \alpha - 2m) - \zeta(2 + (1 + \alpha)^{2} / 2 - 2m)
  \right)x^{2m}
\end{equation*}
and
\begin{equation*}
  \sum_{m = 2}^{\infty}\frac{(-1)^{m}}{(2m)!}a_{\alpha,0}\left(
    \zeta(1 - 2\alpha - 2m) - \zeta(2 - \alpha + (1 + \alpha)^{2} / 2 - 2m)
  \right)x^{2m}
\end{equation*}
requires a bound similar to that in
Lemma~\ref{lemma:clausen-derivative-tails} above. Factoring out
\(a_{\alpha,0}(1 + \alpha)\), which is bounded, we are left bounding
\begin{equation*}
  \sum_{m = 1}^{\infty}\frac{(-1)^{m}}{(2m)!}\frac{
    \zeta(1 - \alpha - 2m) - \zeta(2 + (1 + \alpha)^{2} / 2 - 2m)
  }{1 + \alpha}x^{2m}
\end{equation*}
and
\begin{equation*}
  \sum_{m = 2}^{\infty}\frac{(-1)^{m}}{(2m)!}\frac{
    \zeta(1 - 2\alpha - 2m) - \zeta(2 - \alpha + (1 + \alpha)^{2} / 2 - 2m)
  }{1 + \alpha}x^{2m}.
\end{equation*}
The following lemma allows us to write them in a form similar to that
in Lemma~\ref{lemma:clausen-derivative-tails}.
\begin{lemma}
  \label{lemma:bound-tail-I-1}
  For \(-1 < \alpha < 0\) and \(M_{1} \geq 1\) we have
  \begin{multline*}
    \left|\sum_{m = M_{1}}^{\infty}\frac{(-1)^{m}}{(2m)!}\frac{
      \zeta(1 - \alpha - 2m) - \zeta(2 + (1 + \alpha)^{2} / 2 - 2m)
    }{1 + \alpha}x^{2m}\right|\\
  \leq \left|\sum_{m = M_{1}}^{\infty}\frac{(-1)^{m}}{(2m)!}\zeta'(s_{1,m} - 2m)x^{2m}\right|
    + \frac{1 + \alpha}{2}\left|\sum_{m = M_{1}}^{\infty}\frac{(-1)^{m}}{(2m)!}\zeta'(s_{2,m} - 2m)x^{2m}\right|
  \end{multline*}
  with \(s_{1,m} \in [1 - \alpha, 2]\) and
  \(s_{2,m} \in [2, 2 + (1 + \alpha)^{2} / 2]\). Similarly, for
  \(M_{2} \geq 2\),
  \begin{multline*}
    \left|\sum_{m = M_{2}}^{\infty}\frac{(-1)^{m}}{(2m)!}\frac{
      \zeta(1 - 2\alpha - 2m) - \zeta(2 - \alpha + (1 + \alpha)^{2} / 2 - 2m)
    }{1 + \alpha}x^{2m}\right|\\
  \leq 2\left|\sum_{m = M_{2}}^{\infty}\frac{(-1)^{m}}{(2m)!}\zeta'(s_{3,m} - 2m)x^{2m}\right|
  + \left(1 - \frac{1 + \alpha}{2}\right)\left|\sum_{m = M_{2}}^{\infty}\frac{(-1)^{m}}{(2m)!}\zeta'(s_{4,m} - 2m)x^{2m}\right|
  \end{multline*}
  with \(s_{3,m} \in [1 - 2\alpha, 3]\) and
  \(s_{4,m} \in [2 - \alpha + (1 + \alpha)^{2} / 2, 3]\).
\end{lemma}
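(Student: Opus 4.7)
The plan is to prove both inequalities by splitting each zeta-difference at an intermediate point and then applying the mean value theorem on the two resulting pieces; the triangle inequality then delivers the stated form.

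For the first inequality, I would split the difference at $s = 2$, writing
\begin{equation*}
  \zeta(1 - \alpha - 2m) - \zeta(2 + (1 + \alpha)^{2}/2 - 2m) = \bigl[\zeta(1 - \alpha - 2m) - \zeta(2 - 2m)\bigr] + \bigl[\zeta(2 - 2m) - \zeta(2 + (1 + \alpha)^{2}/2 - 2m)\bigr].
\end{equation*}
For $\alpha \in (-1, 0)$ the point $s = 2$ lies in both of the relevant intervals (note $1 - \alpha \leq 2$ and $2 + (1+\alpha)^{2}/2 \geq 2$). Applying the mean value theorem on each of the two pieces, and using that the separation in the first piece is $2 - (1 - \alpha) = 1 + \alpha$ and in the second is $(1 + \alpha)^{2}/2$, gives
\begin{equation*}
  \zeta(1 - \alpha - 2m) - \zeta(2 - 2m) = -(1 + \alpha)\,\zeta'(s_{1,m} - 2m),\quad
  \zeta(2 - 2m) - \zeta(2 + (1+\alpha)^{2}/2 - 2m) = -\tfrac{(1+\alpha)^{2}}{2}\,\zeta'(s_{2,m} - 2m),
\end{equation*}
for some $s_{1,m} \in [1 - \alpha, 2]$ and $s_{2,m} \in [2, 2 + (1 + \alpha)^{2}/2]$ (both depending on $m$). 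Dividing through by $1 + \alpha$ collapses the prefactors to $1$ and $(1+\alpha)/2$ respectively. Substituting into the series, distributing the $m$-sum across the two terms, and applying the triangle inequality then yields the claimed bound.

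For the second inequality the argument is essentially the same, but now the natural split point is $s = 3$. One checks that $1 - 2\alpha \leq 3$ and $2 - \alpha + (1+\alpha)^{2}/2 \leq 3$ for $\alpha \in (-1, 0)$ (the second follows from $(1 - \alpha - (1+\alpha)^{2}/2) = (1+\alpha)(1 - (1+\alpha)/2) \geq 0$), so splitting at $3$ keeps $s_{3,m}$ and $s_{4,m}$ in the stated ranges. The MVT applied to the two pieces produces factors $3 - (1 - 2\alpha) = 2(1+\alpha)$ and $3 - (2 - \alpha + (1+\alpha)^{2}/2) = (1+\alpha)(1 - (1+\alpha)/2)$; dividing by $1 + \alpha$ leaves the coefficients $2$ and $(1 - (1+\alpha)/2) = 1 - (1+\alpha)/2$ seen in the claim. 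Triangle inequality again finishes it.

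I do not expect any real obstacle: the only thing one has to be a bit careful about is that each $s_{j,m}$ depends on $m$ through the MVT, so the splitting and triangle inequality must be performed before bounding the tails with Lemma~\ref{lemma:clausen-derivative-tails} (which is exactly how the bounds are used in practice). Once that is respected, the derivation is a direct calculation with no further tricks.
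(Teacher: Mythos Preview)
Your proposal is correct and follows essentially the same approach as the paper: split each zeta-difference at the intermediate point ($s=2$ for the first inequality, $s=3$ for the second), apply the mean value theorem on each piece to produce the factors $(1+\alpha)$ and $(1+\alpha)^{2}/2$ (respectively $2(1+\alpha)$ and $(1+\alpha)(1-(1+\alpha)/2)$), divide by $1+\alpha$, and finish with the triangle inequality. Your remark that each $s_{j,m}$ depends on $m$ and that the splitting must precede any tail estimate is exactly the point that motivates the companion Lemma~\ref{lemma:clausen-derivative-tails-alt}.
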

\begin{proof}
  The first inequality follows directly from that
  \begin{equation*}
    \frac{\zeta(1 - \alpha - 2m) - \zeta(2 + (1 + \alpha)^{2} / 2 - 2m)}{1 + \alpha}
    = \frac{\zeta(1 - \alpha - 2m) - \zeta(2 - 2m)}{1 + \alpha}
    + \frac{\zeta(2 - 2m) - \zeta(2 + (1 + \alpha)^{2} / 2 - 2m)}{1 + \alpha}
  \end{equation*}
  together with
  \begin{equation*}
    \frac{\zeta(1 - \alpha - 2m) - \zeta(2 - 2m)}{1 + \alpha}
    = \zeta'(s_{1,m} - 2m)
  \end{equation*}
  for some \(s_{1,m} \in [1 - \alpha, 2]\) and
  \begin{equation*}
    \frac{\zeta(2 - 2m) - \zeta(2 + (1 + \alpha)^{2} / 2 - 2m)}{1 + \alpha}
    = \frac{1 + \alpha}{2}\zeta'(s_{2,m} - 2m)
  \end{equation*}
  for some \(s_{2,m} \in [2, 2 + (1 + \alpha)^{2} / 2]\). The second
  one is similar.
\end{proof}

Lemma~\ref{lemma:clausen-derivative-tails} cannot be directly applied
to the result of the above lemma since the argument of the zeta
function depends on \(m\). This can however be fixed, as the following
lemma shows.

\begin{lemma}
  \label{lemma:clausen-derivative-tails-alt}
  Let \(\beta \geq 1\), \(\inter{s} = [\lo{s}, \hi{s}]\) with
  \(\lo{s} \geq 0\), \(2M \geq \hi{s} + 1\), \(s_{m} \in \inter{s}\)
  for \(m \geq M\) and \(|x| < 2\pi\), we then have the following
  bounds:
  \begin{multline*}
    \left|\sum_{m = M}^{\infty} (-1)^{m}\zeta^{(\beta)}(s_{m} - 2m)\frac{x^{2m}}{(2m)!}\right|\\
    \leq \sum_{j_{1} + j_{2} + j_{3} = \beta} \binom{\beta}{j_{1},j_{2},j_{3}}
    2\left(\log(2\pi) + \frac{\pi}{2}\right)^{j_{1}}(2\pi)^{\hi{s} - 1}|\zeta^{(j_{3})}(1 + 2M - \hi{s})|
    \sum_{m = M}^{\infty} \left|p_{j_{2}}(1 + 2m - s_{m})\left(\frac{x}{2\pi}\right)^{2m}\right|,
  \end{multline*}
  \begin{multline*}
    \left|\sum_{m = M}^{\infty} (-1)^{m}\zeta^{(\beta)}(s_{m} - 2m - 1)\frac{x^{2m + 1}}{(2m + 1)!}\right|\\
    \leq \sum_{j_{1} + j_{2} + j_{3} = \beta} \binom{\beta}{j_{1},j_{2},j_{3}}
    2\left(\log(2\pi) + \frac{\pi}{2}\right)^{j_{1}}(2\pi)^{s - 2}|\zeta^{(j_{3})}(2 + 2M - s)|
    \sum_{m = M}^{\infty} \left|p_{j_{2}}(2 + 2m - s_{m})\left(\frac{x}{2\pi}\right)^{2m + 1}\right|.
  \end{multline*}
  Here \(p_{j_{2}}\) is the same as in
  Lemma~\ref{lemma:clausen-derivative-tails} and given recursively by
  \begin{equation*}
    p_{k + 1}(s) = \psi^{(0)}(s)p_{k}(s) + p'_{k}(s),\quad p_{0} = 1,
  \end{equation*}
  where \(\psi^{(0)}\) is the polygamma function. It is given by a
  linear combination of terms of the form
  \begin{equation*}
    (\psi^{(0)}(s))^{q_{0}}(\psi^{(1)}(s))^{q_{1}}\cdots (\psi^{(j_{2} - 1)}(s))^{q_{j_{2} - 1}}.
  \end{equation*}
  We have the following bounds
  \begin{multline*}
    \sum_{m = M}^{\infty}\left|
      (\psi^{(0)}(1 + 2m - s_{m}))^{q_{0}}\cdots (\psi^{(j_{2} - 1)}(1 + 2m - s_{m}))^{q_{j_{2} - 1}}
      \left(\frac{x}{2\pi}\right)^{2m}
    \right|\\
    \leq |(\psi^{(1)}(1 + 2M - \hi{s}))^{q_{1}}\cdots (\psi^{(j_{2} - 1)}(1 + 2M - \hi{s}))^{q_{j_{2} - 1}}|
    \frac{1}{2^{q_{0} / 2}}(2\pi)^{-2M}
    \Phi\left(\frac{x^{2}}{4\pi^{2}}, -\frac{q_{0}}{2}, M + \frac{1}{2}\right)x^{2M}
  \end{multline*}
  and
  \begin{multline*}
    \sum_{m = M}^{\infty}\left|
      (\psi^{(0)}(2 + 2m - s_{m}))^{q_{0}}\cdots (\psi^{(j_{2} - 1)}(2 + 2m - s_{m}))^{q_{j_{2} - 1}}
      \left(\frac{x}{2\pi}\right)^{2m + 1}
    \right|\\
    \leq |(\psi^{(1)}(2 + 2M - \hi{s}))^{q_{1}}\cdots (\psi^{(j_{2} - 1)}(2 + 2M - \hi{s}))^{q_{j_{2} - 1}}|
    \frac{1}{2^{q_{0} / 2}}(2\pi)^{-2M - 1}
    \Phi\left(\frac{x^{2}}{4\pi^{2}}, -\frac{q_{0}}{2}, M + 1\right)x^{2M}
  \end{multline*}
\end{lemma}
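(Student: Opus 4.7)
The plan is to adapt the proof of Lemma~\ref{lemma:clausen-derivative-tails} essentially verbatim, tracking where the parameter $s$ enters and showing that replacing it by $s_m \in [\lo{s}, \hi{s}]$ only requires uniform bounds in terms of $\hi{s}$. Recall that the previous lemma is obtained by starting from the functional equation for the Hurwitz zeta / periodic zeta expression for $\zeta^{(\beta)}(s-2m)$, differentiating $\beta$ times with respect to $s$ via Leibniz, and then bounding each factor. Three types of factors appear: (i) powers of constants like $\log(2\pi) + \pi/2$ coming from differentiating $(2\pi)^{s-1}$ and sines/cosines, (ii) polygamma products $p_{j_2}(1 + 2m - s)$ coming from differentiating $\Gamma(1 + 2m - s)$, and (iii) a derivative $\zeta^{(j_3)}(1 + 2M - s)$ of the reflected zeta evaluated at the tail cutoff.

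First I would write out this decomposition once more, but now the argument $s$ of the three factors is replaced by $s_m$ (which may differ term by term). For factor (i) the bound is independent of $s$, so nothing changes. For factor (iii), $|\zeta^{(j_3)}(1 + 2M - s_m)|$ needs a uniform bound over $s_m \in [\lo{s}, \hi{s}]$; since $1 + 2M - \hi{s} \geq 1$ by the hypothesis $2M \geq \hi{s} + 1$ and $\zeta^{(j_3)}$ decays monotonically as its argument grows past $1$, the maximum over $\inter{s}$ is attained at $s_m = \hi{s}$, giving the stated bound $|\zeta^{(j_3)}(1 + 2M - \hi{s})|$. For factor (ii) the argument $1 + 2m - s_m$ also remains $\geq 1$, so the polygamma-product bound that follows next goes through with only cosmetic changes.

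Next I would handle the polygamma-sum bound separately, because the argument $1 + 2m - s_m$ couples $m$ and $s_m$. I would split the product $(\psi^{(0)})^{q_0}(\psi^{(1)})^{q_1}\cdots(\psi^{(j_2-1)})^{q_{j_2-1}}$ into its $\psi^{(0)}$ part and the higher-order part. The higher polygammas $\psi^{(k)}$ with $k \geq 1$ are monotone decreasing in absolute value for arguments $\geq 1$, so each $|\psi^{(k)}(1 + 2m - s_m)|^{q_k}$ is dominated by $|\psi^{(k)}(1 + 2M - \hi{s})|^{q_k}$ and can be pulled out of the sum as in the original lemma. For $\psi^{(0)}$ the bound $|\psi^{(0)}(1 + 2m - s_m)|^{q_0} \leq \bigl(\tfrac12 \log(1+2m-s_m)^2\bigr)^{q_0/2} \leq (1+2m)^{q_0/2}/2^{q_0/2}$ (or whatever uniform asymptotic bound in $m$ is used in the original proof) depends on $m$ but can be bounded uniformly in $s_m \in \inter{s}$ by the value at $s_m = \hi{s}$ and recognized as the Lerch transcendent $\Phi\bigl(\tfrac{x^2}{4\pi^2}, -\tfrac{q_0}{2}, M + \tfrac12\bigr)$ (resp.\ $M+1$) exactly as before.

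The only real subtlety — and thus the main place where care is needed — is confirming that the stated form of the bound on the polygamma product in the lemma keeps the \emph{original} symbolic expression $p_{j_2}(1 + 2m - s_m)$ under the sum (with $s_m$, not $\hi{s}$, inside), while the higher-polygamma prefactor outside the sum uses $1 + 2M - \hi{s}$. Structurally this matches Lemma~\ref{lemma:clausen-derivative-tails} after substituting $s \to s_m$ in the summand and $s \to \hi{s}$ in every quantity that has been pulled out in front of the sum; so the proof reduces to verifying that each pull-out step is justified by monotonicity on $[1 + 2M - \hi{s}, \infty)$ of $|\zeta^{(j_3)}|$ and $|\psi^{(k)}|$. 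The proof of the sine/cosine analogue is identical with the shift $2m \to 2m+1$ and $\hi{s} - 1 \to \hi{s} - 2$ throughout.
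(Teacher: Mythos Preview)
Your proposal is correct and follows exactly the same approach as the paper, which simply states that the proof is identical to that of Lemma~\ref{lemma:clausen-derivative-tails} with the only difference being that the termwise bounds must hold for all \(s_m \in \inter{s}\). You have in fact supplied more detail than the paper does, correctly identifying that the monotonicity of \(|\zeta^{(j_3)}|\) and \(|\psi^{(k)}|\) for \(k\geq 1\) on \([1+2M-\hi{s},\infty)\) is what allows each pulled-out factor to be replaced by its value at \(\hi{s}\).
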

\begin{proof}
  The proof is the same as for
  Lemma~\ref{lemma:clausen-derivative-tails}, and we therefore omit
  the details. The only difference is that care has to be taken so
  that the termwise bounds holds for all \(s_{m}\).
\end{proof}

\section{Rigorous integration with singularities}
\label{sec:rigorous-integration}
In this section we discuss how to compute enclosures of the integrals
\(U_{\alpha,1,1}(x)\), \(U_{\alpha,1,2}(x)\) and \(U_{\alpha,2}(x)\)
in the non-asymptotic case. We focus on the case when
\(w_{\alpha}(x)\) is not given by \(|x|\), since otherwise
Lemma~\ref{lemma:U-primitive} allows us to directly compute the
integral. In particular this means that we don't have to consider the
case when \(\alpha \to 0\), though we have to handle
\(\alpha \to -1\).

Recall that
\begin{align*}
  U_{\alpha,1,1}(x) &= -x\int_{0}^{r_{\alpha,x}} \hat{I}_{\alpha}(x, t)w_{\alpha}(tx)\ dt,\\
  U_{\alpha,1,2}(x) &= x\int_{r_{\alpha,x}}^{1} \hat{I}_{\alpha}(x, t)w_{\alpha}(tx)\ dt,\\
  U_{\alpha,2}(x) &= \int_{x}^{\pi}I_{\alpha}(x, y)w_{\alpha}(y)\ dy.
\end{align*}
The integrand for \(U_{1,2}\) has a (integrable) singularity at
\(t = 1\) and the integrand for \(U_{2}\) has one at \(y = x\). As a
first step we split these off to handle them separately.

Let
\begin{align*}
  U_{\alpha,1,2}(x) =& x\int_{r_{\alpha,x}}^{1 - \delta_{U,1}} \hat{I}_{\alpha}(x, t)w_{\alpha}(tx)\ dt
                       + x\int_{1 - \delta_{U,1}}^{1} \hat{I}_{\alpha}(x, t)w_{\alpha}(tx)\ dt
                       = U_{\alpha,1,2,1}(x) + U_{\alpha,1,2,2}(x),\\
     U_{\alpha,2}(x) =& \int_{x}^{x + \delta_{U,2}}I_{\alpha}(x, y)w_{\alpha}(y)\ dy
                        + \int_{x + \delta_{U,2}}^{\pi}I_{\alpha}(x, y)w_{\alpha}(y)\ dy
                        = U_{\alpha,2,1}(x) + U_{\alpha,2,2}(x).
\end{align*}
These integrals are handled by noticing that \(w_{\alpha}\) is bounded
on the interval of integration. This allows us to compute an enclosure
as
\begin{align*}
  U_{\alpha,1,2,2}(x) \in&\ xw_{\alpha}(x[1 - \delta_{U,1}, 1]) \int_{1 - \delta_{U,1}}^{1} \hat{I}_{\alpha}(x, t)\ dt,\\
  U_{\alpha,2,1}(x) \in&\ w_{\alpha}([x, x + \delta_{U,2}]) \int_{x}^{x + \delta_{U,2}}I_{\alpha}(x, y)\ dy.
\end{align*}
The integrals can be computed explicitly, using that
\begin{align*}
  \int \hat{I}_{\alpha}(x, t)\ dt &= \frac{1}{x}\left(-S_{1 - \alpha}(x(1 - t)) + S_{1 - \alpha}(x(1 + t)) - 2S_{1 - \alpha}(xt)\right),\\
  \int I_{\alpha}(x, y)\ dy &= -S_{1 - \alpha}(x - y) + S_{1 - \alpha}(x + y) - 2S_{1 - \alpha}(y).
\end{align*}
From which we get
\begin{equation*}
  \int_{1 - \delta_{U,1}}^{1} \hat{I}_{\alpha}(x, t)\ dt = \frac{1}{x}\Big(
  S_{1 - \alpha}(2x) - 2S_{1 - \alpha}(x)
  + S_{1 - \alpha}(x\delta_{U,1}) - S_{1 - \alpha}(x(2 - \delta_{U,1})) + 2S_{1 - \alpha}(x(1 - \delta_{U,1}))
  \Big)
\end{equation*}
and
\begin{equation*}
  \int_{x}^{x + \delta_{U,2}}I_{\alpha}(x, y)\ dy = -S_{1 - \alpha}(-\delta_{U,2}) + S_{1 - \alpha}(2x + \delta_{U,2}) - 2S_{1 - \alpha}(x + \delta_{U,2})
  - S_{1 - \alpha}(2x) + 2S_{1 - \alpha}(x).
\end{equation*}
For \(\alpha \in I_{1}\) we use a slightly modified approach which
gives better bounds. Using that
\(w_{\alpha}(x) = |x|^{(1 - \alpha) / 2}\log(2e + 1/|x|)\) in this
case we have
\begin{multline*}
  U_{\alpha,1,2,2}(x)
  = x^{1 + (1 - \alpha) / 2}\int_{1 - \delta_{U,1}}^{1} \hat{I}_{\alpha}(x, t)t^{(1 - \alpha) / 2}\log(2e + 1/(xt))\ dt\\
  \in x^{1 + (1 - \alpha) / 2}[1 - \delta_{U,1}, 1]^{-(1 + \alpha) / 2}\log(2e + 1/(x[1 - \delta_{U,1}, 1]))\int_{1 - \delta_{U,1}}^{1} \hat{I}_{\alpha}(x, t)t\ dt
\end{multline*}
and can use the same approach as in Lemma~\ref{lemma:U-primitive} for
computing the integral.

What remains is computing \(U_{\alpha,1,1}\), \(U_{\alpha,1,2,1}\) and
\(U_{\alpha,2,2}\). In this case the integrands are bounded everywhere
on the intervals of integration. To enclose the integrals we make use
of two different rigorous numerical integrators, depending on the
value of \(\alpha\). For \(\alpha \in I_{2}\) we use the rigorous
numerical integrator implemented by Arb~\cite{Johansson2018numerical}.
For functions that are analytic on the interval the integrator uses
Gaussian quadratures with error bounds computed through complex
magnitudes, we therefore need to evaluate the integrands on complex
intervals. When the function is not analytic it falls back to naive
enclosures using interval arithmetic. For \(\alpha \in I_{1}\) it is
harder to enclose the integrand for complex values, we therefore use a
quadrature rule given by
\begin{equation*}
  \int_{a}^{b}f(y)\ dy \in \frac{b - a}{2}\left(f(y_{1}) + f(y_{2})\right) + \frac{1}{4320}(b - a)^{5}f^{(4)}([a, b]),
\end{equation*}
where
\begin{equation*}
  y_{1} = \frac{b + a}{2} + \frac{b - a}{2\sqrt{3}},\quad
  y_{2} = \frac{b + a}{2} - \frac{b - a}{2\sqrt{3}},
\end{equation*}
which doesn't require us to evaluate on complex balls, see
e.g.~\cite{Kramer1996}. On intervals where the function is not two
times differentiable it falls back to naive enclosures using interval
arithmetic.

For \(U_{\alpha,1,1}\) the integrand is bounded but not analytic at
\(t = 0\). Slightly more work is required to compute an enclosure of
the integrand for intervals containing \(t = 0\). The only non-trivial
part of the integrand is the term \(C_{-\alpha}(xt) w_{\alpha}(xt)\)
where the first factor diverges at \(t = 0\) and the second factor
tends to zero. For \(\alpha \in I_{2}\) this is handled by expanding
the Clausen function at zero, which allows us to simplify the terms
and bound them individually. For \(\alpha \in I_{1}\) this doesn't
work directly because the individual terms are not bounded as
\(\alpha \to -1\), instead we use the following lemma.
\begin{lemma}
  For \(\alpha \in \left(-1, -\frac{1}{2}\right)\) and
  \(x \in (0, \pi)\) the function
  \begin{equation*}
    C_{-\alpha}(xt)|xt|^{(1 - \alpha) / 2}\log(2e + 1 / |xt|)
  \end{equation*}
  is increasing in \(t\) on the interval \((0, t_{0})\) for
  \begin{equation*}
    t_{0} = \left(
      -\frac{2\Gamma(1 + \alpha)\sin\left(-\frac{\pi}{2}\alpha\right)x^{-\alpha - 1}\left(-\alpha - \frac{1}{2}\right)}{\zeta(-\alpha)}
    \right)^{\frac{1}{\alpha + 1}}.
  \end{equation*}
\end{lemma}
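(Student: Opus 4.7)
I would begin with the substitution $u = xt$, reducing the claim to showing that $g(u) := C_{-\alpha}(u)\,u^{(1-\alpha)/2}\log(2e + 1/u)$ is increasing on $(0, u_0)$ with $u_0 := xt_0$. The defining formula for $t_0$ rewrites cleanly as $\zeta(-\alpha) u_0^{\alpha+1} = (2\alpha+1)\,\Gamma(1+\alpha)\sin(-\pi\alpha/2)$, where both sides are negative for $\alpha \in (-1, -1/2)$. Writing $g = F \cdot L$ with $F(u) := u^{(1-\alpha)/2} C_{-\alpha}(u)$ and $L(u) := \log(2e + 1/u)$, and using $L'(u) = -1/(u(2eu+1))$, the product rule yields
\begin{equation*}
    g'(u) = u^{-(1+\alpha)/2}\left[H(u) L(u) - \frac{C_{-\alpha}(u)}{2eu+1}\right],\qquad H(u) := u C_{-\alpha}'(u) + \tfrac{1-\alpha}{2} C_{-\alpha}(u).
\end{equation*}
Since $u^{-(1+\alpha)/2} > 0$, positivity of $g'$ reduces to the single inequality $H(u) L(u) > C_{-\alpha}(u)/(2eu+1)$.

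Next, I substitute the asymptotic expansion $C_{-\alpha}(u) = A u^{-\alpha-1} + \sum_{m\geq 0}(-1)^m \zeta(-\alpha-2m) u^{2m}/(2m)!$ with $A := \Gamma(1+\alpha)\sin(-\pi\alpha/2) > 0$. Two sign facts are crucial: $\zeta(-\alpha) < 0$ because $-\alpha \in (1/2, 1)$, and $(-1)^m \zeta(-\alpha - 2m) > 0$ for every $m \geq 1$ due to the location of the trivial zeros of the zeta function. This monotone sign structure means that truncating both $C_{-\alpha}$ and $H$ after their first two terms yields pointwise lower bounds. Substituting these into the positivity condition reduces it to a clean two-term inequality of the form
\begin{equation*}
    A u^{-\alpha-1}\left[\tfrac{-1-3\alpha}{2} L(u) - \tfrac{1}{2eu+1}\right] > (-\zeta(-\alpha))\left[\tfrac{1-\alpha}{2} L(u) - \tfrac{1}{2eu+1}\right],
\end{equation*}
in which both bracketed factors are positive (since $L(u) > 1$ and $-1-3\alpha,\ 1-\alpha > 0$). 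The value $u_0$ is chosen precisely so that at the threshold the balance between the singular part $A u^{-\alpha-1}$ and the constant $\zeta(-\alpha)$ collapses to the algebraic identity $\zeta(-\alpha) u_0^{\alpha+1} = A(2\alpha+1)$; verifying the inequality for $u \in (0, u_0)$ then amounts to dividing through by $A u^{-\alpha-1}$ and observing that the resulting condition reads $u^{\alpha+1} < (\text{explicit expression})$ of which $u_0^{\alpha+1}$ is a sufficient upper bound.

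The main obstacle is to close this argument rigorously despite the implicit dependence on $L(u)$, which is not constant. Since $L$ is decreasing, the tightest test of the inequality occurs at $u = u_0$, and one must show that the truncation of the Clausen expansion to two terms really does give the right direction of the correction. The sign pattern $(-1)^m \zeta(-\alpha - 2m) > 0$ for $m \geq 1$ handles the tail of $C_{-\alpha}$ favorably, since the corresponding contributions to $H$ also carry the correct sign (the coefficient $\tfrac{4m+1-\alpha}{2(2m)!}$ is positive). Finally one checks that $u_0 \leq \pi$ so the asymptotic expansion is applicable throughout the interval, using the explicit form of $u_0^{\alpha+1}$ together with $\alpha + 1 > 0$. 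The delicate part is keeping the quantitative constants transparent enough that the relation defining $u_0$ emerges unambiguously from the sufficient condition.
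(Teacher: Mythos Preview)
Your approach is genuinely different from the paper's and considerably more involved. The paper splits the exponent $(1-\alpha)/2 = 1/2 + (-\alpha/2)$ and writes the function as the product
\[
\bigl[C_{-\alpha}(u)\,u^{1/2}\bigr]\cdot\bigl[u^{-\alpha/2}\log(2e+1/u)\bigr],\qquad u=xt,
\]
then shows each factor is increasing separately. For the first factor the Clausen expansion makes the role of $t_0$ transparent: it is exactly where the two-term truncation $A u^{-\alpha-1/2}+\zeta(-\alpha)u^{1/2}$ stops being increasing, and the higher terms are all positive and increasing. For the second factor the paper checks directly that the derivative is positive for all $u>0$ when $\alpha<-1/2$. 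This decoupling makes both halves elementary.

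Your route via the product rule on $F\cdot L$ can be made to work, but there is a real gap in the justification. The claim that the left bracket $\tfrac{-1-3\alpha}{2}L(u)-\tfrac{1}{2eu+1}$ is positive does \emph{not} follow from ``$L(u)>1$'': as $\alpha\to(-1/2)^{-}$ the coefficient $\tfrac{-1-3\alpha}{2}\to 1/4$, while $\tfrac{1}{2eu+1}$ can be close to $1$. What is actually needed, both for this bracket positivity and for your final claim that $u_0^{\alpha+1}$ is a sufficient upper bound, is the estimate
\[
(-\alpha)\,(2eu+1)\log\!\bigl(2e+1/u\bigr)\;\geq\;2,
\]
which is precisely the condition that the paper's second factor $u^{-\alpha/2}\log(2e+1/u)$ be increasing. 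You never state or prove this. If you unwind your final reduction $(-2\alpha-1)B_2\le B_1$, it becomes $(\alpha+1)\bigl[\alpha L(u)+\tfrac{2}{2eu+1}\bigr]\le 0$, i.e.\ exactly the displayed inequality, so your argument collapses to the same key fact the paper uses for its second factor---but arrives there much less directly and with the crucial step left as a hand-wave.
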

\begin{proof}
  For \(t \geq 0\) can remove the absolute values and rewrite the
  function as
  \begin{equation*}
    C_{-\alpha}(xt)(xt)^{1 / 2} \cdot (xt)^{-\alpha / 2}\log(2e + 1 / (xt)).
  \end{equation*}
  It is enough to prove that the two factors both are increasing in
  \(t\).

  For the first factor, \(C_{-\alpha}(xt)(xt)^{1 / 2}\), we expand the
  Clausen function, giving us
  \begin{equation*}
    C_{-\alpha}(xt)(xt)^{1 / 2} =
    \Gamma(1 + \alpha)\sin\left(-\frac{\pi}{2}\alpha\right)(xt)^{-\alpha - 1 / 2}
    + \sum_{m = 0}^{\infty} (-1)^{m}\zeta(-\alpha - 2m)\frac{(xt)^{2m + 1 / 2}}{(2m)!}.
  \end{equation*}
  All terms in the sum with \(m \geq 1\) are positive, due to the
  location of the zeros of the zeta function on the negative real
  axis, and hence increasing in \(t\). We are left with proving that
  \begin{equation*}
    \Gamma(1 + \alpha)\sin\left(-\frac{\pi}{2}\alpha\right)(xt)^{-\alpha - 1 / 2}
    + \zeta(-\alpha)(xt)^{1 / 2}
  \end{equation*}
  is increasing in \(t\). Differentiating with respect to \(t\) gives
  us
  \begin{equation*}
    \Gamma(1 + \alpha)\sin\left(-\frac{\pi}{2}\alpha\right)x^{-\alpha - 1 / 2}\left(-\alpha - \frac{1}{2}\right)t^{-\alpha - 3 / 2}
    + \frac{1}{2}\zeta(-\alpha)x^{1 / 2}t^{-1 / 2}.
  \end{equation*}
  Since we are only interested in the sign we can multiply by
  \(t^{\alpha + 3 / 2}\), giving us
    \begin{equation*}
    \Gamma(1 + \alpha)\sin\left(-\frac{\pi}{2}\alpha\right)x^{-\alpha - 1 / 2}\left(-\alpha - \frac{1}{2}\right)
    + \frac{1}{2}\zeta(-\alpha)x^{1 / 2}t^{\alpha + 1}.
  \end{equation*}
  The positiveness of the first term together with \(1 + \alpha > 0\)
  means that it is positive at \(t = 0\). The unique root for
  \(t > 0\) is given by
  \begin{equation*}
    \left(
      -\frac{2\Gamma(1 + \alpha)\sin\left(-\frac{\pi}{2}\alpha\right)x^{-\alpha - 1}\left(-\alpha - \frac{1}{2}\right)}{\zeta(-\alpha)}
    \right)^{\frac{1}{\alpha + 1}},
  \end{equation*}
  which is exactly the value for \(t_{0}\).

  For the second factor, \((xt)^{-\alpha / 2}\log(2e + 1 / (xt))\),
  differentiation with respect to \(t\) gives us
  \begin{equation*}
    -xt^{-\frac{\alpha}{2} - 1}\frac{1 + \alpha(1 + 2ext)\log(2e + 1/(xt))}{2(1 + 2ext)}.
  \end{equation*}
  Since \(xt^{-\frac{\alpha}{2} - 1}\) and \(1 + 2ext\) are positive
  the sign is the same as that for
  \begin{equation*}
    -(1 + \alpha(1 + 2ext)\log(2e + 1/(xt))).
  \end{equation*}
  For \(x, t > 0\) a lower bound is given by \(-1 - 2\alpha\), which
  is positive for \(\alpha < -\frac{1}{2}\) and hence for
  \(\alpha \in I_{1}\). The derivative is hence positive for all
  \(t > 0\) and the factor is always increasing.
\end{proof}

For the endpoint \(r_{\alpha,x}\) of \(U_{\alpha,1,1}\) and
\(U_{\alpha,1,2,1}\) we don't have an exact value but only an
enclosure. The width of this enclosure depends on how large the
intervals for \(\alpha\) and \(x\) are. The integrators we use don't
handle wide endpoints very well, it is therefore beneficial to
slightly modify the endpoints. If we have the enclosure
\(r_{\alpha,x} \in [\lo{r}_{\alpha,x}, \hi{r}_{\alpha,x}]\) then for
\(U_{\alpha,1,1}\) we only integrate up to \(\lo{r}_{\alpha,x}\) and
for \(U_{\alpha,1,2,1}\) we integrate from \(\hi{r}_{\alpha,x}\). The
part we have missed is given by
\begin{equation*}
  \int_{\lo{r}_{\alpha,x}}^{\hi{r}_{\alpha,x}} |\hat{I}_{\alpha}(x, t)|w_{\alpha}(xt)\ dt.
\end{equation*}
Since the interval centered at the root of \(\hat{I}_{\alpha}(x, t)\)
the integrand will be very small. It is therefore enough to just
compute a naive enclosure given by the diameter of the interval times
an enclosure of the integrand on the interval.

\section{Details for evaluating \(F_{\alpha}(x)\) with \(\alpha \in I_{1}\) and \(x\) near zero}
\label{sec:evaluation-F-I-1-asymptotic}
We here explain how to bound \eqref{eq:evaluation-F-I-1-factor}, given
by
\begin{equation*}
  \frac{\Hop{\alpha}[u_{\alpha}](x) + \frac{1}{2}u_{\alpha}(x)^{2}}{\Gamma(1 + \alpha)\log(1 / x)(1 - x^{1 + \alpha + (1 + \alpha)^{2} / 2})x^{1 - \alpha}}
\end{equation*}
for \(\alpha \in I_{1}\) and \(x\) close to zero. This is used when
bounding \(F_{\alpha}(x)\) for \(x \in [0, \epsilon]\). To simplify
the notation we in this section let
\(p_{\alpha} = 1 + \alpha + (1 + \alpha)^{2} / 2\), note that
\(p_{\hat{\alpha}}\) still denotes a numerical value, we also assume
that \(0 < x < 1\).

From Lemma~\ref{lemma:u0-asymptotic-I-1} we can get the expansion of
\(\Hop{\alpha}[u_{\alpha}](x) + \frac{1}{2}u_{\alpha}(x)^{2}\) at
\(x = 0\). To handle the cancellations between some of the terms in
the expansion we extract them and handle them separately.

To begin with we take out the leading terms in the expansions of
\(u_{\alpha}\) and \(\Hop{\alpha}[u_{\alpha}]\), namely
\begin{align*}
  P &= a_{\alpha,0}\left(
      \Gamma(\alpha)\cos\left(\frac{\pi}{2}\alpha\right)
      - \Gamma(-1 - (1 + \alpha)^{2} / 2)\cos\left(\frac{\pi}{2}(1 + (1 + \alpha)^{2} / 2)\right)x^{p_{\alpha}}
      \right)x^{-\alpha},\\
  Q &= -a_{\alpha,0}\Big(
      \Gamma(2\alpha)\cos\left(\pi\alpha\right)\\
    &\quad\quad - \Gamma(-1 + \alpha - (1 + \alpha)^{2} / 2)\cos\left(\frac{\pi}{2}(1 - \alpha + (1 + \alpha)^{2} / 2)\right)x^{p_{\alpha}}\\
    &\quad\quad + \frac{1}{2}(\zeta(-1 - 2\alpha) - \zeta(-\alpha + (1 + \alpha)^{2} / 2))x^{2(1 + \alpha)}
      \Big)x^{-2\alpha},
\end{align*}
and treat them together. For small values of \(j\) the terms
\(-a_{j}\tilde{C}_{1 \alpha - \hat{\alpha} + jp_{\hat{\alpha}}}\) in
\(\Hop{\alpha}[u_{\alpha}]\) have large cancellations between the
\(x^{-\alpha - \hat{\alpha} + jp_{\hat{\alpha}}}\) and \(x^{2}\) terms
in their expansion, we therefore also treat these separately. More
precisely we consider the terms
\begin{equation*}
  \sum_{j = 1}^{M}-\hat{A}_{\alpha,j}^{0}x^{-\alpha - \hat{\alpha} + jp_{\hat{\alpha}}} + \frac{a_{\hat{\alpha},j}\zeta(-1 - \alpha - \hat{\alpha} + jp_{\hat{\alpha}})}{2}x^{2}
\end{equation*}
Where \(M \leq N_{0}\) is some fixed limit for which terms we treat
separately.

\subsection{First part}
For the first part we are interested in the term
\begin{equation*}
  \frac{Q + P^{2} / 2}{\Gamma(1 + \alpha)\log(1 / x)(1 - x^{p_{\alpha}})x^{1 - \alpha}}
\end{equation*}
with \(P\) and \(Q\) as above. If we let
\(c(\alpha) = \Gamma(\alpha)\cos\left(\frac{\pi}{2}\alpha\right)\) we
can write \(P\) and \(Q\) as
\begin{align*}
  P &= a_{\alpha,0}(c(\alpha) - c(\alpha - p_{\alpha})x^{p_{\alpha}})x^{-\alpha};\\
  Q &= -a_{\alpha,0}(c(2\alpha) - c(2\alpha - p_{\alpha})x^{p_{\alpha}})x^{-2\alpha} + \frac{a_{\alpha,0}}{2}(\zeta(-1 - 2\alpha) - \zeta(-1 - 2\alpha + p_{\alpha}))x^{2}.
\end{align*}
This gives us
\begin{multline*}
  Q + P^{2} / 2 = a_{\alpha,0}\Big(
  (a_{\alpha,0}c(\alpha)^{2} / 2 - c(2\alpha))x^{-2\alpha}
  + (c(2\alpha - p_{\alpha}) - a_{\alpha,0}c(\alpha)c(\alpha - p_{\alpha}))x^{-2\alpha + p_{\alpha}}\\
  + \frac{1}{2}(\zeta(-1 - 2\alpha) - \zeta(-1 - 2\alpha + p_{\alpha}))x^{2}
  + \frac{a_{\alpha,0}}{2}c(\alpha - p_{\alpha})^{2}x^{-2\alpha + 2p_{\alpha}}
  \Big),
\end{multline*}
where we can note that \(2 < -2\alpha + 2p_{\alpha}\). The value of
\(a_{\alpha,0}\) is taken such that the first term is identically equal to
zero, we are thus left with
\begin{multline*}
  Q + P^{2} / 2 = a_{\alpha,0}\Big(
  (c(2\alpha - p_{\alpha}) - a_{\alpha,0}c(\alpha)c(\alpha - p_{\alpha}))x^{-2\alpha + p_{\alpha}}\\
  + \frac{1}{2}(\zeta(-1 - 2\alpha) - \zeta(-1 - 2\alpha + p_{\alpha}))x^{2}
  + \frac{a_{\alpha,0}}{2}c(\alpha - p_{\alpha})^{2}x^{-2\alpha + 2p_{\alpha}}
  \Big).
\end{multline*}
Now, cancelling the \(x^{1 - \alpha}\) and reordering the terms a bit
gives us
\begin{multline*}
  \frac{Q + P^{2} / 2}{\Gamma(1 + \alpha)\log(1 / x)(1 - x^{p_{\alpha}})x^{1 - \alpha}}
  = \frac{a_{\alpha,0}}{\Gamma(1 + \alpha)}\Big(
  \frac{
    (c(2\alpha - p_{\alpha}) - a_{\alpha,0}c(\alpha)c(\alpha - p_{\alpha}))x^{-1 - \alpha + p_{\alpha}}
  }{
    \log(1 / x)(1 - x^{p_{\alpha}})
  }\\
  + \frac{
    \frac{1}{2}(\zeta(-1 - 2\alpha) - \zeta(-1 - 2\alpha + p_{\alpha}))x^{1 + \alpha} + \frac{a_{\alpha,0}}{2}c(\alpha - p_{\alpha})^{2}x^{-1 - \alpha + 2p_{\alpha}}
  }{
    \log(1 / x)(1 - x^{p_{\alpha}})
  }
  \Big).
\end{multline*}
The factor \(\frac{a_{\alpha,0}}{\Gamma(1 + \alpha)}\) can be handled the
same way as in Lemma~\ref{lemma:asymptotic-inv-u0-I-1}. We consider
the two terms separately.

The first term we split as
\begin{equation*}
  \frac{x^{-1 - \alpha + p_{\alpha}}}{\log(1 / x)}
  \frac{1 + \alpha}{1 - x^{p_{\alpha}}}
  \frac{c(2\alpha - p_{\alpha}) - a_{\alpha,0}c(\alpha)c(\alpha - p_{\alpha})}{1 + \alpha}
\end{equation*}
For the first factor we have
\(-1 - \alpha + p_{\alpha} = (1 + \alpha)^{2} / 2 > 0\), so the factor
it is zero for \(x = 0\) and increasing in \(x\), allowing us to
compute an enclosure. The second factor is also increasing in \(x\),
for \(x = 0\) it is \(1 + \alpha\) and for non-zero \(x\) we can
handle the removable singularity in \(\alpha\). For the third factor
we note that \(a_{\alpha,0} = 2c(2\alpha) / c(\alpha)^{2}\), giving us
\begin{equation*}
  \frac{c(2\alpha - p_{\alpha}) - a_{\alpha,0}c(\alpha)c(\alpha - p_{\alpha})}{1 + \alpha}
  = \frac{c(2\alpha - p_{\alpha}) - 2c(2\alpha)c(\alpha - p_{\alpha})/c(\alpha)}{1 + \alpha}.
\end{equation*}
This can also be computed by handling the removable singularities.

The second term we split in a similar way
\begin{multline*}
  \frac{
    \frac{1}{2}(\zeta(-1 - 2\alpha) - \zeta(-1 - 2\alpha + p_{\alpha}))x^{1 + \alpha} + \frac{a_{\alpha,0}}{2}c(\alpha - p_{\alpha})^{2}x^{-1 - \alpha + 2p_{\alpha}}
  }{
    \log(1 / x)(1 - x^{p_{\alpha}})
  }\\
  = \frac{x^{1 + \alpha}}{2}\frac{1 + \alpha}{1 - x^{p_{\alpha}}}
  \frac{\frac{1}{2}(\zeta(-1 - 2\alpha) - \zeta(-1 - 2\alpha + p_{\alpha})) + \frac{a_{\alpha,0}}{2}c(\alpha - p_{\alpha})^{2}x^{(1 + \alpha)^{2}}}{(1 + \alpha)\log(1/x)},
\end{multline*}
where in the last exponent we have used
\(-2 - \alpha + 2p_{\alpha} = (1 + \alpha)^{2}\). The first factor is
easily handled, and the second factor is the same as in the previous
term. For the third factor we let
\begin{align*}
  v_{\alpha} &= \frac{1}{2}(\zeta(-1 - 2\alpha) - \zeta(-1 - 2\alpha + p_{\alpha})),\\
  w_{\alpha} &= \frac{a_{\alpha,0}}{2}c(\alpha - p_{\alpha})^{2}.
\end{align*}
Allowing us to write it as
\begin{equation*}
  \frac{v_{\alpha} + w_{\alpha}x^{(1 + \alpha)^{2}}}{(1 + \alpha)\log(1/x)}
  = \frac{v_{\alpha} - w_{\alpha}}{1 + \alpha}\frac{1}{\log(1/x)}
  + (1 + \alpha)w_{\alpha}\frac{1 - x^{(1 + \alpha)^{2}}}{(1 + \alpha)^{2}\log(1/x)}
\end{equation*}
The first factor in the first term has a removable singularity we can
handle, and its second factor is easily enclosed. For the second term
we can enclose \((1 + \alpha)w_{\alpha}\) which has a removable
singularity at \(\alpha = -1\). For the remaining factor we let
\(t = (1 + \alpha)^{2}\log(x)\) and rewrite it as
\begin{equation}
  \label{eq:I1-defect-exp-t}
  \frac{1 - x^{(1 + \alpha)^{2}}}{(1 + \alpha)^{2}\log(1/x)} =
  \frac{x^{(1 + \alpha)^{2}} - 1}{(1 + \alpha)^{2}\log(x)}=
  \frac{e^{t} - 1}{t}.
\end{equation}
This function is increasing in \(t\), so it is enough to get an
enclosure of \(t\) and evaluate at the endpoints of \(t\).

\subsection{Second part}
For the second part we are interested in the term
\begin{equation*}
  \sum_{j = 1}^{M}\frac{-\hat{A}_{\alpha,j}^{0}x^{-\alpha - \hat{\alpha} + jp_{\hat{\alpha}}} + \frac{a_{\hat{\alpha},j}\zeta(-1 - \alpha - \hat{\alpha} + jp_{\hat{\alpha}})}{2}x^{2}}{\Gamma(1 + \alpha)\log(1 / x)(1 - x^{p_{\alpha}})x^{1 - \alpha}}
  = \frac{1}{\Gamma(1 + \alpha)(1 - x^{p_{\alpha}})}
  \sum_{j = 1}^{M}\frac{-\hat{A}_{\alpha,j}^{0}x^{-\alpha - \hat{\alpha} + jp_{\hat{\alpha}}} + \frac{a_{\hat{\alpha},j}\zeta(-1 - \alpha - \hat{\alpha} + jp_{\hat{\alpha}})}{2}x^{2}}{\log(1 / x)x^{1 - \alpha}}.
\end{equation*}
We begin by noting that
\begin{equation*}
  \frac{1}{\Gamma(1 + \alpha)(1 - x^{p_{\alpha}})}
\end{equation*}
is increasing in \(x\). For \(x = 0\) it is lower bounded by \(0\) and
increasing in \(\alpha\). For \(x > 0\) it has a removable singularity
at \(\alpha = -1\) which we can handle. What remains is to handle the
sum, this we do term by term.

Let \(1 \leq j \leq M\) and
\(r_{j} = -1 - \hat{\alpha} + jp_{\hat{\alpha}} = (j - 1)(1 +
\hat{\alpha}) + (1 + \hat{\alpha})^{2} / 2\), note that \(r_{j} > 0\),
this gives us
\begin{equation*}
  \frac{-\hat{A}_{\alpha,j}^{0}x^{1 - \alpha + r_{j}} + \frac{a_{j}\zeta(-\alpha + r_{j})}{2}x^{2}}{\log(1 / x)x^{1 - \alpha}}
  =   \frac{-\hat{A}_{\alpha,j}^{0}x^{r_{j}} + \frac{a_{\hat{\alpha},j}\zeta(-\alpha + r_{j})}{2}x^{1 + \alpha}}{\log(1 / x)}.
\end{equation*}
Note that which exponent is the largest depends on the precise choice
of both \(j\) and \(\alpha\) and is in general not the same for all
\(\alpha \in I_{1}\). Using that
\begin{equation*}
  \hat{A}_{\alpha,j}^{0} = \Gamma(\alpha + \hat{\alpha} - jp_{\hat{\alpha}})\cos\left((\alpha + \hat{\alpha} - jp_{\hat{\alpha}})\frac{\pi}{2}\right)a_{\hat{\alpha},j}
  = \Gamma(\alpha - 1 - r_{j})\cos\left((\alpha - 1 - r_{j})\frac{\pi}{2}\right)a_{\hat{\alpha},j}
\end{equation*}
we get
\begin{equation*}
  a_{\hat{\alpha},j}\frac{-\Gamma(\alpha - 1 - r_{j})\cos\left((\alpha - 1 - r_{j})\frac{\pi}{2}\right)x^{r_{j}} + \frac{\zeta(-\alpha + r_{j})}{2}x^{1 + \alpha}}{\log(1 / x)}.
\end{equation*}
The \(a_{\hat{\alpha},j}\) is a constant, we therefore focus on the
rest. Adding and subtracting
\(\frac{\zeta(-\alpha + r_{j})}{2}x^{r_{j}}\) we can write this as
\begin{equation*}
  \left(-\Gamma(\alpha - 1 - r_{j})\cos\left((\alpha - 1 - r_{j})\frac{\pi}{2}\right) + \frac{\zeta(-\alpha + r_{j})}{2}\right)\frac{x^{r_{j}}}{\log(1 / x)}
  + \frac{\zeta(-\alpha + r_{j})}{2}\frac{x^{1 + \alpha}-x^{r_{j}}}{\log(1 / x)}.
\end{equation*}
For the first term we can easily enclose the factor depending on \(x\)
using that it is monotone. The factor depending on \(\alpha\) we
either enclose directly, if \(-\alpha - 1 - r_{j} \not= 0\), or handle
the removable singularity otherwise.

For the second term we use the deflated zeta function
\eqref{eq:deflated-zeta} to split it further into two terms
\begin{equation*}
  \frac{\underline{\zeta}(-\alpha + r_{j})}{2}\frac{x^{1 + \alpha}-x^{r_{j}}}{\log(1 / x)}
  - \frac{1}{2(1 + \alpha - r_{j})}\frac{x^{1 + \alpha}-x^{r_{j}}}{\log(1 / x)}.
\end{equation*}
The first term can be enclosed directly. The second term we write as
\begin{equation*}
  \frac{x^{r_{j}} - x^{1 + \alpha}}{2(r_{j} - (1 + \alpha))\log(1 / x)}.
\end{equation*}

We first consider the case when \(r \geq 1 + \alpha\), we then factor
out \(x^{1 + \alpha}\), giving us
\begin{equation*}
  \frac{x^{1 + \alpha}}{2}\frac{x^{r_{j} - (1 + \alpha)} - 1}{(r_{j} - (1 + \alpha))\log(1 / x)}
  = \frac{x^{1 + \alpha}}{2}\frac{1 - x^{r_{j} - (1 + \alpha)}}{(r_{j} - (1 + \alpha))\log(x)}
\end{equation*}
If we let \(t = (r_{j} - (1 + \alpha))\log(x)\) the second factor can
be written as
\begin{equation*}
  \frac{1 - e^{t}}{t}
\end{equation*}
which we handle similarly to \eqref{eq:I1-defect-exp-t}, note that
since \(r_{j} \geq 1 + \alpha\) and \(x < 1\) we have \(t \leq 0\).

Next we consider the case when \(r_{j} < 1 + \alpha\), depending on
the value of \(r_{j}\) this case might or might not occur. We then
factor out \(x^{r_{j}}\), giving us
\begin{equation*}
  \frac{x^{r_{j}}}{2}\frac{1 - x^{1 + \alpha - r_{j}}}{(r_{j} - (1 + \alpha))\log(1 / x)}
  = \frac{x^{r_{j}}}{2}\frac{1 - x^{1 + \alpha - r_{j}}}{(1 + \alpha - r_{j})\log(x)}.
\end{equation*}
We here let \(t = 1 + \alpha - r_{j}\) and the procedure is then the
same as the previous case.

\subsection{Remaining part}
Similarly to in the above section we factor out
\(\frac{1}{\Gamma(1 + \alpha)(1 - x^{p_{\alpha}})}\) and bound it
separately. For the terms in the expansion we note that they all have
an exponent greater than \(1 - \alpha\), we can thus cancel the
\(x^{1 - \alpha}\) in the denominator directly. It is then possible to
enclose the terms in the numerator and the factor
\(\frac{1}{\log(1 / x)}\) separately. For some terms we can however
improve the computed enclosures by incorporating the division by
\(\log(1 / x)\). These are the terms coming from \(u_{\alpha}(x)^{2}\)
containing a factor \(P\). In that case we want to enclose the factor
\begin{equation*}
  \frac{a_{\alpha,0}(c(\alpha) - c(\alpha - p_{\alpha})x^{p_{\alpha}})}{\log(1 / x)}.
\end{equation*}
This can be done using the following lemma and combining it with
Lemma~\ref{lemma:x_pow_t_m1_div_t} to enclose
\(\frac{x^{p_{\alpha}} - 1}{p_{\alpha}\log(1 / x)}\).
\begin{lemma}
  The factor
  \begin{equation*}
    a_{\alpha,0}(c(\alpha) - c(\alpha - p_{\alpha})x^{p_{\alpha}}).
  \end{equation*}
  can be written as
  \begin{equation*}
    C_{1}x^{p_{\alpha}} - C_{2}\frac{x^{p_{\alpha}} - 1}{p_{\alpha}}
  \end{equation*}
  with
  \begin{equation*}
    C_{1} = a_{\alpha,0}(c(\alpha) - c(\alpha - p_{\alpha}))\quad\text{ and }\quad
    C_{2} = a_{\alpha,0}c(\alpha)p_{\alpha}.
  \end{equation*}
  Both \(C_{1}\) and \(C_{2}\) have removable singularities at
  \(\alpha = -1\) and are finite there.
\end{lemma}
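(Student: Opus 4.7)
The proof breaks naturally into two parts: (i) the algebraic identity, and (ii) the removable singularity analysis at $\alpha = -1$.

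For the algebraic identity, the plan is a straightforward expansion of the proposed right-hand side. Writing out
\begin{equation*}
  C_{1}x^{p_{\alpha}} - C_{2}\frac{x^{p_{\alpha}} - 1}{p_{\alpha}}
  = a_{\alpha,0}(c(\alpha) - c(\alpha - p_{\alpha}))x^{p_{\alpha}}
  - a_{\alpha,0}c(\alpha)(x^{p_{\alpha}} - 1),
\end{equation*}
the $a_{\alpha,0}c(\alpha)x^{p_{\alpha}}$ terms cancel and the remaining terms rearrange to $a_{\alpha,0}(c(\alpha) - c(\alpha - p_{\alpha})x^{p_{\alpha}})$, as desired. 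This part requires no further commentary.

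For the removable singularity of $C_{2}$, the plan is to combine the explicit formula for $a_{\alpha,0}$ with the definition of $c(\alpha)$, giving
\begin{equation*}
  a_{\alpha,0}c(\alpha) = \frac{2\Gamma(2\alpha)\cos(\pi\alpha)}{\Gamma(\alpha)\cos\left(\tfrac{\pi}{2}\alpha\right)}.
\end{equation*}
Near $\alpha=-1$, the factor $\Gamma(2\alpha)$ contributes a simple pole (of order $\tfrac{1}{4(\alpha+1)}$), $\Gamma(\alpha)$ contributes a simple pole $\tfrac{-1}{\alpha+1}$, and $\cos(\tfrac{\pi}{2}\alpha)$ contributes a simple zero with derivative $\tfrac{\pi}{2}$. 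Balancing these poles and zeros one finds $a_{\alpha,0}c(\alpha) = O((\alpha+1)^{-1})$. Since $p_{\alpha} = (1+\alpha) + (1+\alpha)^{2}/2$ vanishes to first order at $\alpha=-1$, the product $C_{2} = a_{\alpha,0}c(\alpha)p_{\alpha}$ is finite at $\alpha=-1$; a direct computation yields a limit value of order $1/\pi$. I would present this as: rewrite $C_{2}$ by substituting the definitions, apply the reflection/recurrence identities for $\Gamma$ if convenient to cancel the $\Gamma(2\alpha)/\Gamma(\alpha)$ pole, and then apply the approach in Appendix~\ref{sec:removable-singularities} to enclose the resulting expression at $\alpha=-1$.

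For the removable singularity of $C_{1}$, the key observation is that the function $c(\alpha) = \Gamma(\alpha)\cos(\tfrac{\pi}{2}\alpha)$ itself has only a removable singularity at $\alpha=-1$ (the simple pole of $\Gamma$ is cancelled by the simple zero of the cosine), so $c$ extends to an analytic function near $\alpha=-1$ with value $-\pi/2$. Consequently $c(\alpha) - c(\alpha - p_{\alpha}) = O(p_{\alpha}) = O(1+\alpha)$ by the mean value theorem. Combined with $a_{\alpha,0} = O((1+\alpha)^{-1})$ (as already shown in the proof of Lemma~\ref{lemma:u0-i_1-main-limit}), this gives $C_{1} = O(1)$. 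In practice I would write $C_{1} = -(1+\alpha)a_{\alpha,0} \cdot \frac{c(\alpha - p_{\alpha}) - c(\alpha)}{1+\alpha}$, enclose the first factor using the computer-assisted bound from Lemma~\ref{lemma:u0-i_1-main-limit}, and handle the second factor using Appendix~\ref{sec:removable-singularities} together with the previously established removable singularity of $c$. The only mildly delicate part is book-keeping: one must handle two nested removable singularities (first in $c$, then in the difference divided by $1+\alpha$), but both fit the framework of the appendix and present no conceptual obstacle.
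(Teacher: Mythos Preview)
Your proof is correct and follows essentially the same approach as the paper: the algebraic identity is verified by direct expansion (the paper phrases it as adding and subtracting $a_{\alpha,0}c(\alpha)x^{p_{\alpha}}$), and both $C_{1}$ and $C_{2}$ are handled by factoring out $(1+\alpha)a_{\alpha,0}$ and showing the remaining factor is bounded, exactly as you describe. The paper's writeup is terser, simply recording the factorizations $C_{1} = a_{\alpha,0}(1+\alpha)\cdot\frac{c(\alpha)-c(\alpha-p_{\alpha})}{1+\alpha}$ and $C_{2} = a_{\alpha,0}(1+\alpha)\cdot c(\alpha)(1+(1+\alpha)/2)$, but your more detailed pole-counting discussion is a welcome elaboration of the same idea.
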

\begin{proof}
  The expression follows directly by adding and subtracting
  \(a_{\alpha,0}c(\alpha)x^{p_{\alpha}}\) and collecting the terms.
  The removable singularities of \(C_{1}\) and \(C_{2}\) can be
  handled by writing them as
  \begin{align*}
    C_{1} &= a_{\alpha,0}(1 + \alpha) \cdot \frac{c(\alpha) - c(\alpha - p_{\alpha})}{1 + \alpha},\\
    C_{2} &= a_{\alpha,0}(1 + \alpha) \cdot c(\alpha)(1 + (1 + \alpha) / 2)
  \end{align*}
  and enclosing all the factors separately.
\end{proof}

\section{Details for evaluating \(\mathcal{T}_{\alpha}(x)\) with \(\alpha \in I_{1}\) and \(x\) near zero}
\label{sec:evaluation-T-I-1-asymptotic}
In this section we give bounds for \(G_{\alpha,1}\), \(G_{\alpha,2}\)
and \(R_{\alpha}\) occurring in Lemma~\ref{lemma:evaluation-U-I-1}.

\subsection{\(G_{\alpha,1}\)}
Recall that
\begin{equation*}
  G_{\alpha,1}(x) = \frac{
    \int_{0}^{1}\left|(1 - t)^{-\alpha - 1} + (1 + t)^{-\alpha - 1} - 2t^{-\alpha - 1}\right|t^{(1 - \alpha) / 2}\log(2e + 1 / (xt))\ dt
  }{
    \log(1 / x)(1 - x^{1 + \alpha + (1 + \alpha)^{2} / 2})
  }.
\end{equation*}
To bound it we use the following lemma.
\begin{lemma}
  For \(\alpha \in (-1, 0)\) and \(x < 1\), \(G_{\alpha,1}\) satisfies
  \begin{equation*}
    G_{\alpha,1}(x) \leq \frac{1 + \alpha}{1 - x^{1 + \alpha + (1 + \alpha)^{2} / 2}}\left(
      \frac{\log(2e + 1 / x)}{\log(1 / x)} J_{\alpha,1}
      - \frac{1}{\log(1 / x)} J_{\alpha,2}
    \right),
  \end{equation*}
  where \(J_{\alpha,1}\) and \(J_{\alpha,2}\) are independent of \(x\)
  and given by
  \begin{align*}
    J_{\alpha,1} &= \int_{0}^{1}\left|\frac{(1 - t)^{-(1 + \alpha)} + (1 + t)^{-(1 + \alpha)} - 2t^{-(1 + \alpha)}}{1 + \alpha}\right|t^{(1 - \alpha) / 2}\ dt,\\
    J_{\alpha,2} &= \int_{0}^{1}\left|\frac{(1 - t)^{-(1 + \alpha)} + (1 + t)^{-(1 + \alpha)} - 2t^{-(1 + \alpha)}}{1 + \alpha}\right|t^{(1 - \alpha) / 2}\log(t)\ dt.
  \end{align*}
\end{lemma}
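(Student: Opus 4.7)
The plan is to bound the logarithmic factor $\log(2e + 1/(xt))$ pointwise in $t$ by a sum of a function of $x$ alone and a function of $t$ alone, and then split the integral in $G_{\alpha,1}$ accordingly. The key elementary inequality I would use is
\begin{equation*}
  \log(2e + 1/(xt)) \leq \log(2e + 1/x) + \log(1/t)
  \quad \text{for } 0 < t \leq 1,\ 0 < x < 1,
\end{equation*}
which follows by rewriting $(1/t)(2e + 1/x) = 2e/t + 1/(xt) \geq 2e + 1/(xt)$, valid because $t \leq 1$ implies $2e/t \geq 2e$. Taking logarithms of both sides of $2e + 1/(xt) \leq (1/t)(2e + 1/x)$ gives the claimed bound.

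With this in hand, I would substitute into the defining integral for $G_{\alpha,1}$ and distribute, splitting it as
\begin{equation*}
  G_{\alpha,1}(x) \leq \frac{\log(2e + 1/x)}{\log(1/x)(1 - x^{1 + \alpha + (1+\alpha)^2/2})}\, K_{\alpha,1}(x)
  + \frac{1}{\log(1/x)(1 - x^{1 + \alpha + (1+\alpha)^2/2})}\, K_{\alpha,2}(x),
\end{equation*}
where $K_{\alpha,1}$ and $K_{\alpha,2}$ are the integrals of $|(1-t)^{-\alpha-1} + (1+t)^{-\alpha-1} - 2t^{-\alpha-1}|\, t^{(1-\alpha)/2}$ against $1$ and $\log(1/t)$, respectively, over $(0,1)$. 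Neither of these integrals depends on $x$, so they are the $x$-independent constants one wants.

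The last step is to match these to $J_{\alpha,1}$ and $J_{\alpha,2}$. Since $J_{\alpha,i}$ are defined with a factor $1/(1+\alpha)$ absorbed into the integrand, we have $K_{\alpha,1}(x) = (1+\alpha) J_{\alpha,1}$ by direct inspection. For the second piece, $\log(1/t) = -\log(t)$ on $(0,1)$ turns $K_{\alpha,2}(x)$ into $-(1+\alpha) J_{\alpha,2}$, which yields the minus sign in the claimed bound. Note that $J_{\alpha,2}$ is itself nonpositive (the integrand is a product of a nonnegative quantity and $\log(t) \leq 0$), so $-J_{\alpha,2} \geq 0$ and the whole right-hand side is nonnegative as it must be.

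The only nontrivial point is the initial inequality, which is genuinely just algebra; the rest is linearity of integration plus bookkeeping of the $(1+\alpha)$ factor extracted to handle the $\alpha \to -1$ limit uniformly. I do not anticipate any real obstacle beyond verifying the domain condition $0 < t \leq 1$ under which the logarithm bound holds—and that is automatic since the integration in $G_{\alpha,1}$ is over precisely $(0,1)$. The factor $1 - x^{1 + \alpha + (1+\alpha)^2/2}$ in the denominator is positive because $x < 1$ and the exponent is positive for $\alpha \in (-1,0)$, so no sign issue arises on dividing through.
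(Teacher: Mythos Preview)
Your proposal is correct and essentially the same as the paper's proof. The paper decomposes $\log(2e + 1/(xt)) = \log(1 + 2ext) - \log(x) - \log(t)$ exactly and then bounds $\log(1 + 2ext) \leq \log(1 + 2ex)$ on $(0,1)$, which after recombining yields precisely your inequality $\log(2e + 1/(xt)) \leq \log(2e + 1/x) + \log(1/t)$; from there both arguments are identical bookkeeping.
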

\begin{proof}
  Factoring out \(1 + \alpha\) we have
  \begin{multline*}
    G_{\alpha,1}(x) = \frac{1 + \alpha}{1 - x^{1 + \alpha + (1 + \alpha)^{2} / 2}}
    \frac{1}{\log(1 / x)}\\
    \int_{0}^{1}\left|\frac{(1 - t)^{-(1 + \alpha)} + (1 + t)^{-(1 + \alpha)} - 2t^{-(1 + \alpha)}}{1 + \alpha}\right|t^{(1 - \alpha) / 2}\log(2e + 1 / (xt))\ dt.
  \end{multline*}
  Let \(J_{\alpha}(x)\) denote
  \begin{equation*}
    J_{\alpha}(x) = \int_{0}^{1}\left|\frac{(1 - t)^{-(1 + \alpha)} + (1 + t)^{-(1 + \alpha)} - 2t^{-(1 + \alpha)}}{1 + \alpha}\right|t^{(1 - \alpha) / 2}\log(2e + 1 / (xt))\ dt
  \end{equation*}
  Splitting the logarithm as
  \begin{equation*}
    \log(2e + 1 / (xt)) = \log(1 + 2ext) - \log(x) - \log(t)
  \end{equation*}
  we can split \(J_{\alpha}(x)\) as
  \begin{multline*}
    J_{\alpha}(x) = \int_{0}^{1}\left|\frac{(1 - t)^{-(1 + \alpha)} + (1 + t)^{-(1 + \alpha)} - 2t^{-(1 + \alpha)}}{1 + \alpha}\right|t^{(1 - \alpha) / 2}\log(1 + 2ext)\ dt\\
    - \log(x)\int_{0}^{1}\left|\frac{(1 - t)^{-(1 + \alpha)} + (1 + t)^{-(1 + \alpha)} - 2t^{-(1 + \alpha)}}{1 + \alpha}\right|t^{(1 - \alpha) / 2}\ dt\\
    - \int_{0}^{1}\left|\frac{(1 - t)^{-(1 + \alpha)} + (1 + t)^{-(1 + \alpha)} - 2t^{-(1 + \alpha)}}{1 + \alpha}\right|t^{(1 - \alpha) / 2}\log(t)\ dt.
  \end{multline*}
  Using that \(\log(1 + 2ext) \leq \log(1 + 2ex)\) on the interval of
  integration we have
  \begin{multline*}
    J_{\alpha}(x) \leq
    (\log(1 + 2ex) - \log(x)) \int_{0}^{1}\left|\frac{(1 - t)^{-(1 + \alpha)} + (1 + t)^{-(1 + \alpha)} - 2t^{-(1 + \alpha)}}{1 + \alpha}\right|t^{(1 - \alpha) / 2}\ dt\\
    - \int_{0}^{1}\left|\frac{(1 - t)^{-(1 + \alpha)} + (1 + t)^{-(1 + \alpha)} - 2t^{-(1 + \alpha)}}{1 + \alpha}\right|t^{(1 - \alpha) / 2}\log(t)\ dt
    = \log(2e + 1 / x)J_{\alpha,1} - J_{\alpha,2}
  \end{multline*}
  and the result follows.
\end{proof}
The computation of \(J_{\alpha,1}\) and \(J_{\alpha,2}\) follow the
same approach as in Appendix~\ref{sec:rigorous-integration}, rigorous
numerical integration on most of the interval and explicitly handling
the singularity at \(t = 1\) by factoring out a bound for
\(t^{(1 - \alpha) / 2}\) and \(t^{(1 - \alpha) / 2}\log(t)\)
respectively.

\subsection{\(G_{\alpha,2}\)}
Recall that
\begin{equation*}
  G_{\alpha,2}(x)
  = \frac{
    \int_{1}^{\pi / x}\left((t - 1)^{-\alpha - 1} + (1 + t)^{-\alpha - 1} - 2t^{-\alpha - 1}\right)t^{(1 - \alpha) / 2}\log(2e + 1 / (xt))\ dt
  }{
    \log(1 / x)(1 - x^{1 + \alpha + (1 + \alpha)^{2} / 2})
  }.
\end{equation*}
Unfortunately it is not enough in this case to just factor out
\(1 + \alpha\) and bound the two factors independently, as for
\(G_{\alpha,1}\), the second factor would not be bounded. Instead, we
have to more carefully track the dependence on \(x\). We split the
work from getting an upper bound into several lemmas. In
Lemma~\ref{lemma:G2-split} we split \(G_{\alpha,2}\) into one main
part, \(G_{\alpha,2,M}\), and one remainder part, \(G_{\alpha,2,R}\).
In Lemma~\ref{lemma:G2M-bound} we give a bound for \(G_{\alpha,2,M}\).
The bound for the \(G_{\alpha,2,R}\) is handled in
Lemma~\ref{lemma:G2R-n1}, \ref{lemma:G2R1-bound}
and~\ref{lemma:G2R2-bound}. These bounds work well for very small
values of \(x\), less than around \(10^{-10}\), but work poorly for
larger values of \(x\). The methods based on direct integration, as
discussed in Appendix~\ref{sec:rigorous-integration}, work well for
\(x\) larger than around \(0.1\). In between these two intervals,
\(10^{-10} < x < 0.\), we use a slightly different approach for
bounding \(G_{\alpha,2}\). It is described at the end of this section,
with most of the details in Lemma~\ref{lemma:G21}.

\begin{lemma}
  \label{lemma:G2-split}
  For \(\alpha \in (-1, 0)\) and \(x < 1\), \(G_{\alpha,2}(x)\)
  satisfies
  \begin{equation*}
    G_{\alpha,2}(x) = G_{\alpha,2,M}(x) + G_{\alpha,2,R}(x)
  \end{equation*}
  with
  \begin{align*}
    G_{\alpha,2,M}(x) =&  \frac{1}{1 - x^{1 + \alpha + (1 + \alpha)^{2} / 2}}
                         \frac{1}{\log(1 / x)}
                         \frac{4 + 2\alpha}{3}\\
                       &\left(
                         \left(D_{x} - \log x - \frac{2}{3}\frac{1}{1 + \alpha}\right)
                         \left(1 - \left(\frac{x}{\pi}\right)^{\frac{3}{2}(1 + \alpha)}\right)
                         + \log\left(\frac{\pi}{x}\right)\left(\frac{x}{\pi}\right)^{\frac{3}{2}(1 + \alpha)}
                         \right),\\
    G_{\alpha,2,R}(x) =& \frac{1}{1 - x^{1 + \alpha + (1 + \alpha)^{2} / 2}}
                         \frac{1}{\log(1 / x)}\\
                       &\sum_{n = 1}^{\infty} (-1)^{n}\frac{(1 + \alpha)^{n}}{n!}
                         \sum_{k = 0}^{n - 1}\binom{n}{k}\frac{1}{2^{k}}\int_{1}^{\pi / x}
                         \log(t)^{k}h_{n - k}(t)t\log(2e + 1 / (xt))\ dt,
  \end{align*}
  for some \(D_{x} \in \left[-\log(1 + 2e\pi), \log(1 + 2e\pi)\right]\)
  and
  \begin{equation*}
    h_{k}(t) = \log(t - 1)^{k} + \log(t + 1)^{k} - 2\log(t)^{k}
    - \frac{k(k - 1 - \log t)\log(t)^{k - 2}}{t^{2}}.
  \end{equation*}
\end{lemma}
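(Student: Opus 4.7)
The plan is to Taylor-expand the integrand in powers of $(1+\alpha)$ and peel off, term by term, the leading large-$t$ behavior, so that what remains integrates absolutely while the subtracted main part can be computed in closed form. The definition of $h_k(t)$ in the statement is precisely designed to remove the $O(t^{-2})$ tail from $\log(t-1)^k+\log(t+1)^k-2(\log t)^k$, so I expect $G_{\alpha,2,R}$ to be defined by the integral of $h_{n-k}$'s and $G_{\alpha,2,M}$ to arise from integrating the subtracted tail.

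First I would write $s^{-\alpha-1}=e^{-(1+\alpha)\log s}=\sum_{n\ge 0}\frac{(-(1+\alpha))^n}{n!}(\log s)^n$ and apply this to $s=t-1,t+1,t$; the $n=0$ terms cancel in the combination $(t-1)^{-\alpha-1}+(t+1)^{-\alpha-1}-2t^{-\alpha-1}$. Likewise I would write $t^{(1-\alpha)/2}=t\cdot t^{-(1+\alpha)/2}=t\sum_{k\ge 0}\frac{(-(1+\alpha))^k}{2^k k!}(\log t)^k$. Taking the Cauchy product in the power of $(1+\alpha)$ (with $m=n+k$) turns the integrand of the numerator of $G_{\alpha,2}$ into
\begin{equation*}
t\sum_{m=1}^{\infty}\frac{(-(1+\alpha))^m}{m!}\sum_{k=0}^{m-1}\binom{m}{k}\frac{(\log t)^k}{2^k}\bigl[\log(t-1)^{m-k}+\log(t+1)^{m-k}-2\log(t)^{m-k}\bigr]\log(2e+1/(xt)).
\end{equation*}
Replacing the bracket by $h_{m-k}(t)+\frac{(m-k)(m-k-1-\log t)(\log t)^{m-k-2}}{t^2}$ splits the sum into two pieces. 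Dividing by $\log(1/x)(1-x^{1+\alpha+(1+\alpha)^2/2})$, the $h_{m-k}$ piece is exactly $G_{\alpha,2,R}(x)$.

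Next I would evaluate the correction piece. The inner sums collapse via $\binom{m}{k}(m-k)=m\binom{m-1}{k}$ and $\binom{m}{k}(m-k)(m-k-1)=m(m-1)\binom{m-2}{k}$, producing factors $m(3/2)^{m-1}$ and $m(m-1)(3/2)^{m-2}$. After one more reindexing the sum in $m$ reduces to the exponential $\sum_{j\ge 0}\frac{(-(3/2)(1+\alpha)\log t)^j}{j!}=t^{-(3/2)(1+\alpha)}$, and the entire correction integrand collapses to
\begin{equation*}
(1+\alpha)(2+\alpha)\,t^{-(3/2)(1+\alpha)-1}\log(2e+1/(xt)).
\end{equation*}
I would then split $\log(2e+1/(xt))=\log(1+2ext)-\log x-\log t$ and integrate on $[1,\pi/x]$. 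The $\log x$ and $\log t$ pieces give elementary closed forms using $\int t^a\,dt$ and $\int t^a\log t\,dt$; rearranging the resulting powers of $(x/\pi)^{(3/2)(1+\alpha)}$ produces precisely the expression inside the parentheses of $G_{\alpha,2,M}$, with the $-\frac{2}{3(1+\alpha)}$ coming from the $\int t^a\log t\,dt$ constant. For the $\log(1+2ext)$ piece, since $xt\in[x,\pi]\subseteq(0,\pi]$, we have $\log(1+2ext)\in[0,\log(1+2e\pi)]$, and division by $\int_1^{\pi/x}t^{-(3/2)(1+\alpha)-1}\,dt=\tfrac{2}{3(1+\alpha)}[1-(x/\pi)^{(3/2)(1+\alpha)}]$ lets us absorb this contribution into $D_x$ with the stated range.

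The main obstacle is bookkeeping: one must keep track of what happens when $m-k\in\{0,1\}$ in the correction term (where $(\log t)^{m-k-2}$ is formally singular but the prefactor $(m-k)(m-k-1-\log t)$ makes the product a polynomial in $\log t$), and verify that the rearrangements—Cauchy product, interchange of sum and integral, reindexing the $m$-sum as an exponential—are valid. Absolute convergence on each compact subinterval of $(1,\pi/x)$ follows because both expansions of $s^{-\alpha-1}$ and $t^{(1-\alpha)/2}$ are entire in $(1+\alpha)$ with coefficients that are $L^1$-integrable against the weight $\log(2e+1/(xt))$ on $[1,\pi/x]$, so Fubini and dominated convergence apply. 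Once the collapse to $(1+\alpha)(2+\alpha)t^{-(3/2)(1+\alpha)-1}$ is established, the remainder is direct antidifferentiation and the identification of $D_x$ as the normalized mean of $\log(1+2ext)$ against the measure $t^{-(3/2)(1+\alpha)-1}dt$.
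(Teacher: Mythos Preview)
Your proposal is correct and follows the same overall strategy as the paper: expand the integrand as a power series in $(1+\alpha)$, use the binomial theorem to organize the result as a double sum indexed by $(m,k)$, and split off the $h_{m-k}$ remainder from the leading $t^{-2}$ correction. Where you diverge from the paper is in the treatment of the main part. The paper integrates each term $\int_1^{\pi/x}\frac{(\log t)^l}{t}\log(2e+1/(xt))\,dt$ separately, introduces a different constant $D_{l,x}\in[0,\log(1+2e\pi)]$ for each $l$, and only then sums the resulting series in $n$ to obtain the closed form; the combination of the various $D_{l,x}$ over an alternating series is what forces the wider interval $D_x\in[-\log(1+2e\pi),\log(1+2e\pi)]$. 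You instead sum the series in $m$ \emph{before} integrating, recognizing that the correction collapses to the single closed-form integrand $(1+\alpha)(2+\alpha)\,t^{-\frac{3}{2}(1+\alpha)-1}\log(2e+1/(xt))$. This is cleaner and in fact yields the tighter range $D_x\in[0,\log(1+2e\pi)]$, since only one mean-value constant is needed; the lemma as stated is then a fortiori true. Your remark about the degenerate cases $m-k\in\{0,1\}$ is on point and is exactly where the paper is also careful (for $n=1$ the $(n-1)$ prefactor kills the $l=-1$ integral).
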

\begin{proof}
  Let us start by studying the integral in of \(G_{\alpha,2}\), for
  which we use the notation
  \begin{equation*}
    K_{\alpha}(x) = \int_{1}^{\pi / x}
    \left((t - 1)^{-\alpha - 1} + (1 + t)^{-\alpha - 1} - 2t^{-\alpha - 1}\right)
    t^{(1 - \alpha) / 2}\log(2e + 1 / (xt))\ dt.
  \end{equation*}

  We have
  \begin{multline*}
    \left((t - 1)^{-\alpha - 1} + (1 + t)^{-\alpha - 1} - 2t^{-\alpha - 1}\right)t^{(1 - \alpha) / 2}\\
    = t\left(
      e^{-(1 + \alpha)(\log(t - 1) + \log(t) / 2)}
      + e^{-(1 + \alpha)(\log(t + 1) + \log(t) / 2)}
      - 2e^{-(1 + \alpha)3\log(t)/2}
    \right)\\
    = t\sum_{n = 1}^{\infty}(-1)^{n}\frac{(1 + \alpha)^{n}}{n!}g_{n}(t)
  \end{multline*}
  with
  \begin{equation*}
    g_{n}(t) = \left(\log(t - 1) + \frac{\log t}{2}\right)^{n}
      + \left(\log(t + 1) + \frac{\log t}{2}\right)^{n}
      - 2\left(\frac{3\log t}{2}\right)^{n}.
  \end{equation*}
  Inserting this into the integral we can write it as
  \begin{equation*}
    K_{\alpha}(x) = \int_{1}^{\pi / x}
    t\sum_{n = 1}^{\infty}(-1)^{n}\frac{(1 + \alpha)^{n}}{n!}g_{n}(t)\log(2e + 1 / (xt))\ dt.
  \end{equation*}
  Switching the integral and the sum we get
  \begin{equation*}
    K_{\alpha}(x) = \sum_{n = 1}^{\infty}(-1)^{n}\frac{(1 + \alpha)^{n}}{n!}
    \int_{1}^{\pi / x} g_{n}(t)t\log(2e + 1 / (xt))\ dt.
  \end{equation*}

  Next we want to look closer at
  \begin{equation*}
    I_{n}(x) = \int_{1}^{\pi / x} g_{n}(t)t\log(2e + 1 / (xt))\ dt.
  \end{equation*}
  Using the binomial theorem and splitting
  \(\frac{3\log t}{2} = \log t + \frac{\log t}{2}\) we can write
  \(g_{n}(t)\) as
  \begin{equation*}
    g_{n}(t) = \sum_{k = 0}^{n - 1}\binom{n}{k}\left(\frac{\log t}{2}\right)^{k}\left(
      \log(t - 1)^{n - k} + \log(t + 1)^{n - k} - 2\log(t)^{n - k}
    \right).
  \end{equation*}
  This gives us
  \begin{equation*}
    I_{n}(x) = \sum_{k = 0}^{n - 1}\binom{n}{k}\frac{1}{2^{k}}\int_{1}^{\pi / x}
    \log(t)^{k}\left(
      \log(t - 1)^{n - k} + \log(t + 1)^{n - k} - 2\log(t)^{n - k}
    \right)t\log(2e + 1 / (xt))\ dt.
  \end{equation*}
  Asymptotically as \(t\) goes to infinity we have that
  \begin{equation*}
    \log(t - 1)^{l} + \log(t + 1)^{l} - 2\log(t)^{l}
  \end{equation*}
  behaves like
  \begin{equation*}
    \frac{l(l - 1 - \log t)\log(t)^{l - 2}}{t^{2}}.
  \end{equation*}
  That this indeed is the case is seen in the proof of
  Lemma~\ref{lemma:G2R2-bound}, and this will be what gives the main
  part of the integral for small \(x\). If we let
  \begin{equation*}
    h_{k}(t) = \log(t - 1)^{k} + \log(t + 1)^{k} - 2\log(t)^{k}
    - \frac{k(k - 1 - \log t)\log(t)^{k - 2}}{t^{2}}
  \end{equation*}
  and
  \begin{align*}
    I_{n,M}(x) =& \sum_{k = 0}^{n - 1}\binom{n}{k}\frac{1}{2^{k}}\int_{1}^{\pi / x}
                  \log(t)^{k}
                  \frac{(n - k)(n - k - 1 - \log t)\log(t)^{n - k - 2}}{t^{2}}
                  t\log(2e + 1 / (xt))\ dt,\\
    I_{n,R}(x) =& \sum_{k = 0}^{n - 1}\binom{n}{k}\frac{1}{2^{k}}\int_{1}^{\pi / x}
                  \log(t)^{k}h_{n - k}(t)t\log(2e + 1 / (xt))\ dt,
  \end{align*}
  then we have
  \begin{equation*}
    I_{n}(x) = I_{n,M}(x) + I_{n,R}(x)
  \end{equation*}
  and
  \begin{equation*}
    K_{\alpha}(x) = \sum_{n = 1}^{\infty}(-1)^{n}\frac{(1 + \alpha)^{n}}{n!}I_{n,M}(x)
    + \sum_{n = 1}^{\infty}(-1)^{n}\frac{(1 + \alpha)^{n}}{n!}I_{n,R}(x).
  \end{equation*}
  Multiplying the second term with the factor
  \begin{equation*}
    \frac{1}{1 - x^{1 + \alpha + (1 + \alpha)^{2} / 2}}\frac{1}{\log(1 / x)}
  \end{equation*}
  gives us \(G_{\alpha,2,R}\).

  For \(I_{n,M}(x)\) we note that
  \begin{equation*}
    \log(t)^{k}(n - k)(n - k - 1 - \log t)\log(t)^{n - k - 2}
    = (n - k)\left((n - k - 1)\log(t)^{n - 2} - \log(t)^{n - 1}\right).
  \end{equation*}
  Giving us
  \begin{align*}
    I_{n,M}(x) =& \sum_{k = 0}^{n - 1}\binom{n}{k}\frac{1}{2^{k}}(n - k)\\
                & \left(
                  (n - k - 1)\int_{1}^{\pi / x}\frac{\log(t)^{n - 2}}{t}\log(2e + 1 / (xt))\ dt
                  - \int_{1}^{\pi / x}\frac{\log(t)^{n - 1}}{t}\log(2e + 1 / (xt))\ dt
                  \right)\\
    =& \int_{1}^{\pi / x}\frac{\log(t)^{n - 2}}{t}\log(2e + 1 / (xt))\ dt
       \sum_{k = 0}^{n - 1}\binom{n}{k}\frac{1}{2^{k}}(n - k)(n - k - 1)\\
                & - \int_{1}^{\pi / x}\frac{\log(t)^{n - 1}}{t}\log(2e + 1 / (xt))\ dt
       \sum_{k = 0}^{n - 1}\binom{n}{k}\frac{1}{2^{k}}(n - k).
  \end{align*}
  The sums can be computed explicitly,
  \begin{align*}
    \sum_{k = 0}^{n - 1}\binom{n}{k}\frac{1}{2^{k}}(n - k)(n - k - 1)
    &= \left(\frac{3}{2}\right)^{n - 2}n(n - 1),\\
    \sum_{k = 0}^{n - 1}\binom{n}{k}\frac{1}{2^{k}}(n - k)
    &= \left(\frac{3}{2}\right)^{n - 1}n.
  \end{align*}
  Inserting this back we have
  \begin{equation*}
    I_{n,M}(x) = \left(\frac{3}{2}\right)^{n - 2}n \left(
      (n - 1)\int_{1}^{\pi / x}\frac{\log(t)^{n - 2}}{t}\log(2e + 1 / (xt))\ dt
      - \frac{3}{2}\int_{1}^{\pi / x}\frac{\log(t)^{n - 1}}{t}\log(2e + 1 / (xt))\ dt
    \right).
  \end{equation*}
  Next we are interested in computing integrals of the form
  \begin{equation*}
    \int_{1}^{\pi / x}\frac{\log(t)^{l}}{t}\log(2e + 1 / (xt))\ dt.
  \end{equation*}
  Since \(n \geq 1\) we need to consider \(l \geq -1\). However, the
  only case for which we have \(l = -1\) is for \(n = 1\), in which
  case the factor \(n - 1\) in front is zero. Hence, we only need to
  consider \(l \geq 0\). Using that
  \({\log(2e + 1 / (xt)) = \log(1 + 2ext) - \log(x) - \log(t)}\) we
  get
  \begin{equation*}
    \int_{1}^{\pi / x}\frac{\log(t)^{l}}{t}\log(2e + 1 / (xt))\ dt
    = \int_{1}^{\pi / x}\frac{\log(t)^{l}}{t}\log(1 + 2ext)\ dt
    - \log(x)\int_{1}^{\pi / x}\frac{\log(t)^{l}}{t}\ dt
    - \int_{1}^{\pi / x}\frac{\log(t)^{l + 1}}{t}\ dt.
  \end{equation*}
  For \(1 \leq t \leq \pi / x\) we have
  \(0 \leq \log(1 + 2ext) \leq \log(1 + 2e\pi)\), hence
  \begin{equation*}
    \int_{1}^{\pi / x}\frac{\log(t)^{l}}{t}\log(1 + 2ext)\ dt
    = D_{l,x}\int_{1}^{\pi / x}\frac{\log(t)^{l}}{t}\ dt
  \end{equation*}
  for some \(D_{l,x} \in [0, \log(1 + 2e\pi)]\). This gives us
  \begin{align*}
    \int_{1}^{\pi / x}\frac{\log(t)^{l}}{t}\log(2e + 1 / (xt))\ dt
    &= (D_{l,x} - \log(x))\int_{1}^{\pi / x}\frac{\log(t)^{l}}{t}\ dt
      - \int_{1}^{\pi / x}\frac{\log(t)^{l + 1}}{t}\ dt\\
    &= (D_{l,x} - \log(x))\frac{\log(\pi / x)^{l + 1}}{l + 1}
      - \frac{\log(\pi / x)^{l + 2}}{l + 2},
  \end{align*}
  valid for \(l \geq 0\). Inserting this back into \(I_{n,M}\) we get
  for \(n \geq 2\)
  \begin{align*}
    I_{n,M}(x) =& \left(\frac{3}{2}\right)^{n - 2}n
                  \Bigg(
                  (n - 1)\left(
                  (D_{n - 2,x} - \log(x))\frac{\log(\pi / x)^{n - 1}}{n - 1}
                  - \frac{\log(\pi / x)^{n}}{n}
                  \right)\\
                &- \frac{3}{2}\left(
                  (D_{n-1,x} - \log(x))\frac{\log(\pi / x)^{n}}{n}
                  - \frac{\log(\pi / x)^{n + 1}}{n + 1}
                  \right)
                  \Bigg)\\
                  =& \left(\frac{3}{2}\right)^{n - 2}\log(\pi / x)^{n - 1}
                     \Bigg(
                     n(D_{n - 2,x} - \log(x)) - (n - 1)\log(\pi / x)\\
                &- \frac{3}{2}(D_{n-1,x} - \log(x))\log(\pi / x)
                  + \frac{3n}{2(n + 1)}\log(\pi / x)^{2}
                  \Bigg).
  \end{align*}
  For \(n = 1\) we instead get
  \begin{align*}
    I_{1,M}(x) =& \left(\frac{3}{2}\right)^{-1}\log(\pi / x)^{-1}
                  \left(
                  - \frac{3}{2}(D_{-1,x} - \log(x))\log(\pi / x)
                  + \frac{3}{4}\log(\pi / x)^{2}
                  \right).
  \end{align*}

  Inserting this into the sum for \(n\) and splitting it in parts we
  get
  \begin{align*}
    \sum_{n = 1}^{\infty}(-1)^{n}\frac{(1 + \alpha)^{n}}{n!}I_{n,M}(x)
    =& \sum_{n = 2}^{\infty}(-1)^{n}\frac{(1 + \alpha)^{n}}{n!}
       \left(\frac{3}{2}\right)^{n - 2}\log(\pi / x)^{n - 1}nD_{n - 2,x}\\
     & -\log(x)\sum_{n = 2}^{\infty}(-1)^{n}\frac{(1 + \alpha)^{n}}{n!}
       \left(\frac{3}{2}\right)^{n - 2}\log(\pi / x)^{n - 1}n\\
     & -\sum_{n = 2}^{\infty}(-1)^{n}\frac{(1 + \alpha)^{n}}{n!}
       \left(\frac{3}{2}\right)^{n - 2}\log(\pi / x)^{n}(n - 1)\\
     & -\sum_{n = 1}^{\infty}(-1)^{n}\frac{(1 + \alpha)^{n}}{n!}
       \left(\frac{3}{2}\right)^{n - 1}\log(\pi / x)^{n}D_{n - 1,x}\\
     & +\log(x)\sum_{n = 1}^{\infty}(-1)^{n}\frac{(1 + \alpha)^{n}}{n!}
       \left(\frac{3}{2}\right)^{n - 1}\log(\pi / x)^{n}\\
     & +\sum_{n = 1}^{\infty}(-1)^{n}\frac{(1 + \alpha)^{n}}{n!}
       \left(\frac{3}{2}\right)^{n - 1}\log(\pi / x)^{n + 1}\frac{n}{n + 1}.
  \end{align*}
  We begin by noting that
  \begin{align*}
    \sum_{n = 2}^{\infty}(-1)^{n}\frac{(1 + \alpha)^{n}}{n!}
    \left(\frac{3}{2}\right)^{n - 2}\log(\pi / x)^{n - 1}nD_{n - 2,x}
    &= -(1 + \alpha)\sum_{n = 1}^{\infty}(-1)^{n}\frac{(1 + \alpha)^{n}}{n!}
      \left(\frac{3}{2}\right)^{n - 1}\log(\pi / x)^{n}D_{n - 1,x},\\
    \sum_{n = 2}^{\infty}(-1)^{n}\frac{(1 + \alpha)^{n}}{n!}
    \left(\frac{3}{2}\right)^{n - 2}\log(\pi / x)^{n - 1}n
    &= -(1 + \alpha)\sum_{n = 1}^{\infty}(-1)^{n}\frac{(1 + \alpha)^{n}}{n!}
      \left(\frac{3}{2}\right)^{n - 1}\log(\pi / x)^{n},\\
    \sum_{n = 2}^{\infty}(-1)^{n}\frac{(1 + \alpha)^{n}}{n!}
    \left(\frac{3}{2}\right)^{n - 2}\log(\pi / x)^{n}(n - 1)
    &= -(1 + \alpha)\sum_{n = 1}^{\infty}(-1)^{n}\frac{(1 + \alpha)^{n}}{n!}
      \left(\frac{3}{2}\right)^{n - 1}\log(\pi / x)^{n + 1}\frac{n}{n + 1},
  \end{align*}
  and that since \(D_{l,x} \in [0, \log(1 + 2e\pi)]\) for all values of
  \(l\) and \(x\) we have that
  \begin{align*}
    \sum_{n = 1}^{\infty}(-1)^{n}\frac{(1 + \alpha)^{n}}{n!}
    \left(\frac{3}{2}\right)^{n - 1}\log(\pi / x)^{n}D_{n - 1,x}
    &= D_{x}\sum_{n = 1}^{\infty}(-1)^{n}\frac{(1 + \alpha)^{n}}{n!}
      \left(\frac{3}{2}\right)^{n - 1}\log(\pi / x)^{n}
  \end{align*}
  for some \(D_{x} \in [-\log(1 + 2e\pi), \log(1 + 2e\pi)]\). With this
  we get
  \begin{align*}
    \sum_{n = 1}^{\infty}(-1)^{n}\frac{(1 + \alpha)^{n}}{n!}I_{n,M}(x)
    =& -(D_{x} - \log(x))(2 + \alpha)\sum_{n = 1}^{\infty}(-1)^{n}\frac{(1 + \alpha)^{n}}{n!}
       \left(\frac{3}{2}\right)^{n - 1}\log(\pi / x)^{n}\\
     & +(2 + \alpha)\sum_{n = 1}^{\infty}(-1)^{n}\frac{(1 + \alpha)^{n}}{n!}
       \left(\frac{3}{2}\right)^{n - 1}\log(\pi / x)^{n + 1}\frac{n}{n + 1}.
  \end{align*}
  Explicitly computing the two sums we have
  \begin{align*}
    \sum_{n = 1}^{\infty}(-1)^{n}\frac{(1 + \alpha)^{n}}{n!}
    \left(\frac{3}{2}\right)^{n - 1}\log(\pi / x)^{n}
    &= \frac{2}{3}\sum_{n = 1}^{\infty}(-1)^{n}
      \frac{\left(\frac{3}{2}(1 + \alpha)\log(\pi / x)\right)^{n}}{n!}\\
    &= \frac{2}{3}\left(e^{-\frac{3}{2}(1 + \alpha)\log(\pi / x)} - 1\right)\\
    &= \frac{2}{3}\left(\left(\frac{x}{\pi}\right)^{\frac{3}{2}(1 + \alpha)} - 1\right),\\
    \sum_{n = 1}^{\infty}(-1)^{n}\frac{(1 + \alpha)^{n}}{n!}
    \left(\frac{3}{2}\right)^{n - 1}\log(\pi / x)^{n + 1}\frac{n}{n + 1}
    &= -\left(\frac{2}{3}\right)^{2}\frac{1}{1 + \alpha}\sum_{n = 1}^{\infty}(-1)^{n + 1}
      \frac{\left(\frac{3}{2}(1 + \alpha)\log(\pi / x)\right)^{n + 1}}{(n + 1)!}n\\
    &= -\left(\frac{2}{3}\right)^{2}\frac{1}{1 + \alpha}\left(1 - \left(1 + \frac{3}{2}(1 + \alpha)\log(\pi / x)\right)e^{-\frac{3}{2}(1 + \alpha)\log(\pi / x)}\right)\\
    &= -\left(\frac{2}{3}\right)^{2}\frac{1}{1 + \alpha}\left(1 - \left(1 + \frac{3}{2}(1 + \alpha)\log(\pi / x)\right)\left(\frac{x}{\pi}\right)^{\frac{3}{2}(1 + \alpha)}\right).
  \end{align*}
  This gives us
  \begin{multline*}
    \sum_{n = 1}^{\infty} (-1)^{n}\frac{(1 + \alpha)^{n}}{n!}I_{n,M}(x)
    = -(D_{x} - \log x)(2 + \alpha)\frac{2}{3}\left(\left(\frac{x}{\pi}\right)^{\frac{3}{2}(1 + \alpha)} - 1\right)\\
    -(2 + \alpha)\left(\frac{2}{3}\right)^{2}\frac{1}{1 + \alpha}\left(1 - \left(1 + \frac{3}{2}(1 + \alpha)\log(\pi / x)\right)\left(\frac{x}{\pi}\right)^{\frac{3}{2}(1 + \alpha)}\right).
  \end{multline*}
  Simplifying we get
  \begin{equation*}
    \sum_{n = 1}^{\infty} (-1)^{n}\frac{(1 + \alpha)^{n}}{n!}I_{n,M}(x)
    = \frac{4 + 2\alpha}{3}\left(
      \left(D_{x} - \log x - \frac{2}{3}\frac{1}{1 + \alpha}\right)\left(1 - \left(\frac{x}{\pi}\right)^{\frac{3}{2}(1 + \alpha)}\right)
    + \log\left(\frac{\pi}{x}\right)\left(\frac{x}{\pi}\right)^{\frac{3}{2}(1 + \alpha)}
    \right).
  \end{equation*}
  Multiplying this with the factor
  \begin{equation*}
    \frac{1}{1 - x^{1 + \alpha + (1 + \alpha)^{2} / 2}}\frac{1}{\log(1 / x)}
  \end{equation*}
  gives us \(G_{\alpha,2,M}\).
\end{proof}

A bound for \(G_{\alpha,2,M}(x)\) is given in the following lemma.
\begin{lemma}
  \label{lemma:G2M-bound}
  For \(\alpha \in (-1, 0)\) and \(x < \frac{1}{\pi^{3}}\),
  \(G_{\alpha,2,M}\) satisfies the following bound
  \begin{equation*}
    G_{\alpha,2,M}(x) \leq \frac{4 + 2\alpha}{3}\left(\frac{2\log(1 + 2e\pi)}{\log(1 / x)} + 1\right).
  \end{equation*}
\end{lemma}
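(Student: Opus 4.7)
The plan is to algebraically reduce the claim to a single clean inequality and then verify it via a Taylor expansion near the tight endpoint $\alpha = -1$.

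First, writing $q_* = \tfrac{3}{2}(1+\alpha)$ (so that $\tfrac{2}{3(1+\alpha)} = 1/q_*$ and the quantity $q$ from Lemma~\ref{lemma:G2-split} is $q = (x/\pi)^{q_*}$) and expanding $\log(\pi/x) = \log(1/x) + \log\pi$, the bracket inside the definition of $G_{\alpha,2,M}$ becomes
\begin{equation*}
\log(1/x) + D_x(1-q) + \log\pi \cdot q - \frac{1-q}{q_*}.
\end{equation*}
Since $|D_x| \leq \log(1+2e\pi)$ and $1-q \geq 0$, I would bound $D_x(1-q) \leq \log(1+2e\pi)(1-q)$. After dividing by $(1-x^{p_*})\log(1/x)$ and rearranging, the claimed upper bound on $G_{\alpha,2,M}(x)$ is equivalent to
\begin{equation*}
x^{p_*}(L + 2K) \leq K + (K-P)q + \frac{1-q}{q_*},
\end{equation*}
with the shorthand $L = \log(1/x)$, $K = \log(1+2e\pi)$, $P = \log\pi$, and $p_* = 1+\alpha+(1+\alpha)^2/2$.

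The key observation is that both sides of this reduced inequality tend to $L+2K$ as $\alpha \to -1^+$, so the inequality is tight in that limit. Setting $\epsilon = 1+\alpha$ and expanding both sides around $\epsilon = 0$, a short computation (using $dp_*/d\epsilon|_0 = 1$, $dq_*/d\epsilon|_0 = 3/2$, and $(1-e^{-y})/y = 1 - y/2 + O(y^2)$) yields that the $\epsilon$-coefficient of $\mathrm{RHS} - \mathrm{LHS}$ equals $\tfrac{1}{4}(L^2 + 2KL + 3P^2 - 6KP)$. Under the hypothesis $x < 1/\pi^3$ one has $L > 3P$, and substituting gives the lower bound $3P^2 > 0$; hence the reduced inequality holds for all $\alpha$ in some right neighborhood of $-1$.

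To cover the remainder of $(-1,0)$, I would split into two regimes. For $\alpha$ close to $-1$, the argument above extends by making the Taylor expansion quantitative: the quadratic remainder in $\epsilon$ is uniformly controlled using $x^{p_*} = e^{-p_*L} \leq 1$, $q = e^{-q_*(L+P)} \leq 1$, and $L > 3P$. For $\alpha$ bounded away from $-1$, $p_*$ has a positive lower bound, so $x^{p_*} \leq (1/\pi^3)^{p_*}$ is uniformly small and the left-hand side of the reduced inequality is easily dominated by the constant $K$ on the right. The main obstacle will be controlling the quadratic Taylor remainder carefully enough to handle the transition between these two regimes, since the reduced inequality is genuinely tight as $\alpha \to -1$.
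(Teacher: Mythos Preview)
Your algebraic reduction to the single inequality
\[
x^{p_*}(L+2K) \leq K + (K-P)q + \frac{1-q}{q_*}
\]
is correct, and your first-order Taylor computation at $\epsilon = 1+\alpha = 0$ is also correct (the coefficient is indeed positive for $L > 3P$; at $L = 3P$ it equals $3P^2$, not $3P^2$ as a lower bound for all $L > 3P$, but positivity is what matters).

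However, your plan for the ``$\alpha$ bounded away from $-1$'' regime does not work as stated. You claim that $x^{p_*}$ is then uniformly small so that the left side is dominated by the constant $K$ on the right. This is false: take $\epsilon = 0.1$ and $x$ near $1/\pi^3$. Then $p_* \approx 0.105$, $L \approx 3.43$, $K \approx 2.90$, and $x^{p_*} \approx e^{-0.36} \approx 0.70$, giving $\text{LHS} \approx 6.4$ while $K \approx 2.9$. The inequality still holds (the full RHS is about $7.1$), but only because of the $(1-q)/q_*$ term, not because of $K$ alone. So the two-regime split you propose does not isolate an easy regime; the transition you flag as ``the main obstacle'' really is the entire content of the lemma, and your proposal does not resolve it.

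The paper avoids the perturbative approach altogether. It splits $G_{\alpha,2,M}$ into the $D_x$-part and the rest. For the $D_x$-part it shows directly that $(1-q)/(1-x^{p_*}) < 2$ for $x < 1/\pi^3$, by writing both exponents in terms of $1+\alpha$ and comparing. For the remaining part it proves the exact bound $\leq 1$ by substituting $s = (1+\alpha)\log x$ (and $v = (1+\alpha)\log\pi$), reducing to an inequality in $(s,v)$ with $s < 0 < v$, and then showing via two successive derivative-sign arguments that the relevant function is monotone in $s$ with the correct limit at $s = 0$. This works uniformly in $\alpha$ and $x$ without any regime splitting or remainder control.
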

\begin{proof}
  As a first step we split \(G_{\alpha,2,M}(x)\) into two terms
  \begin{multline*}
    G_{\alpha,2,M}(x)
    = \frac{4 + 2\alpha}{3}\frac{1}{\log(1 / x)}\frac{D_{x}}{1 - x^{1 + \alpha + (1 + \alpha)^{2} / 2}}
    \left(1 - \left(\frac{x}{\pi}\right)^{\frac{3}{2}(1 + \alpha)}\right)\\
    + \frac{4 + 2\alpha}{3}\frac{1}{1 - x^{1 + \alpha + (1 + \alpha)^{2} / 2}}\frac{1}{\log(1 / x)}
    \left(
      \log\left(\frac{\pi}{x}\right)\left(\frac{x}{\pi}\right)^{\frac{3}{2}(1 + \alpha)}
      -\left(\log x + \frac{2}{3}\frac{1}{1 + \alpha}\right)
      \left(1 - \left(\frac{x}{\pi}\right)^{\frac{3}{2}(1 + \alpha)}\right)
    \right)
  \end{multline*}

  For the first term we focus on the part
  \begin{equation*}
    \frac{D_{x}}{1 - x^{1 + \alpha + (1 + \alpha)^{2} / 2}}
    \left(1 - \left(\frac{x}{\pi}\right)^{\frac{3}{2}(1 + \alpha)}\right)
    = D_{x}\frac{1 - e^{\frac{3}{2}(1 + \alpha)(\log x - \log \pi)}}{1 - e^{(1 + \alpha)(1 + (1 + \alpha) / 2)\log x}}.
  \end{equation*}
  Adding and subtracting
  \(e^{(1 + \alpha)(1 + (1 + \alpha) / 2)\log x}\) to the numerator we
  can write this as
  \begin{multline*}
    D_{x}\left(
      1
      + e^{(1 + \alpha)(1 + (1 + \alpha) / 2)\log x}\frac{1 - e^{\frac{3}{2}(1 + \alpha)(\log x - \log \pi) - (1 + \alpha)(1 + (1 + \alpha) / 2)\log x}}{1 - e^{(1 + \alpha)(1 + (1 + \alpha) / 2)\log x}}
    \right)\\
    = D_{x}\left(
      1
      + e^{(1 + \alpha)(1 + (1 + \alpha) / 2)\log x}\frac{1 - e^{(1 + \alpha)\left(\frac{3}{2}(\log x - \log \pi) - (1 + (1 + \alpha) / 2)\log x\right)}}{1 - e^{(1 + \alpha)(1 + (1 + \alpha) / 2)\log x}}
    \right)
  \end{multline*}
  Since \((1 + \alpha)(1 + (1 + \alpha) / 2)\log x\) is negative this
  is bounded by
  \begin{equation*}
    D_{x}\left(
      1
      + \frac{1 - e^{(1 + \alpha)\left(\frac{3}{2}(\log x - \log \pi) - (1 + (1 + \alpha) / 2)\log x\right)}}{1 - e^{(1 + \alpha)(1 + (1 + \alpha) / 2)\log x}}
    \right)
  \end{equation*}
  Since \(x < \frac{1}{\pi^{3}}\) we have
  \begin{multline*}
    (1 + \alpha)\left(\frac{3}{2}(\log x - \log \pi) - (1 + (1 + \alpha) / 2)\log x\right)
    > (1 + \alpha)\left(\frac{3}{2}\left(\log x + \frac{1}{3} \log x\right) - (1 + (1 + \alpha) / 2)\log x\right)\\
    = (1 + \alpha)(1 - (1 + \alpha) / 2)\log x.
  \end{multline*}
  From which we get
  \begin{equation*}
    e^{(1 + \alpha)\left(\frac{3}{2}(\log x - \log \pi) - (1 + (1 + \alpha) / 2)\log x\right)}
    > e^{(1 + \alpha)(1 + (1 + \alpha) / 2)\log x}
  \end{equation*}
  and hence
  \begin{equation*}
    \frac{1 - e^{(1 + \alpha)\left(\frac{3}{2}(\log x - \log \pi) - (1 + (1 + \alpha) / 2)\log x\right)}}{1 - e^{(1 + \alpha)(1 + (1 + \alpha) / 2)\log x}}
    < 1.
  \end{equation*}
  From this we get the upper bound
  \begin{equation*}
    \frac{D_{x}}{1 - x^{1 + \alpha + (1 + \alpha)^{2} / 2}}
    \left(1 - \left(\frac{x}{\pi}\right)^{\frac{3}{2}(1 + \alpha)}\right)
    < 2D_{x}.
  \end{equation*}
  Together with the bound \(D_{x} < \log(1 + 2e\pi)\) this gives us the
  first part of the bound for \(G_{\alpha,2,M}(x)\).

  For the second term we want to prove that
  \begin{equation}
    \label{eq:G2M2}
    \frac{1}{1 - x^{1 + \alpha + (1 + \alpha)^{2} / 2}}\frac{1}{\log(1 / x)}
    \left(
      \log\left(\frac{\pi}{x}\right)\left(\frac{x}{\pi}\right)^{\frac{3}{2}(1 + \alpha)}
      -\left(\log x + \frac{2}{3}\frac{1}{1 + \alpha}\right)
      \left(1 - \left(\frac{x}{\pi}\right)^{\frac{3}{2}(1 + \alpha)}\right)
    \right)
  \end{equation}
  is bounded by \(1\). To begin with we note that
  \begin{multline*}
    \log\left(\frac{\pi}{x}\right)\left(\frac{x}{\pi}\right)^{\frac{3}{2}(1 + \alpha)}
    -\left(\log x + \frac{2}{3}\frac{1}{1 + \alpha}\right)
    \left(1 - \left(\frac{x}{\pi}\right)^{\frac{3}{2}(1 + \alpha)}\right)\\
    = \frac{1}{1 + \alpha}\left(
      - \frac{2}{3}
      - (1 + \alpha)\log x
      + \left(
        \frac{2}{3}
        + (1 + \alpha)\log(\pi)
      \right)\left(\frac{x}{\pi}\right)^{\frac{3}{2}(1 + \alpha)}
    \right).
  \end{multline*}
  Allowing us to write \eqref{eq:G2M2} as
  \begin{equation*}
    \frac{1}{\left(1 - x^{1 + \alpha + (1 + \alpha)^{2} / 2}\right)(1 + \alpha)\log x}
    \left(
      \frac{2}{3}
      + (1 + \alpha)\log x
      - \left(
        \frac{2}{3}
        + (1 + \alpha)\log(\pi)
      \right)\left(\frac{x}{\pi}\right)^{\frac{3}{2}(1 + \alpha)}
    \right).
  \end{equation*}
  If we let \(s = (1 + \alpha)\log x\) we can write it as
  \begin{equation*}
    \frac{1}{\left(1 - e^{(1 + (1 + \alpha) / 2) s}\right)s}
    \left(
      \frac{2}{3}
      + s
      - \left(
        \frac{2}{3}
        + (1 + \alpha)\log \pi
      \right)e^{\frac{3}{2}s - \frac{3}{2}(1 + \alpha)\log \pi}
    \right).
  \end{equation*}
  Since the denominator is negative it is enough to verify that
  \begin{equation*}
    \frac{2}{3}
    + s
    - \left(
      \frac{2}{3}
      + (1 + \alpha)\log \pi
    \right)e^{\frac{3}{2}s - \frac{3}{2}(1 + \alpha)\log \pi}
    > \left(1 - e^{(1 + (1 + \alpha) / 2) s}\right)s
  \end{equation*}
  to prove that the quotient is bounded by \(1\). If we let
  \(v = (1 + \alpha)\log \pi\) and simplify the inequality we get
  \begin{equation}
    \label{eq:G2M2-inequality}
    \frac{2}{3}
    - \left(\frac{2}{3} - v\right) e^{\frac{3}{2}(s - v)}
    + se^{\frac{3 + \alpha}{2}s}
    > 0.
  \end{equation}
  The left-hand side is increasing in \(v\), to see this we
  differentiate it with respect to \(v\) to get
  \begin{equation*}
    \frac{4 + 3v}{2}e^{\frac{3}{2}(s - v)}.
  \end{equation*}
  Since \(0 < v\) this is positive and the value for the left-hand
  side of \eqref{eq:G2M2-inequality} is hence lower bounded by the
  value for \(v = 0\), from which we get the inequality
  \begin{equation}
    \label{eq:G2M2-inequality-2}
    \frac{2}{3} - \frac{2}{3}e^{\frac{3}{2}s} + se^{\frac{3 + \alpha}{2}s} > 0.
  \end{equation}
  Note that \(s = (1 + \alpha)\log x < 0\). We want to prove that the
  left-hand side is decreasing in \(s\) so that it is lower bounded by
  the value for \(s = 0\), which is \(0\). To see that the left-hand
  side of \eqref{eq:G2M2-inequality-2} is decreasing in \(s\) we
  differentiate it with respect to \(s\), giving us
  \begin{equation*}
    -e^{\frac{3}{2}s} + e^{\frac{3 + \alpha}{2}s} + s \frac{3 + \alpha}{2}e^{\frac{3 + \alpha}{2}s}
    = e^{\frac{3}{2}s}\left(e^{\frac{\alpha}{2}s}\left(1 + s\frac{3 + \alpha}{2}\right) - 1\right).
  \end{equation*}
  We want to check that
  \begin{equation}
    \label{eq:G2M2-inequality-3}
    e^{\frac{\alpha}{2}s}\left(1 + s\frac{3 + \alpha}{2}\right) - 1 < 0.
  \end{equation}
  If we again differentiate the left-hand side with respect to \(s\)
  we get
  \begin{equation*}
    \frac{1}{2}e^{\frac{\alpha}{2}s}\left(3 + 2\alpha + \frac{(3 + \alpha)\alpha}{2}s\right).
  \end{equation*}
  We have \(3 + 2\alpha > 0\) and
  \(\frac{(3 + \alpha)\alpha}{2}s > 0\), so this is positive and hence
  \eqref{eq:G2M2-inequality-3} is upper bounded by the value at
  \(s = 0\), where it is \(0\). It follows that the left-hand side of
  \eqref{eq:G2M2-inequality-2} is decreasing in \(s\). With all of
  this we have shown that \eqref{eq:G2M2} is bounded by \(1\) and the
  result follows.
\end{proof}

To bound \(G_{\alpha,2,R}(x)\) we factor out \(1 + \alpha\) from the
sum and take the absolute value termwise and move it inside the
integral to get
\begin{multline*}
  G_{\alpha,2,R}(x) \leq \frac{1 + \alpha}{1 - x^{1 + \alpha + (1 + \alpha)^{2} / 2}}
  \frac{1}{\log(1 / x)}\\
  \sum_{n = 1}^{\infty} \frac{(1 + \alpha)^{n - 1}}{n!}
  \sum_{k = 0}^{n - 1}\binom{n}{k}\frac{1}{2^{k}}\int_{1}^{\pi / x}
  \log(t)^{k}|h_{n - k}(t)|t\log(2e + 1 / (xt))\ dt.
\end{multline*}
Furthermore we use the bound \(\log(2e + 1 / (xt)) < \log(2e + 1 / x)\)
to get
\begin{equation}
  \label{eq:G2R-sum}
  G_{\alpha,2,R}(x) \leq \frac{1 + \alpha}{1 - x^{1 + \alpha + (1 + \alpha)^{2} / 2}}
  \frac{\log(2e + 1 / x)}{\log(1 / x)}
  \sum_{n = 1}^{\infty} \frac{(1 + \alpha)^{n - 1}}{n!}
  \sum_{k = 0}^{n - 1}\binom{n}{k}\frac{1}{2^{k}}\int_{1}^{\pi / x}
  \log(t)^{k}|h_{n - k}(t)|t\ dt.
\end{equation}

We can bound
\begin{equation*}
  \frac{1 + \alpha}{1 - x^{1 + \alpha + (1 + \alpha)^{2} / 2}}
\end{equation*}
by handling the removable singularity at \(\alpha = -1\) and the
second factor by writing it as
\begin{equation*}
  \frac{\log(2e + 1 / x)}{\log(1 / x)} = 1 + \frac{\log(1 + 2ex)}{\log(1 / x)}.
\end{equation*}

For the remaining part the first term in the sum, with \(n = 1\), is
treated explicitly in Lemma~\ref{lemma:G2R-n1}. For the remaining
terms we split the interval of integration into two parts and bound
them separately. More precisely we look at the two terms
\begin{align*}
  G_{\alpha,2,R,1}(x) =& \sum_{n = 2}^{\infty} \frac{(1 + \alpha)^{n - 1}}{n!}
                         \sum_{k = 0}^{n - 1}\binom{n}{k}\frac{1}{2^{k}}\int_{1}^{2}
                         \log(t)^{k}|h_{n - k}(t)|t\ dt,\\
  G_{\alpha,2,R,2}(x) =& \sum_{n = 2}^{\infty} \frac{(1 + \alpha)^{n - 1}}{n!}
                         \sum_{k = 0}^{n - 1}\binom{n}{k}\frac{1}{2^{k}}\int_{2}^{\pi / x}
                         \log(t)^{k}|h_{n - k}(t)|t\ dt
\end{align*}
which are handled in Lemma~\ref{lemma:G2R1-bound}
and~\ref{lemma:G2R2-bound}.

\begin{lemma}
  \label{lemma:G2R-n1}
  For \(\alpha \in (-1, 0)\) and \(x < 1\) the first term for the sum
  in~\eqref{eq:G2R-sum} is bounded by \(\frac{1}{2}\).
\end{lemma}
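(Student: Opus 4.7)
The plan is to reduce the claim to evaluating one explicit improper integral. For $n=1$ the outer coefficient $(1+\alpha)^{0}/1!$ equals $1$ and the inner sum collapses to the single term $k=0$ with $\binom{1}{0}/2^{0}=1$ and $\log(t)^{0}=1$, so the $n=1$ contribution is exactly $\int_{1}^{\pi/x}|h_{1}(t)|t\,dt$. I will show this is strictly less than $\tfrac{1}{2}$ by bounding it by the improper integral over $(1,\infty)$ and computing the latter in closed form.

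First I would put $h_{1}$ in closed form. Taking $k=1$ in the definition of $h_{k}$, the first three terms telescope through $\log(t-1)+\log(t+1)-2\log t = \log(1-1/t^{2})$, while the correction term simplifies to $-1/t^{2}$ (the factor $\log(t)^{-1}$ cancels the $-\log t$ in the bracket). Hence $h_{1}(t) = \log(1-1/t^{2}) + 1/t^{2}$, and the standard expansion $\log(1-u)+u = -\sum_{j\ge 2} u^{j}/j$ applied at $u = 1/t^{2}\in(0,1)$ shows that $h_{1}(t)<0$ on $(1,\infty)$, so $|h_{1}(t)| = -\log(1-1/t^{2}) - 1/t^{2}$.

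Next, since $|h_{1}(t)|t\ge 0$, shortening the interval only decreases the integral, so it suffices to bound $\int_{1}^{\infty}|h_{1}(t)|t\,dt$. Here I would apply the substitution $s = 1/t^{2}$ (so $t\,dt = -\tfrac{1}{2s^{2}}\,ds$), which converts the integral into $\tfrac{1}{2}\int_{0}^{1}(-\log(1-s)-s)/s^{2}\,ds$. Using the series $-\log(1-s)-s = \sum_{k\ge 2} s^{k}/k$ and integrating term by term produces the telescoping sum $\sum_{k\ge 2} 1/(k(k-1)) = 1$, giving the exact value $\tfrac{1}{2}$. The only mild subtlety, and the main point requiring care, is justifying the termwise integration in view of the logarithmic singularity of $-\log(1-s)$ at $s=1$; this follows cleanly from monotone convergence since every term of the series is non-negative on $(0,1)$.
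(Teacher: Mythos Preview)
Your proof is correct and follows essentially the same route as the paper: identify the $n=1$ term as $\int_{1}^{\pi/x}|h_{1}(t)|t\,dt$, simplify $h_{1}(t)=\log(1-1/t^{2})+1/t^{2}$, observe it is negative via the logarithm series, extend the integral to $(1,\infty)$, and evaluate it to $\tfrac{1}{2}$ by termwise integration. The only cosmetic difference is that the paper integrates the series $-h_{1}(t)=\sum_{n\ge 2}\frac{1}{n t^{2n}}$ directly in $t$, whereas you first substitute $s=1/t^{2}$; both lead to the same telescoping sum $\sum_{n\ge 2}\frac{1}{2n(n-1)}=\tfrac{1}{2}$.
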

\begin{proof}
  The term is given by
  \begin{equation*}
    \frac{(1 + \alpha)^{n - 1}}{n!}
    \sum_{k = 0}^{n - 1}\binom{n}{k}\frac{1}{2^{k}}\int_{1}^{\pi / x}
    \log(t)^{k}|h_{n - k}(t)|t\ dt.
  \end{equation*}
  Inserting \(n = 1\) gives us
  \begin{equation*}
    \int_{1}^{\pi / x}|h_{1}(t)|t\ dt.
  \end{equation*}
  We have
  \begin{equation*}
    h_{1}(t) = \log(t - 1) + \log(t + 1) - 2\log(t) + \frac{1}{t^{2}},
  \end{equation*}
  giving us
  \begin{equation*}
    \int_{1}^{\pi / x}|h_{1}(t)|t\ dt
    = \int_{1}^{\pi / x}\left|\log(t - 1) + \log(t + 1) - 2\log(t) + \frac{1}{t^{2}}\right|t\ dt.
  \end{equation*}
  Note that for \(t > 1\) we have
  \begin{equation*}
    h_{1}(t) = \log(1 - 1 / t) + \log(1 + 1 / t) + \frac{1}{t^{2}}
    = -\sum_{n = 2}^{\infty} \frac{1}{nt^{2n}} < 0,
  \end{equation*}
  so the absolute value can be replaced with a minus sign. If we also
  integrate to infinity we get the upper bound
  \begin{equation*}
    \int_{1}^{\pi / x}|h_{1}(t)|t\ dt
    \leq \int_{1}^{\infty}-\left(\log(t - 1) + \log(t + 1) - 2\log(t) + \frac{1}{t^{2}}\right)t\ dt.
  \end{equation*}
  The integral can be computed explicitly to be \(\frac{1}{2}\),
  giving us the required upper bound.
\end{proof}

\begin{lemma}
  \label{lemma:G2R1-bound}
  For \(\alpha \in (-1, 0)\) and \(x < 1\), \(G_{\alpha,2,R,1}\)
  satisfies the following bound
  \begin{equation*}
    G_{\alpha,2,R,1}(x) \leq 2\left(
      \sqrt{e}\frac{1 + \alpha}{-\alpha}
      + 9\frac{e^{3(1 + \alpha)} -3(1 + \alpha) - 1}{3(1 + \alpha)}
      + (2 + \alpha)e^{3(1 + \alpha) / 2} - 1
    \right).
  \end{equation*}
\end{lemma}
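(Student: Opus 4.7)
The plan is to apply the triangle inequality
\begin{equation*}
|h_j(t)| \leq |\log(t-1)|^j + |\log(t+1)|^j + 2|\log t|^j + \frac{j\,|j-1-\log t|\,|\log t|^{j-2}}{t^2},
\end{equation*}
splitting $G_{\alpha,2,R,1}$ into four sub-sums. The only singular contribution comes from $|\log(t-1)|^j$; it produces the $\sqrt{e}$-term. The three remaining pieces are pointwise controlled on $[1,2]$ and will combine to yield the two exponential summands.

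For the singular piece, I would substitute $s = t-1$ and use $\log(1+s) \leq 1$ and $1+s \leq 2$ on $[0,1]$ together with $\int_0^1 |\log s|^m\,ds = m!$ to obtain
\begin{equation*}
\int_1^2 (\log t)^k |\log(t-1)|^{n-k} t\,dt \leq 2(n-k)!.
\end{equation*}
The identity $\binom{n}{k}(n-k)! = n!/k!$ collapses the inner sum to $2n!\sum_{k=0}^{n-1}\frac{1}{k!\,2^k} \leq 2n!\sqrt{e}$, and the geometric series $\sum_{n\geq 2}(1+\alpha)^{n-1} = (1+\alpha)/(-\alpha)$ delivers exactly $2\sqrt{e}(1+\alpha)/(-\alpha)$.

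For the three bounded pieces, I would use pointwise estimates of the form $c \cdot b^j$ on $[1,2]$: for example $|\log(t+1)|^j \leq (3/2)^j$, $|\log t|^j \leq 1$, and for $j \geq 2$ the correction bound $j(j-1)(\log 2)^{j-2}/t^2$ (handling $j\in\{0,1\}$ by hand). Combined with $(\log t)^k \leq 1$ and $t \leq 2$, each of the three integrals is bounded by a constant times $b^{n-k}$, and the binomial identity
\begin{equation*}
\sum_{k=0}^{n-1}\binom{n}{k}\frac{1}{2^k}\,b^{n-k} \leq \left(\tfrac{1}{2}+b\right)^n
\end{equation*}
converts the $k$-sum into a power of $(1/2+b)$ in $n$. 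Choosing the bounds so that $1/2+b = 3$ in one case and $1/2+b = 3/2$ in another, and applying $\sum_{n\geq 2}(1+\alpha)^{n-1}c^n/n! = (e^{c(1+\alpha)} - 1 - c(1+\alpha))/(1+\alpha)$, produces the middle summand $18(e^{3(1+\alpha)} - 3(1+\alpha) - 1)/(3(1+\alpha))$. The extra linear factor $j$ in the correction's prefactor injects an additional $n$-weight that, after summation via $\sum_{n\geq 1} n \cdot (3/2)^n y^{n-1}/n! = (3/2)(e^{3y/2}-1)$, assembles together with a plain exponential contribution into $2\bigl((2+\alpha)e^{3(1+\alpha)/2} - 1\bigr)$.

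The main obstacle is calibrating the three pointwise bounds so that the cumulative sums produce exactly the stated constants rather than coarser exponentials with a larger exponent. In particular, the prefactor $(2+\alpha) = 1 + (1+\alpha)$ in the last summand reflects the $n$-weight inherited from the correction's leading factor of $j$, and preserving this weight through the binomial collapse is the delicate step; any looser bound on the correction would destroy the $e^{3(1+\alpha)/2}$ structure. A secondary technical point is the special treatment of $j \in \{0, 1\}$ in the correction, where $|\log t|^{j-2}$ is formally singular at $t = 1$ but the full product $j(j-1-\log t)|\log t|^{j-2}$ remains finite and needs to be identified explicitly before integrating.
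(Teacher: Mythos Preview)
Your approach is essentially the paper's: split $|h_j|$ by the triangle inequality, extract the $\sqrt{e}$-term from the singular $|\log(t-1)|^j$ piece via $\int_0^1|\log s|^m\,ds=m!$, and bound the remaining pieces pointwise on $[1,2]$. The organizational trick that resolves your stated calibration obstacle is to first factor $\log(t)^k\,t\le 2\log(2)^k$ out of the whole integrand and then, using $\log 2<1<\log 3<2$, compress $\int_1^2|h_{n-k}|\,dt$ to $(n-k)!+3\cdot 2^n+(n-k)^2$; the closed forms $\sum_{k}\binom{n}{k}2^{-k}<(3/2)^n$ and $\sum_{k}\binom{n}{k}(n-k)^2 2^{-k}=\tfrac{n(2n+1)}{3}(3/2)^{n-1}$ then produce exactly the two exponential summands.
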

\begin{proof}
  As a first step our goal is to compute an upper bound of the
  integral
  \begin{equation*}
    H_{k} = \int_{1}^{2} \log(t)^{k}|h_{n - k}(t)|t\ dt.
  \end{equation*}
  On the interval of integration we have
  \begin{equation*}
    \log(t)^{k}t \leq 2\log(2)^{k}
  \end{equation*}
  and hence
  \begin{equation*}
    H_{k} \leq 2\log(2)^{k}\int_{1}^{2} \left|h_{n - k}(t)\right|\ dt.
  \end{equation*}
  For \(1 < t < 2\) we have
  \begin{align*}
    \left|h_{k}(t)\right|
    &= \left|
      \log(t - 1)^{k} + \log(t + 1)^{k} - 2\log(t)^{k}- \frac{k(k - 1 - \log t)\log(t)^{k - 2}}{t^{2}}
      \right|\\
    &\leq (-1)^{k}\log(t - 1)^{k}
      + \log(t + 1)^{k}
      + 2\log(t)^{k}
      + \frac{k(k - 1)}{t^{2}}\log(t)^{k - 2}
      + \frac{k}{t^{2}}\log(t)^{k - 1}\\
    &\leq (-1)^{k}\log(t - 1)^{k} + \log(3)^{k} + 2\log(2)^{k} + k(k - 1)\log(2)^{k - 2} + k\log(2)^{k - 1}.
  \end{align*}
  This, together with
  \(\int_{1}^{2}(-1)^{k}\log(t - 1)^{k}\ dt = k!\), gives us
  \begin{equation*}
    \int_{1}^{2} \left|h_{n - k}(t)\right|\ dt
    \leq (n - k)! + \log(3)^{n - k} + 2\log(2)^{n - k}
    + (n - k)(n - k - 1)\log(2)^{n - k - 2} + (n - k)\log(2)^{n - k - 1}.
  \end{equation*}
  Combining this with \(\log(2) < 1 < \log(3) < 2\) and \(n \geq 2\)
  we get
  \begin{equation*}
    H_{k} \leq 2\left((n - k)! + 3 \cdot 2^{n} + (n - k)^{2}\right).
  \end{equation*}
  This gives us the upper bound
  \begin{equation*}
    G_{\alpha,2,R,1}(x) \leq 2\sum_{n = 2}^{\infty}\frac{(1 + \alpha)^{n - 1}}{n!}\Bigg(
    \sum_{k = 0}^{n - 1}\binom{n}{k}\frac{(n - k)!}{2^{k}}
    + 3 \cdot 2^{n}\sum_{k = 0}^{n - 1}\binom{n}{k}\frac{1}{2^{k}}
    + \sum_{k = 0}^{n - 1}\binom{n}{k}\frac{(n - k)^{2}}{2^{k}}
    \Bigg).
  \end{equation*}
  Explicitly computing the inner sums we have
  \begin{align*}
    \sum_{k = 0}^{n - 1}\binom{n}{k}\frac{(n - k)!}{2^{k}}
    &= \sqrt{e}n\Gamma\left(n, \frac{1}{2}\right) < \sqrt{e}n!,\\
    \sum_{k = 0}^{n - 1}\binom{n}{k}\frac{1}{2^{k}}
    &= \left(\frac{3}{2}\right)^{n} - \frac{1}{2^{n}} < \left(\frac{3}{2}\right)^{n},\\
    \sum_{k = 0}^{n - 1}\binom{n}{k}\frac{(n - k)^{2}}{2^{k}}
    &= \frac{n(2n + 1)}{3}\left(\frac{3}{2}\right)^{n - 1},
  \end{align*}
  here \(\Gamma(n, z)\) is the upper incomplete gamma function, which
  satisfies \(\Gamma(n, z) < \Gamma(n) = (n - 1)!\) for \(z > 0\).
  Inserting this back gives us
  \begin{equation*}
    G_{\alpha,2,R,1}(x) \leq 2\sum_{n = 2}^{\infty}\frac{(1 + \alpha)^{n - 1}}{n!}
    \left(
      \sqrt{e}n!
      + 3 \cdot 2^{n}\left(\frac{3}{2}\right)^{n}
      + \frac{n(2n + 1)}{3}\left(\frac{3}{2}\right)^{n - 1}
    \right).
  \end{equation*}
  Splitting it into the three sums we have
  \begin{equation*}
    G_{\alpha,2,R,1}(x) \leq 2\left(\sqrt{e}\sum_{n = 2}^{\infty}(1 + \alpha)^{n - 1}
    + 9\sum_{n = 2}^{\infty}\frac{(1 + \alpha)^{n - 1}}{n!}3^{n - 1}
    + \frac{1}{3}\sum_{n = 2}^{\infty}\frac{(1 + \alpha)^{n - 1}}{n!}n(2n + 1)\left(\frac{3}{2}\right)^{n - 1}\right).
  \end{equation*}
  For the individual sums we get
  \begin{align*}
    \sum_{n = 2}^{\infty}(1 + \alpha)^{n - 1}
    &= \frac{1 + \alpha}{-\alpha},\\
    \sum_{n = 2}^{\infty}\frac{(1 + \alpha)^{n - 1}}{n!}3^{n - 1}
    &= \frac{e^{3(1 + \alpha)} -3(1 + \alpha) - 1}{3(1 + \alpha)},\\
    \sum_{n = 2}^{\infty}\frac{(1 + \alpha)^{n - 1}}{n!}n(2n + 1)\left(\frac{3}{2}\right)^{n - 1}
    &= 3(2 + \alpha)e^{3(1 + \alpha) / 2} - 3.
  \end{align*}
  Combining all of this we get the bound
  \begin{equation*}
    G_{\alpha,2,R,1}(x) \leq 2\left(
      \sqrt{e}\frac{1 + \alpha}{-\alpha}
      + 9\frac{e^{3(1 + \alpha)} -3(1 + \alpha) - 1}{3(1 + \alpha)}
      + (2 + \alpha)e^{3(1 + \alpha) / 2} - 1
    \right),
  \end{equation*}
  which is what we wanted to prove.
\end{proof}

\begin{lemma}
  \label{lemma:G2R2-bound}
  For \(\alpha \in (-1, 0)\) and \(x < 1\), \(G_{\alpha,2,R,2}\)
  satisfies the following bound
  \begin{equation*}
    G_{\alpha,2,R,2}(x) < 192\log(2)\frac{1 + \alpha}{-\alpha}.
  \end{equation*}
\end{lemma}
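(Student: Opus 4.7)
The plan is to exploit the cancellation that is built into the definition of $h_k$. By construction $h_k(t)$ removes the $O(1/t^2)$ contribution from $\log(t-1)^k + \log(t+1)^k - 2\log(t)^k$, so $|h_k(t)|$ decays at least like $1/(t-1)^4$ on $[2,\infty)$, which is enough to integrate $|h_{n-k}(t)|\,t\log(t)^k$ to infinity and obtain a bound on $G_{\alpha,2,R,2}(x)$ that is independent of $x$. The whole argument reduces to showing $S_n(x) := \sum_{k=0}^{n-1}\binom{n}{k}2^{-k}\int_2^{\pi/x}\log(t)^k|h_{n-k}(t)|t\,dt \leq 192\log(2)\cdot n!$, since then summation against $(1+\alpha)^{n-1}/n!$ yields the geometric series $\sum_{n\geq 2}(1+\alpha)^{n-1} = (1+\alpha)/(-\alpha)$ with the claimed prefactor.

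To bound $h_k$, I would set $f(s) = \log(t+s)^k$ and observe that $h_k(t) = f(1) + f(-1) - 2f(0) - f''(0)$. Taylor's formula with integral remainder through fourth order then gives
\[
h_k(t) = \frac{1}{6}\int_0^1 (1-\tau)^3\bigl(f^{(4)}(\tau) + f^{(4)}(-\tau)\bigr)\,d\tau.
\]
Applying the Fa\`a di Bruno formula to $f = g^k$ with $g(s) = \log(t+s)$ and using the elementary estimates $|g^{(j)}(s)| \leq (j-1)!/(t-1)^j$ valid for $|s| \leq 1$ and $t \geq 2$, I get
\[
|f^{(4)}(s)| \leq \frac{6k L^{k-1} + 11 k(k-1) L^{k-2} + 6 k(k-1)(k-2) L^{k-3} + k(k-1)(k-2)(k-3) L^{k-4}}{(t-1)^4},
\]
where $L = \log(t+1)$. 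Since $L \geq \log 3 > 1$ on $[2,\infty)$, this collapses to a bound of the shape $|h_k(t)| \leq C_0 P(k) L^{k-1}/(t-1)^4$ for a polynomial $P$ of degree at most three and an absolute constant $C_0$.

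For the integrals, I would combine $t \leq 2(t-1)$ and $L \leq \log 3 + \log t \leq 2\log t$ on $[2,\infty)$ with the substitution $v = \log t$ to reduce everything to Gamma-type integrals
\[
\int_{\log 2}^\infty v^N e^{-2v}\,dv \leq N!/2^{N+1}.
\]
This yields bounds of the form $\int_2^{\pi/x}\log(t)^k|h_{n-k}(t)|t\,dt \leq C_1 P(n-k)(n-1)!/2^{k+1}$. Substituting into $S_n(x)$ and summing in $k$ using the combinatorial identities from Lemma \ref{lemma:G2R1-bound} (such as $\sum_{k=0}^{n-1}\binom{n}{k}(n-k)^r/2^k$ evaluated explicitly via binomial differentiation), the sum over $k$ yields $S_n(x) \leq 192 \log(2)\cdot n!$, from which the lemma follows.

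The main obstacle is controlling the interplay between the polynomial factor $P(k)$ from Fa\`a di Bruno, the $k!$-type factor from the integral, and the geometric $(3/2)^n$ factor that tends to arise when summing $\binom{n}{k}/2^k$; these must conspire to produce exactly an $n!$ bound so that the outer sum remains a clean geometric series in $(1+\alpha)$ rather than in $3(1+\alpha)/2$, which would not be summable for all $\alpha \in (-1,0)$. Saving one power of $\log(t+1)$ in Step 2, via the $L^{k-1}$ rather than $L^k$ exponent produced by the Taylor cancellation, is what makes the bookkeeping work out; tracking the absolute constants through this chain is what fixes the numerical value $192\log(2)$.
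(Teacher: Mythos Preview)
Your Taylor--Fa\`a di Bruno route to $|h_k(t)| \leq C_0 P(k)\,\log(t+1)^{k-1}/(t-1)^4$ is correct, and so is the integral bound $\int_2^{\pi/x}\log(t)^k|h_{n-k}(t)|\,t\,dt \leq C_1 P(n-k)(n-1)!/2^{k+1}$ that follows from $L\le 2\log t$ and the Gamma integral. The gap is the next step: substituting into $S_n(x)$ gives
\[
S_n(x)\;\leq\;\frac{C_1}{2}\,(n-1)!\sum_{k=0}^{n-1}\binom{n}{k}\frac{P(n-k)}{4^{k}},
\]
and this inner sum is \emph{not} $O(n)$. Already with $P\equiv 1$ one has $\sum_k\binom{n}{k}4^{-k}=(5/4)^n$; with $P$ cubic it is $\asymp n^3(5/4)^n$. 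Hence $S_n(x)/n!$ grows like $n^{2}(5/4)^n$, and the outer sum is at best a series in $\tfrac{5}{4}(1+\alpha)$ rather than in $(1+\alpha)$, convergent only for $\alpha<-1/5$, not for all $\alpha\in(-1,0)$. The loss comes from the multiplicative estimate $L=\log(t+1)\leq 2\log t$: at $t=2$ the ratio $\log 3/\log 2\approx 1.585$ is bounded away from~$1$, so $L^{n-k-1}$ carries an exponential-in-$(n-k)$ penalty. Saving one power of $L$ is only a polynomial gain and cannot undo this; your closing paragraph correctly names the obstacle but the claimed resolution does not hold.

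The paper's argument is organised differently: it expands $\log(1\pm 1/t)$ as power series in $1/t$, writes $h_k$ as an explicit double sum over the resulting coefficients $c_{i,j}$, and extracts a bound of the form $|h_k(t)|\leq 32\,(1+\log(t)^{k-1})\,t^{-4}\,S_k$ in which the logarithm is already $\log t$ (not $\log(t+1)$) and all $k$-dependence sits in a purely numerical constant $S_k$. To salvage your approach you would need the additive splitting $\log(t+1)=\log t+\log(1+1/t)\leq \log t + 1/t$ rather than the multiplicative one, so that each surplus factor of $\log(1+1/t)$ contributes extra $t$-decay instead of a constant larger than~$1$; carrying that through is essentially the same combinatorics the paper performs via the $c_{i,j}$.
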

\begin{proof}
  To bound the integral we need to better understand the behavior of
  \(h_{k}(t)\). Recall that we have
  \begin{equation*}
    h_{k}(t) = \log(t - 1)^{k} + \log(t + 1)^{k} - 2\log(t)^{k}
    - \frac{k(k - 1 - \log t)\log(t)^{k - 2}}{t^{2}}.
  \end{equation*}
  As a first step we use that
  \begin{align*}
    \log(t - 1)^{k} + \log(t + 1)^{k} - 2\log(t)^{k}
    &= \left(\log(t) + \log(1 - 1 / t)\right)^{k}
    + \left(\log(t) + \log(1 + 1 / t)\right)^{k}
    - 2\log(t)^{k}\\
    &= \sum_{j = 0}^{k - 1}\binom{k}{j}\log(t)^{j}
      \left(\log(1 - 1 / t)^{k - j} + \log(1 + 1 / t)^{k - j}\right)
  \end{align*}
  to get
  \begin{equation*}
    h_{k}(t) = \sum_{j = 0}^{k - 1}\binom{k}{j}\log(t)^{j}
    \left(\log(1 - 1 / t)^{k - j} + \log(1 + 1 / t)^{k - j}\right)
    - \frac{k(k - 1 - \log t)\log(t)^{k - 2}}{t^{2}}.
  \end{equation*}
  Next, using that
  \begin{equation*}
    \log(1 - 1 / t) = -t\sum_{i = 0}^{\infty} \frac{1}{i + 1}\frac{1}{t^{i}},\quad
    \log(1 + 1 / t) = t\sum_{i = 0}^{\infty} (-1)^{i}\frac{1}{i + 1}\frac{1}{t^{i}},
  \end{equation*}
  we can write it as
  \begin{multline*}
    h_{k}(t) = \sum_{j = 0}^{k - 1}\binom{k}{j}\log(t)^{j}\frac{1}{t^{k - j}}
    \left(
      (-1)^{k - j}\left(\sum_{i = 0}^{\infty} \frac{1}{i + 1}\frac{1}{t^{i}}\right)^{k - j}
      + \left(\sum_{i = 0}^{\infty} (-1)^{i}\frac{1}{i + 1}\frac{1}{t^{i}}\right)^{k - j}
    \right)\\
    - \frac{k(k - 1 - \log t)\log(t)^{k - 2}}{t^{2}}.
  \end{multline*}

  Using formulas for powers of power series (see
  e.g.~\cite{Gould1974}) we have
  \begin{align*}
    \left(\sum_{i = 0}^{\infty} \frac{1}{i + 1}\frac{1}{t^{i}}\right)^{j}
    &= \sum_{i = 0}^{\infty}c_{i,j}\frac{1}{t^{i}},\\
    \left(\sum_{i = 0}^{\infty} (-1)^{i}\frac{1}{i + 1}\frac{1}{t^{i}}\right)^{j}
    &= \sum_{i = 0}^{\infty}d_{i,j}\frac{1}{t^{i}}.
  \end{align*}
  with
  \begin{equation*}
    c_{0,j} = 1,\quad
    c_{i,j} = \frac{1}{i}\sum_{l = 1}^{i} \frac{lj - i + l}{l + 1}c_{i-l,j},\ i \geq 1
  \end{equation*}
  and
  \begin{equation*}
    d_{0,j} = 1,\quad
    d_{i,j} = \frac{1}{i}\sum_{l = 1}^{i} (-1)^{l}\frac{lj - i + l}{l + 1}d_{i-l,j},\ i \geq 1.
  \end{equation*}
  By induction it can be checked that \(d_{i,j} = (-1)^{i}c_{i,j}\),
  hence
  \begin{equation*}
    \left(\sum_{i = 0}^{\infty} (-1)^{i}\frac{1}{i + 1}\frac{1}{t^{i}}\right)^{j}
    = \sum_{i = 0}^{\infty}(-1)^{i}c_{i,j}\frac{1}{t^{i}}.
  \end{equation*}
  Inserting this back into \(h_{k}(t)\) we get
  \begin{equation*}
    h_{k}(t) = \sum_{j = 0}^{k - 1}\binom{k}{j}\log(t)^{j}\frac{1}{t^{k - j}}
    \sum_{i = 0}^{\infty} \left((-1)^{k - j} + (-1)^{i}\right)c_{i,k-j}\frac{1}{t^{i}}
    - \frac{k(k - 1 - \log t)\log(t)^{k - 2}}{t^{2}},
  \end{equation*}
  which we can rewrite as
  \begin{equation*}
    h_{k}(t) = \sum_{j = 0}^{k - 1}\sum_{i = 0}^{\infty}
    \binom{k}{j}\left((-1)^{k - j} + (-1)^{i}\right)c_{i,k-j}\log(t)^{j}\frac{1}{t^{k - j + i}}
    - \frac{k(k - 1 - \log t)\log(t)^{k - 2}}{t^{2}}.
  \end{equation*}
  Reordering the sums we can further write it as
  \begin{align*}
    h_{k}(t) &= \sum_{m = 0}^{\infty}\sum_{j = \max(k - m, 0)}^{k - 1}
               \binom{k}{j}\left((-1)^{k - j} + (-1)^{m + j - k}\right)c_{m+j-k,k-j}\log(t)^{j}\frac{1}{t^{m}}
               - \frac{k(k - 1 - \log t)\log(t)^{k - 2}}{t^{2}}\\
             &= \sum_{m = 1}^{\infty}\sum_{j = \max(k - m, 0)}^{k - 1}
               \binom{k}{j}\left((-1)^{k - j} + (-1)^{m + j - k}\right)c_{m+j-k,k-j}\log(t)^{j}\frac{1}{t^{m}}
               - \frac{k(k - 1 - \log t)\log(t)^{k - 2}}{t^{2}}\\
             &= \sum_{m = 1}^{\infty}\sum_{j = \max(k - m, 0)}^{k - 1}
               \binom{k}{j}(-1)^{k - j}\left(1 + (-1)^{m}\right)c_{m+j-k,k-j}\log(t)^{j}\frac{1}{t^{m}}
               - \frac{k(k - 1 - \log t)\log(t)^{k - 2}}{t^{2}}\\
             &= 2\sum_{m = 1}^{\infty}\sum_{j = \max(k - 2m, 0)}^{k - 1}
               \binom{k}{j}(-1)^{k - j}c_{2m+j-k,k-j}\log(t)^{j}\frac{1}{t^{2m}}
               - \frac{k(k - 1 - \log t)\log(t)^{k - 2}}{t^{2}}.
  \end{align*}
  The \(m = 1\) term exactly cancels the subtracted term, giving us
  \begin{equation*}
    h_{k}(t) = 2\sum_{m = 2}^{\infty}\sum_{j = \max(k - 2m, 0)}^{k - 1}
    \binom{k}{j}(-1)^{k - j}c_{2m+j-k,k-j}\log(t)^{j}\frac{1}{t^{2m}}.
  \end{equation*}
  Taking the absolute value we have
  \begin{equation*}
    \left|h_{k}(t)\right| \leq 2\sum_{m = 2}^{\infty}\sum_{j = \max(k - 2m, 0)}^{k - 1}
    \binom{k}{j}c_{2m+j-k,k-j}\log(t)^{j}\frac{1}{t^{2m}}.
  \end{equation*}
  Using that \(\log(t)^{j} \leq 1 + \log(t)^{k - 1}\) and
  \(t^{-2m} \leq 16t^{-4}2^{-2m}\)for \(t \geq 2\) we get the upper
  bound
  \begin{equation*}
    \left|h_{k}(t)\right| \leq 32\frac{1 + \log(t)^{k - 1}}{t^{4}}
    \sum_{m = 2}^{\infty}\sum_{j = \max(k - 2m, 0)}^{k - 1}
    \binom{k}{j}\frac{c_{2m+j-k,k-j}}{2^{2m}}.
  \end{equation*}
  Note that the sum no longer depends on \(t\), we denote it by
  \(S_{k}\).

  Inserting this back into \(G_{\alpha,2,R,2}\) we get the upper bound
  \begin{equation*}
    G_{\alpha,2,R,2}(x) \leq 32\sum_{n = 2}^{\infty} \frac{(1 + \alpha)^{n - 1}}{n!}
    \sum_{k = 0}^{n - 1}\binom{n}{k}\frac{S_{n - k}}{2^{k}}\int_{2}^{\pi / x}
    \frac{\log(t)^{k} + \log(t)^{n - 1}}{t^{3}}\ dt.
  \end{equation*}
  The integral can be explicitly computed to be
  \begin{equation*}
    \int_{2}^{\pi / x}\frac{\log(t)^{k} + \log(t)^{n - 1}}{t^{3}}\ dt
    = \frac{\Gamma(k + 1, 2\log(2)) - \Gamma\left(k + 1, 2\log\left(\frac{\pi}{x}\right)\right)}{2^{k + 1}}
      + \frac{\Gamma(n, 2\log(2)) - \left(n, 2\log\left(\frac{\pi}{x}\right)\right)}{2^{n}}.
  \end{equation*}
  The differences between the incomplete gamma functions are bounded
  by \(\Gamma(k + 1) = k!\) and \(\Gamma(n) = (n - 1)!\) respectively,
  giving us
  \begin{equation*}
    \int_{2}^{\pi / x}\frac{\log(t)^{k} + \log(t)^{n - 1}}{t^{3}}\ dt
    \leq \frac{k!}{2^{k + 1}} + \frac{(n-1)!}{2^{n}}.
  \end{equation*}
  For \(0 \leq k \leq n - 1\) we have
  \(\frac{k!}{2^{k + 1}} < 2\frac{(n-1)!}{2^{n}}\) and hence
  \begin{equation*}
    \int_{2}^{\pi / x}\frac{\log(t)^{k} + \log(t)^{n - 1}}{t^{3}}\ dt \leq 3\frac{(n-1)!}{2^{n}}.
  \end{equation*}
  Inserting this back into \(G_{\alpha,2,R,2}\) gives
  \begin{align*}
    G_{\alpha,2,R,2}(x)
    &\leq 96\sum_{n = 2}^{\infty} \frac{(1 + \alpha)^{n - 1}}{n!}
      \sum_{k = 0}^{n - 1}\binom{n}{k}\frac{S_{n - k}}{2^{k}}\frac{(n-1)!}{2^{n}}\\
    &= 48\sum_{n = 2}^{\infty} \left(\frac{1 + \alpha}{2}\right)^{n - 1}\frac{1}{n}
      \sum_{k = 0}^{n - 1}\binom{n}{k}\frac{S_{n - k}}{2^{k}}.
  \end{align*}

  The sum no longer depends on \(x\) and to bound it we need to
  analyze
  \begin{equation*}
    S_{k} = \sum_{m = 2}^{\infty}\sum_{j = \max(k - 2m, 0)}^{k - 1}\binom{k}{j}\frac{c_{2m+j-k,k-j}}{2^{2m}}.
  \end{equation*}
  The bound \(\binom{k}{j} \leq 2^{k}\) gives us
  \begin{equation*}
    S_{k} \leq 2^{k}\sum_{m = 2}^{\infty}\sum_{j = \max(k - 2m, 0)}^{k - 1}\frac{c_{2m+j-k,k-j}}{2^{2m}}.
  \end{equation*}
  Starting from \(m = 0\) and adding back the odd powers of \(2\)
  gives us
  \begin{equation*}
    S_{k} \leq 2^{k}\sum_{m = 0}^{\infty}\sum_{j = \max(k - m, 0)}^{k - 1}\frac{c_{m+j-k,k-j}}{2^{m}}.
  \end{equation*}
  Reordering the sums we have
  \begin{equation*}
    \sum_{m = 0}^{\infty}\sum_{j = \max(k - m, 0)}^{k - 1}\frac{c_{m+j-k,k-j}}{2^{m}}
    = \sum_{j = 0}^{k - 1}\frac{1}{2^{k - j}}\sum_{i = 0}^{\infty}c_{i,k-j}\frac{1}{2^{i}}.
  \end{equation*}
  From the definition of \(c_{i,j}\) we get
  \begin{equation*}
    \sum_{i = 0}^{\infty}c_{i,k-j}\frac{1}{2^{i}}
    = \left(\sum_{i = 0}^{\infty}\frac{1}{i + 1}\frac{1}{2^{i}}\right)^{k - j}
    = \left(-2\log(1 - 1 / 2)\right)^{k - j} = \log(4)^{k - j}.
  \end{equation*}
  Inserting this back into \(S_{k}\) we have
  \begin{equation*}
    S_{k} \leq 2^{k}\sum_{j = 0}^{k - 1}\left(\frac{\log(4)}{2}\right)^{k - j}
    = 2^{k}\sum_{j = 0}^{k - 1}\log(2)^{k - j}
    = 2^{k}\log(2)\frac{1 - \log(2)^{k}}{1 - \log(2)}.
  \end{equation*}
  We have \(\frac{1 - \log(2)^{k}}{1 - \log(2)} \leq 4\) and hence
  \(S_{k} \leq 2^{k + 2}\log(2)\). Inserting this back into
  \(G_{\alpha,2,R,2}\) gives
  \begin{align*}
    G_{\alpha,2,R,2}(x)
    &= 48\sum_{n = 2}^{\infty} \left(\frac{1 + \alpha}{2}\right)^{n - 1}\frac{1}{n}
      \sum_{k = 0}^{n - 1}\binom{n}{k}\frac{2^{k + 2}\log(2)}{2^{k}}\\
    &= 192\log(2)\sum_{n = 2}^{\infty} \left(\frac{1 + \alpha}{2}\right)^{n - 1}\frac{1}{n}\sum_{k = 0}^{n - 1}\binom{n}{k}\\
    &= 192\log(2)\sum_{n = 2}^{\infty} \left(\frac{1 + \alpha}{2}\right)^{n - 1}\frac{1}{n}(2^{n} - 1)\\
    &\leq 192\log(2)\sum_{n = 2}^{\infty} (1 + \alpha)^{n - 1}\\
    &= 192\log(2)\frac{1 + \alpha}{-\alpha},
  \end{align*}
  which is what we wanted to prove.
\end{proof}

As mentioned in the beginning of the section the above methods work
well for very small values of \(x\). For small, but not that small,
values we have to take another approach. We let
\begin{equation*}
  G_{\alpha,2,1}(x)
  = \frac{1}{\log(1 / x)}\int_{1}^{2}\frac{(t - 1)^{-\alpha - 1} + (1 + t)^{-\alpha - 1} - 2t^{-\alpha - 1}}{1 + \alpha}t^{(1 - \alpha) / 2}\log(2e + 1 / (xt))\ dt
\end{equation*}
and
\begin{equation*}
  G_{\alpha,2,2}(x)
  = \frac{1}{\log(1 / x)}\int_{2}^{\pi / x}\frac{(t - 1)^{-\alpha - 1} + (1 + t)^{-\alpha - 1} - 2t^{-\alpha - 1}}{1 + \alpha}t^{(1 - \alpha) / 2}\log(2e + 1 / (xt))\ dt,
\end{equation*}
giving
\begin{equation*}
  G_{\alpha,2}(x) = \frac{1 + \alpha}{1 - x^{1 + \alpha + (1 + \alpha)^{2} / 2}}(G_{\alpha,2,1}(x) + G_{\alpha,2,2}(x)).
\end{equation*}
The integral for \(G_{\alpha,2,2}\) is bounded on the interval of
integration and can be computed using numerical integration, as
discussed in Appendix~\ref{sec:rigorous-integration}. The only
problematic part is evaluating
\begin{equation*}
  \frac{(t - 1)^{-\alpha - 1} + (1 + t)^{-\alpha - 1} - 2t^{-\alpha - 1}}{1 + \alpha},
\end{equation*}
which has a removable singularity at \(\alpha = -1\), this can be
handled as described in Appendix~\ref{sec:removable-singularities}.
The integral for \(G_{\alpha,2,1}\) has a singularity at \(t = 1\) and
to compute it we use the following lemma.
\begin{lemma}
  \label{lemma:G21}
  We have
  \begin{equation*}
    G_{\alpha,2,1}(x) =
    \frac{C}{\log(1 / x)}\frac{1 + 2^{-\alpha} - 3^{-\alpha} + \alpha(-4 + 5 \cdot 2^{-\alpha} - 2 \cdot 3^{-\alpha})}{(\alpha - 1)\alpha(\alpha + 1)}
  \end{equation*}
  for some
  \begin{equation*}
    C \in [2^{-(1 + \alpha) / 2}\log(2e + 1 / (2x)), \log(2e + 1 / x)].
  \end{equation*}
\end{lemma}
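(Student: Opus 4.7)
The plan is to recognize $G_{\alpha,2,1}(x)$ as the integral of a positive weight against a monotone one-variable factor depending on $x$, then apply the first mean value theorem for integrals to pull out a constant in the claimed range, leaving an integral that can be computed exactly.

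First I would factor $t^{(1-\alpha)/2} = t \cdot t^{-(1+\alpha)/2}$ and write the integrand as $g(t)\varphi(t)$, where
\begin{equation*}
  g(t) = \frac{(t-1)^{-\alpha-1} + (t+1)^{-\alpha-1} - 2t^{-\alpha-1}}{1+\alpha}\, t,
  \qquad
  \varphi(t) = t^{-(1+\alpha)/2}\log\!\left(2e + \tfrac{1}{xt}\right).
\end{equation*}
To see $g \geq 0$ on $[1,2]$ I would observe that for $\alpha \in (-1,0)$ the function $s \mapsto s^{-\alpha-1}$ has second derivative $(-\alpha-1)(-\alpha-2)s^{-\alpha-3}>0$, hence is strictly convex on $(0,\infty)$. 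Applied at the three equally spaced points $t-1 < t < t+1$ this yields $(t-1)^{-\alpha-1} + (t+1)^{-\alpha-1} \geq 2 t^{-\alpha-1}$, and dividing by $1+\alpha>0$ and multiplying by $t>0$ keeps the sign.

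Next I would show $\varphi$ is strictly decreasing on $[1,2]$: both $t^{-(1+\alpha)/2}$ and $\log(2e+1/(xt))$ are positive and decreasing in $t$ (since $-(1+\alpha)/2<0$ and $1/(xt)$ decreases in $t$), so their product is decreasing. Consequently on $[1,2]$ the range of $\varphi$ equals $[\varphi(2),\varphi(1)] = [2^{-(1+\alpha)/2}\log(2e + 1/(2x)),\, \log(2e+1/x)]$. Since $g \geq 0$ is integrable, the first mean value theorem for integrals gives
\begin{equation*}
  \int_1^2 g(t)\varphi(t)\, dt = C \int_1^2 g(t)\, dt
\end{equation*}
for some $C$ in that interval.

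Finally I would evaluate $\int_1^2 g(t)\, dt$ by splitting it into three pieces. Substituting $u = t-1$ in the first term and $u = t+1$ in the second reduces everything to integrals of the form $\int u^{-\alpha-1}\, du$ and $\int u^{-\alpha}\, du$, which can be carried out explicitly since $\alpha \in (-1,0)$ makes them convergent. Collecting the three contributions over the common denominator $(\alpha-1)\alpha(\alpha+1)$ and expanding yields the numerator $1 + 2^{-\alpha} - 3^{-\alpha} + \alpha(-4 + 5\cdot 2^{-\alpha} - 2\cdot 3^{-\alpha})$. Dividing through by $\log(1/x)$ gives the stated identity. The only place where care is needed is the convexity/monotonicity verification in the first two steps; the closed-form evaluation is routine bookkeeping.
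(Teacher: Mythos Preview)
Your proof is correct and follows essentially the same strategy as the paper's. The paper factors out the bounded factors in two separate steps—first pulling out $\log(2e+1/(xt))$ as a constant $C_1$, then $t^{-(1+\alpha)/2}$ as a constant $C_2$, and setting $C=C_1C_2$—whereas you combine them into a single monotone factor $\varphi(t)$ and apply the mean value theorem once; the resulting range for $C$ is identical. Your explicit convexity argument for the positivity of $g$ is a small bonus the paper leaves implicit.
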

\begin{proof}
  For \(t \in [1, 2]\) we have
  \begin{equation*}
    \log(2e + 1 / (xt)) \in [\log(2e + 1 / (2x)), \log(2e + 1 / x)].
  \end{equation*}
  Since the integrand is positive it follows that
  \begin{equation*}
    G_{\alpha,2,2}(x)
    = \frac{C_{1}}{\log(1 / x)}\int_{1}^{2}\frac{(t - 1)^{-\alpha - 1} + (1 + t)^{-\alpha - 1} - 2t^{-\alpha - 1}}{1 + \alpha}t^{(1 - \alpha) / 2}\ dt,
  \end{equation*}
  for some \(C_{1} \in [\log(2e + 1 / (2x)), \log(2e + 1 / x)]\).
  Furthermore we have
  \(t^{(1 - \alpha) / 2} = t t^{-(1 + \alpha) / 2}\) and
  \begin{equation*}
    t^{-(1 + \alpha) / 2} \in [2^{-(1 + \alpha) / 2}, 1].
  \end{equation*}
  This gives us
  \begin{equation*}
    G_{\alpha,2,2}(x)
    = \frac{C_{1}C_{2}}{\log(1 / x)}\int_{1}^{2}\frac{(t - 1)^{-\alpha - 1} + (1 + t)^{-\alpha - 1} - 2t^{-\alpha - 1}}{1 + \alpha}t\ dt,
  \end{equation*}
  for some \(C_{2} \in [2^{-(1 + \alpha) / 2}, 1]\). The integral can
  now be computed explicitly to be
  \begin{equation*}
    \int_{1}^{2} \frac{(t - 1)^{-\alpha - 1} + (1 + t)^{-\alpha - 1} - 2t^{-\alpha - 1}}{1 + \alpha}t\ dt
    = \frac{1 + 2^{-\alpha} - 3^{-\alpha} + \alpha(-4 + 5 \cdot 2^{-\alpha} - 2 \cdot 3^{-\alpha})}{(\alpha - 1)\alpha(\alpha + 1)}.
  \end{equation*}
  Finally we note that if we let \(C = C_{1}C_{2}\) then
  \begin{equation*}
    C \in [2^{-(1 + \alpha) / 2}\log(2e + 1 / (2x)), \log(2e + 1 / x)].
  \end{equation*}
  This gives us the result.
\end{proof}

\subsection{\(R_{\alpha}\)}
Recall that
\begin{equation*}
  R_{\alpha}(x) = 2\sum_{m = 1}^{\infty} (-1)^{m}\zeta(-\alpha - 2m)\frac{x^{2m}}{(2m)!}\sum_{k = 0}^{m - 1}\binom{2m}{2k}\int_{0}^{\pi / x}t^{2k + (1 - \alpha) / 2}\log(2e + 1 / (xt))\ dt.
\end{equation*}
To begin with we can note that since \(0 < -\alpha < 1\) we have
\((-1)^{m}\zeta(-\alpha - 2m) > 0\) for all \(m = 1, 2, \dots\) due to
the zeros of the zeta function on the real line being exactly the even
negative integers. As a consequence all terms in the sum are positive.
The following lemma gives a bound
\begin{lemma}
  For \(\alpha \in (-1, 0)\), \(x < 1\) and \(M \geq 1\),
  \(R_{\alpha}(x)\) satisfies the following bound
  \begin{multline*}
    R_{\alpha}(x) < 2\log(2e + 1/\pi) \Bigg(
    \sum_{m = 1}^{M - 1} (-1)^{m}\zeta(-\alpha - 2m)\frac{\pi^{2m}}{(2m)!}
    \sum_{k = 0}^{m - 1}\binom{2m}{2k}\frac{1}{2k + 1 + (1 - \alpha) / 2}\left(\frac{x}{\pi}\right)^{2(m - 1 - k)}\\
    + \pi^{2} 2(2\pi)^{1 - \alpha - 2M}\left|\sin\left(\frac{\pi}{2}\alpha\right)\right|\zeta(2M + 1 + \alpha) \frac{(\pi(1 + 1 / \pi))^{2M}}{4\pi^{2} - (\pi(1 + 1 / \pi))^{2}}
    \Bigg).
  \end{multline*}
\end{lemma}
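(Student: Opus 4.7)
The plan is to bound the inner integral in $t$ by an elementary expression, split the outer $m$-sum at $M$, keep the first $M-1$ terms as the finite sum in the statement, and reduce the tail to an application of Lemma~\ref{lemma:clausen-tails} with argument $\pi(1 + 1/\pi) = \pi + 1 < 2\pi$. Positivity of $(-1)^{m}\zeta(-\alpha - 2m)$ for $\alpha \in (-1, 0)$ and $m \ge 1$ allows termwise bounds throughout, and $x \le 1$ will be used repeatedly to trade exponents.

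First I would establish the pointwise estimate $\log(2e + 1/(xt)) \le \log(2e + 1/\pi) + \log(\pi/(xt))$, valid on the integration range $xt \le \pi$ since it reduces to $2ext + 1 \le 2e\pi + 1$. Substituting this bound into the integral and evaluating the two resulting pieces (the logarithmic one via the substitution $u = xt/\pi$) gives
\begin{equation*}
  \int_0^{\pi/x} t^{2k + (1-\alpha)/2}\log(2e + 1/(xt))\,dt \;\le\; \frac{(\pi/x)^{2k+1+(1-\alpha)/2}}{2k+1+(1-\alpha)/2}\left[\log(2e+1/\pi) + \frac{1}{2k+1+(1-\alpha)/2}\right],
\end{equation*}
where $2k + 1 + (1-\alpha)/2 \ge 3/2$ so the correction $1/(2k+1+(1-\alpha)/2) \le 2/3$ is controlled by $\log(2e+1/\pi)$. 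Using $x \le 1$ and $(1-\alpha)/2 \le 1$ to replace the exponent $2k+1+(1-\alpha)/2$ with $2k+2$ and applying the identity $x^{2m}(\pi/x)^{2k+2} = \pi^{2m}(x/\pi)^{2(m-1-k)}$ puts the termwise bound into exactly the form of the finite-sum summand in the statement.

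For the tail $m \ge M$ the main task is an $x$-independent bound on the inner $k$-sum by $\pi^{2}(1 + 1/\pi)^{2m}$. The key ingredient is the elementary inequality $(x/\pi)^{2l} \le (x/\pi)^{2}(1/\pi)^{2l-2}$ for $l \ge 1$, which uses $x \le 1$ (so $x^{2l-2} \le 1$), combined with the binomial identity $\sum_{l=0}^{m}\binom{2m}{2l}y^{2l} = \frac{1}{2}\bigl[(1+y)^{2m} + (1-y)^{2m}\bigr]$ at $y = 1/\pi$. After rewriting $\sum_{k=0}^{m-1}\binom{2m}{2k}(x/\pi)^{2(m-1-k)}$ as $(\pi/x)^{2}\sum_{l=1}^{m}\binom{2m}{2l}(x/\pi)^{2l}$ and applying both ingredients, the factor $(\pi/x)^{2}$ cancels against a factor $(x/\pi)^{2}$ pulled out by the pointwise inequality, leaving $\pi^{2}(1+1/\pi)^{2m}$. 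Multiplying by $\pi^{2m}$ yields the uniform bound $\pi^{2}(\pi+1)^{2m}$, and Lemma~\ref{lemma:clausen-tails} applied with $s = -\alpha$ and argument $\pi + 1$ then produces exactly the tail expression in the statement.

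The main obstacle is this last cancellation: a naive bound on the inner $k$-sum gives $(\pi/x)^{2}(\pi+1)^{2m}$ rather than the desired $\pi^{2}(\pi+1)^{2m}$, and the bound is genuinely false without the trick $(x/\pi)^{2l} \le (x/\pi)^{2}(1/\pi)^{2l-2}$ that moves one factor of $x^{2}$ into the inner sum and absorbs the unwanted $x^{-2}$. Once this is in place, the appearance of $\pi(1+1/\pi) = \pi + 1$ as the Clausen-tail argument becomes the natural consequence of $1 + x/\pi \le 1 + 1/\pi$, and the remaining bookkeeping of constants between the finite sum and the tail estimate is routine.
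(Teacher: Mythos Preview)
Your overall strategy matches the paper's: bound the integral termwise, split off the first $M-1$ terms, and push the tail through Lemma~\ref{lemma:clausen-tails}. There is one genuine gap and one genuine difference worth noting.

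\textbf{The gap: the constant.} Your logarithm split gives the bracket $\log(2e+1/\pi)+\frac{1}{2k+1+(1-\alpha)/2}$, and the phrase ``controlled by $\log(2e+1/\pi)$'' is doing work it cannot do: the correction is at most $2/3$, so at best your bracket is bounded by $\log(2e+1/\pi)+2/3\approx 2.42$, not by $\log(2e+1/\pi)\approx 1.75$. As written you therefore prove the bound only with constant $2\bigl(\log(2e+1/\pi)+2/3\bigr)$, not $2\log(2e+1/\pi)$. The paper's split is different: it writes $\log(2e+1/(xt))=\log(1+2ext)-\log(xt)$, bounds the first piece by $\log(1+2e\pi)$, and \emph{drops} the second piece entirely after checking that $\int_0^{\pi/x}t^{2k+(1-\alpha)/2}\log(xt)\,dt$ is positive. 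This produces a single clean constant with no correction term. (Incidentally, the paper's own proof derives $\log(1+2e\pi)$ rather than the $\log(2e+1/\pi)$ appearing in the statement, and your constant $\log(2e+1/\pi)+2/3$ is in fact smaller than $\log(1+2e\pi)$; so your argument does yield the bound the paper's proof actually establishes, just not the one it states.)

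\textbf{The tail.} Here your route is genuinely different and arguably cleaner. The paper replaces $(x/\pi)^{2(m-1-k)}$ by $(1/\pi)^{2(m-1-k)}$ using $x<1$, extends the $k$-sum to $k=m$, and then invokes the closed form for $\sum_{k=0}^m\binom{2m}{2k}\frac{1}{2k+1}y^{2k}$ together with some numerical estimation to reach $\pi^2(1+1/\pi)^{2m}$. Your reindexing $l=m-k$ together with the pointwise inequality $(x/\pi)^{2l}\le (x/\pi)^{2}(1/\pi)^{2l-2}$ and the simpler identity $\sum_{l=0}^{m}\binom{2m}{2l}y^{2l}=\tfrac12\bigl[(1+y)^{2m}+(1-y)^{2m}\bigr]$ arrives at the same bound with less calculation; the cancellation of $(\pi/x)^2$ against $(x/\pi)^2$ that you highlight as the ``main obstacle'' is exactly right, and the argument is complete once you note that the factors $\frac{1}{2k+1+(1-\alpha)/2}\le 1$ can simply be dropped before reindexing.
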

\begin{proof}
  Looking at the integral in the inner sum we write
  \begin{equation*}
    \log(2e + 1 / (xt)) = \log(1 + 2ext) - \log(xt),
  \end{equation*}
  giving us
  \begin{equation*}
    \int_{0}^{\pi / x}t^{2k + (1 - \alpha) / 2}\log(2e + 1 / (xt))\ dt =
    \int_{0}^{\pi / x}t^{2k + (1 - \alpha) / 2}\log(1 + 2ext)\ dt
    -   \int_{0}^{\pi / x}t^{2k + (1 - \alpha) / 2}\log(xt)\ dt.
  \end{equation*}
  For the first integral we compute an upper bound using that
  \(\log(1 + 2ext)\) is bounded by \(\log(1 + 2e\pi)\), for the second
  integral we integrate directly.
  \begin{align*}
    \int_{0}^{\pi / x}t^{2k + (1 - \alpha) / 2}\log(1 + 2ext)\ dt
    &\leq \log(1 + 2e\pi) \int_{0}^{\pi / x}t^{2k + (1 - \alpha) / 2}\ dt\\
    &= \log(1 + 2e\pi)\frac{1}{2k + 1 + (1 - \alpha) / 2}\left(\frac{\pi}{x}\right)^{2k + 1 + (1 - \alpha) / 2}\\
    &\leq \frac{\log(1 + 2e\pi)}{2k + 1 + (1 - \alpha) / 2}\left(\frac{\pi}{x}\right)^{2(k + 1)}.
  \end{align*}
  The second integral can be computed explicitly to be
  \begin{equation*}
    \int_{0}^{\pi / x}t^{2k + (1 - \alpha) / 2}\log(xt)\ dt
    = \frac{(2k + 1 + (1 - \alpha) / 2)\log \pi - 1}{(2k + 1 + (1 - \alpha) / 2)^{2}}\left(\frac{\pi}{x}\right)^{2k + 1 + (1 - \alpha) / 2},
  \end{equation*}
  and we note that this is positive for \(k \geq 0\). An upper bound
  for the full integral is hence given by
  \begin{equation*}
    \int_{0}^{\pi / x}t^{2k + (1 - \alpha) / 2}\log(2e + 1 / (xt))\ dt
    \leq \frac{\log(2e + 1 / \pi)}{2k + 1 + (1 - \alpha) / 2}\left(\frac{\pi}{x}\right)^{2(k + 1)}
  \end{equation*}
  For the inner sum we thus get
  \begin{equation*}
    \sum_{k = 0}^{m - 1}\binom{2m}{2k}\int_{0}^{\pi / x}t^{2k + (1 - \alpha) / 2}\log(2e + 1 / (xt))\ dt
    \leq \log(2e + 1/\pi) \sum_{k = 0}^{m - 1}\binom{2m}{2k}\frac{1}{2k + 1 + (1 - \alpha) / 2}\left(\frac{\pi}{x}\right)^{2(k + 1)}.
  \end{equation*}
  Factoring out \(\left(\frac{\pi}{x}\right)^{2m}\) and inserting into
  \(R_{\alpha}\) we have
  \begin{equation*}
    R_{\alpha}(x) \leq 2\log(2e + 1/\pi) \sum_{m = 1}^{\infty} (-1)^{m}\zeta(-\alpha - 2m)\frac{\pi^{2m}}{(2m)!}
    \sum_{k = 0}^{m - 1}\binom{2m}{2k}\frac{1}{2k + 1 + (1 - \alpha) / 2}\left(\frac{x}{\pi}\right)^{2(m - 1 - k)}.
  \end{equation*}
  Taking \(M \geq 1\) we split this into one finite sum and one tail as
  \begin{multline*}
    R_{\alpha}(x) \leq 2\log(2e + 1/\pi) \Bigg(
    \sum_{m = 1}^{M - 1} (-1)^{m}\zeta(-\alpha - 2m)\frac{\pi^{2m}}{(2m)!}
    \sum_{k = 0}^{m - 1}\binom{2m}{2k}\frac{1}{2k + 1 + (1 - \alpha) / 2}\left(\frac{x}{\pi}\right)^{2(m - 1 - k)}\\
    + \sum_{m = M}^{\infty} (-1)^{m}\zeta(-\alpha - 2m)\frac{\pi^{2m}}{(2m)!}
    \sum_{k = 0}^{m - 1}\binom{2m}{2k}\frac{1}{2k + 1 + (1 - \alpha) / 2}\left(\frac{x}{\pi}\right)^{2(m - 1 - k)}
    \Bigg).
  \end{multline*}
  The finite sum can be enclosed directly, for the second sum we note
  that for \(x < 1\) we have
  \begin{align*}
    \sum_{k = 0}^{m - 1}\binom{2m}{2k}\frac{1}{2k + 1 + (1 - \alpha) / 2}\left(\frac{x}{\pi}\right)^{2(m - 1 - k)}
    &< \sum_{k = 0}^{m - 1}\binom{2m}{2k}\frac{1}{2k + 1}\left(\frac{1}{\pi}\right)^{2(m - 1 - k)}\\
    &< \sum_{k = 0}^{m}\binom{2m}{2k}\frac{1}{2k + 1}\left(\frac{1}{\pi}\right)^{2(m - 1 - k)}\\
    &= \pi^{1 - 2m}\frac{(\pi + 1)^{2m} - (\pi - 1)^{2m} + \pi(\pi + 1)^{2m} + \pi(\pi - 1)^{2m}}{2(1 + 2m)}\\
    &\leq \pi^{1 - 2m}(\pi + 1)^{2m}\frac{1 + 2\pi}{6}\\
    &= \pi^{2} \left(1 + \frac{1}{\pi}\right)^{2m}.
  \end{align*}
  and hence
  \begin{multline*}
    \sum_{m = M}^{\infty} (-1)^{m}\zeta(-\alpha - 2m)\frac{\pi^{2m}}{(2m)!}
    \sum_{k = 0}^{m - 1}\binom{2m}{2k}\frac{1}{2k + 1 + (1 - \alpha) / 2}\left(\frac{x}{\pi}\right)^{2(m - 1 - k)}\\
    \leq
    \pi^{2}\sum_{m = M}^{\infty} (-1)^{m}\zeta(-\alpha - 2m)\frac{(\pi(1 + 1 / \pi))^{2m}}{(2m)!}.
  \end{multline*}
  The last sum is the same as for the tail of the Clausen function
  occurring in Lemma~\ref{lemma:clausen-tails}, since
  \(\pi(1 + 1 / \pi) < 2\pi\) this gives us
  \begin{equation*}
    \sum_{m = M}^{\infty} (-1)^{m}\zeta(-\alpha - 2m)\frac{(\pi(1 + 1 / \pi))^{2m}}{(2m)!}
    \leq 2(2\pi)^{1 - \alpha - 2M}\left|\sin\left(\frac{\pi}{2}\alpha\right)\right|\zeta(2M + 1 + \alpha) \frac{(\pi(1 + 1 / \pi))^{2M}}{4\pi^{2} - (\pi(1 + 1 / \pi))^{2}}.
  \end{equation*}
  Finally we then get
  \begin{multline*}
    R_{\alpha}(x) < 2\log(2e + 1/\pi) \Bigg(
    \sum_{m = 1}^{M - 1} (-1)^{m}\zeta(-\alpha - 2m)\frac{\pi^{2m}}{(2m)!}
    \sum_{k = 0}^{m - 1}\binom{2m}{2k}\frac{1}{2k + 1 + (1 - \alpha) / 2}\left(\frac{x}{\pi}\right)^{2(m - 1 - k)}\\
    + \pi^{2} 2(2\pi)^{1 - \alpha - 2M}\left|\sin\left(\frac{\pi}{2}\alpha\right)\right|\zeta(2M + 1 + \alpha) \frac{(\pi(1 + 1 / \pi))^{2M}}{4\pi^{2} - (\pi(1 + 1 / \pi))^{2}}
    \Bigg),
  \end{multline*}
  which is what we wanted to prove.
\end{proof}

\section{Details for evaluating \(\mathcal{T}_{\alpha}(x)\) with \(\alpha\) near \(-1\) and \(x\) near zero}
\label{sec:evaluation-T-hybrid-asymptotic}
In this section we discuss how to bound
\begin{equation*}
  \frac{U_{\alpha}(x)}{\log(1 / x)x^{-\alpha + p}}
\end{equation*}
for small \(x\) in the hybrid case when \(\alpha\) is near \(-1\), see
Section~\ref{sec:evaluation-T-hybrid}.

In this case the weight is given by
\(w_{\alpha}(x) = |x|^{p}\log(2e + 1 / |x|)\), giving us
\begin{equation*}
  U_{\alpha}(x) = x^{1 + p}\int_{0}^{\pi / x} |\hat{I}_{\alpha}(x, t)|t^{p}\log(2e + 1 / (xt))\ dt.
\end{equation*}
Using that
\begin{equation*}
  \log(2e + 1 / (xt)) = \log(1 + 2ext) + log(1 / x) + log(1 / t)
\end{equation*}
we split \(U_{\alpha}\) into three parts
\begin{align*}
  U_{\alpha}^{1}(x) &= x^{1 + p}\int_{0}^{\pi / x} |\hat{I}_{\alpha}(x, t)|t^{p}\log(1 + 2ext)\ dt,\\
  U_{\alpha}^{2}(x) &= x^{1 + p}\int_{0}^{\pi / x} |\hat{I}_{\alpha}(x, t)|t^{p}\ dt,\\
  U_{\alpha}^{3}(x) &= x^{1 + p}\int_{0}^{\pi / x} |\hat{I}_{\alpha}(x, t)|t^{p}\log(1 / t)\ dt,
\end{align*}
satisfying
\begin{equation*}
  U_{\alpha}(x) = U_{\alpha}^{1}(x) +\log(1 / x) U_{\alpha}^{2}(x) + U_{\alpha}^{3}(x).
\end{equation*}

The integral \(U_{\alpha}^{2}\) is the same as what we would get with
the weight \(w_{\alpha}(x) = |x|^{p}\), and we can therefore use
Lemma~\ref{lemma:evaluation-U-I-2} to bound it.

For \(U_{\alpha}^{1}\) we can use that \(\log(1 + 2ext)\) is bounded
on the interval of integration to factor it out. We then recover the
same integral as for \(U_{\alpha}^{2}\), and we can bound it in the
same way. If \(x\) is very small this work well, but for \(x\) larger
than around \(10^{-5}\) this gives fairly poor bounds. In that case we
compute part of the integral using a rigorous integrator instead.

The computation of \(U_{\alpha}^{3}\) is the most complicate part.
Similar to in Lemma~\ref{lemma:evaluation-U-I-2} we expand the
integrand and split it into two main parts and one remainder part as
\begin{align*}
  U_{\alpha,M,1}^{3}(x) &= \Gamma(1 + \alpha)\sin\left(-\frac{\pi}{2}\alpha\right)x^{-\alpha + p}
  \int_{0}^{1}\left|(1 - t)^{-\alpha - 1} + (1 + t)^{-\alpha - 1} - 2t^{-\alpha - 1}\right|t^{p}\log(1 / t)\ dt\\
  U_{\alpha,M,2}^{3}(x) &= \Gamma(1 + \alpha)\sin\left(-\frac{\pi}{2}\alpha\right)x^{-\alpha + p}
  \int_{1}^{\pi / x}((t - 1)^{-\alpha - 1} + (1 + t)^{-\alpha - 1} - 2t^{-\alpha - 1})t^{p}\log(1 / t)\ dt\\
  U_{\alpha,R}^{3}(x) &= 2x^{1 + p}\sum_{m = 1}^{\infty} (-1)^{m}\zeta(-\alpha - 2m)\frac{x^{2m}}{(2m)!}
  \sum_{k = 0}^{m - 1}\binom{2m}{2k}\int_{0}^{\pi / x}t^{2k + p}\log(1 / t)\ dt,
\end{align*}
with
\begin{equation*}
  U_{\alpha}^{3}(x) \leq U_{\alpha,M,1}^{3}(x) + U_{\alpha,M,2}^{3}(x) + U_{\alpha,R}^{3}(x).
\end{equation*}

For \(U_{\alpha,M,1}^{3}\) the integral doesn't depend on \(x\). We
compute it by using a rigorous integrator from \(0\) to \(0.999\) and
from \(0.999\) to \(1\) we use that \(\log(1 / t)\) is bounded to
factor it out and integrate explicitly.

For \(U_{\alpha,M,2}^{3}\) we note that \(\log(1 / t)\) is negative
and the integral is hence decreasing in \(x\). To get an upper bound
it is thus enough to work with an upper bound of \(x\). We split the
interval of integration into several parts that are all treated
slightly different. We let
\begin{align*}
  U_{\alpha,M,2,1}^{3}(x)
  &= \int_{1}^{1.001}((t - 1)^{-\alpha - 1} + (1 + t)^{-\alpha - 1} - 2t^{-\alpha - 1})t^{p}\log(1 / t)\ dt,\\
  U_{\alpha,M,2,2}^{3}(x)
  &= \int_{1.001}^{10^{10}}((t - 1)^{-\alpha - 1} + (1 + t)^{-\alpha - 1} - 2t^{-\alpha - 1})t^{p}\log(1 / t)\ dt,\\
  U_{\alpha,M,2,3}^{3}(x)
  &= \int_{10^{10}}^{10^{50}}((t - 1)^{-\alpha - 1} + (1 + t)^{-\alpha - 1} - 2t^{-\alpha - 1})t^{p}\log(1 / t)\ dt,\\
  U_{\alpha,M,2,4}^{3}(x)
  &= \int_{10^{50}}^{\pi / x}((t - 1)^{-\alpha - 1} + (1 + t)^{-\alpha - 1} - 2t^{-\alpha - 1})t^{p}\log(1 / t)\ dt.
\end{align*}
If \(\pi / x \geq 10^{50}\) we then have
\begin{equation*}
  U_{\alpha,M,2}^{3} = U_{\alpha,M,2,1}^{3} + U_{\alpha,M,2,2}^{3} + U_{\alpha,M,2,3}^{3}+ U_{\alpha,M,2,4}^{3}.
\end{equation*}
If \(\pi / x\) is less than \(10^{50}\) we simply skip the integrals
above \(\pi / x\) and cut the last one off at \(\pi / x\). The
integral \(U_{\alpha,M,2,2}^{3}\) is computed using a rigorous
integrator. For the remaining ones we use that
\begin{equation*}
  \int_{a}^{b}((t - 1)^{-\alpha - 1} + (1 + t)^{-\alpha - 1} - 2t^{-\alpha - 1})t^{p}\log(1 / t)\ dt
  \geq \log(1 / a)\int_{a}^{b}((t - 1)^{-\alpha - 1} + (1 + t)^{-\alpha - 1} - 2t^{-\alpha - 1})t^{p}\ dt
\end{equation*}
and integrate explicitly. The primitive function is given by
\begin{multline*}
  \int ((t - 1)^{-\alpha - 1} + (1 + t)^{-\alpha - 1} - 2t^{-\alpha - 1})t^{p}\ dt\\
  = -\frac{t^{p - \alpha}}{\alpha - p}\left(
    {}_{2}F_{1}\left(1 + \alpha, \alpha - p; 1 + \alpha - p; \frac{1}{t}\right)
    + {}_{2}F_{1}\left(1 + \alpha, \alpha - p; 1 + \alpha - p; -\frac{1}{t}\right)
    - 2
  \right).
\end{multline*}
The \({}_{2}F_{1}\) functions are evaluated using the hypergeometric
series, valid when \(1 + \alpha - p\) is not an integer and \(t > 1\),
with bounds from~\cite{Johansson2019}.

For \(U_{\alpha,R}^{3}(x)\) we use the following lemma to compute a
bound.
\begin{lemma}
  Let \(0 < \epsilon < \frac{\pi}{2}\), for \(\alpha \in (-1, 0)\),
  \(x \in [0, \epsilon]\) and \(-\alpha < p < 1\) with
  \(1 + \alpha \not= p\) we have
  \begin{multline*}
    \frac{U_{\alpha,R}^{3}(x)}{x^{-\alpha + p}\log(1 / x)} \leq
    \frac{2x^{2 + \alpha - p}\pi^{p - 1}}{\log(1 / x)}\sum_{m = 1}^{M - 1} (-1)^{m}\zeta(-\alpha - 2m)\frac{\pi^{2m}}{(2m)!}
    \sum_{k = 0}^{m - 1}\binom{2m}{2k}\frac{1}{(2k + 1 + p)^{2}}\left(\frac{x}{\pi}\right)^{2(m - 1 - k)}\\
    - \frac{2x^{2 + \alpha - p}\pi^{p - 1}\log(\pi / x)}{\log(1 / x)}\sum_{m = 1}^{M - 1} (-1)^{m}\zeta(-\alpha - 2m)\frac{\pi^{2m}}{(2m)!}
    \sum_{k = 0}^{m - 1}\binom{2m}{2k}\frac{1}{2k + 1 + p}\left(\frac{x}{\pi}\right)^{2(m - 1 - k)}\\
    + \frac{6x^{2 + \alpha - p}\pi^{p - 1}}{\log(1 / x)}\sum_{m = M}^{\infty} (-1)^{m}\zeta(-\alpha - 2m)\frac{(\frac{3\pi}{2})^{2m}}{(2m)!}.
  \end{multline*}
\end{lemma}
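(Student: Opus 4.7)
The plan is to start from the definition of $U_{\alpha,R}^{3}$ and evaluate the inner integral in closed form. Integration by parts with $u=\log(1/t)$ and $dv=t^{2k+p}\,dt$ gives
\begin{equation*}
  \int_{0}^{\pi/x}t^{2k+p}\log(1/t)\,dt
  = \frac{(\pi/x)^{2k+1+p}}{(2k+1+p)^{2}}
  - \frac{(\pi/x)^{2k+1+p}\log(\pi/x)}{2k+1+p},
\end{equation*}
where the boundary contribution at $t=0$ vanishes since $2k+1+p>0$. Substituting back into $U_{\alpha,R}^{3}(x)$ and pulling out a common factor $x^{2}\pi^{2m-1+p}$ from each term of the double sum, exactly as in the proof of Lemma~\ref{lemma:evaluation-U-I-2}, produces the exact identity
\begin{equation*}
  \frac{U_{\alpha,R}^{3}(x)}{x^{-\alpha+p}\log(1/x)} = A_{\infty}(x) - B_{\infty}(x),
\end{equation*}
where $A_{\infty}$ is the infinite-$m$ version of the first term of the claim (carrying the weights $1/(2k+1+p)^{2}$) and $B_{\infty}$ is the infinite-$m$ version of the second term (carrying the weights $\log(\pi/x)/(2k+1+p)$ with the leading minus sign).

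Next I split each of these series as $\sum_{m=1}^{\infty}=\sum_{m=1}^{M-1}+\sum_{m=M}^{\infty}$. Because $(-1)^{m}\zeta(-\alpha-2m)>0$ for $\alpha\in(-1,0)$ (the real zeros of $\zeta$ are the negative even integers) and $p>0$ keeps every combinatorial weight positive, and because $\log(\pi/x)>0$ on $(0,\epsilon)$, the tail of $-B_{\infty}$ is a strictly negative quantity. Dropping it is therefore an upper bound and explains why no tail term enters from the $B$-direction in the claim, leaving only the stated partial sum with its minus sign.

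For the tail of $A_{\infty}$, I use $p>0$ to get $(2k+1+p)^{-2}\le(2k+1)^{-1}$, then together with $\epsilon/\pi\le 1/2$ I invoke the combinatorial estimate already used in Lemma~\ref{lemma:evaluation-U-I-2}:
\begin{equation*}
  \sum_{k=0}^{m-1}\binom{2m}{2k}\frac{(\epsilon/\pi)^{2(m-1-k)}}{2k+1}
  \le \frac{2^{-2m}(1+3^{1+2m})-4}{1+2m}
  < 3\left(\frac{3}{2}\right)^{2m}.
\end{equation*}
The factor $3$ here combines with the factor $2$ in the prefactor $2x^{2+\alpha-p}\pi^{p-1}/\log(1/x)$ to give the constant $6$ demanded by the claim, while $\pi^{2m}(3/2)^{2m}=(3\pi/2)^{2m}$ produces the correct summand. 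Reassembling the two partial sums from $A_{\infty}$ and $-B_{\infty}$ with this tail bound yields the three-term upper bound of the statement.

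The only real obstacle is the careful bookkeeping of the exponents of $x$ and $\pi$ after integration by parts, so that every term collapses onto the homogeneous prefactor $x^{2+\alpha-p}\pi^{p-1}/\log(1/x)$. Once the series is in the form with $\pi^{2m}$ and $(x/\pi)^{2(m-1-k)}$, the two finite partial sums appear verbatim, and the tail estimate is a direct transcription of the argument in Lemma~\ref{lemma:evaluation-U-I-2}.
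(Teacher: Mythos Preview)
Your proof is correct and follows essentially the same approach as the paper: compute the integral $\int_{0}^{\pi/x}t^{2k+p}\log(1/t)\,dt$ explicitly, rewrite $U_{\alpha,R}^{3}(x)/(x^{-\alpha+p}\log(1/x))$ as two infinite sums, drop the non-positive tail coming from the $\log(\pi/x)$ piece, and bound the remaining tail via the same combinatorial estimate as in Lemma~\ref{lemma:evaluation-U-I-2}. The only cosmetic difference is that the paper first bounds $(2k+1+p)^{-2}\le(2k+1+p)^{-1}$ and then identifies the resulting sum with the one already handled in Lemma~\ref{lemma:evaluation-U-I-2}, whereas you pass directly to $(2k+1)^{-1}$; the end result and the final constant $6$ are identical.
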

\begin{proof}
  We have
  \begin{equation*}
    \int_{0}^{\pi / x}t^{2k + p}\log(1 / t)\ dt
    = \frac{1 - (2k + 1 + p)\log(\pi / x)}{(2k + 1 + p)^{2}}\left(\frac{\pi}{x}\right)^{2k + 1 + p},
  \end{equation*}
  giving us
  \begin{equation*}
    \frac{U_{\alpha,R}^{3}(x)}{x^{-\alpha + p}\log(1 / x)}
    = \frac{2x^{1 + \alpha}}{\log(1 / x)}\sum_{m = 1}^{\infty} (-1)^{m}\zeta(-\alpha - 2m)\frac{x^{2m}}{(2m)!}
    \sum_{k = 0}^{m - 1}\binom{2m}{2k}\frac{1 - (2k + 1 + p)\log(\pi / x)}{(2k + 1 + p)^{2}}\left(\frac{\pi}{x}\right)^{2k + 1 + p}.
  \end{equation*}
  Using a similar approach as in Lemma~\ref{lemma:evaluation-U-I-2} we
  can rewrite this as
  \begin{multline*}
    \frac{U_{\alpha,R}^{3}(x)}{x^{-\alpha + p}\log(1 / x)}\\
    = \frac{2x^{2 + \alpha - p}\pi^{p - 1}}{\log(1 / x)}\sum_{m = 1}^{\infty} (-1)^{m}\zeta(-\alpha - 2m)\frac{\pi^{2m}}{(2m)!}
    \sum_{k = 0}^{m - 1}\binom{2m}{2k}\frac{1 - (2k + 1 + p)\log(\pi / x)}{(2k + 1 + p)^{2}}\left(\frac{x}{\pi}\right)^{2(m - 1 - k)}.
  \end{multline*}
  Splitting it into two sums we can write it as
  \begin{multline*}
    \frac{U_{\alpha,R}^{3}(x)}{x^{-\alpha + p}\log(1 / x)}
    = \frac{2x^{2 + \alpha - p}\pi^{p - 1}}{\log(1 / x)}\sum_{m = 1}^{\infty} (-1)^{m}\zeta(-\alpha - 2m)\frac{\pi^{2m}}{(2m)!}
    \sum_{k = 0}^{m - 1}\binom{2m}{2k}\frac{1}{(2k + 1 + p)^{2}}\left(\frac{x}{\pi}\right)^{2(m - 1 - k)}\\
    - \frac{2x^{2 + \alpha - p}\pi^{p - 1}\log(\pi / x)}{\log(1 / x)}\sum_{m = 1}^{\infty} (-1)^{m}\zeta(-\alpha - 2m)\frac{\pi^{2m}}{(2m)!}
    \sum_{k = 0}^{m - 1}\binom{2m}{2k}\frac{1}{2k + 1 + p}\left(\frac{x}{\pi}\right)^{2(m - 1 - k)}.
  \end{multline*}
  We can treat the first few terms in the sums explicitly, what
  remains is bounding the tails
  \begin{equation}
    \label{eq:evaluation-T-hybrid-asymptotic-sum-1}
    \sum_{m = M}^{\infty} (-1)^{m}\zeta(-\alpha - 2m)\frac{\pi^{2m}}{(2m)!}
    \sum_{k = 0}^{m - 1}\binom{2m}{2k}\frac{1}{(2k + 1 + p)^{2}}\left(\frac{x}{\pi}\right)^{2(m - 1 - k)}
  \end{equation}
  and
  \begin{equation}
    \label{eq:evaluation-T-hybrid-asymptotic-sum-2}
    \sum_{m = M}^{\infty} (-1)^{m}\zeta(-\alpha - 2m)\frac{\pi^{2m}}{(2m)!}
    \sum_{k = 0}^{m - 1}\binom{2m}{2k}\frac{1}{2k + 1 + p}\left(\frac{x}{\pi}\right)^{2(m - 1 - k)}.
  \end{equation}
  For \eqref{eq:evaluation-T-hybrid-asymptotic-sum-2} the factor in
  front is negative, we hence only need to compute a lower bound.
  Since all terms in the sum are positive it is trivially lower
  bounded by zero.

  For \eqref{eq:evaluation-T-hybrid-asymptotic-sum-1} we need an upper
  bound. We can notice that
  \begin{equation*}
    \sum_{m = M}^{\infty} (-1)^{m}\zeta(-\alpha - 2m)\frac{\pi^{2m}}{(2m)!}
    \sum_{k = 0}^{m - 1}\binom{2m}{2k}\frac{1}{2k + 1 + p}\left(\frac{x}{\pi}\right)^{2(m - 1 - k)}
  \end{equation*}
  gives an upper bound and that this sum also occurs in the proof of
  Lemma~\ref{lemma:evaluation-U-I-2}. Following the same approach we
  therefore get that an upper bound is given by
  \begin{equation*}
    3\sum_{m = M}^{\infty} (-1)^{m}\zeta(-\alpha - 2m)\frac{(\frac{3\pi}{2})^{2m}}{(2m)!}.
  \end{equation*}
\end{proof}

\section*{Acknowledgments}

The author was partially supported by the ERC Starting Grant
ERC-StG-CAPA-852741 as well as MICINN (Spain) research grant number
PID2021- 125021NA-I00. This material is based upon work supported by
the National Science Foundation under Grant No. DMS-1929284 while the
author was in residence at the Institute for Computational and
Experimental Research in Mathematics in Providence, RI, during the
program “Hamiltonian Methods in Dispersive and Wave Evolution
Equations”. The author was partially supported by the Swedish-American
foundation for the visit. We are also thankful for the hospitality of
the Princeton Department of Mathematics and the Brown University
Department of Mathematics where parts of this paper were done. The
computations were enabled by resources provided by the National
Academic Infrastructure for Supercomputing in Sweden (NAISS) and the
Swedish National Infrastructure for Computing (SNIC) at the PDC Center
for High Performance Computing, KTH Royal Institute of Technology,
partially funded by the Swedish Research Council through grant
agreements no. 2022-06725 and no. 2018-05973. The author would like to
thank Javier Gómez-Serrano for his guidance and Erik Wahlén and
Gabriele Brüll for fruitful discussions about highest waves for
related equations.

\printbibliography

\begin{tabular}{l}
  \textbf{Joel Dahne} \\
  {Department of Mathematics} \\
  {Uppsala University} \\
  {L\"agerhyddsv\"agen 1, 752 37, Uppsala, Sweden} \\
  {Email: joel.dahne@math.uu.se} \\ \\
\end{tabular}

\end{document}